

\documentclass[leqno]{prims}
\usepackage{hyperref}
\usepackage{amssymb,amsmath}
\usepackage{enumerate}

\usepackage[dvips]{graphicx}
\usepackage{pst-all} 
\usepackage[all]{xy} 


\newtheorem{thm}{Theorem}[section] 
\newtheorem{prop}[thm]{Proposition}

\newtheorem{lem}[thm]{Lemma}



\theoremstyle{definition}
\newtheorem{defin}[thm]{Definition}
\newtheorem{rem}[thm]{Remark}

\newtheorem{ass}[thm]{Assumption}



\numberwithin{equation}{section}  



\def\p{\partial}
\def\tx{\tilde{x}}
\def\tt{\tilde{t}}
\def\tr{\tilde{r}}
\def\ta{\tilde{a}}
\def\tz{\tilde{z}}
\def\ts{\tilde{s}}
\def\tQ{\tilde{Q}}
\def\tA{\tilde{A}}
\def\tS{\tilde{S}}
\def\tE{\tilde{E}}

\def\tF{\tilde{F}}
\def\tU{\tilde{U}}
\def\tV{\tilde{V}}
\def\tlambda{\tilde{\lambda}}
\def\tnu{\tilde{\nu}}
\def\talpha{\tilde{\alpha}}
\def\tbeta{\tilde{\beta}}
\def\tdelta{\tilde{\delta}}
\def\tgamma{\tilde{\gamma}}
\def\tGamma{\tilde{\Gamma}}
\def\tphi{\tilde{\phi}}
\def\tPhi{\tilde{\Phi}}

\def\PJ{(P_{J})}
\def\HJ{(H_{J})}
\def\SLJ{(SL_{J})}
\def\DJ{(D_{J})}
\def\PII{(P_{\rm II})}
\def\HII{(H_{\rm II})}
\def\SLII{(SL_{\rm II})}
\def\DII{(D_{\rm II})}
\def\PIII{(P_{{\rm III'}(D_7)})}
\def\HIII{(H_{{\rm III'}(D_7)})}
\def\SLIII{(SL_{{\rm III'}(D_7)})}

\def\III{{\rm III'}(D_7)}

\newcommand{\Res}{\mathop{\rm Res}}



\VolumeNo{4x}           
\YearNo{201x}          
\communication{S.Mochizuki. 
Received December 5, 2013. 
Revised July 19, 2014.} 


\begin{document}



\TitleHead{Transformations on Stokes segments}
\title{On WKB theoretic transformations for 
Painlev\'e transcendents on degenerate Stokes segments}


\AuthorHead{Kohei Iwaki}
\author{Kohei \textsc{Iwaki}*
\footnote{$*$~Research Institute for Mathematical Sciences,
Kyoto University, Kyoto 606-8502, Japan;
\email{iwaki@kurims.kyoto-u.ac.jp}} }

\classification{Primary 34M60; Secondary 34M55.}
\keywords{Exact WKB analysis, Painlev\'e equations.}

\maketitle
\begin{abstract}
The WKB theoretic transformation theorem established 
in \cite{KT98} implies that the first Painlev\'e equation 
gives a normal form of Painlev\'e equations 
with a large parameter near a simple $P$-turning point. 
In this paper we extend this result and show that 
the second Painlev\'e equation $\PII$ and the 
third Painlev\'e equation $\PIII$ of type $D_7$ 
give a normal form of Painlev\'e equations 
on a degenerate $P$-Stokes segments connecting 
two different simple $P$-turning points and 
on a degenerate $P$-Stokes segment of loop-type, respectively.
That is, any 2-parameter formal solution of a Painlev\'e 
equation is reduced to a 2-parameter formal solution of 
$\PII$ or $\PIII$ on these degenerate $P$-Stokes segments
by our transformation.
\end{abstract}


\section{Introduction}

\begin{table}[h]
{\small
\begin{eqnarray*}
\hspace{-1.em} 
(P_{\rm I}) & : & 
\frac{d^{2} \lambda}{dt^{2}} = 
\eta^{2} (6\lambda^{2} + t), \\[+.2em]  %
\hspace{-1.em} 
(P_{\rm II}) & : & 
\frac{d^{2} \lambda}{dt^{2}} = 
\eta^{2} (2\lambda^{3} + t \lambda + c), \\[+.2em] %
\hspace{-1.em}  
(P_{{\rm III'}(D_6)}) & : & 
\frac{d^{2} \lambda}{dt^{2}} = 
\frac{1}{\lambda} \biggl( 
\frac{d \lambda}{dt} \biggr)^{2} - 
\frac{1}{t} \frac{d \lambda}{dt} + 
\eta^{2} \Bigl[ 
\frac{\lambda^{3}}{t^{2}} - 
\frac{c_{\infty} \lambda^{2}}{t^{2}} + 
\frac{c_{0}}{t} - \frac{1}{\lambda} \Bigr], \\[+.5em] %
\hspace{-1.em}  
(P_{{\rm III'}(D_7)}) & : & 
\frac{d^{2} \lambda}{dt^{2}} = \frac{1}{\lambda} 
\biggl(\frac{d \lambda}{dt} \biggr)^{2} 
- \frac{1}{t} \frac{d \lambda}{dt} + 
\eta^{2} \Bigl[ \frac{-2\lambda^{2}}{t^{2}} + 
\frac{c}{t} - \frac{1}{\lambda} \Bigr], \\[+.5em] %
\hspace{-1.em}  
(P_{{\rm III'}(D_8)}) & : & 
\frac{d^{2} \lambda}{dt^{2}} = \frac{1}{\lambda} 
\biggl(\frac{d \lambda}{dt} \biggr)^{2} - \frac{1}{t} 
\frac{d \lambda}{dt} + \eta^{2} \Bigl[ \frac{\lambda^{2}}{t^{2}} 
- \frac{1}{t} \Bigr], \\[+.5em] %
\hspace{-1.em}  
(P_{\rm IV}) & : & 
\frac{d^{2} \lambda}{dt^{2}} = \frac{1}{2\lambda} 
\biggl(\frac{d \lambda}{dt} \biggr)^{2} 
+\eta^{2} \Bigl[ \frac{3}{2}\lambda^3 + 4t\lambda^2 +
(2t^2 - 2c_{\infty})\lambda - 
\frac{2c_0^2}{\lambda} \Bigr], \\[+.5em] %
\hspace{-1.em}  
(P_{\rm V}) & : & 
\frac{d^{2} \lambda}{dt^{2}} = 
\left(\frac{1}{2\lambda}-\frac{1}{\lambda-1} \right)
\left(\frac{d\lambda}{dt}\right)^2-\frac{1}{t}\frac{d\lambda}{dt}
\\
& & \hspace{-.5em}
+\eta^{2} \frac{2\lambda(\lambda-1)^2}{t^2}
\Bigl[
\frac{c_{\infty}^2}{4}-\frac{c_0^2}{4} 
\frac{1}{\lambda^2}-\frac{c_1 \hspace{+.1em} t}{(\lambda-1)^2}
-\frac{t^2}{4}\frac{\lambda+1}{(\lambda-1)^3}
\Bigr], \\[+.5em] %
\hspace{-1.em} 
(P_{\rm VI}) & : & 
\frac{d^{2} \lambda}{dt^{2}} = 
\frac{1}{2} \biggl( 
\frac{1}{\lambda} + \frac{1}{\lambda - 1} 
+ \frac{1}{\lambda - t} \biggr) 
\biggl(\frac{d \lambda}{dt} \biggr)^{2} - 
\biggl(\frac{1}{t} + \frac{1}{t-1} + \frac{1}{\lambda - t}  
\biggr) \frac{d \lambda}{dt} \\[+.2em]
&   & \hspace{-2.em}
+ \frac{\lambda(\lambda-1)}{2t(t-1)(\lambda - t)} 
+ \eta^{2} \frac{2 \lambda (\lambda - 1) (\lambda - t)}
{t^{2}(t - 1)^{2}} 
\biggl[ 
\frac{c_{\infty}^2}{4}
- \frac{c_{0}^2}{4} \frac{t}{\lambda^{2}}  
+ \frac{c_{1}^2}{4} \frac{t - 1}{(\lambda - 1)^{2}} 
- \frac{c_{t}^2}{4} \frac{t (t - 1)}{(\lambda - t)^{2}}
 \biggr].%
\end{eqnarray*}
\caption{Painlev\'e equations with a large parameter $\eta$. }
\label{table:PJ}
 } 
\end{table}

{\em Painlev\'e transcendents} are remarkable special functions which 
appear in many areas of mathematics and physics. These are solutions 
of certain non-linear ordinary differential equations known as 
Painlev\'e equations. Since the work of Painlev\'e and Gambier there 
have been many works which investigate mutual relationships 
(mainly on the formal level) between different Painlev\'e equations, 
often called the degeneration or confluence procedure,  
or (double) scaling limits of Painlev\'e equations.
More recently, relations of solutions of different Painlev\'e equations 
have been also discussed; see \cite{Kit92, Kit94, Kapaev-Kitaev, 
Kit-Var, Kit06, GIL} and references therein. For example, 
\cite{Kapaev-Kitaev} describes solutions of the first Painlev\'e equation 
in terms of those of the second Painlev\'e equation using infinite 
times iteration of B\"acklund transformations. \cite{GIL} also
succeeds in giving a relation between solutions of different
Painlev\'e equations through their explicit expressions of 
$\tau$-functions and computations of the limit 
in the degeneration procedure.

Now, in this paper we discuss a different kind of relations between
solutions of Painlev\'e equations containing a large parameter $\eta$
(cf. Table \ref{table:PJ}) called a ``{\em WKB theoretic transformation}". 
Here a WKB theoretic transformation is an invertible formal coordinate
transformation which relates formal solutions of different Painlev\'e
equations. (See a series of papers \cite{KT96}, \cite{AKT96} and 
\cite{KT98} by Aoki, Kawai and Takei for more details of 
WKB theoretic transformations.)
%
%
%
%
%
The main result of this paper is the construction of new 
WKB theoretic transformations. 
That is, for any ``2-parameter (formal) solution" 
of a general Painlev\'e equation $\PJ$, we can find a formal 
invertible coordinate transformation which reduces 
the 2-parameter solution to a 2-parameter solution of 
$\PII$ or $\PIII$, when the configuration of 
``$P$-Stokes curves" of $\PJ$ degenerates and contains a 
$P$-Stokes curve connecting two ``$P$-turning points" 
(we call such a special $P$-Stokes curve a ``{\em $P$-Stokes segment}").




We explain 
the motivation of our study. 
Some of the important results by Aoki, Kawai and Takei 
are summarized as follows (see \cite{KT96}, \cite{AKT96} and \cite{KT98}): 
\begin{itemize}
\item %
notions of {\em $P$-turning points} and 
{\em $P$-Stokes curves} are introduced for $\PJ$, %
\item %
{\em 2-parameter (formal) solutions}
$\lambda_J(t,\eta;\alpha,\beta)$ of $\PJ$ containing 
two free parameters $\alpha$ and $\beta$ are constructed 
by the multiple-scale method, %
\item %
the {\em WKB theoretic transformation theory} 
near a simple $P$-turning point is established, that is, 
any 2-parameter solution of $\PJ$ 
can be reduced to that of the first Painlev\'e equation 
\begin{equation}
(P_{\rm I}) : \frac{d^{2} \lambda}{dt^{2}} = 
\eta^{2} (6\lambda^{2} + t)
\end{equation}
on a $P$-Stokes curve emanating from a {\em simple} 
$P$-turning point.
\end{itemize}
In this paper, for the sake of clarity, we call turning points 
(resp., Stokes curves) of Painlev\'e equations ``$P$-turning 
points" (resp., ``$P$-Stokes curves"), following the 
terminology used in \cite{KT06} for example.
The precise statement of the last claim is that, 
for any 2-parameter solution 
$\tlambda_J(\tt,\eta;\talpha,\tbeta)$ of $\PJ$, 
there exist formal coordinate transformation series 
$x(\tx,\tt,\eta)$ and $t(\tt,\eta)$ of dependent and 
independent variables and a 2-parameter solution 
$\lambda_{\rm I}(t,\eta;\alpha,\beta)$ of $(P_{\rm I})$ 
such that 
\begin{equation} \label{eq:transform-P1}
x(\tlambda_J(\tt,\eta;\talpha,\tbeta),\tt,\eta) = 
\lambda_{\rm I}(t(\tt,\eta),\eta;\alpha,\beta)
\end{equation}
holds in a neighborhood of a point $\tt=\tt_{\ast}$ 
which lies on a $P$-Stokes curve emanating 
from a simple $P$-turning point.
Here we put the symbol $\sim$ on the variables relevant to
$\PJ$ to distinguish them from those of $(P_{\rm I})$.  
In this sense the first Painlev\'e equation 
$(P_{\rm I})$ is a canonical equation of Painlev\'e 
equations near a simple $P$-turning point.

The above result can be considered as a non-linear analogue 
of the transformation theory of linear ordinary differential 
equations near a simple turning point. In the case of linear  
equations of second order, a canonical equation is given by 
the {\em Airy equation}:
\begin{equation} \label{eq:Airy}
\left(\frac{d^2}{dx^2} - \eta^2 x \right) \psi(x,\eta) = 0.
\end{equation} 
See \cite{AKT91} for the precise statement. 
The transformation gives an equivalence between 
{\em WKB solutions} of a general Schr{\"o}dinger equation 
and those of the Airy equation \eqref{eq:Airy} 
near a simple turning point, and consequently 
the explicit form of the connection formula 
on a Stokes curve for a general equation 
is determined in a 
{\em ``generic"} situation (\cite{KT iwanami}). 

The above genericity assumption means that 
the Stokes graph of the equation does not contain 
any {\em (degenerate) Stokes segments} 
(i.e., Stokes curves connecting simple turning points). 
We say that the Stokes geometry {\em degenerates} 
if such a Stokes segment appears. When a Stokes segment 
appears in the Stokes geometry, the connection formula 
does not make sense on the Stokes segment 
(cf.~\cite[Section 7]{Voros}).  

Typically two types of Stokes segments appear 
for Stokes geometry of linear equations in a 
generic situation: A Stokes segment of the first type 
connects two {\em different} simple turning points, while
a Stokes segment of the second type 
(sometimes called a {\em loop-type} Stokes segment)
emanates from and returns to the {\em same} simple 
turning point and hence forms a closed loop. 

To analyze the degenerate situation where 
a Stokes segment connects two {\em different} simple turning 
points for a general Schr{\"o}dinger equation, 
\cite{AKT09} constructs a transformation which brings 
WKB solutions of the general equation to that 
of the {\em Weber equation} when $x$ lies on a Stokes segment. 
Here the Weber equation they discussed 
has the form 
\begin{equation} \label{eq:Weber}
\left(\frac{d^2}{dx^2} - \eta^2 \Bigl( c-\frac{x^2}{4} \Bigr) 
\right) \psi(x,\eta) = 0.
\end{equation} 
To be more precise, we need 
to replace the constant $c$ by a formal power series 
$c=c(\eta)$ in $\eta^{-1}$ with constant coefficients 
in discussing the transformation. The Stokes geometry of the 
equation \eqref{eq:Weber} when $c\in{\mathbb R}_{\ne 0}$ 
(where ${\mathbb R}_{\ne 0}$ is the set of non-zero 
real numbers) has two simple turning points and 
a Stokes segment connects the two simple turning points. 
In this sense the Weber equation gives 
a canonical equation on a Stokes segment 
which connects two different simple turning points. 

On the other hand, recently Takahashi \cite{Takahashi}
constructs a similar kind of formal transformation 
which brings a general  Schr{\"o}dinger equation
having a loop-type Stokes segment to the
{\em Bessel-type equation} of the form 
\begin{equation} \label{eq:Bessel}
\left(\frac{d^2}{dx^2} - \eta^2 
\Bigl( \frac{x - c^2}{x^2} \Bigr) 
\right) \psi(x,\eta) = 0.
\end{equation} 
When $c\in{\mathbb R}_{\ne 0}$, 
the Stokes geometry of the equation \eqref{eq:Bessel} 
has one simple turning point and a Stokes curve 
emanating from the turning point turns around 
the double-pole $x=0$ of the potential and 
returns to the original simple turning point. 
This gives a loop-type Stokes segment. 
In this sense the Bessel-type equation gives 
a canonical equation on a loop-type Stokes segment.

The transformation constructed in \cite{AKT09} and 
\cite{Takahashi} are expected to play important roles 
in the analysis of {\em parametric} Stokes phenomena. 
Actually, if we vary the constant $c$, 
WKB solutions of \eqref{eq:Weber} may enjoy a Stokes 
phenomenon, that is, the correspondence between 
WKB solutions and their {\em Borel sums} 
changes discontinuously before and after the appearance 
of Stokes segments (cf.~\cite{SS}, \cite{Takei08}). 
We call such Stokes phenomena ``parametric" 
since the Stokes phenomena 
occur when we vary the parameter $c$ which is not the 
independent variable. 
Due to parametric Stokes 
phenomena, the transformation to the Airy equation does 
not work when a Stokes segment appears. 
Actually, a Stokes segment yields the so-called 
{\em fixed singularities} 
(cf.~\cite{DP99}, \cite{AKT09}) for the Borel transform 
of WKB solutions. Parametric Stokes phenomena are caused 
by such fixed singularities. 
The analysis of these fixed singularities is done 
in \cite{AKT09} through the transformation 
to the Weber equation. If the Borel summability 
of the transformation series constructed 
in \cite{AKT09} and \cite{Takahashi} is established, 
then the explicit form of the connection formula 
describing the parametric Stokes phenomena 
will be derived.

Here a natural question aries: 
What happens to 2-parameter solutions of $\PJ$ 
when the $P$-Stokes geometry {\em degenerates}, 
that is, when a $P$-Stokes segment appears 
in the $P$-Stokes geometry of $\PJ$.

It is shown in the author's papers 
\cite{Iwaki}, \cite{Iwaki-Bessatsu} and 
\cite{Iwaki-PIII} that the parametric Stokes phenomena 
also occur to {\em 1-parameter solutions} 
(which belongs to a subclass of 2-parameter solutions)
of the Painlev\'e equations when 
a $P$-Stokes segment appears. 
For example, when the parameter $c$ contained 
in the second Painlev\'e equation 
\begin{equation}
\PII : \frac{d^{2} \lambda}{dt^{2}} = 
\eta^{2} (2\lambda^{3} + t \lambda + c)
\end{equation}
is pure imaginary, $P$-Stokes segments appear 
in the $P$-Stokes geometry of $\PII$. 
In this case three $P$-Stokes segments appear
simultaneously and each of them connects two 
{\em different} simple $P$-turning points (see Section 
\ref{section:degeneration-of-PStokes-geometry}).
It is shown in \cite{Iwaki} that 
1-parameter solutions of $\PII$ 
enjoy Stokes phenomena when 
the $P$-Stokes segments appear. 
Similarly, a {\em loop-type} $P$-Stokes segment 
also appears in the $P$-Stokes geometry of 
the degenerate third Painlev\'e equation
\begin{equation}
\PIII : \frac{d^{2} \lambda}{dt^{2}} = \frac{1}{\lambda} 
\biggl(\frac{d \lambda}{dt} \biggr)^{2} 
- \frac{1}{t} \frac{d \lambda}{dt} + 
\eta^{2} \Bigl[ \frac{-2\lambda^{2}}{t^{2}} + 
\frac{c}{t} - \frac{1}{\lambda} \Bigr]
\end{equation}
of type $D_7$ (in the sense of \cite{OKSO})
when $c \in i \hspace{+.1em} {\mathbb R}_{\ne 0}$ 
(see Section \ref{section:degeneration-of-PStokes-geometry}).

Motivated by these results, in this paper we construct 
a transformation of the form \eqref{eq:transform-P1} 
when the $P$-Stokes geometry of $\PJ$ degenerates. 
That is, as is described below, 
(under some geometric assumptions for the 
Stokes geometry of isomonodromy systems,)
when a $P$-Stokes segment which connects 
two {\em different} simple $P$-turning points 
(resp., a {\em loop-type} $P$-Stokes segment) 
appears, then any 2-parameter solution of $\PJ$ is 
reduced to a 2-parameter solution of $\PII$ 
(resp., $\PIII$) on the $P$-Stokes segment
(see Section \ref{section:main-results} 
and Section \ref{section:main-theorem2} 
for the precise statements and assumptions). 

\begin{thm}[Thorem \ref{thm:main-theorem1}]
Assume that $\PJ$ has a $P$-Stokes segment 
connecting two different simple $P$-turning points of $\PJ$. 
Then, for any 2-parameter solution 
$\tlambda_J(\tt,\eta;\talpha,\tbeta)$ of $\PJ$, 
we can find
\begin{itemize}
\item formal coordinate transformation series 
$x(\tx,\tt,\eta)$ and $t(\tt,\eta)$ of dependent 
and independent variables, %
\item a 2-parameter solution 
$\lambda_{\rm II}(t,\eta;\alpha,\beta)$ of $\PII$ 
with a suitable choice of the constant $c$ 
in the equation, %
\end{itemize}
satisfying
\begin{equation}
x \bigl( \tlambda_J(\tt,\eta;\talpha,\tbeta),\tt,\eta 
\bigr) = \lambda_{\rm II} 
\left( t(\tt,\eta),\eta; \alpha,\beta \right)
\end{equation}
in a neighborhood of a point $\tt=\tt_{\ast}$ 
which lies on the $P$-Stokes segment. 
\end{thm}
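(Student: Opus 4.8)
The plan is to reduce the nonlinear transformation problem to a linear one through the isomonodromy description of $\PJ$, following the strategy of \cite{AKT96} and \cite{KT98} but now constructing the transformation on the whole $P$-Stokes segment rather than merely near a single simple $P$-turning point. Recall that a 2-parameter solution $\tlambda_J(\tt,\eta;\talpha,\tbeta)$ is, by construction, read off from the associated linear isomonodromy system $\SLJ$, and that the $P$-turning points and $P$-Stokes curves of $\PJ$ are precisely the turning points and Stokes curves of $\SLJ$ traced along the deformation. Hence the geometric hypothesis --- that two different simple $P$-turning points are joined by a $P$-Stokes segment --- translates, at the base point $\tt=\tt_{\ast}$, into the statement that the linear equation $\SLJ$ has a \emph{Stokes segment connecting two simple turning points}. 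This is exactly the degenerate configuration for which the Weber equation \eqref{eq:Weber} is the canonical model, and $\SLII$ is of Weber type.

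First I would construct the transformation at the linear level. Applying the WKB transformation theory on a Stokes segment joining two simple turning points (the Weber normalization of \cite{AKT09}) to $\SLJ$, I obtain formal series transforming $\SLJ$ into $\SLII$ in a full neighborhood of the segment. The leading order of this transformation is fixed by requiring that the WKB ``action'' $\int\sqrt{\tQ_J}\,d\tx$ of $\SLJ$ be carried to the corresponding Weber/$\SLII$ action; matching the period of $\sqrt{\tQ_J}$ taken between the two turning points against the Weber period is what determines the constant $c=c(\eta)$ in $\PII$.

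Next I would lift this spatial transformation to the deformation direction. The transformation of $\SLJ$ must be made compatible with the deformation equation $\DJ$, so that it intertwines $\DJ$ with $\DII$; this compatibility \emph{is} the isomonodromy condition and it forces the rescaling $t=t(\tt,\eta)$ of the independent variable. Reading off the motion of the distinguished singularity then yields the coordinate change $x=x(\tx,\tt,\eta)$ of the dependent variable, and the compatibility of $\SLII$ with $\DII$ guarantees that the transformed object genuinely solves $\PII$. Because the whole construction preserves the Borel/WKB structure, the two free parameters $\talpha,\tbeta$ (the Stokes data of $\SLJ$) are carried to parameters $\alpha,\beta$ of $\PII$ by explicit WKB formulas, producing a bona fide 2-parameter solution $\lambda_{\rm II}(t,\eta;\alpha,\beta)$ together with the asserted identity $x(\tlambda_J(\tt,\eta;\talpha,\tbeta),\tt,\eta)=\lambda_{\rm II}(t(\tt,\eta),\eta;\alpha,\beta)$.

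The main obstacle I expect is the \emph{global, segment-wide} nature of the construction, as opposed to the purely local transformation near a single simple $P$-turning point used for the reduction to $(P_{\rm I})$. Two difficulties must be handled at once: the order-by-order recursion defining the transformation series must be solvable not only at one turning point but uniformly along the segment joining the two turning points, with the relevant Fredholm-type solvability condition for the linearized operator met at every step; and the lift to the deformation direction must remain consistent, i.e. the induced $t(\tt,\eta)$ must be a well-defined invertible formal series in $\eta^{-1}$. The period-matching that pins down $c$ couples these two requirements and is precisely where the two-turning-point structure genuinely enters, so verifying it is the crux of the argument.
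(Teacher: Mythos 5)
Your plan misidentifies the linear-level geometry, and this breaks the first step. By Proposition \ref{prop:P-saddles}, when $\tt_{\ast}$ lies on a $P$-Stokes segment joining two simple $P$-turning points, the Stokes geometry of $\SLJ$ does \emph{not} contain a Stokes segment connecting two simple turning points; it contains the \emph{double} turning point $\tx=\tlambda_0(\tt_{\ast})$ joined by \emph{two} adjacent Stokes segments to two different simple turning points $\ta_1(\tt_{\ast})$ and $\ta_2(\tt_{\ast})$. Consequently the Weber normalization of \cite{AKT09} is not the applicable canonical model, and $\SLII$ is not ``of Weber type'': its potential $Q_{\rm II}$ is quartic with an apparent singularity at $x=\lambda$ carrying the data $(\lambda,\nu)$ of the 2-parameter solution. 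That double turning point is not incidental --- it is exactly where the Painlev\'e transcendent lives, and the engine of the actual proof is the reduction of \cite{KT98} near the double turning point (Theorem \ref{thm:transformation-at-double-turning-point}), which is what transports 2-parameter solutions; the final identity $x(\tlambda_J,\tt,\eta)=\lambda_{\rm II}(t(\tt,\eta),\eta)$ is then read off from the intertwining of the deformation equations at the apparent singularity, as in \eqref{eq:x-relation2}. Your proposal, by replacing this with a transformation of $\SLJ$ to a Weber-type equation, discards precisely the structure that encodes $\tlambda_J$, and \cite{AKT09} in any case provides no control of deformation equations or of instanton-type coefficients.

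The second gap is that the genuinely hard point of the theorem is not a ``Fredholm-type solvability condition along the segment'' but a two-stage holomorphy problem that your outline does not engage. The series $x^{\rm pre},t^{\rm pre}$ built near the double turning point have higher-order coefficients that are in general singular at the simple turning points. The paper first removes the singularity at $\ta_1$ by adjusting the free additive parameters $C_n$ hidden in $t^{\rm pre}$ (the matching argument of Section \ref{section:matching-P2}, using the $\tt$-independent constants ${\cal I}_{j/2}$); it must then prove that the \emph{same} series, with no parameters left to adjust, is automatically holomorphic at the second simple turning point $\ta_2$ (Proposition \ref{prop:y1=y2}). This is where the two-turning-point structure truly enters: Lemma \ref{lem:key-lamma} shows the contour integrals of $\tS_{J,{\rm odd}}$ and $S_{\rm II,odd}$ around the pair of simple turning points agree, and this equality holds only because $c$ is fixed by the period formula \eqref{eq:c-P2}, the parameters are matched through the first integral $\tE_J(\talpha,\tbeta)=E_{\rm II}(\alpha,\beta)$ of \eqref{eq:invariance-of-residues}, and Assumption \ref{ass:transform-to-P2} (5) (even-order poles) lets the period collapse to top-term residues via \eqref{eq:resSodd=res-1}. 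You correctly sensed that period matching determines $c$, but the period in question is the Painlev\'e-level period $\int_{\tr_1}^{\tr_2}\sqrt{\tF^{(1)}_J}\,d\tt$ (related to the linear-level cycle by Lemma \ref{lemma:integral-equality} and Lemma \ref{lem:integral-P2-P3}), and its role is to make the second matching possible --- not to normalize a Weber action. Without this mechanism the construction cannot close up at $\ta_2$, so the proposal as written does not yield the theorem.
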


\begin{thm}[Theorem \ref{thm:main-theorem2}]
Assume that $\PJ$ has a $P$-Stokes segment of loop-type.
Then, for any 2-parameter solution 
$\tlambda_J(\tt,\eta;\talpha,\tbeta)$ of $\PJ$, 
we can find
\begin{itemize}
\item formal coordinate transformation series 
$x(\tx,\tt,\eta)$ and $t(\tt,\eta)$ of dependent 
and independent variables, %
\item a 2-parameter solution 
$\lambda_{\III}(t,\eta;\alpha,\beta)$ of $\PIII$ 
with a suitable choice of the constant $c$ 
in the equation, %
\end{itemize}
satisfying
\begin{equation}
x \bigl( \tlambda_J(\tt,\eta;\talpha,\tbeta),\tt,\eta 
\bigr) = \lambda_{\III} \left( t(\tt,\eta),c,\eta;
\alpha,\beta \right)
\end{equation}
in a neighborhood of a point 
$\tt=\tt_{\ast}$ which lies on 
the $P$-Stokes segment of loop-type. 
\end{thm}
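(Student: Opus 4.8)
The plan is to lift the problem from the nonlinear equation $\PJ$ to its associated isomonodromy system and to reduce the construction of the coordinate transformation to the already-available \emph{linear} transformation theory on a loop-type Stokes segment. Recall that $\PJ$ arises as the compatibility (isomonodromy) condition of a Lax pair consisting of a Schr\"odinger-type equation $\SLJ$, $\bigl(\partial_x^2 - \eta^2 Q_J\bigr)\psi = 0$, together with a deformation equation $\DJ$ in $t$, where the potential $Q_J=Q_J(x,t,\eta;\lambda,\nu)$ carries the Painlev\'e function $\lambda$ and its conjugate momentum $\nu$ through the Hamiltonian system $\HJ$. Substituting the given 2-parameter solution $\tlambda_J(\tt,\eta;\talpha,\tbeta)$ (and the corresponding $\tnu$) produces a definite potential $\tQ_J$; by the hypothesis that $\PJ$ carries a loop-type $P$-Stokes segment, the Stokes geometry of $\SLJ$ with this potential contains a loop-type Stokes segment winding around a double pole of $\tQ_J$. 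My first step is therefore to record the principle---established for the simple $P$-turning point case in \cite{KT98} and for the two-turning-point segment in Theorem \ref{thm:main-theorem1}---that a WKB transformation $\bigl(x(\tx,\tt,\eta),\,t(\tt,\eta)\bigr)$ relating $\tlambda_J$ to a solution of $\PIII$ is equivalent to a pair of WKB transformations carrying $\SLJ$ to $\SLIII$ and $\DJ$ to $\DIII$ simultaneously. This converts the nonlinear statement into a linear transformation problem together with a compatibility condition.

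Next I would construct the $x$-transformation for frozen $t$. Here I invoke Takahashi's theorem \cite{Takahashi}, which transforms a general Schr\"odinger equation carrying a loop-type Stokes segment into the Bessel-type canonical equation \eqref{eq:Bessel}; the latter is precisely the isomonodromy Schr\"odinger equation $\SLIII$ for $\PIII$. The point requiring care is that the potential of $\SLJ$ is not \emph{generic}: it possesses a double pole (the regular singular point the loop encircles), a prescribed behavior at $x=\infty$, and an \emph{apparent} singularity at $x=\tlambda_J$. I must verify that the transformation can be normalized so that the encircled double pole of $\tQ_J$ maps to $x=0$ of the Bessel equation, that the apparent singularity is sent to the apparent singularity of $\SLIII$, and that the behavior at infinity is preserved. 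The constant $c$ of $\PIII$ is then fixed as a formal series $c=c(\eta)$ in $\eta^{-1}$ by matching the characteristic exponent (equivalently, the relevant Voros/period datum) at the double pole---the analogue of fixing $c$ in the Weber case of \cite{AKT09}.

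The heart of the proof, and the step I expect to be the main obstacle, is to promote this $x$-transformation to one depending on $t$ that is \emph{compatible with the deformation equation}, i.e.\ that simultaneously carries $\DJ$ to $\DIII$. I would build the transformation series order by order in $\eta^{-1}$: at the top order the change of variables is dictated by requiring that the two loop-type Stokes segments (and their encircled double poles and turning points) correspond, while the higher-order corrections are determined by the transport equations obtained from $\SLJ \leftrightarrow \SLIII$. The nontrivial point is that compatibility of $\HJ$ with $\HIII$ must be preserved at every order; concretely, one must show that the $t$-derivative of the constructed $x$-transformation, computed via $\DJ$, agrees with the pull-back of $\DIII$, so that no obstruction to solving the recursion arises. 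It is exactly this compatibility that forces the identification $x(\tlambda_J(\tt,\eta;\talpha,\tbeta),\tt,\eta) = \lambda_{\III}(t(\tt,\eta),c,\eta;\alpha,\beta)$, since the image of the apparent singularity $x=\tlambda_J$ is the apparent singularity $x=\lambda_{\III}$ of the target system.

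Finally I would match the 2-parameter (instanton) data. The 2-parameter solutions are built by the multiple-scale method, their dependence on $(\talpha,\tbeta)$ being governed by instanton exponentials $\exp\bigl(\pm\eta\!\int\!\sqrt{\cdots}\bigr)$ whose phases are the relevant WKB period integrals. Because the transformation preserves the loop-type Stokes segment, and hence the associated phase integrals up to the explicit change of variables, the two free parameters of $\tlambda_J$ are carried to those of $\lambda_{\III}$ by an invertible formal reparametrization $(\talpha,\tbeta)\mapsto(\alpha,\beta)$. Combining the linear transformation, the deformation compatibility, and this parameter matching yields the desired identity in a neighborhood of $\tt=\tt_{\ast}$ on the loop-type $P$-Stokes segment, completing the proof.
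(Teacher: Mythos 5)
Your high-level skeleton does agree with the paper's: lift the problem to the isomonodromy pair, construct a transformation carrying $\SLJ$ to $\SLIII$ compatibly with the deformation equations $\DJ$ and $\DIII$, and read off the Painlev\'e correspondence from the behaviour at the apparent singularity $\tx=\tlambda_J$. But your central construction step rests on a false identification. Takahashi's canonical equation \eqref{eq:Bessel} is \emph{not} $\SLIII$, and the geometry it is designed for does not occur here: when the 2-parameter solution is substituted into $\SLJ$ and $\tt_\ast$ lies on a loop-type $P$-Stokes segment, the Stokes geometry of $\SLJ$ consists of a \emph{double} turning point $\tlambda_0(\tt_\ast)$ (coming from the apparent singularity) joined to one simple turning point $\ta(\tt_\ast)$ by \emph{two} Stokes segments (Proposition \ref{prop:P-saddles}, case (b) of Figure \ref{fig:SLJ-Stokes-with-two-saddles}), not a single simple turning point with a Stokes curve looping around a double pole, which is what \cite{Takahashi} treats. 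The target $\SLIII$ has this same double-turning-point geometry (Figure \ref{fig:SLD7-Stokes}), not the Bessel geometry, so a frozen-$t$ application of \cite{Takahashi} lands on the wrong canonical equation; the paper cites \cite{Takahashi} only as an analogy. What the paper actually does is glue three local reductions to $\SLIII$: the reduction of \cite{KT98} to the canonical equation near the double turning point (Theorem \ref{thm:transformation-at-double-turning-point}, producing $x^{\rm pre}$, $t^{\rm pre}$ with free parameters $C_n$ and the parameter constraint $\tE_J=E_{\III}$), and two Airy-type reductions $y^{(1)}$, $y^{(2)}$ near the simple turning point, one along each Stokes segment (Lemma \ref{lemma:transformation-at-simple-TP}).

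The two steps that constitute the real technical content are also absent from your plan: (a) the matching problem, i.e.\ choosing the free parameters $C_n$ hidden in $t^{\rm pre}(\tt,\eta)$ so that $x^{\rm pre}=y^{(1)}$, making the coefficients regular at both the double and the simple turning point; and (b) single-valuedness around the loop, i.e.\ $y^{(1)}=y^{(2)}$ (Proposition \ref{prop:single-valuedness}): the transformation lives on an annular domain containing $\tgamma_1\cup\tgamma_2$, and one must prove that continuation along one Stokes segment agrees with continuation along the other. Your recursion "determined by the transport equations" would in general produce multi-valued higher-order coefficients on the annulus; killing the monodromy is exactly where the choice \eqref{eq:c-P3} of $c$ as the genuine constant $\frac{1}{2\pi i}\oint_{\tGamma}\sqrt{\tF_J^{(1)}}\,d\tt$ and the constraint $\tE_J=E_{\III}$ are used, via the contour identity
\begin{equation*}
\int_{\tdelta}\tS_{J,{\rm odd}}(\tx,\tt,\eta)\,d\tx \;=\; \pi i\,\tE_J - 2\pi i\,c\,\eta
\end{equation*}
and its $\III$ counterpart, followed by an induction. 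Finally, your proposal to fix $c$ as a formal series $c(\eta)$ in $\eta^{-1}$ contradicts the statement being proved ($c$ is a constant in the equation $\PIII$) and is unnecessary: by \eqref{eq:resSodd=res-1} the residues of $\tS_{J,{\rm odd}}\,d\tx$ at even-order poles are genuine constants times $\eta$, which is precisely what permits a constant $c$, in contrast with the linear Weber case of \cite{AKT09}.
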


In this sense the equations $\PII$ and $\PIII$ 
give canonical equations of Painlev\'e equations 
on a $P$-Stokes segment connecting different 
simple $P$-turning points and a loop-type $P$-Stokes 
segment, respectively. Our main results can be 
considered as non-linear analogues of the transformation 
theory of \cite{AKT09} (to the Weber equation)
and \cite{Takahashi} (to the Bessel-type equation). 
We expect that, together with 
the previous results \cite{Iwaki}, 
\cite{Iwaki-Bessatsu} and \cite{Iwaki-PIII}, 
our transformation theory plays an important role 
in the analysis of parametric Stokes phenomena 
for Painlev\'e equations. 


This paper is organized as follows. In Section 
\ref{section:PJ-and-SLJ-and-2-parameter-solutions}
we briefly review some results of WKB analysis of 
Painlev\'e equations $\PJ$ and a role of isomonodromy 
systems $\SLJ$ and $\DJ$ associated with $\PJ$. 
Section \ref{section:Stokes-geometry} is devoted to 
descriptions of properties of the $P$-Stokes geometry 
of $\PJ$ and the Stokes geometry of $\SLJ$. 
Our main results together with assumptions are stated 
and proved in Section \ref{section:main-results} 
and Section \ref{section:main-theorem2}.

\section{Review of the exact WKB analysis of 
Painlev\'e transcendents with a large parameter}
\label{section:PJ-and-SLJ-and-2-parameter-solutions}

In this section we prepare some notations and review 
some results of \cite{KT96}, \cite{AKT96} and \cite{KT98}
that are relevant to this paper.

\subsection{2-parameter solution 
$\lambda_J(t,\eta;\alpha,\beta)$ of $\PJ$}

In \cite{AKT96} a 2-parameter family of formal solutions 
of $\PJ$, called a {\em 2-parameter solution}, is constructed 
by the so-called multiple-scale method. Here we introduce 
some notations to describe the solutions explicitly 
and to make our discussion smoothly. 
Most notations introduced here are 
consistent with those used in \cite{KT98}.

As is clear from Table \ref{table:PJ}, each 
$(P_{J})$ has the following form,
\[
(P_{J}) : \frac{d^2\lambda}{dt^2}=
G_{J}\left(\lambda,\frac{d\lambda}{dt},t\right)+
\eta^2F_{J}(\lambda,t),
\]
where $F_{J}$ is a rational function in $t$ and $\lambda$, 
and $G_{J}$ is a polynomial in $d\lambda/dt$ with degree 
equal to or at most 2, and rational in $\lambda$ and $t$.
Define the set ${\rm Sing}_J \subset {\mathbb P}^1$ 
of {\em singular points} of $\PJ$ by 
\begin{eqnarray}
\label{eq:singular-points}
{\rm Sing}_{\rm I} & = & {\rm Sing}_{\rm II} = \{\infty \},~
{\rm Sing}_{{\rm III'}(D_6)} = {\rm Sing}_{{\rm III'}(D_7)} 
= {\rm Sing}_{{\rm III}'(D_8)} = \{0,\infty \}, \hspace{-.5em} \\
{\rm Sing}_{\rm IV} & = & \{ \infty \},~
{\rm Sing}_{\rm V} = \{0,\infty \},~
{\rm Sing}_{{\rm VI}} = \{ 0,1,\infty\}, 
\hspace{-.5em} \nonumber
\end{eqnarray}
and the set $\Delta_J$ of {\em branch points} of $\PJ$ by 
\begin{equation}
\Delta_J = \{r\in{\mathbb P}^1\setminus{\rm Sing}_J~|~
F_J(\lambda,r) = ({\p F_J}/{\p \lambda})
(\lambda,r) = 0~\text{for some $\lambda$}\}.
\end{equation}
We also set $\Omega_J={\mathbb P}^1\setminus
({\rm Sing}_{J}\cup\Delta_J)$.

Fix a holomorphic function $\lambda_0(t)$ 
that satisfies 
\begin{equation}\label{eq:lam0}
F_{J}(\lambda_0(t),t)=0
\end{equation}
near a point $t_{\ast} \in \Omega_J$. 
The 2-parameter solutions are formal solutions 
of $\PJ$ defined in a neighborhood $V$ of $t_{\ast}$ 
of the following form:
\begin{equation} \label{eq:lambdaJ}
\lambda_J(t,\eta;\alpha,\beta) = \lambda_0(t) + 
\eta^{-1/2}\sum_{j=0}^{\infty}\eta^{-j/2}\Lambda_{j/2}
(t,\eta;\alpha,\beta).
\end{equation}
Here $(\alpha,\beta)=(\sum_{n=0}^{\infty}\eta^{-n}\alpha_{n}, 
\sum_{n=0}^{\infty}\eta^{-n}\beta_{n})$ is a pair of 
formal power series whose coefficients 
$\{(\alpha_{n},\beta_{n})\}_{n=0}^{\infty}$ 
parametrize the formal solution, and the functions 
\begin{equation} \label{eq:Lambdaj/2}
\Lambda_{j/2}(t,\eta;\alpha,\beta) = 
\sum_{m=0}^{j+1}a_{j+1-2m}^{(j/2)}(t) 
\exp((j+1-2m)\Phi_{J}(t,\eta))
\end{equation}
labeled by half-integers possess the following properties
(see \cite{AKT96}, \cite{KT98}).
\begin{itemize} 
\item For any $j \ge 0$ and $\ell = j+1-2m$ 
($m=0,\dots,j+1$), $a^{(j/2)}_{\ell}(t)$ 
is a holomorphic function of $t$ on $V$ and 
free from $\eta$. %
\item The functions $a^{(0)}_{\pm1}(t)$ contain 
the free parameters $(\alpha_0,\beta_0)$ as 
\begin{equation}
a^{(0)}_{+1}(t)=\frac{\alpha_0}
{\sqrt[4]{F^{(1)}_{J}(t)~C_{J}(\lambda_0(t),t)^2}},~~
a^{(0)}_{-1}(t)=\frac{\beta_0}
{\sqrt[4]{F^{(1)}_{J}(t)~C_{J}(\lambda_0(t),t)^2}},
\end{equation} 
where the function $F_J^{(1)}(t)$ is given by 
\begin{equation} \label{eq:F1}
F_J^{(1)}(t) = \frac{\p F_J}{\p\lambda}(\lambda_0(t),t),
\end{equation}
and $C_J(\lambda,t)$ is given in 
Table \ref{table:CJ-and-DJ}. %
\item The function $\Phi_{J}(t,\eta)$, 
which is also holomorphic 
in $t \in V$, is given by
\begin{equation}
\Phi_{J}(t,\eta) = \eta \phi_{J}(t) + \alpha_0 \beta_0 
\log (\theta_{J}(t) \eta^2),
\end{equation}
where
\begin{eqnarray} \label{eq:phaseJ}
\phi_{J}(t) & = & \int^t \sqrt{F^{(1)}_{J}(t)}~dt, 
\label{eq:phaseJ} 
\end{eqnarray}
and $\theta_J(t)$ is determined from $F_J$, $G_J$ 
and $\lambda_0(t)$ (cf.~\cite[Section 1]{KT98}).
We will fix the lower end point of $\phi_J(t)$ later. %
\item The functions $a^{(j/2)}_{\ell}(t)$ 
and ($\ell \ne \pm 1$) 
are determined recursively from \\
$\{a^{(j'/2)}_{j'+1-2m}(t) \}_{j'<j,~0 \le m \le j'+1}$. %
\item The functions $a^{(j/2)}_{\pm1}(t)=0$ for an odd integer 
$j$ while $a^{(j/2)}_{+1}(t)$ and $a^{(j/2)}_{-1}(t)$ for an 
even integer $j \ge 2$ satisfy a certain system of linear 
inhomogeneous differential equations of the following form:
\begin{eqnarray}
\Biggl\{\frac{d}{dt} + \frac{1}{4}\frac{d}{dt}
\log{F^{(1)}_{J}(t)}+
\frac{1}{2}\log{C_{J}(\lambda_0(t),t)} 
~~\qquad\qquad \\ -
\begin{pmatrix}
\alpha_0\beta_0 & \alpha_0^2 \\ \nonumber
 -\beta_0^2 & -\alpha_0\beta_0
\end{pmatrix}
\frac{d}{dt}\log\theta_{J}(t) \Biggr\}
\left( \begin{array}{cc} 
a^{(n)}_{+1} \\ a^{(n)}_{-1}\\ 
\end{array} \right) = 
\left( \begin{array}{cc} 
R^{(n)}_{+1} \\ R^{(n)}_{-1}\\ 
\end{array} \right), \hspace{-2.em}
\end{eqnarray}
where $R^{(n)}_{\pm1}$ is determined by 
$\{a^{(j'/2)}_{j'+1-2m}(t) \}_{j'<j=2n,~0 \le m \le j'+1}$.
The free parameters $(\alpha_n, \beta_n)$ ($n \ge 1$: integer) 
capture the ambiguity of solutions of the differential 
equation for $j=2n$.
\end{itemize}
Therefore, 2-parameter solutions are formal power series 
in $\eta^{-1/2}$ whose coefficients $\Lambda_{j/2}$ may contain 
$\eta$-dependent terms of the form of $\exp(\ell \Phi(t,\eta))$ 
for some $\ell \in {\mathbb Z}$, called the
{\em $\ell$-instanton term} in \cite{KT98}. In this paper 
``{formal series}" means such a series, and we say that 
``$\Lambda_{j/2}(t,\eta)$ is holomorphic in $t$" if 
coefficients of each instanton term in $\Lambda_{j/2}(t,\eta)$ 
are holomorphic in $t$. Note that $\Lambda_{j/2}(t,\eta)$ 
contains instanton terms in such a way that, 
if $j$ is odd (resp., even), 
then $\Lambda_{j/2}(t,\eta)$ contains only even (resp., odd) 
instanton terms. We call this property the 
{\em alternating parity} of 2-parameter solutions. 
In order to avoid some degeneracy, we assume the condition
\begin{equation} \label{eq:genericity}
\alpha_0 \beta_0 \ne 0
\end{equation}
throughout this paper. 

\begin{table}[h]
\begin{eqnarray*}
C_{\rm I}(\lambda,t) & = & C_{\rm II}(\lambda,t) = 1,~~
C_{{\rm III'}(D_6)}(\lambda,t) =  C_{{\rm III'}(D_7)}(\lambda,t) 
= C_{{\rm III'}(D_8)}(\lambda,t) = \frac{t}{2\lambda^2},\\
C_{\rm IV}(\lambda,t) & = & \frac{1}{4\lambda},~~
C_{\rm V}(\lambda,t) = \frac{t}{2\lambda(\lambda-1)^2}, ~~
C_{\rm VI}(\lambda,t) = \frac{t(t-1)}
{2\lambda(\lambda-1)(\lambda-t)}. 
\end{eqnarray*}
\caption{$C_J(\lambda,t)$.}
\label{table:CJ-and-DJ}
\end{table}

It is well-known that the Painlev\'e equation $(P_{J})$ 
is equivalent to the following Hamiltonian system 
(e.g.,~\cite{Okamoto}):
\[
(H_{J}) : \frac{d\lambda}{dt}=\eta \frac{\p K_{J}}{\p\nu},~~
\frac{d\nu}{dt}=-\eta \frac{\p K_{J}}{\p\lambda}.
\]
Here the explicit form of Hamiltonians 
$K_{J} = K_{J}(t,\lambda,\nu,\eta)$ are tabulated in 
Table \ref{table:KJ}. 
From 2-parameter solutions of $\PJ$, we can also construct 
2-parameter solutions of the Hamiltonian system $(H_J)$. 
From the explicit form of the Hamiltonians $K_J$, we see that 
$\nu$ is written by $\lambda$ and its first order derivative. 
Consequently, $\nu_J = \nu_J(t,\eta;\alpha,\beta)$ has 
the following form:
\begin{equation}
\nu_J(t,\eta;\alpha,\beta) = \eta^{-1/2} 
\sum_{j=0}^{\infty}\eta^{-j/2}N_{j/2}
(t,\eta;\alpha,\beta),
\end{equation}
where $N_{j/2}$ has a similar form as $\Lambda_{j/2}$; 
that is, it contains instanton terms and enjoys
the alternating parity. 

\begin{rem}
If we put $\alpha = 0$ or $\beta = 0$ or $\alpha=\beta=0$, 
then 2-parameter solutions are reduced to 
{\em 1-parameter solutions} or {\em 0-parameter solutions}. 
Here $\alpha = 0$ etc., mean that all $\alpha_n$ are set 
to be $0$ etc. 1-parameter solutions are also called 
{\em trans-series solutions}. We can expect that 1-parameter
solutions and 0-parameter solutions interpreted as
analytic solutions through the {\em Borel resummation method} 
(see \cite{Kamimoto-Koike} 
for example). 
\end{rem}


%
\begin{table}[h]
\begin{eqnarray*}
K_{\rm I} & = & \frac{1}{2}\left[\nu^2 - 
(4\lambda^3+2t\lambda)\right],\\[+.5em] %
K_{\rm II} & = & \frac{1}{2}\left[\nu^2 - 
(\lambda^4+t\lambda^2+2c\lambda)\right],\\[+.5em] %
K_{{\rm III'}(D_6)} & = &
\frac{\lambda^2}{t}\left[\nu^2 - 
\eta^{-1}\frac{\nu}{\lambda} - 
\left( \frac{t^2}{4\lambda^4} - 
\frac{c_0 t}{2\lambda^3} 
- \frac{c_{\infty}}{2\lambda} + 
\frac{1}{4} \right) \right],\\[+.5em]%
K_{{\rm III'}(D_7)} & = &
\frac{\lambda^2}{t}\left[\nu^2 - 
\eta^{-1}\frac{\nu}{\lambda} - 
\left( \frac{t^2}{4\lambda^4} - 
\frac{c t}{2\lambda^3} 
- \frac{c^2}{4\lambda^2} - 
\frac{1}{\lambda} \right) \right],\\[+.5em]%
K_{{\rm III'}(D_8)} & = &
\frac{\lambda^2}{t}\left[\nu^2 - 
\eta^{-1}\frac{\nu}{\lambda} - 
\left( \frac{t}{2\lambda^3}+
\frac{1}{2\lambda} \right) \right],\\[+.5em]%
K_{{\rm IV}} & = &
2\lambda\left[\nu^2 - 
\eta^{-1}\frac{\nu}{\lambda} - 
\left( 
\frac{c_0^2 - \eta^{-2}}{4\lambda^2}-\frac{c_{\infty}}{4} 
+ \left(\frac{\lambda+2t}{4}\right)^2 
\right) \right], \\[+.5em]%
K_{{\rm V}} & = &
\frac{\lambda(\lambda-1)^2}{t}\biggl[\nu^2 - 
\eta^{-1}\left(\frac{1}{\lambda}+
\frac{1}{\lambda-1}\right)\nu \\[+.2em] 
& & -
\left( 
\frac{c_0^2 - \eta^{-2}}{4\lambda^2}
+ \frac{t^2}{4(\lambda-1)^4}
+\frac{c_1 t}{(\lambda-1)^3} 
+ 
\frac{c^2_{\infty}-c_0^2 - 3\eta^{-2}}{4(\lambda-1)^2} 
\right) \biggr], \\[+.5em]%
K_{{\rm VI}} & = & \frac{\lambda(\lambda-1)(\lambda-t)}{t(t-1)}
\biggl[ \nu^2 - \eta^{-1}\left(\frac{1}{\lambda}+
\frac{1}{\lambda-1}\right)\nu -
\biggl( \frac{c_0^2 - \eta^{-2}}{4\lambda^2} \\[+.2em]
& & 
+ \frac{c_1^2-\eta^{-2}}{4(\lambda-1)^2} + 
\frac{c_t^2 - \eta^{-2}}{4(\lambda-t)^2} +
\frac{c_{\infty}^2-(c_0^2+c_1^2+c_t^2)-\eta^{-2}}
{4\lambda(\lambda-1)} 
\biggr) \biggr].
\end{eqnarray*}
\caption{Hamiltonians of $\HJ$.}
\label{table:KJ}
\end{table}

\subsection{Isomonodromy system for $\PJ$ and WKB solutions}
\label{section:SLJ}

The Hamiltonian system $(H_J)$ arises when we 
consider {\em isomonodromic deformations} 
(see \cite{Jimbo-Miwa}, \cite{Okamoto}) of a certain 
Schr{\"o}dinger equation of the form
\[
(SL_{J}):\left(\frac{\p^2}{\p x^2}-
\eta^{2}Q_{J}(x,t,\eta)\right)\psi(x,t,\eta)=0.
\]
More precisely, there exists another differential equation 
\[
(D_{J}): \frac{\p}{\p t}\psi(x,t,\eta) = 
\left( A_J(x,t,\eta) \frac{\p}{\p x} 
- \frac{1}{2} \frac{\p A_J}{\p x}(x,t,\eta) 
\right) \psi(x,t,\eta),
\]
called {\em deformation equation}, such that $(H_J)$ describes 
the compatibility condition of the system of linear differential 
equations $(SL_{J})$ and $(D_J)$. See Table \ref{table:QJ} and 
\ref{table:AJ} for the explicit forms of $Q_J$ and $A_J$. 

\begin{table}[h]
\begin{eqnarray*}
Q_{\rm I} & = & 4x^3+2tx+2K_{\rm I}-\eta^{-1}\frac{\nu}{x-\lambda}
+\eta^{-2}\frac{3}{4(x-\lambda)^2}, \\[+.5em]%
Q_{\rm II} & = & x^4+tx^2+2cx+2K_{\rm II}-
\eta^{-1}\frac{\nu}{x-\lambda}+
\eta^{-2}\frac{3}{4(x-\lambda)^2},  \\[+.5em]%
Q_{{\rm III'}(D_{6})}
& = & \frac{t^{2}}{4x^{4}} - \frac{c_{0} t}{2 x^{3}} - 
\frac{c_{\infty}}{2x} + \frac{1}{4} + 
\frac{t \hspace{+.1em} {K}_{{\rm III'}(D_{6})}}{x^{2}} - 
\eta^{-1} \frac{\lambda \nu}{x(x-\lambda)} + 
\eta^{-2} \frac{3}{4(x-\lambda)^{2}}, \\[+.5em]%
Q_{{\rm III'}(D_{7})} 
& = & \frac{t^{2}}{4x^{4}} - \frac{c \hspace{+.1em} t}{2 x^{3}} + 
\frac{c^{2}}{4x^{2}} - \frac{1}{x} + 
\frac{t \hspace{+.1em} {K}_{{\rm III'}(D_{7})}}{x^{2}} - 
\eta^{-1} \frac{\lambda \nu}{x(x-\lambda)} + 
\eta^{-2} \frac{3}{4(x-\lambda)^{2}}, \\[+.5em] %
Q_{{\rm III'}(D_{8})} 
& = & \frac{t}{2x^{3}} + \frac{1}{2x} + 
\frac{t \hspace{+.1em} {K}_{{\rm III'}(D_{8})}}{x^{2}} - 
\eta^{-1} \frac{\lambda \nu - \eta^{-1}}{x(x-\lambda)} + 
\eta^{-2} \frac{3}{4(x-\lambda)^{2}}, \\[+.5em] %
Q_{{\rm IV}}
& = & 
\frac{c_0^2 - \eta^{-2}}{4x^{2}} - \frac{c_{\infty}}{4} +  
\left(\frac{x+2t}{4}\right)^2 + 
\frac{{K}_{{\rm IV}}}{2x} - 
\eta^{-1} \frac{\lambda \nu}{x(x-\lambda)} + 
\eta^{-2} \frac{3}{4(x-\lambda)^{2}}, \\[+.5em]%
Q_{{\rm V}}
& = & 
\frac{c_0^2 - \eta^{-2}}{4x^{2}} + \frac{t^2}{4(x-1)^4}
+ \frac{c_{1} t}{(x-1)^3} - 
\frac{c_{\infty}^2-c_0^2 - 3\eta^{-2}}{4(x-1)^2} \\[+.3em]
& &  + 
\frac{t \hspace{+.1em} {K}_{{\rm V}}}{x(x-1)^{2}} - 
\eta^{-1} \frac{\lambda(\lambda-1) \nu}{x(x-1)(x-\lambda)} 
+ \eta^{-2} \frac{3}{4(x-\lambda)^{2}}, \\[+.5em]%
Q_{{\rm VI}}
& = & \frac{c_0^2 - \eta^{-2}}{4x^2} + 
\frac{c_1^2-\eta^{-2}}{4(x-1)^2} + 
\frac{c_t^2 - \eta^{-2}}{4(x-t)^2} +
\frac{c_{\infty}^2-(c_0^2+c_1^2+c_t^2)-\eta^{-2}}
{4x(x-1)}\\[+.3em]%
& & 
+ \frac{t(t-1) \hspace{+.1em} {K}_{{\rm VI}}}
{x(x-1)(x-t)} - 
\eta^{-1} \frac{\lambda(\lambda-1) \nu}{x(x-1)(x-\lambda)} 
+ \eta^{-2} \frac{3}{4(x-\lambda)^{2}}
\end{eqnarray*}
\caption{Coefficient of $(SL_J)$.}
\label{table:QJ}
\end{table}
\begin{table}[h]
\begin{eqnarray*}
A_{\rm I} & = & A_{\rm II} = \frac{1}{2(x-\lambda)},~~
A_{{\rm III'}(D_6)} = A_{{\rm III'}(D_7)} =
A_{{\rm III'}(D_8)} =  \frac{\lambda x}{t(x-\lambda)}, \\[+.5em] %
A_{\rm IV} & = &   \frac{2x}{x-\lambda},~~
A_{\rm V} = \frac{\lambda-1}{t}\frac{x(x-1)}{x-\lambda},~~
A_{\rm VI} = \frac{\lambda-t}{t(t-1)}
\frac{x(x-1)}{x-\lambda}.
\end{eqnarray*}
\caption{Coefficient of $(D_J)$.}
\label{table:AJ}
\end{table}


Substituting 2-parameter solutions 
$(\lambda_J(t,\eta;\alpha,\beta),\nu_J(t,\eta;\alpha,\beta))$
into $(\lambda,\nu)$ that appears in $Q_J$ and $A_J$, 
we find that they have the same type formal series 
expansion as
\begin{equation} \label{eq:QJ-and-AJ}
Q_J(x,t,\eta) = \sum_{j=0}^{\infty}Q_{j/2}(x,t,\eta), \quad
A_J(x,t,\eta) = \sum_{j=0}^{\infty}A_{j/2}(x,t,\eta). 
\end{equation}
Here we omit writing explicitly 
the dependence on $\alpha$ and $\beta$ for 
simplicity. The top term $Q_0 = Q_{J,0}(x,t)$ 
is independent of $\eta$ 
(i.e., it does not contain instanton terms), 
and can be written in the form 
\begin{equation} \label{eq:Q0}
Q_{J, 0}(x,t) = C_{J}(x,t)^2(x-\lambda_0(t))^2 R_{J}(x,t).
\end{equation}
Thus, $Q_{J, 0}(x,t)$ has a double zero at 
$x=\lambda_0(t)$ in general. 
(Here we have used the fact that $\lambda_0(t)$ 
is defined by the algebraic equation \eqref{eq:lam0}.)
Here $R_J(x,t)$ is a polynomial in $x$ which satisfies
\begin{equation} \label{eq:RJ-at-lambda0}
R_J(\lambda_0(t),t) = F^{(1)}_J(t).
\end{equation}
We can verify that $R_{\rm I}(x,t)$,  $R_{\III}(x,t)$ 
and $R_{{\rm III}'(D_8)}(x,t)$ are polynomial in $x$ 
of degree 1, while $R_{J}(x,t)$ for other $J$ 
are polynomial in $x$ of degree 2. 


In what follows, we always assume that 
a 2-parameter solution 
$(\lambda_J, \nu_J)$ of $(H_J)$ is substituted 
into $(\lambda,\nu)$ which appears 
in the coefficients of $(SL_J)$ and $(D_J)$. 
For such a Schr{\"o}dinger equation $(SL_J)$, we can 
construct {\em WKB solutions} of the following form: 
\begin{equation} \label{eq:WKBsol}
\psi_{J,\pm}(x,t,\eta) = 
\frac{1}{\sqrt{S_{J,{\rm odd}}(x,t,\eta)}}
\exp\left(\pm \int^x S_{J,{\rm odd}}(x,t,\eta)~dx\right). 
\end{equation}
Here $S_{J,{\rm odd}}(x,t,\eta)$ is the 
{\em odd part} of 
a formal series solution $S_J(x,t,\eta)$ of 
\begin{equation} \label{eq:Riccati}
S^2 + \frac{\p S}{\p x} = \eta^2 Q_J(x,t,\eta),
\end{equation}
which is called the {\em Riccati equation} associated 
with $(SL_J)$. Here the odd part $S_{J,{\rm odd}}(x,t,\eta)$
is defined as follows (see \cite{AKT96} for details). 
We can find two formal series solutions 
\begin{equation} \label{eq:S-plus-minus}
S^{(\pm)}_J(x,t,\eta) = \eta S^{(\pm)}_{-1}(x,t) + 
\sum_{j=0}^{\infty}\eta^{-j/2} S^{(\pm)}_{j/2}(x,t,\eta)
\end{equation}
of \eqref{eq:Riccati} starting from 
\begin{equation} \label{eq:S-1}
S^{(\pm)}_{-1}(x,t) = \pm \sqrt{Q_{J,0}(x,t)}. 
\end{equation}
Once we fix the sign in \eqref{eq:S-1} 
(i.e., the branch of square root),
the subsequent terms are determined 
by a recursion relation.
Then, $S_{J, \rm odd}(x,t,\eta)$ is given by
\begin{eqnarray}
S_{J,{\rm odd}}(x,t,\eta) & = & \frac{1}{2} 
\left(S^{(+)}_J(x,t,\eta)-S^{(-)}_J(x,t,\eta) \right) \\
& = & \nonumber
\eta S_{-1}(x,t) + \sum_{j=0}^{\infty} \eta^{-j/2}
S_{{\rm odd}, j/2}(x,t,\eta).
\end{eqnarray}
The integral of $S_{J,\rm odd}(x,t,\eta)$ appeared in 
\eqref{eq:WKBsol} is defined by the term-wise integral
of formal series. We discuss the choice of lower end 
point of \eqref{eq:WKBsol} later.

The formal series $S_{J,\rm odd}(x,t,\eta)$ etc. are 
constructed in the above manner for a fixed $t$ and 
have several good properties as a function of $t$. 
Firstly, $S_{J,\rm odd}(x,t,\eta)$ 
also has the property of alternating parity; if $j$ is odd 
(resp., even), then $S_{{\rm odd}, j/2}(x,t,\eta)$ contains 
only odd (resp., even) instanton terms. Secondly, the derivative 
of $S_{J,\rm odd}(x,t,\eta)$ with respect to $t$ satisfies 
the following equation.
\begin{prop}[{\cite[Proposition 2.1]{AKT96}}] 
\label{prop:t-derivative-of-Sodd}
The formal solutions $S^{(\pm)}_J(x,t,\eta)$ satisfy 
\begin{equation}
\frac{\p}{\p t}S^{(\pm)}_J(x,t,\eta) = \frac{\p}{\p x} 
\left( S_J^{(\pm)}(x,t,\eta) A_J(x,t,\eta) - \frac{1}{2}
\frac{\p A_J}{\p x}(x,t,\eta) \right)
\end{equation}
and hence we have
\begin{equation} \label{eq:t-derivative-of-Sodd}
\frac{\p}{\p t}S_{J,{\rm odd}}(x,t,\eta) = \frac{\p}{\p x} 
\left( S_{J,{\rm odd}}(x,t,\eta) A_J(x,t,\eta) \right).
\end{equation}
\end{prop}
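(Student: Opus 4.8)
The plan is to work entirely at the level of the Riccati equation \eqref{eq:Riccati}, so that the statement becomes a purely formal identity whose only external input is the compatibility of $(SL_J)$ and $(D_J)$. Throughout I write $'=\partial/\partial x$ and a dot for $\partial/\partial t$, and abbreviate $S=S^{(\pm)}_J$, $Q=Q_J$, $A=A_J$; all manipulations are carried out termwise on the formal instanton-series \eqref{eq:S-plus-minus}, where interchanging $\partial_t$ and $\partial_x$ is legitimate. First I would differentiate the Riccati equation $S^2+S'=\eta^2 Q$ with respect to $t$, obtaining the first-order linear inhomogeneous equation $(\dot S)'+2S\,\dot S=\eta^2\dot Q$ for the unknown $\dot S$ in the variable $x$. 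The goal is then to show that the candidate
\[
T:=\partial_x\!\left(A\hspace{+.1em}S-\tfrac12 A'\right)=A'S+AS'-\tfrac12 A''
\]
solves the \emph{same} linear equation, i.e.\ that $T'+2ST=\eta^2\dot Q$.

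Second, I would record the compatibility identity. Since $(H_J)$ is by construction the compatibility condition of $(SL_J)$ and $(D_J)$, and since a $2$-parameter solution of $(H_J)$ has been substituted into $Q_J$ and $A_J$, the coefficients satisfy $\dot Q=AQ'+2A'Q-\tfrac12\eta^{-2}A'''$ as an identity in $x$. This is exactly the coefficient of $\psi$ in the cross-derivative relation $\partial_t(\psi'')=\partial_x^2(\dot\psi)$ (the coefficient of $\psi'$ matching automatically), and it is the only place where the specific structure of the Painlev\'e/isomonodromy system enters — and it does so uniformly in $J$, which is a pleasant feature of this route.

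Third, I would compute $T'+2ST$ directly and use the Riccati relations $S'=\eta^2 Q-S^2$ and $S''=\eta^2 Q'-2SS'$ to eliminate every derivative of $S$. The terms $\pm 2ASS'$ cancel, the combination $2A'(S'+S^2)$ collapses to $2A'\eta^2 Q$, and what survives is precisely $\eta^2(2A'Q+AQ')-\tfrac12 A'''$, which equals $\eta^2\dot Q$ by the compatibility identity. Hence $\dot S-T$ solves the homogeneous equation $u'+2Su=0$. To conclude $\dot S=T$ I would argue by order counting in $\eta$: because $S=\pm\eta\sqrt{Q_{J,0}}+O(1)$ with $\sqrt{Q_{J,0}}\not\equiv 0$ by \eqref{eq:S-1}, a nonzero formal-series solution of $u'=-2Su$ would force its top-order coefficient to be annihilated by $\sqrt{Q_{J,0}}$, a contradiction; thus $u=0$ and the first displayed identity of the proposition follows. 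The second identity is then immediate: subtracting the $+$ and $-$ versions of $\dot S^{(\pm)}=\partial_x(A\hspace{+.1em}S^{(\pm)}-\tfrac12 A')$ and dividing by $2$, the $-\tfrac12 A'$ term cancels and one obtains $\partial_t S_{J,\mathrm{odd}}=\partial_x(S_{J,\mathrm{odd}}A_J)$, using $S_{J,\mathrm{odd}}=\tfrac12(S^{(+)}_J-S^{(-)}_J)$.

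I expect the main obstacle to be not the algebra of the third step but pinning down the compatibility identity in its precise $x$-dependent form and verifying that the order-counting uniqueness argument is valid in the present formal framework — in particular, checking that $\dot S-T$ really is a formal instanton-series of top order at most $\eta^{1}$ (both $\dot S$ and $T$ are $O(\eta)$), so that comparison of the leading $\eta$-power in $u'=-2Su$ is justified, and that the meromorphic structure of $A_J$ and $Q_J$ at $x=\lambda$ causes no difficulty in the termwise integration implicit in \eqref{eq:WKBsol}.
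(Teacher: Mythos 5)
Your proof is correct and follows essentially the route the paper itself indicates: the paper offers no independent argument but cites \cite[Proposition 2.1]{AKT96} and remarks that the statement is proved using the compatibility of $\SLJ$ and $\DJ$, which is exactly the single external input in your argument, namely the identity $\partial_t Q_J = A_J\,\partial_x Q_J + 2(\partial_x A_J)Q_J - \tfrac{1}{2}\eta^{-2}\partial_x^3 A_J$ that encodes the substitution of a solution of $(H_J)$. Your way of closing the argument---showing $\partial_t S^{(\pm)}_J$ and $\partial_x\bigl(A_J S^{(\pm)}_J - \tfrac{1}{2}\partial_x A_J\bigr)$ solve the same linear equation and then killing the difference $u$ via $u' = -2S^{(\pm)}_J u$ by comparing top orders against $\pm\eta\sqrt{Q_{J,0}}$ (legitimate here because the top terms of $S^{(\pm)}_J$, $Q_J$, $A_J$ carry no instanton factors, so $u$ indeed has top grade at most $\eta$)---is sound in the instanton-graded formal framework, and taking the odd part gives \eqref{eq:t-derivative-of-Sodd} as you say.
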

Proposition \ref{prop:t-derivative-of-Sodd} is proved by using 
the isomonodromic property of $(SL_{J})$, that is, 
the compatibility of $\SLJ$ and $\DJ$. As a corollary, 
we obtain the following important (formal series valued) 
first integral of $\PJ$ from $\SLJ$. 
\begin{lem}[{\cite[Section 3]{AKT96}}]
The formal series $E(\eta)$ defined by 
\begin{equation} \label{eq:EJ}
E_J(\eta) = 4 \hspace{-.6em}
{\Res_{~~x=\lambda_0(t)}} 
S_{J,{\rm odd}}(x,t,\eta)~dx
\end{equation}
is independent of $t$. 
\end{lem}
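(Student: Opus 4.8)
The plan is to differentiate $E_J(\eta)$ with respect to $t$ and show the derivative vanishes, trading the $t$-derivative of $S_{J,{\rm odd}}$ for an $x$-derivative via Proposition \ref{prop:t-derivative-of-Sodd}. First I would rewrite the formal residue in \eqref{eq:EJ} as a contour integral, working coefficient-by-coefficient in the formal series (separately for each power of $\eta^{-1/2}$ and each instanton term), since each coefficient of $S_{J,{\rm odd}}(x,t,\eta)$ is a meromorphic function of $x$ in a punctured neighborhood of $x=\lambda_0(t)$. Thus
\[
E_J(\eta) = \frac{4}{2\pi i}\oint_{\gamma} S_{J,{\rm odd}}(x,t,\eta)\,dx,
\]
where $\gamma$ is a small positively oriented circle. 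The key preliminary remark is that, for $t$ in a sufficiently small neighborhood of $t_\ast$, the point $\lambda_0(t)$ moves continuously and remains the unique singularity in $x$ of (each coefficient of) $S_{J,{\rm odd}}(\,\cdot\,,t,\eta)$ enclosed by $\gamma$: the pole at the apparent singularity collapses onto $x=\lambda_0(t)$ upon formal expansion, while the zeros of $R_J$ in $x$ and the points of ${\rm Sing}_J$ stay outside $\gamma$. Hence $\gamma$ can be chosen \emph{independent of $t$}, and the contour integral computes the residue at $x=\lambda_0(t)$ for every such $t$.

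With a $t$-independent contour, $\p/\p t$ commutes with $\oint_\gamma$, and Proposition \ref{prop:t-derivative-of-Sodd} (equation \eqref{eq:t-derivative-of-Sodd}) gives
\[
\frac{\p}{\p t} E_J(\eta) = \frac{4}{2\pi i}\oint_{\gamma} \frac{\p}{\p x}\bigl( S_{J,{\rm odd}}(x,t,\eta)\,A_J(x,t,\eta)\bigr)\,dx.
\]
It then remains to argue that this is the integral of an exact form whose primitive is single-valued around $\gamma$, so that the integral vanishes. Here \eqref{eq:Q0} is essential: since $Q_{J,0}(x,t)=C_J(x,t)^2(x-\lambda_0(t))^2R_J(x,t)$ has an \emph{even-order} (double) zero at $x=\lambda_0(t)$ and $R_J(\lambda_0(t),t)=F_J^{(1)}(t)$ is nonzero by \eqref{eq:RJ-at-lambda0}, the leading term $S^{(\pm)}_{-1}=\pm\sqrt{Q_{J,0}}$ is single-valued — indeed holomorphic, with a simple zero — near $\lambda_0(t)$, rather than carrying a branch point. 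Consequently every term produced by the Riccati recursion \eqref{eq:Riccati}, obtained by $x$-differentiations and divisions by $2\sqrt{Q_{J,0}}$, is meromorphic, hence single-valued, in a punctured neighborhood of $\lambda_0(t)$; and $A_J$ is rational in $x$. Therefore $S_{J,{\rm odd}}A_J$ is single-valued near $\lambda_0(t)$ and the closed contour integral of its $x$-derivative vanishes term by term, giving $\p E_J/\p t = 0$.

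The main obstacle is precisely this single-valuedness step: one must verify that the square-root branch point normally attached to a turning point is \emph{absent} at $x=\lambda_0(t)$. This is exactly what the double-zero structure \eqref{eq:Q0} guarantees, reflecting that $\lambda_0(t)$ is the location of the apparent singularity of $\SLJ$ rather than a genuine turning point of $Q_{J,0}$. I would also be careful to justify the interchange of $\p_t$ with the contour integral by stressing that everything is carried out on each coefficient of the formal $\eta^{-1/2}$-expansion, where the integrals are ordinary residues of meromorphic functions; the fixed-contour choice from the first step makes this interchange automatic and sidesteps any boundary contribution that would otherwise arise from a contour moving with $\lambda_0(t)$.
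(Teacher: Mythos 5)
Your proof is correct and is essentially the paper's own argument: the paper presents this lemma as an immediate corollary of Proposition \ref{prop:t-derivative-of-Sodd} (citing \cite[Section 3]{AKT96}), i.e., the residue at $x=\lambda_0(t)$ is a closed contour integral whose $t$-derivative becomes, by \eqref{eq:t-derivative-of-Sodd}, the integral of the exact form $\p_x\bigl(S_{J,{\rm odd}}A_J\bigr)\,dx$ and hence vanishes. Your additional care about the fixed contour and the single-valuedness of $S_{J,{\rm odd}}$ near the \emph{double} turning point (guaranteed by the even-order zero in \eqref{eq:Q0}) is exactly the right justification for why the contour integral of that exact derivative is zero term by term.
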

The independence of $t$ implies that $E_J(\eta)$ 
must be a formal power series in $\eta^{-1}$; 
$E_J(\eta) = \sum_{n=0}^{\infty} \eta^{-n} E_n$ 
with some constants $E_n$. The free parameters 
$\alpha_n$ and $\beta_n$ of a 2-parameter solution 
are contained in $E_n$ in the following manner.
\begin{lem} [{\cite[Lemma 3.2]{KT98}}] \label{lemma:E}
\begin{enumerate}[\upshape (i)]
\item %
The top term $E_0$ of $E_J(\eta)$ is given by
\begin{equation}
E_0 = - 8 \alpha_0 \beta_0.
\end{equation} %
\item 
The coefficient $E_n$ of $\eta^{-n}$ in $E_J(\eta)$
depends only on $\{\alpha_{n'},\beta_{n'} \}_{0 \le n' \le n}$.
Furthermore, $E_n + 8(\alpha_0\beta_n+\alpha_n\beta_0)$ 
is independent of $(\alpha_n,\beta_n)$.
\end{enumerate}
\end{lem}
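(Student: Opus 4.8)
The plan is to read both claims off the residue definition \eqref{eq:EJ}, using the already-established $t$-independence of $E_J(\eta)$ as the main lever. Since the free parameters enter the whole construction only through the substitution of the $2$-parameter solution $(\lambda_J,\nu_J)$ into the coefficient $Q_J$ of $\SLJ$, I would first trace how $\alpha_n,\beta_n$ propagate: they enter $\lambda_J$ and $\nu_J$ at order $\eta^{-n-1/2}$, through the instanton coefficients $a^{(n)}_{\pm1}$ of the $\pm1$-instanton terms in \eqref{eq:Lambdaj/2}, hence enter $Q_J=\sum_{j\ge0}Q_{j/2}$ and then the odd part $S_{J,{\rm odd}}=\eta S_{-1}+\sum_{j\ge0}\eta^{-j/2}S_{{\rm odd},j/2}$ via the Riccati recursion \eqref{eq:Riccati} with a definite weight in $\eta^{-1/2}$.

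For part (i) I would compute the $\eta^0$ part of the residue directly. Because $Q_{J,0}=C_J^2(x-\lambda_0)^2R_J$ has a double zero at $x=\lambda_0(t)$, the leading odd term $\eta S_{-1}=\eta\sqrt{Q_{J,0}}$ vanishes (simply) there and contributes nothing; the $\eta^0$ part of $\Res_{x=\lambda_0}S_{J,{\rm odd}}\,dx$ is produced by the next terms of the Riccati recursion together with the Laurent expansion at $\lambda_0$ of the explicit poles of $Q_J$ at $x=\lambda_J$. Carrying the recursion to the needed order and using $a^{(0)}_{+1}(t)\,a^{(0)}_{-1}(t)=\alpha_0\beta_0/\sqrt{F^{(1)}_J(t)\,C_J(\lambda_0(t),t)^2}$, I expect the only surviving $\eta^0$ contribution to be proportional to this product; the $t$-dependent prefactor is then forced to be constant by the $t$-independence of $E_J$, and pinning the numerical constant gives $E_0=-8\alpha_0\beta_0$.

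For part (ii) the first assertion is an order count. The coefficient $E_n$ of $\eta^{-n}$ in $4\Res S_{J,{\rm odd}}$ is determined by $S_{{\rm odd},j/2}$ for $j\le2n$, hence by $Q_{j'/2}$ for $j'\le2n$, hence by $\Lambda_{j'/2},N_{j'/2}$ for $j'\le2n$; since the parameters $(\alpha_{n'},\beta_{n'})$ first enter at level $j'=2n'$, the dependence of $E_n$ is confined to $\{\alpha_{n'},\beta_{n'}\}_{0\le n'\le n}$. For the second assertion I would isolate the part of $E_n$ linear in the top parameters $(\alpha_n,\beta_n)$. These enter only through $a^{(n)}_{\pm1}\propto\alpha_n,\beta_n$, and—because the Riccati construction carries an overall $\eta$-enhancement (the odd part begins at $\eta S_{-1}$)—a product $a^{(n)}_{\pm1}a^{(0)}_{\mp1}$ of a top-level coefficient with a leading one lands precisely at order $\eta^{-n}$ in $E_J$, whereas a product $a^{(n)}_{+1}a^{(n)}_{-1}$ of two top-level ones lands below order $\eta^{-n}$ for $n\ge1$. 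Thus the only contributions of $(\alpha_n,\beta_n)$ to $E_n$ are the cross terms $\alpha_n\beta_0$ and $\alpha_0\beta_n$, and repeating the computation of part (i) on these pairings reproduces the coefficient $-8$; consequently $E_n+8(\alpha_0\beta_n+\alpha_n\beta_0)$ is independent of $(\alpha_n,\beta_n)$.

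The main obstacle is the residue bookkeeping underlying the two previous paragraphs. A priori $\Res_{x=\lambda_0}S_{J,{\rm odd}}\,dx$ contains genuinely $t$-dependent instanton contributions $\propto e^{\ell\Phi_J}$ with $\ell\ne0$, and one must show these cancel so that only the ``diagonal'' products of a $+1$-instanton coefficient with a $-1$-instanton coefficient—carrying the parameter combinations $\alpha\beta$—survive in the $t$-independent series $E_J(\eta)$. I expect the alternating parity of $S_{J,{\rm odd}}$ together with $t$-independence (via Proposition~\ref{prop:t-derivative-of-Sodd}) to force this cancellation, leaving a purely computational difficulty: controlling the higher terms of the Riccati recursion near the double turning point $\lambda_0$ sharply enough to extract the precise constant $-8$ and to rule out lower-order contributions of the top parameters.
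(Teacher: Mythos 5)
You should first be aware that the paper contains no proof of Lemma \ref{lemma:E}: it is imported verbatim from \cite[Lemma 3.2]{KT98}, so there is no in-paper argument to compare against, and your attempt has to stand on its own. Judged that way, your overall strategy is the natural one (and is of the same kind as the original Kawai--Takei computation): expand the residue \eqref{eq:EJ} through the Riccati recursion \eqref{eq:recursion} at the double turning point, use the already established $t$-independence, and do half-integer order bookkeeping. In particular, your order count for the first assertion of (ii) is correct and essentially complete: $(\alpha_{n'},\beta_{n'})$ first enters $Q_J$ at order $\eta^{-(n'+1)}$, hence $E_J(\eta)$ at order $\eta^{-n'}$, and the diagonal product $\alpha_n\beta_n$ can only appear at order $\eta^{-2n}$, so it cannot contaminate $E_n$ for $n\ge 1$.

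However, as written the proposal is a plan rather than a proof, and the deferred steps are exactly where the content of the lemma lies. First, in part (i) the value $-8$ is never derived: the $t$-independence trick only shows the prefactor of $\alpha_0\beta_0$ is a constant, and ``pinning the numerical constant'' is the whole claim. The computation you postpone is actually short and you should do it: the first odd step of \eqref{eq:recursion} gives $S_{J,{\rm odd}}$ at order $\eta^0$ equal to $Q_1/(2\sqrt{Q_{J,0}})$, where $Q_1$ is the coefficient of $\eta^{-1}$ in $Q_J$ (the pole terms of $Q_J$ at $x=\lambda_J$ only start at order $\eta^{-3/2}$, so $Q_1$ is regular at $x=\lambda_0$); the instanton-free part of $Q_1$ comes from the cross terms $e^{+\Phi_J}e^{-\Phi_J}$ in the multiple-scale formulas for $\Lambda_0$ and $N_0$ (for $(P_{\rm I})$, $Q_1=N_0^2-12\lambda_0\Lambda_0^2=-48\lambda_0\,a^{(0)}_{+1}a^{(0)}_{-1}$), and then \eqref{eq:Q0}, \eqref{eq:RJ-at-lambda0} yield $\Res_{x=\lambda_0}S_{J,{\rm odd}}\,dx=-2\alpha_0\beta_0$ at order $\eta^0$, i.e.\ $E_0=-8\alpha_0\beta_0$. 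Second, in part (ii) you assume $a^{(n)}_{\pm1}\propto\alpha_n,\beta_n$, i.e.\ that the free parameters enter $a^{(n)}_{\pm1}$ exactly as $(\alpha_0,\beta_0)$ enter $a^{(0)}_{\pm1}$. That is not automatic and is in general false as stated: the homogeneous solutions of the transport system displayed in Section \ref{section:PJ-and-SLJ-and-2-parameter-solutions} contain $\log\theta_J(t)$-terms, because the matrix coefficient $\frac{d}{dt}\log\theta_J$ mixes the two components, so the $(\alpha_n,\beta_n)$-dependence of $a^{(n)}_{\pm1}$ is not of simple product form. The second assertion of (ii) survives because those extra terms cancel out of the symmetric, instanton-free pairing $a^{(0)}_{+1}a^{(n)}_{-1}+a^{(n)}_{+1}a^{(0)}_{-1}$, but verifying that cancellation (together with the convention fixing $(\alpha_n,\beta_n)$) is the substance of the claim, and your polarization-by-analogy skips it. Finally, your declared ``main obstacle'' is not one: the paper records, immediately before the lemma, that $t$-independence forces $E_J(\eta)=\sum_n\eta^{-n}E_n$ with constant coefficients, and since the sectors $e^{\ell\Phi_J}$ with $\ell\ne0$ carry the $t$-dependent factor $e^{\ell\eta\phi_J(t)}$, only instanton-free parts of the residues can survive; you may simply cite this instead of re-deriving it.
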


\begin{rem}
Let us take a generic point $t_{\ast}$ such that 
$Q_{J,0}(x,t)$ has a simple zero $x=a(t)$ at any 
point $t$ in a neighborhood 
of $t_{\ast}$. It is known that each coefficient 
$S_{{\rm odd},j/2}(x,t,\eta)$ of 
$S_{J,{\rm odd}}(x,t,\eta)$ 
has a square root type singularity at a simple zero 
of $Q_{J,0}$ (e.g., \cite[Section 2]{KT iwanami}). Due to 
this property we can define the WKB solution of $\SLJ$
which is ``well-normalized" at $x=a(t)$ as follows:
\begin{equation} \label{eq:WKBsol-tp}
\psi_{J,\pm}(x,t,\eta) = 
\frac{1}{\sqrt{S_{J,{\rm odd}}(x,t,\eta)}}
\exp\left(\pm \int_{a(t)}^x 
S_{J,{\rm odd}}(x,t,\eta)~dx\right). 
\end{equation}
Here the integral in \eqref{eq:WKBsol-tp} is defined 
as a contour integral; that is, 
\[
\int_{a(t)}^x S_{J,{\rm odd}}(x,t,\eta)~dx = 
\frac{1}{2} \int_{\delta_x} 
S_{J,{\rm odd}}(x,t,\eta)~dx,
\]
where the path $\delta_x$ is depicted in Figure 
\ref{fig:normalized-at-TP}. In Figure 
\ref{fig:normalized-at-TP} the wiggly line
is a branch cut to determine the branch of 
$\sqrt{Q_{J,0}(x,t)}$, and the solid (resp., the dashed) 
line is a part of the path $\delta_x$ on the first 
(resp., the second) sheet of the Riemann surface of 
$\sqrt{Q_{J,0}(x,t)}$. 
Then, we can show that the well-normalized WKB solutions 
\eqref{eq:WKBsol-tp} satisfy both $(SL_{J})$ and $(D_J)$ 
by using \eqref{eq:t-derivative-of-Sodd} 
(cf. \cite[Lemma 1]{Takei00}). 
\end{rem}
\begin{figure}[h]
\begin{center}
\begin{pspicture}(0,0)(4,2)
%
\psset{fillstyle=none}
\rput[c]{0}(3.53,0.51){$\times$}
\rput[c]{0}(2.05,0.48){$\bullet$}
\rput[c]{0}(2,1.5){$\delta_x$}
\rput[c]{0}(3.9,0.5){$x$}
\rput[c]{0}(2.5,0.52){$a(t)$}
\psset{linewidth=1pt}
\pscurve(2,0.5)(1.9,0.4)(1.8,0.5)(1.7,0.6)(1.6,0.5)
(1.5,0.4)(1.4,0.5)(1.3,0.6)(1.2,0.5)(1.1,0.4)(1,0.5)
(0.9,0.6)(0.8,0.5)(0.7,0.4)(0.6,0.5)(0.5,0.6)(0.4,0.5)
(0.3,0.4)(0.2,0.5)(0.1,0.6)(0,0.5)
%
%
%
\psset{linewidth=0.5pt}
%
\psecurve(1.45,0.2)(1.5,0.5)(1.6,0.8)(2,1)(2.6,0.9)(3.5,0.5)
\pscurve(2.6,0.9)(3,0.75)(3.5,0.5)
\psline(2.5,0.95)(2.37,1.08)
\psline(2.5,0.95)(2.35,0.85)
\psline(2.4,0.025)(2.57,0.22)
\psline(2.4,0.025)(2.63,-0.03)
\psset{linewidth=0.5pt, linestyle=dashed}
\psecurve(1.45,0.8)(1.5,0.5)(1.6,0.2)(2,0)(2.6,0.1)(3.5,0.5)
\pscurve(2.6,0.1)(3,0.25)(3.5,0.5)
\end{pspicture}
\end{center}
\caption{Path of integration $\delta_x$.} 
\label{fig:normalized-at-TP}
\end{figure}

The following proposition will play an important role 
in the proof of our main theorems.
\begin{prop}\label{prop:residue}
Let $p$ be an even order pole of $Q_{J,0}(x,t)$ 
(hence, a singular point of $\SLJ$), and set 
\begin{equation}
{\rm Res}({SL_J},p) = \Res_{x=p}S_{J, {\rm odd}}(x,t,\eta)~dx.
\end{equation}
Then, the list of ${\rm Res}(SL_J,p)$ for all $J$ and $p$ 
is given in Table \ref{table:residues}, up to the sign. 
\begin{table}[h]
\begin{eqnarray*}
{\rm Res}(SL_{\rm II},\infty) & = & c \eta, \\[+.5em]
{\rm Res}(SL_{{\rm III'}(D_7)},0) & = & \frac{c\eta}{2} \\[+.5em]
{\rm Res}(SL_{{\rm III'}(D_6)},0) & = & 
\frac{c_{0} \eta}{2},~~ 
{\rm Res}(SL_{{\rm III'}(D_6)},\infty) = 
\frac{c_{\infty} \eta}{2}, \\[+.5em]
{\rm Res}(SL_{\rm IV},0) & = & \frac{c_0 \eta}{2},~~ 
{\rm Res}(SL_{\rm IV},\infty) = 
\frac{c_{\infty}\eta}{2},\\[+.5em]
{\rm Res}(SL_{\rm V},\infty) & = & 
{\frac{c_{\infty} \eta}{2}},~~
{\rm Res}(SL_{\rm V},0) = 
\frac{c_{0} \eta}{2}, ~~ 
{\rm Res}(SL_{\rm V},1) = 2 c_1 \eta, \\[+.7em]
{\rm Res}(SL_{\rm VI},\infty) & = & 
\frac{c_{\infty} \eta}{2},~~
{\rm Res}(SL_{\rm VI},0) = 
\frac{c_{0} \eta}{2},  \\
{\rm Res}(SL_{\rm VI},1) & = & \frac{c_1\eta}{2},~~~
{\rm Res}(SL_{\rm VI},t) = \frac{c_t\eta}{2}. 
\end{eqnarray*}
\caption{The list of 
${\rm Res}(SL_J,p)$ at singular points of $\SLJ$.}
\label{table:residues}
\end{table}
\end{prop}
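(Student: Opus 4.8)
The plan is to localize the computation at each even-order pole $p$ of $Q_{J,0}$ and to separate the leading WKB term from its corrections. Writing $S_{J,{\rm odd}} = \eta\sqrt{Q_{J,0}(x,t)} + \sum_{j\ge 0}\eta^{-j/2}S_{{\rm odd},j/2}(x,t,\eta)$ and taking residues gives
\[
\Res_{x=p}S_{J,{\rm odd}}\,dx = \eta\Res_{x=p}\sqrt{Q_{J,0}}\,dx + \sum_{j\ge 0}\eta^{-j/2}\Res_{x=p}S_{{\rm odd},j/2}\,dx
\]
(at $p=\infty$ one first passes to the coordinate $w=1/x$). Thus the proposition is equivalent to two assertions: that the leading residue equals the entry of Table \ref{table:residues} divided by $\eta$, and that $\Res_{x=p}S_{{\rm odd},j/2}\,dx=0$ for every $j\ge 0$.

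For the first assertion I would compute $\Res_{x=p}\sqrt{Q_{J,0}}\,dx$ directly from the explicit coefficients in Table \ref{table:QJ}. A convenient structural observation, which makes the relevant cases transparent, is that the most singular part of $Q_{J,0}$ at $p$ is a perfect square; for example at $x=0$ for $\PIII$ the $x^{-4}$, $x^{-3}$ and part of the $x^{-2}$ terms combine into $(t-cx)^2/(4x^4)$, so that $\sqrt{Q_{J,0}} = (t-cx)/(2x^2)+\cdots$ near $0$ and the coefficient of $x^{-1}$ is $-c/2$, yielding $\pm c\eta/2$. The remaining, less singular terms of $Q_{J,0}$ are of too low order to influence the $(x-p)^{-1}$ coefficient of the square root. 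The other entries (e.g.\ the coefficient of $x^{-1}$ in $\sqrt{Q_{\rm II,0}}$ at $\infty$, which equals $c$) are obtained by the same finite Laurent expansion; the sign in each case reflects the choice of branch of $\sqrt{Q_{J,0}}$, which accounts for the ``up to the sign'' in the statement.

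The vanishing of the corrections is easiest at a regular singular point of $\SLJ$, i.e.\ a double pole of $Q_J$, which occurs at $x=0$ for $(P_{\rm IV})$ and $(P_{\rm V})$ and at $x=0,1,t$ for $(P_{\rm VI})$. There I would avoid the recursion altogether and use the indicial equation. If $q_{-2}$ is the coefficient of $(x-p)^{-2}$ in $Q_J$, then the WKB solutions satisfy $\psi_{J,\pm}\sim(x-p)^{1/2\pm r}$ with $r=\Res_{x=p}S_{J,{\rm odd}}\,dx$, while the indicial roots of $(SL_J)$ are $\tfrac12\pm\tfrac12\sqrt{1+4\eta^2 q_{-2}}$; comparing gives $r=\tfrac12\sqrt{1+4\eta^2 q_{-2}}$. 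The crucial point is that $q_{-2}$ receives no instanton contribution—the $\lambda,\nu$-dependent terms of $Q_J$ have only a simple pole at $p$—and equals $(c^2-\eta^{-2})/4$, so that the identity $1+4\eta^2 q_{-2}=\eta^2 c^2$ forces $r=\pm c\eta/2$ with no lower-order terms. This single argument establishes both the leading value and the vanishing of all corrections at these poles.

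The genuinely harder case, and the one relevant to the main theorems, is that of the irregular poles: $x=\infty$ for $\PII$ and $x=0$ for $\PIII$ (together with the higher-order poles of $(P_{{\rm III'}(D_6)})$, $(P_{\rm IV})$, $(P_{\rm V})$ and $(P_{\rm VI})$). Here the indicial shortcut is unavailable and one must verify directly that $\Res_{x=p}S_{{\rm odd},j/2}\,dx=0$ for all $j\ge 0$. I would prove this by a local order estimate along the Riccati recursion \eqref{eq:Riccati}, together with the relation $S_{J,{\rm even}}=-\tfrac12\,\p_x\log S_{J,{\rm odd}}$: one checks inductively that near $p$ each correction $S_{{\rm odd},j/2}$ vanishes to sufficiently high order to carry no $(x-p)^{-1}$ term, exactly as in the quartic example, where the first correction is $O(x^{-4})$ at $\infty$. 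Two facts keep this under control: the explicit $\eta^{-1}$- and $\eta^{-2}$-terms of $Q_J$, as well as every instanton term, are holomorphic at $p$ because their only $x$-singularity sits at $x=\lambda_J\ne p$, so they contribute no residue at $p$; and, by Proposition \ref{prop:t-derivative-of-Sodd}, $\Res_{x=p}S_{J,{\rm odd}}\,dx$ is independent of $t$ (the residue of the exact form $\p_x(S_{J,{\rm odd}}A_J)\,dx$ vanishes), which pins down the answer as a $t$-independent constant multiple of $\eta$. The main obstacle is precisely this step: organizing the recursion at an irregular pole so that the residue-freeness of all higher-order corrections becomes transparent while keeping track of the instanton terms carried by the 2-parameter solution.
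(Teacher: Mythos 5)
Your overall architecture---splitting $S_{J,{\rm odd}}$ into $\eta\sqrt{Q_{J,0}}$ plus corrections, reading the leading residue off Table \ref{table:QJ}, and proving the corrections residue-free---is the same as the paper's (this is exactly what \eqref{eq:resSodd=res-1} records), and at the irregular poles your inductive order estimate along the Riccati recursion is the computation the paper performs explicitly for $J={\rm II}$, $p=\infty$ in \eqref{eq:QII0-residue} and then declares analogous for the remaining cases. The genuine difference is at the regular singular points: there the paper kills the corrections by citing the technique of \cite[Proposition 3.6]{KT iwanami}, while your indicial argument is self-contained and sharper. Made precise it reads: each $S^{(\pm)}_{j/2}$ has at most a simple pole at $p$ (an easy induction on \eqref{eq:recursion-1}--\eqref{eq:recursion}, which the paper also notes), so extracting the $(x-p)^{-2}$-coefficient of the Riccati equation gives $\rho^{(\pm)}(\rho^{(\pm)}-1)=\eta^{2}q_{-2}$ for the total residues $\rho^{(\pm)}$ of $S^{(\pm)}_J$; since the $K_J$-, $\nu$- and $(x-\lambda)^{-2}$-terms produce only simple poles at a finite regular singular point, $q_{-2}=(c^{2}-\eta^{-2})/4$ exactly, whence $\rho^{(+)}-\rho^{(-)}=\pm c\eta$ with no lower-order terms. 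That one identity does buy the value and the vanishing of all corrections simultaneously, and avoids the external citation. (One bookkeeping point: $x=\infty$ is also a \emph{regular} singular point for $(SL_{\rm V})$ and $(SL_{\rm VI})$, which falls through your two case lists; the same argument works in $w=1/x$, except that there $\eta^{-2}\,3/(4(x-\lambda)^{2})$ does contribute $3\eta^{-2}/4$ to $q_{-2}$, and for $J={\rm VI}$ one checks the total still combines to $(c_{\infty}^{2}-\eta^{-2})/4$.)

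However, the first of the two ``facts'' you invoke at the irregular poles is false as stated, and this is what needs repair. The $\eta^{-1}$-, $\eta^{-2}$- and instanton-carrying parts of $Q_J$ are \emph{not} holomorphic at $p$ in general: for $\SLIII$ at $p=0$ the term $tK_{\III}/x^{2}$ has a double pole at $p$ and carries the entire instanton dependence through $K_{\III}$, and $\eta^{-1}\lambda\nu/(x(x-\lambda))$ has a simple pole there; the same occurs for ${\rm III'}(D_6)$ at $0$ and for ${\rm V}$ at $1$, and at $p=\infty$ even the $x$-constant $2K_{\rm II}$ becomes a pole of order four in the coordinate $w=1/x$. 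What actually closes your induction is not holomorphy of these terms but the prefactor $1/(2S^{(\pm)}_{-1})$ in \eqref{eq:recursion}: it vanishes at $p$ to half the pole order of $Q_{J,0}$, which is exactly enough to absorb the poles those terms do have, so that the corrections to the \emph{odd} part remain residue-free; with this order count your argument goes through at every entry of Table \ref{table:residues}. Relatedly, the induction must indeed be run on the odd combination (as your appeal to $S_{J,{\rm even}}=-\tfrac12\p_x\log S_{J,{\rm odd}}$ suggests), not on the $S^{(\pm)}_{j/2}$ separately: the derivative term in \eqref{eq:recursion} gives each $S^{(\pm)}_{j/2}$ a nonzero residue at $p$---already $-\p_x S_{-1}/(2S_{-1})\sim -1/x$ at $\infty$ for $J={\rm II}$---and these contributions cancel only in $S^{(+)}-S^{(-)}$ (strictly speaking, the paper's displayed estimate \eqref{eq:QII0-residue} is accurate only after this cancellation).
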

\begin{proof}
Let us show the claim when $J={\rm II}$ and 
$p = \infty$. The coefficients 
$\{S^{(\pm)}_{j/2}(x,t,\eta) \}_{j \ge -2}$
of the formal series $S^{(\pm)}_J(x,t,\eta)$ 
in \eqref{eq:S-plus-minus} must satisfy the 
recursion relations 
\begin{eqnarray}  
\label{eq:recursion-1}
S^{(\pm)}_{-1}(x,t) & = & \pm \sqrt{Q_{J,0}(x,t)},~~
S^{(\pm)}_{-1/2}(x,t) = 0, \\[+.5em]
S^{(\pm)}_{(j+2)/2} & = & \frac{1}{2 S^{(\pm)}_{-1}} 
\Biggl( Q_{(j+4)/2} - \frac{\p S^{(\pm)}_{j/2}}{\p x} - 
\sum_{ \scriptsize
\begin{array}{ll}
j_{1}+j_{2} = j  \\ ~0 \le j_{1}, j_{2} \le j
\end{array} } S^{(\pm)}_{j_1/2}S^{(\pm)}_{j_2/2} \Biggr) 
\label{eq:recursion} \\
& &  \hspace{+19.em} (j\ge-2) \nonumber
\end{eqnarray}
since $S^{(\pm)}_J(x,t,\eta)$ solve the Riccati equation 
\eqref{eq:Riccati}. We can then directly compute 
the asymptotic behavior of $S^{(\pm)}_{j/2}(x,t,\eta)$ 
near $x=\infty$ from the recursion relations 
\eqref{eq:recursion-1} and \eqref{eq:recursion} 
and the explicitform of the potential $Q_{J}$ 
in Table \ref{table:QJ}; for example, 
when $J={\rm II}$, those are given by 
\begin{eqnarray} \label{eq:QII0-residue}
S^{(\pm)}_{j/2}(x,t,\eta) = 
\begin{cases}  
\displaystyle 
\pm \left( x^2 + \frac{t}{2} - c \hspace{+.1em}
x^{-1} + O(x^{-2}) \right) & 
\text{if $j=-2$},  \\
O(x^{-2}) & \text{if $j \ge -1$}.
\end{cases}
\end{eqnarray}
Thus we have 
\begin{equation}
\Res_{x=\infty} S_{\rm II, odd}(x,t,\eta)~dx = 
\pm c \hspace{+.1em} \eta.
\end{equation}
In a similar manner we cam compute residues of 
$S_{J, \rm odd}(x,t,\eta)~dx$ at each singular point 
for the other $J$'s by straightforward computations.  
Actually, when $p$ is a regular singular point 
of $\SLJ$, we need more careful computation 
since $S^{(\pm)}_{j/2}(x,t,\eta)$ may have 
first order poles at regular singular points 
in view of \eqref{eq:recursion-1} and 
\eqref{eq:recursion}. However, by the same technique 
used in the proof of \cite[Proposition 3.6]{KT iwanami}
we can check that the residues of 
$S^{(\pm)}_{j/2}(x,t,\eta)~dx$ at 
regular singular points vanish for $j \ge 0$. 
Thus we obtain Table \ref{table:residues}. 
\end{proof}
Especially, we can find that the residues tabulated 
in Table \ref{table:residues} are genuine constants 
multiplied by $\eta$, which implies that the residue 
of $S_{J, \rm odd}(x,t,\eta)~dx$ only come form 
the top term $\eta \hspace{+.1em} S_{-1}(x,t)~dx$: 
\begin{equation} \label{eq:resSodd=res-1}
\Res_{x=p} S_{J, \rm odd}(x,t,\eta)~dx = 
\eta \hspace{+.1em} \Res_{x=p} S_{-1}(x,t)~dx. 
\end{equation}
This fact will make our construction of 
transformations of Painlev\'e transcendents easy. 

\subsection{Local transformation near the double turning point}
\label{section:zJ-and-sJ}

In the theory of (exact) WKB analysis, zeros of $Q_{J,0}(x,t)$ 
play important roles. They are called {\em turning points} 
of $\SLJ$ (see Definition \ref{def:turning-points} below).
In view of \eqref{eq:Q0}, the point $x=\lambda_0(t_{\ast})$ 
is a {\em double} turning point 
(i.e., a double zero of $Q_{J,0}(x,t_{\ast})$)
when $t_{\ast}$ is a generic point. 
This double turning point is particularly important 
in the WKB analysis of Painlev\'e transcendents.

Let us fix a generic point $t_{\ast}$ and take 
a sufficiently small neighborhood $V$ of $t_{\ast}$
such that $x=\lambda_0(t)$ is a double zero of 
$Q_{J,0}(x,t)$ at any point $t \in V$. 
It is shown in \cite{KT98}
that the isomonodromy system 
$\SLJ$ and $\DJ$ can be reduced to the system 
\begin{eqnarray*}
(Can) & : & \left(\frac{\p^2}{\p z^2}-
\eta^{2}Q_{\rm can}(z,s,\eta)\right)\varphi(z,s,\eta)=0 \\[+.3em]
(D_{\rm can}) & : & \frac{\p}{\p s}\varphi(z,x,\eta) = 
\left( A_{\rm can}(z,s,\eta) \frac{\p}{\p z} - 
\frac{\p A_{\rm can}}{\p z}(z,s,\eta) \right)
\varphi(z,s,\eta)
\end{eqnarray*}
on $U_0 \times V$, where $U_0$ is a neighborhood 
of the double turning point $x=\lambda_0(t)$. 
Here $Q_{\rm can}$ and $A_{\rm can}$ 
are given by
\begin{eqnarray}\label{eq:Qcan}
Q_{\rm can}(z,s,\eta) & = & 4z^2 + \eta^{-1} E(s,\eta) \\
& & + \eta^{-1/2}\frac{\eta^{-1}\rho(s,\eta)}
{z-\eta^{-1/2}\sigma(s,\eta)} + \eta^{-2}
\frac{3}{4(z-\eta^{-1/2}\sigma(s,\eta))^2}, \nonumber \\
A_{\rm can}(z,s,\eta) & = & 
\frac{1}{2(z-\eta^{-1/2}\sigma(s,\eta))},
\label{eq:Acan}
\end{eqnarray}
with 
\begin{equation} \label{eq:E}
E(s,\eta) = \rho(s,\eta)^2 - 4\sigma(s,\eta)^2.
\end{equation}
The system $(Can)$ and $(D_{\rm can})$ is compatible if 
$\rho$ and $\sigma$ satisfy the Hamiltonian system
\[
(H_{\rm can}) : \frac{d \rho}{ds} = - 4\eta\sigma, \quad
\frac{d \sigma}{ds} = - \eta\rho.
\]
As a solution of $(H_{\rm can})$, we take
\begin{eqnarray} \label{eq:sol-of-Hcan}
\sigma(s,\eta;A,B) = A e^{2\eta s}+
B e^{-2\eta s},  \quad
\rho(s,\eta;A,B) = -2A e^{2\eta s}+
2B e^{-2\eta s}, \hspace{-2.em}
\end{eqnarray} 
where $A$ and $B$ are complex constants, and 
\eqref{eq:E} becomes independent of $s$:
\begin{equation} \label{eq:E-2}
E(s,\eta;A,B) = 
\rho(s,\eta;A,B)^2 - 
4\sigma(s,\eta;A,B)^2 = -16 A B.
\end{equation}
Denote by $Q_{\rm can}(z,s,\eta;A,B)$ the potential
\eqref{eq:Qcan} with the solution \eqref{eq:sol-of-Hcan} 
of $(H_{\rm can})$ being substituted into $(\sigma,\rho)$ 
in its expression. Then, the precise statement of the local 
reduction theorem of \cite{KT98} is stated as follows. 
\begin{thm}[{\cite[Theorem 2.1, Lemma 3.3]{KT98} 
(cf.~\cite[Theorem 3.1]{AKT96})}]
\label{thm:transformation-at-double-turning-point}
Let $t_{\ast}$ be a generic point as above. 
Then, there exist a neighborhood $U_0 \times V$ of the 
point $(\lambda_0(t_{\ast}),t_{\ast})$ and a formal series
\begin{eqnarray}
z_J(x,t,\eta) & = & \sum_{j=0}^{\infty} \eta^{-j/2} 
z_{j/2}(x,t,\eta), \\
s_J(t,\eta) & = & \sum_{j=0}^{\infty} \eta^{-j/2} 
s_{j/2}(t,\eta), \\
A_J(\eta) & = & 
\sum_{n=0}^{\infty} \eta^{-n} A_n, \quad
B_J(\eta) = 
\sum_{n=0}^{\infty} \eta^{-n} B_n, 
\end{eqnarray}
satisfying the following conditions.
\begin{enumerate}[\upshape (i)]
\item For each $j \ge 0$, 
$z_{j/2}(x,t,\eta)$ and $s_{j/2}(t,\eta)$ are 
holomorphic functions in $(x,t) \in U_0 \times V$ 
and in $t \in V$, respectively. %
\item 
For each $n \ge 0$, 
$A_n$ and $B_n$ are genuine constants.
\item $z_0(x,t)$ is free from $\eta$, $(\p z_0/\p x)$ never 
vanishes on $U_0 \times V$, and $z_0(\lambda_0(t),t) = 0$. %
\item $s_0(t)$ is also free from $\eta$ and $\p s_0/d t$ 
never vanishes on $V$. %
\item $z_{1/2}(x,t)$ and $s_{1/2}(t)$ vanish identically.
\item The $\eta$-dependence of 
$z_{j/2}(x,t,\eta)$ and $s_{j/2}(t,\eta)$ ($j \ge 2$)
is only through instanton terms $\exp(\ell \Phi_J(t,\eta))$ 
for $\ell = j-2-2j'$ with $0 \le j' \le j-2$ that appear 
in the 2-parameter solution $\lambda(t,\eta;\alpha,\beta)$
of $\PJ$. Thus $z_J(x,t,\eta)$ and $s_J(t,\eta)$ have the 
property of alternating parity. %
\item The following equality holds.
\begin{eqnarray}
Q_{J}(x,t,\eta)  & = &
\left(\frac{\p z_J(x,t,\eta)}{\p x}\right)^2
Q_{\rm can}(z_J(x,t,\eta),s_J(t,\eta),\eta;A_J(\eta),B_J(\eta)) 
\hspace{-3.em}  \\
&  & - \frac{1}{2}\eta^{-2}\{ 
z_J(x,t,\eta);x\}, \nonumber
\end{eqnarray}
where $\{z_J(x,t,\eta);x\}$ denotes 
the Schwarzian derivative:
\begin{equation} \label{eq:Schwarzian-derivative}
\hspace{-2.em}
\{z_J(x,t,\eta);x\} = 
\left(\frac{\p^3 z_J(x,t,\eta)}{\p x^3} {\bigg \slash} 
\frac{\p z_J(x,t,\eta)}{\p x} \right) - \frac{3}{2}
\left(\frac{\p^2 z_J(x,t,\eta)}{\p x^2} {\bigg \slash} 
\frac{\p z_J(x,t,\eta)}{\p x} \right)^2.
\end{equation}
\end{enumerate}
\end{thm}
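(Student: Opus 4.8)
The plan is to build the pair $(z_J,s_J)$ together with the constants $A_J(\eta),B_J(\eta)$ by matching WKB data order by order in $\eta^{-1/2}$, exploiting the fact that condition (vii) is nothing but the transformation law for the potential of a Schr\"odinger equation under the change of dependent variable $z=z_J(x,t,\eta)$ with gauge factor $(\p z_J/\p x)^{-1/2}$. Thus, if $\varphi$ solves $(Can)$, then $\psi=(\p z_J/\p x)^{-1/2}\varphi(z_J)$ solves a Schr\"odinger equation whose potential is the right-hand side of (vii); the whole task is to arrange $z_J$, $s_J$ and the constants so that this potential equals $Q_J$, and simultaneously so that $\DJ$ is carried to $(D_{\rm can})$.

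\emph{Leading order.} Writing $Q_J=\sum_j Q_{j/2}$ and collecting the $\eta^{0}$ part of (vii) gives
\[
Q_{J,0}(x,t)=\left(\frac{\p z_0}{\p x}\right)^{2}\! 4z_0^{2}
=\left(\frac{\p(z_0^{2})}{\p x}\right)^{2},
\]
i.e.\ $z_0(x,t)^2=\int_{\lambda_0(t)}^{x}\sqrt{Q_{J,0}(x',t)}\,dx'$ (the familiar action matching). By \eqref{eq:Q0} the integrand vanishes to first order at $x=\lambda_0(t)$, so the integral vanishes to \emph{second} order with non-zero leading coefficient $\tfrac12 C_J(\lambda_0,t)\sqrt{R_J(\lambda_0,t)}=\tfrac12 C_J(\lambda_0,t)\sqrt{F_J^{(1)}(t)}\neq 0$ at a generic $t$ (using \eqref{eq:RJ-at-lambda0}); hence it has a holomorphic square root $z_0$ with $z_0(\lambda_0,t)=0$ and $\p z_0/\p x\neq 0$, which is exactly (iii) and the holomorphy in (i). The independent-variable map $s_0(t)$ is then fixed by requiring that the gauge transformation intertwine $\DJ$ with $(D_{\rm can})$: the leading part of this intertwining is a first-order ODE $ds_0/dt=(\text{ratio of the leading }A\text{'s and }\p z_0/\p x)$ with $ds_0/dt\neq 0$, giving (iv). Condition (v), $z_{1/2}=s_{1/2}=0$, follows because the $\eta^{-1/2}$ coefficient of $S^{(\pm)}$ vanishes (cf.~\eqref{eq:QII0-residue}).

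\emph{Higher orders and the constants.} With $z_0,s_0$ in hand I would run a recursion: the order-$\eta^{-j/2}$ part of (vii) reads schematically
\[
\mathcal{L}_0\,z_{j/2}+(\text{linear in }s_{j/2})=R_{j/2},
\]
where $\mathcal{L}_0$ is the first-order operator obtained by linearizing $(\p z/\p x)^2\,4z^2$ around $z_0$ and $R_{j/2}$ depends only on $\{z_{j'/2},s_{j'/2}\}_{j'<j}$ and on the known coefficients of $Q_J$ and $Q_{\rm can}$. This determines $z_{j/2}$, while $s_{j/2}$ is fixed by the companion deformation relation at the same order. The apparent singularity at $z=\eta^{-1/2}\sigma$ coming from the $\eta^{-1}\rho(z-\eta^{-1/2}\sigma)^{-1}$ and $\eta^{-2}\tfrac34(z-\eta^{-1/2}\sigma)^{-2}$ terms in \eqref{eq:Qcan} is designed precisely to absorb the corresponding $\nu$-pole of $Q_J$, so the $z_{j/2}$ stay holomorphic on $U_0$. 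The constants $A_n,B_n$ are pinned down by matching first integrals: the canonical first integral is $-16AB$ by \eqref{eq:E-2}, which must agree with $E_J(\eta)$, so at top order $-16A_0B_0=E_0=-8\alpha_0\beta_0$ by Lemma \ref{lemma:E}, i.e.\ $A_0B_0=\tfrac12\alpha_0\beta_0$; the residue identity \eqref{eq:resSodd=res-1} guarantees these quantities carry no spurious $\eta$-dependence, and the higher $E_n$ fix $A_nB_n$ recursively (the remaining $A_n/B_n$ freedom being absorbed into the normalization and the lower endpoint of the phase), giving (ii). The alternating-parity statement (vi) is inherited from that of $Q_J$, since the recursion couples only instanton terms of matching parity.

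The main obstacle I expect is \emph{consistency} of the coupled recursion: at each order one must satisfy both the potential-matching equation (vii) and the deformation-matching equation, and prove these are compatible rather than over-determined. This is exactly where the isomonodromy property enters: equation \eqref{eq:t-derivative-of-Sodd} says the $t$-flow of $S_{J,\rm odd}$ is generated by $A_J$ in the precise form that makes the transformation determined from $\SLJ$ automatically compatible with $\DJ$. A secondary technical point is verifying that the $\nu$-pole produces only the controlled apparent singularity absorbed above, so that all $z_{j/2}$ are holomorphic and the $A_n,B_n$ come out genuinely $\eta$-independent; Proposition \ref{prop:residue} together with \eqref{eq:resSodd=res-1} is what makes this step clean.
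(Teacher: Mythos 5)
Your proposal is correct and follows essentially the same route as the construction this paper imports from \cite{AKT96} and \cite{KT98} (the theorem is cited there rather than reproved; the paper only records the determination process after the statement): the top term by action matching as in \eqref{eq:zJ0}, the constants by the first-integral relation \eqref{eq:correspondemce1} together with Lemma \ref{lemma:E}, the series $s_J$ with top term \eqref{eq:sJ0}, and the isomonodromy property \eqref{eq:t-derivative-of-Sodd} supplying exactly the compatibility you identify as the main obstacle. The only cosmetic difference is that you take the intertwining of $\DJ$ with $(D_{\rm can})$ as the relation defining $s_J$, whereas the paper's recorded process pins $s_J$ down by the equivalent apparent-singularity (pole-matching) condition \eqref{eq:correspondemce2} and obtains the deformation intertwining as a consequence.
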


The proof of \cite{KT98} also tells us that 
the formal series appearing in Theorem 
\ref{thm:transformation-at-double-turning-point} 
are determined by the following process. First, the formal 
series $z_J(x,t,\eta)$ is fixed by \cite[Theorem 3.1]{AKT96}. 
Especially, the top term $z_0(x,t)$ is given 
with a suitable choice of the square root as follows:
\begin{equation} \label{eq:zJ0}
z_0(x,t) = \left[ \int_{\lambda_0(t)}^x 
\sqrt{Q_{J,0}(x,t)}~dx \right]^{1/2}. 
\end{equation}
Next, in view of \eqref{eq:E-2}, 
we find formal power series $A_J(\eta)$ and $B_J(\eta)$ 
(which are not unique) satisfy
\begin{equation} \label{eq:correspondemce1}
-16 A_J(\eta) B_J(\eta) = E_{J}(\eta).
\end{equation}
Fixing $(A_J(\eta),B_J(\eta))$ thus found, we can 
find the formal series $s_J(t,\eta)$ so that 
\begin{eqnarray} \label{eq:correspondemce2}
\sigma(s_J(t,\eta),\eta;A_J(\eta),B_J(\eta)) = \eta^{1/2} 
z_J(\lambda_J(t,\eta;\alpha,\beta),t,\eta)
\end{eqnarray}
holds. Here $\lambda_J$ is the 2-parameter solution 
of $\PJ$ substituted into the coefficients of $\SLJ$ 
and $\DJ$. The top term $s_0(t)$ in $s_J(t,\eta)$ 
is given by
\begin{equation} \label{eq:sJ0}
s_0(t) = \frac{1}{2} \phi_J(t) = 
\frac{1}{2} \int^t \sqrt{F^{(1)}_J(t)}~dt.
\end{equation}
Then the set of formal series 
$(z_J(x,t,\eta),s_J(t,\eta),A_J(\eta),B_J(\eta))$
satisfies the conditions in Theorem 
\ref{thm:transformation-at-double-turning-point}.
Note that there is an ambiguity in the above choice 
of formal series; if a set of formal series 
\[
\bigl( z_J(x,t,\eta),s_J(t,\eta),A_J(\eta),B_J(\eta) \bigr)
\] 
satisfies the conditions in Theorem
\ref{thm:transformation-at-double-turning-point}, then 
\begin{equation} \label{eq:ambiguity}
\bigl( z_J(x,t,\eta),s_J(t,\eta) + G(\eta), 
A_J(\eta) \exp\bigl( -2\eta \hspace{+.1em} 
G(\eta) \bigr), 
B_J(\eta) \exp\bigl( 2\eta \hspace{+.1em} 
G(\eta) \bigr) \bigr)
\end{equation}
also satisfies the same conditions. Here 
\begin{equation} \label{eq:free-parameter-G}
G(\eta) = \sum_{n=1}^{\infty} \eta^{-n} G_n
\end{equation}
is an arbitrary formal power series with constant 
coefficients $G_n$. Here we have assumed that 
the formal power 
series \eqref{eq:free-parameter-G} has no constant term $G_0$. 
If we allow the constant term $G_0 \ne 0$, then 
$A_J(\eta) \exp\bigl( -2\eta \hspace{+.1em} G(\eta) \bigr)$ 
is no longer formal power series in $\eta^{-1}$, 
and hence we set $G_0 = 0$. 
The existence of this ambiguity 
corresponds to the fact that the relation between parameters 
$(\alpha,\beta)$ and $(A,B)$ is given by essentially one 
relation, i.e., \eqref{eq:correspondemce1}. 

We will regard the coefficients $G_n$ in 
\eqref{eq:free-parameter-G} as free parameters. As is clear 
from \eqref{eq:ambiguity}, such free parameters are contained 
in the transformation series $s_J(t,\eta)$ additively, and 
it is shown in \cite[Proposition 3.2]{KT98} that the formal 
series $s_J(t,\eta)$ is unique up to these additive free 
parameters (see \cite[Remark 3.3]{KT98}). Once the free 
parameters $G_n$ are fixed, then the transformation 
from $\SLJ$ and $\DJ$ to $(Can)$ and $D_{\rm can}$ 
is fixed, and hence the correspondence between 
the solutions of $(H_J)$ and $(H_{\rm can})$ is also fixed. 
These free parameters 
will be fixed when we discuss the transformation theory 
between Painlev\'e transcendents in Section 
\ref{section:main-results} 
and \ref{section:main-theorem2}. 

\section{Stokes geometries of Painlev\'e equations 
and isomonodromy systems} \label{section:Stokes-geometry}

In \cite{KT96}, \cite{KT98} etc. the relationship between 
$P$-turning points, $P$-Stokes curves of $\PJ$ and 
turning points, Stokes curves of $\SLJ$
plays an important role in the
construction of WKB theoretic 
transformations. In this section we review 
these geometric properties of Stokes geometries 
of $\PJ$ and $\SLJ$.

\subsection{$P$-Stokes geometry of $(P_J)$}

First, we review the definition of $P$-turning points 
and $P$-Stokes curves of $\PJ$ introduced by Kawai and Takei. 
Here we recall that ${\rm Sing}_J$ is the set of singular 
points of $\PJ$ defined in \eqref{eq:singular-points}.
\begin{defin}[{\cite[Definition 2.1]{KT96}}]
Let $\lambda_J = \lambda_J(t,\eta;\alpha,\beta)$ 
be a 2-parameter solution of $\PJ$ and $\lambda_0(t)$ 
be its top term.
\begin{itemize}
\item A point $t=r \notin {\rm Sing}_J$ 
is said to be a {\em $P$-turning point} 
of $\lambda_J$ if
\begin{equation}
F_J^{(1)}(r) = 0,
\end{equation} 
where $F_J^{(1)}(t)$ is defined by \eqref{eq:F1}.
\item A $P$-turning point $t=r$ of $\lambda_J$
is called {\em simple} if 
\begin{equation}
\frac{\p^2 F_J}{\p \lambda^2}(\lambda_0(r),r) \ne 0.
\end{equation} %
\item For a $P$-turning point $t=r$ of $\lambda_J$, 
a {\em $P$-Stokes curve} of $\lambda_J$ 
(emanating from $t=r$) is an integral curve defined by 
\begin{equation}
{\rm Im}\int_{r}^{t}\sqrt{F^{(1)}_J(t)}~dt = 0.
\end{equation} %
\end{itemize}
\end{defin}

$P$-turning points and $P$-Stokes curves of 
$\lambda_J$ are defined 
in terms of only the top term $\lambda_0(t)$ 
of the 2-parameter solution in question. 
Although they are defined for a fixed branch of the 
algebraic function $\lambda_0(t)$, we may regard them 
as objects on the Riemann surface of $\lambda_0(t)$. 
By ``a $P$-turning point 
(resp., a $P$-Stokes curve)" we may mean ``a $P$-turning point 
(resp., a $P$-Stokes curve) of some 2-parameter solution 
$\lambda_J$", simply. 
Note also that $P$-turning points and $P$-Stokes curves 
are nothing but zeros and {\em horizontal trajectories} 
(see \cite{Strebel}) of the quadratic differential 
$F^{(1)}(t)~dt^2$ defined on the Riemann surface of 
$\lambda_0(t)$.

As is pointed out by \cite{Wakako-Takei} and \cite{Takei10}, 
a point $s \in {\rm Sing}_J$ contained in the following list 
may play a role similar to $P$-turning points: 
\begin{itemize}
\item $s=0$ for $(P_{{\rm III'}(D_6)})$, $(P_{{\rm III'}(D_7)})$, 
$(P_{{\rm III'}(D_8)})$, $(P_{{\rm V}})$ and $(P_{\rm VI})$,
\item $s=1$ for $(P_{\rm VI})$, 
\item $s=\infty$ for $(P_{\rm VI})$.
\end{itemize}
At a singular point $s$ in the above list, there exists 
{\em a simple-pole type} 2-parameter solution; 
that is, the top term $\lambda_0(t)$ of a 2-parameter 
solution has a branch point at $t=s$ satisfing 
\begin{equation} \label{eq:simple-pole}
F_J^{(1)}(t) = O\bigl( (t-s)^{-3/2} \bigr) \quad
\text{as $t \rightarrow s$}.
\end{equation}
Note that the condition \eqref{eq:simple-pole} guarantees 
that the corresponding quadratic differential 
$F^{(1)}(t)~dt^2$ has a simple pole type singularity 
at $t=s$ after taking a new independent variable 
$T=(t-s)^{1/2}$, which is a local parameter of the 
Riemann surface of $\lambda_0(t)$ near $t=s$.  
On the Riemann surface of $\lambda_0(t)$ we distinguish 
such singular points from usual singular points, 
and call them {\em $P$-turning points of simple-pole type}. 
A $P$-turning point of simple-pole type is denoted 
by $r_{sp}$. A $P$-Stokes curve emanating from $r_{sp}$ 
is also defined by 
\begin{equation}
{\rm Im} \int_{r_{sp}}^t \sqrt{F_J^{(1)}(t)}~dt = 0.
\end{equation}

By the {\em $P$-Stokes geometry} (of $\PJ$) we mean the 
configuration of $P$-turning points, $P$-turning points 
of simple-pole type, singular points and $P$-Stokes curves
(of $\PJ$). Figure \ref{fig:P-Stokes-curves-of-PJ} depicts 
examples of $P$-Stokes geometries. Five $P$-Stokes curves 
emanate from each simple $P$-turning point. 
Figure \ref{fig:P-Stokes-curves-of-PJ} (b) shows  
an example of $P_{{\rm III'}(D_6)}$ which has a 
$P$-turning point of simple-pole type at the origin, 
and one $P$-Stokes curve 
emanates from the $P$-turning point of simple-pole type. 
Since $\lambda_0(t)$ is a multi-valued function 
of $t$, $P$-Stokes curves intersect each other, as observed 
in the figures. Such ``apparent" intersections are resolved 
if we take a lift of $P$-Stokes curves onto the Riemann 
surface of $\lambda_0(t)$ (see Section 
\ref{section:degeneration-of-PStokes-geometry} below).

\begin{rem}
$P$-Stokes curves are used to describe the 
criterion of {\em Borel summability} 
of 0-parameter solutions (i.e., formal power 
series solutions of the form 
$\lambda(t,\eta)=\sum_{n=0}^{\infty}\eta^{-n}\lambda_n(t)$) 
of $\PJ$ by \cite{Kamimoto-Koike}. 
It is known that certain {\em non-linear} Stokes phenomena 
occur to such a formal solution of $\PJ$ on $P$-Stokes curves. 
Takei discussed such Stokes phenomena for $(P_{\rm I})$ 
in \cite{Takei95}. Moreover, it is also expected that 
non-linear Stokes phenomena also occur to the 
2-parameter solutions (see \cite{Takei00}). 
\end{rem}

  \begin{figure}[h]
  \begin{minipage}{0.25\hsize}
  \begin{center}
  \includegraphics[width=35mm]{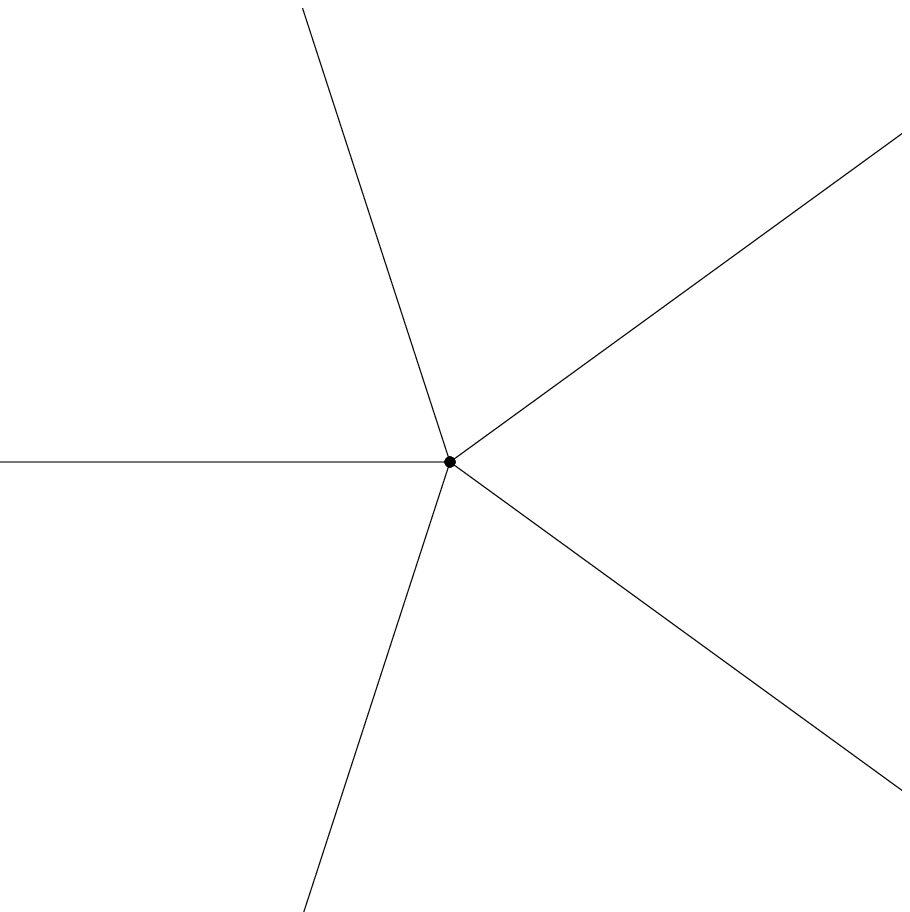} \\
  {(a): $(P_{\rm I})$.}
  \end{center}
  \end{minipage} \hspace{+.3em}
  \begin{minipage}{0.35\hsize}
  \begin{center}
  \includegraphics[width=35mm]{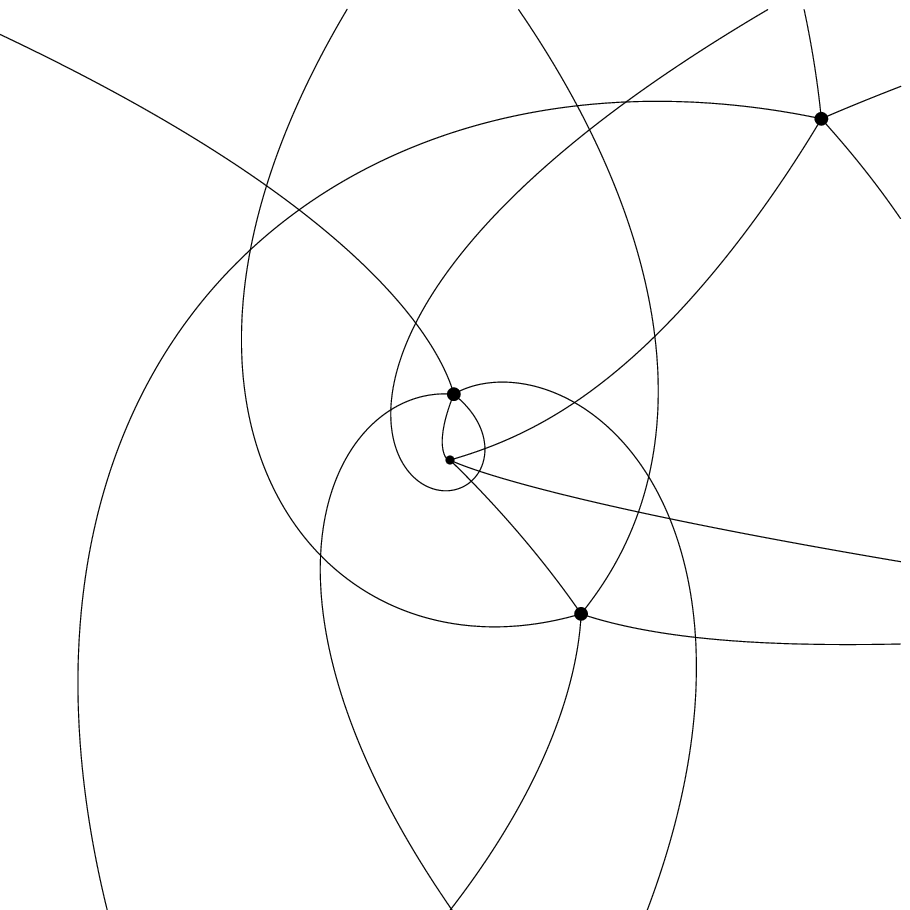} \\
  {(b): $(P_{{\rm III'}(D_6)})$ with 
  $(c_{0},c_{\infty}) = (3,2+i)$}
  \end{center} 
  \end{minipage} \hspace{+.5em}
  \begin{minipage}{0.35\hsize}
  \begin{center}
  \includegraphics[width=38mm]{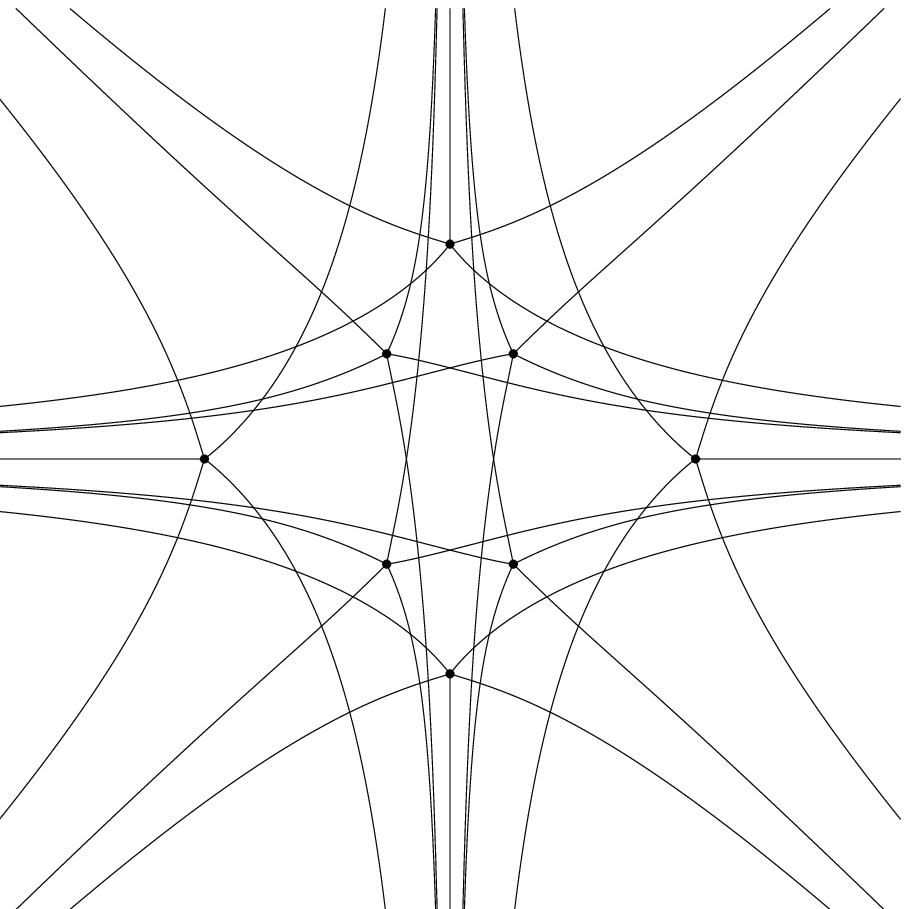} \\[+.3em]
  {(c): $(P_{\rm IV})$ with $(c_0,c_{\infty})=(1,2)$.}
  \end{center}
  \end{minipage} 
  \caption{Examples of $P$-Stokes geometries.}
  \label{fig:P-Stokes-curves-of-PJ}
  \end{figure}

\subsection{Stokes geometry of $\SLJ$}

Next, we recall the definition of turning points and 
Stokes curves for the linear differential equation $\SLJ$,
and explain their relationship with the $P$-Stokes geometry 
defined in the previous subsection. Recall that, we consider 
the situation that a 2-parameter solution 
$(\lambda_J,\nu_J) = $
$ (\lambda_J(t,\eta;\alpha,\beta),
\nu_J(t,\eta;\alpha,\beta))$ of $(H_J)$ is substituted 
into $(\lambda, \nu)$ which appears in the coefficients of 
$\SLJ$ and $\DJ$, as explained in Section \ref{section:SLJ}. 
Here we assume that the 2-parameter solution is defined 
in a neighborhood $V$ of a point $t_{\ast} \in \Omega_J$, 
and the branch of $\lambda_0(t)$, which is the top term of
$\lambda_J$, is fixed on $V$. 

\begin{defin}[{\cite[Definition 2.4 and 2.6]
{KT iwanami}}] \label{def:turning-points}
Fix a point $t$ contained in $V$.
\begin{itemize}
\item A point $x=a(t)$ is called a {\em turning point} of 
$(SL_{J})$ (at $t$) if it is a zero of $Q_{J,0}(x,t)$. %
\item A {\em Stokes curve} of $(SL_{J})$ is an integral curve 
emanating from a turning point $x=a(t)$ defined by 
\begin{equation}
{\rm Im}\int_{a(t)}^x \sqrt{{Q_{J,0}}(x,t)}~dx=0. 
\end{equation} %
\end{itemize}
\end{defin}

\begin{rem} \label{rem:Stokes-geometry-at-t}
Note that, locations of turning points and 
Stokes curves for $\SLJ$ depend on $t$. 
More precisely, they depend also on the branch 
of $\lambda_0$ at $t$, which is the 
top term of 2-parameter solution substituted. 
Therefore, by ``turning points (resp., Stokes curves)
of $\SLJ$ at $t \in V$" we mean 
``turning points (resp., Stokes curves) of $\SLJ$ 
at $t$ with the fixed branch of $\lambda_0$ on $V$". 
\end{rem}

Turning points and Stokes curves of $\SLJ$ are 
nothing but zeros and horizontal trajectories 
of the quadratic differential $Q_{J,0}(x,t)~dx^2$. 
We say that a turning point is of {\em order} $m$ if 
it is a zero of $Q_{J,0}$ of order $m$. Especially, 
turning points of order 1 and 2 are called 
{\em simple} and {\em double} turning points, 
respectively. In view of \eqref{eq:Q0}, 
in a generic situation $(SL_{J})$ has 
a double turning point at $x=\lambda_0(t)$ 
and one simple turning point 
(resp., two simple turning points)
when $J={\rm I}$, ${\III}$ and ${\rm III'}(D_8)$ 
(resp., $J = {\rm II}, {\rm III'}(D_6), 
{\rm IV}, {\rm V}$ and ${\rm VI}$). 
In the case of a linear equation, $(m+2)$ Stokes curves 
emanate from a turning point of order $m$ ($m \ge 1$). 
By the {\em Stokes geometry} (of $\SLJ$) we mean the 
configuration of turning points, singular points and 
Stokes curves (for a fixed $t$). 
Actually, if $Q_{J,0}(x,t)$ has simple poles, we need to 
regard them as turning points 
similarly to $P$-turning points of simple-pole type 
of $\PJ$ (see \cite{Koike00}). 
However, in view of \eqref{eq:Q0}, such a simple pole 
does not appear in a generic situation, and we will 
only consider situations where a simple pole never 
appears in the Stokes geometry of $\SLJ$.

\begin{figure}[h]
\begin{center}
\begin{pspicture}(6,0)(10,3.5)
%
\psset{fillstyle=none}
\rput[c]{0}(8.0,0.2){$P$-Stokes curves of $(P_{\rm I})$.} %
\rput[c]{0}(6.7,2.0){$\times$} \rput[c]{0}(6.3,2.0){$t_{2}$}
\rput[c]{0}(6.7,2.5){$\times$} \rput[c]{0}(6.3,2.5){$t_{1}$}
\rput[c]{0}(6.7,1.5){$\times$} \rput[c]{0}(6.3,1.5){$t_{3}$}
\rput[c]{0}(8.8,-2.5){$\lambda_0(t)$} 
\rput[c]{0}(6.45,-2.1){$-2\lambda_0(t)$}
\psset{linewidth=1pt}
\psline(8,2)(6.5,2)
\psline(8,2)(7.5,0.5)
\psline(8,2)(7.5,3.5)
\psline(8,2)(9.5,1.0)
\psline(8,2)(9.5,3.0)
%
\end{pspicture}
\end{center}
  \begin{minipage}{0.31\hsize}
  \begin{center}
  \includegraphics[width=37mm]
  {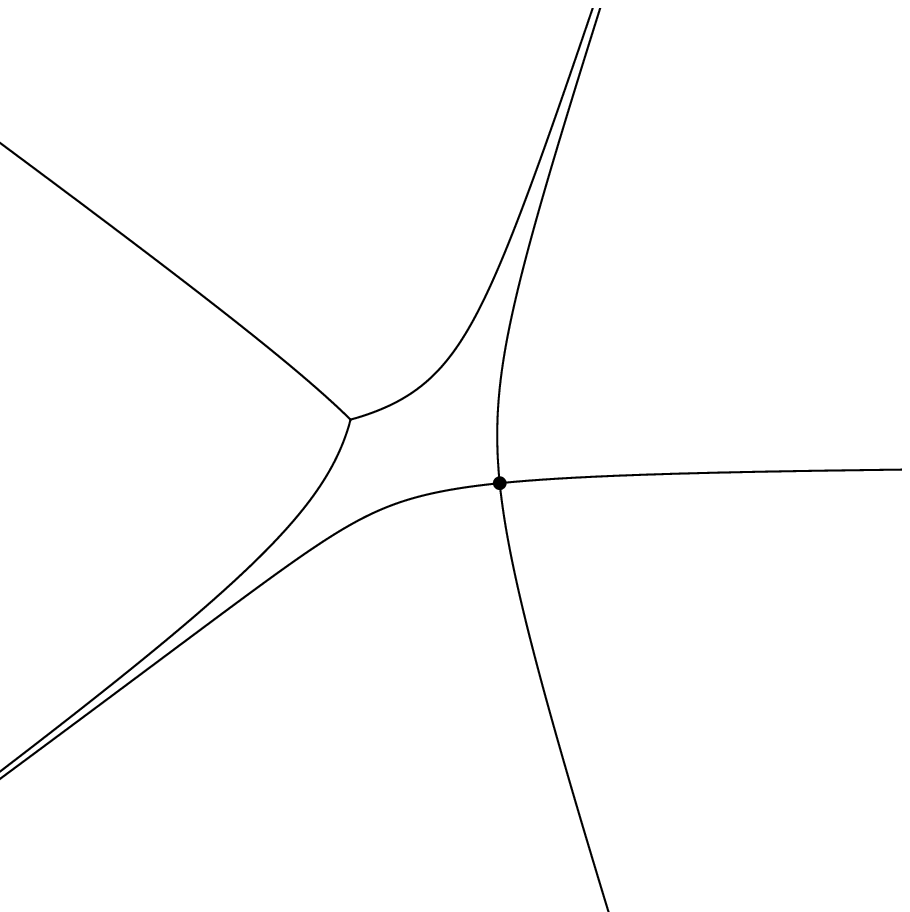} \\
  {(a): At $t=t_1$. \\~}
  \end{center}
  \end{minipage} \hspace{+.8em}
  \begin{minipage}{0.31\hsize}
  \begin{center}
  \includegraphics[width=37mm]
  {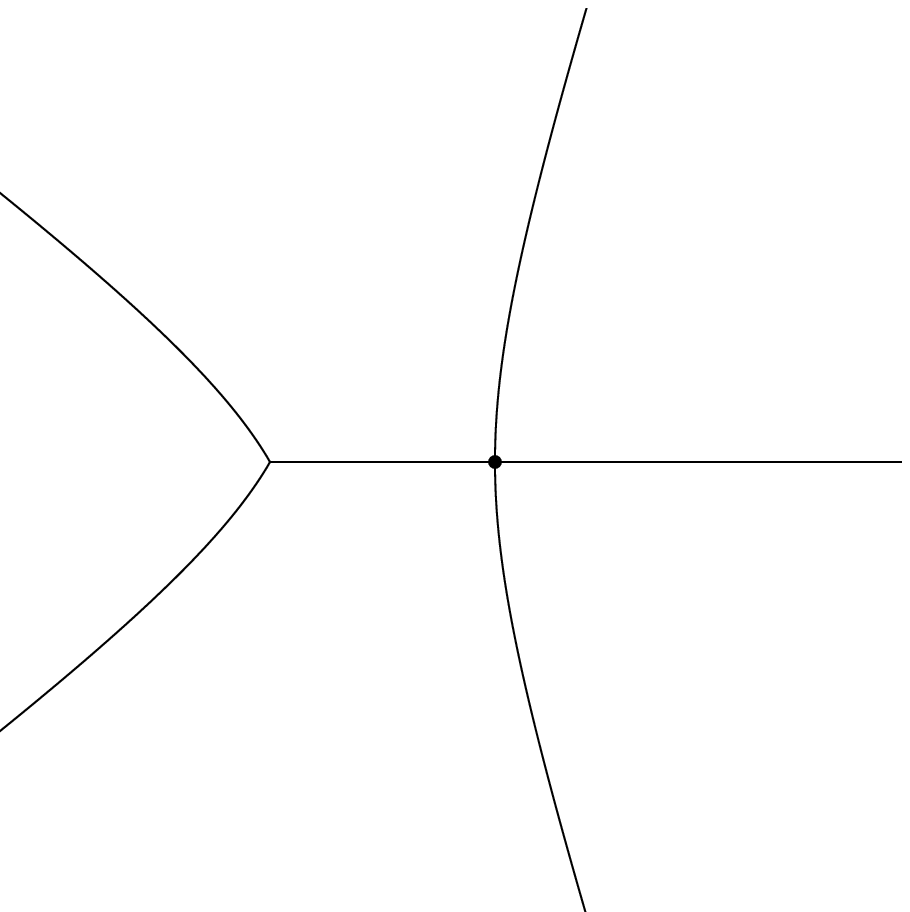} \\
  {(b): At $t=t_2$ \\(on a $P$-Stokes curve).}
  \end{center}
  \end{minipage} \hspace{+.5em}
  \begin{minipage}{0.31\hsize}
  \begin{center}
  \includegraphics[width=37mm]
  {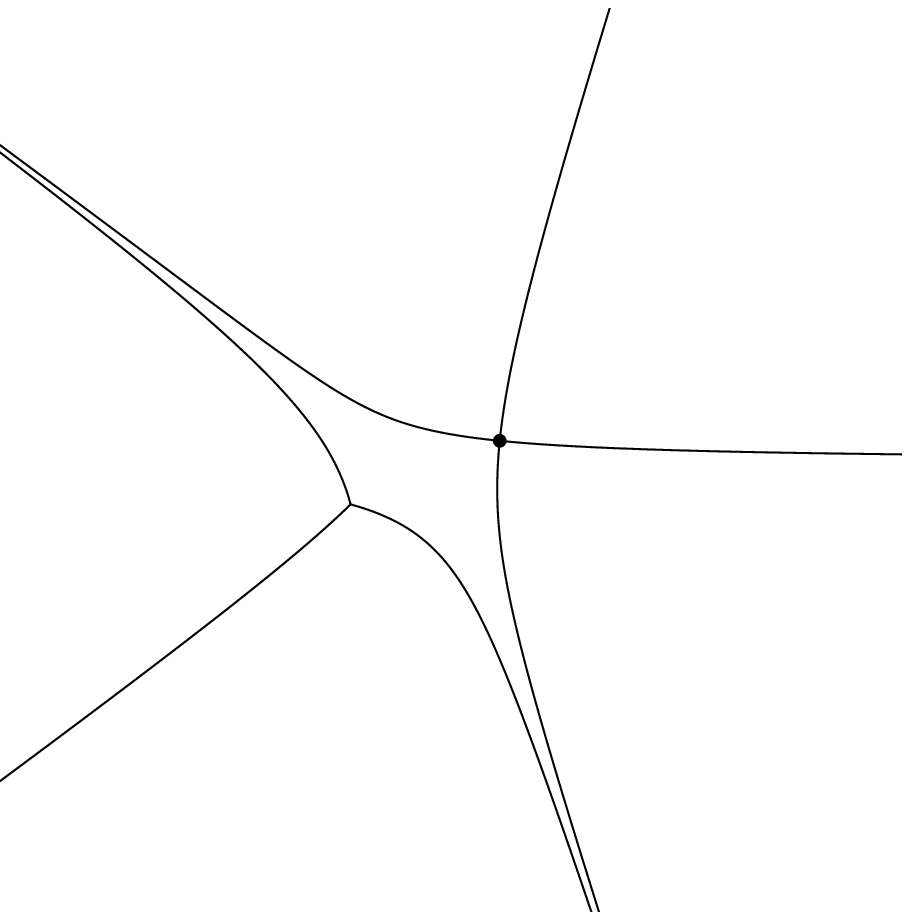} \\
  {(c): At $t=t_3$. \\~}
  \end{center} 
  \end{minipage} \vspace{-.2em}
  \caption{Stokes curves of $(SL_{\rm I})$ (for several $t$).}
  \label{fig:Stokes-curves-of-SLJ}
  \end{figure}

Figure \ref{fig:Stokes-curves-of-SLJ} depicts examples of 
Stokes curves of $(SL_{\rm I})$ for several $t$. 
Here $t_1$ and $t_3$ are some points which do not lie on 
a $P$-Stokes curve of $(P_{\rm I})$, while $t_2$ lies on 
a $P$-Stokes curve $(P_{\rm I})$. $(SL_{\rm I})$ has a 
double turning point at $x=\lambda_0(t)$ and a simple turning 
point at $x=-2\lambda_0(t)$ when $t \ne 0$. 
Note that, since $\lambda_0(t) = \sqrt{-t/6}$ 
for $(P_{\rm I})$, these two turning points merge 
as $t$ tends to the $P$-turning point $t=0$.
We can observe that a Stokes curve of $(SL_{\rm I})$ 
connects these two turning point 
$x=\lambda_0(t)$ and $-2\lambda_0(t)$ when $t = t_2$ 
which lies on a $P$-Stokes curve. 
We call such a Stokes curve connecting turning points 
of $\SLJ$ a {\em degenerate Stokes segment}, or 
a {\em Stokes segment} for short. 
(In the context of quadratic differentials Stokes segments 
are called {\em saddle connections}.) 

Actually, other $\PJ$ and $\SLJ$ also enjoy the same 
geometric properties as $(P_{\rm I})$ and $(SL_{\rm I})$ 
explained here. That is, $P$-turning points and 
$P$-Stokes curves for $(P_J)$ are related to turning points 
and Stokes curves for $(SL_{J})$ in the following manner.
\begin{prop}[{\cite[Proposition 2.1]{KT96}}] 
\label{prop:PStokes-and-Stokes}
\begin{enumerate}[\upshape (i)]
\item For a simple $P$-turning point $r$ (of $\lambda_{J}$), 
there exists a simple turning point $a(t)$ of $\SLJ$
that merges with the double turning point 
$x=\lambda_0(t)$ at $t=r$, 
and consequently there exists a turning point of 
order three at $t=r$ for $\SLJ$. %
\item For the simple $P$-turning point $r$ and the turning point 
$a(t)$ of $\SLJ$ as above, the following equality holds:
\begin{equation} \label{eq:integral-relation}
\int_{a(t)}^{\lambda_0(t)} \sqrt{Q_{J,0}(x,t)}~dx = 
\frac{1}{2}\int_{r}^t \sqrt{F_J^{(1)}(t)}~dt.
\end{equation}
Here the branch of square roots are chosen so that 
\begin{equation}\label{eq:branch-of-Qo-and-FJ1}
\sqrt{Q_{J,0}(x,t)}=C_J(x,t)(x-\lambda_0)\sqrt{R_J(x,t)},\quad
\sqrt{R_J(\lambda_0(t),t)} = \sqrt{F_J^{(1)}(t)}.
\end{equation}
\end{enumerate}
\end{prop}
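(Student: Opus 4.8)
The plan is to treat the two assertions separately, in both cases exploiting the factorization $Q_{J,0}(x,t)=C_J(x,t)^2(x-\lambda_0(t))^2R_J(x,t)$ from \eqref{eq:Q0} together with the normalization $R_J(\lambda_0(t),t)=F_J^{(1)}(t)$ from \eqref{eq:RJ-at-lambda0}.

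For part (i), I would first observe that, away from the singular points (where $C_J$ is finite and non-vanishing), the turning points of $\SLJ$ other than the double turning point $x=\lambda_0(t)$ are precisely the zeros of $R_J(x,t)$. Since $R_J(\lambda_0(r),r)=F_J^{(1)}(r)=0$ at a $P$-turning point $r$, the point $x=\lambda_0(r)$ is a zero of $R_J(\,\cdot\,,r)$; hence for $t$ near $r$ there is a zero $a(t)$ of $R_J(\,\cdot\,,t)$ with $a(r)=\lambda_0(r)$, and by continuity $a(t)$ merges with $\lambda_0(t)$ as $t\to r$. At $t=r$ the product $(x-\lambda_0(r))^2R_J(x,r)$ then has a zero of order at least three at $x=\lambda_0(r)$. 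To pin down the order as exactly three, I would expand $Q_{J,0}$ to third order in $u=x-\lambda_0$: the coefficient of $u^2$ is $C_J(\lambda_0,t)^2F_J^{(1)}(t)$, which vanishes at $t=r$, while the coefficient of $u^3$ collapses at $t=r$ to $C_J(\lambda_0(r),r)^2\,(\p R_J/\p x)(\lambda_0(r),r)$. Thus the turning point is of order exactly three precisely when $(\p R_J/\p x)(\lambda_0(r),r)\ne 0$, and the remaining work is to identify this non-vanishing with the simplicity condition $(\p^2 F_J/\p\lambda^2)(\lambda_0(r),r)\ne 0$; for $J={\rm I},{\rm II}$ (where $C_J\equiv 1$) this is transparent from the relation $\p_x Q_{J,0}=2F_J$, and in general it is checked from the explicit $R_J$ extracted via Tables \ref{table:CJ-and-DJ}, \ref{table:QJ}.

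For part (ii), I would argue by differentiation. Both sides vanish at $t=r$ because $a(r)=\lambda_0(r)$, so it suffices to match their $t$-derivatives. The right-hand side differentiates to $\tfrac{1}{2}\sqrt{F_J^{(1)}(t)}$. On the left I differentiate under the integral sign; the two boundary terms vanish because $\sqrt{Q_{J,0}}$ has a zero at each endpoint $\lambda_0(t)$ and $a(t)$, leaving $\int_{a(t)}^{\lambda_0(t)}(\p/\p t)\sqrt{Q_{J,0}}\,dx$. Here I invoke the top-order (order $\eta^{1}$) part of Proposition \ref{prop:t-derivative-of-Sodd}, which reads $(\p/\p t)\sqrt{Q_{J,0}}=(\p/\p x)\bigl(\sqrt{Q_{J,0}}\,A_{J,0}\bigr)$, where $A_{J,0}(x,t)=P_J(x,t)/(x-\lambda_0)$ is the $\eta$-independent top term of $A_J$ obtained by substituting $\lambda_0$ into Table \ref{table:AJ} (and $P_J$ is regular at $x=\lambda_0$). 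The fundamental theorem of calculus then gives $\bigl[\sqrt{Q_{J,0}}\,A_{J,0}\bigr]_{a(t)}^{\lambda_0(t)}$; since $\sqrt{Q_{J,0}}\,A_{J,0}=C_J\sqrt{R_J}\,P_J$ is regular across $x=\lambda_0$, this evaluates to $0$ at the lower limit (because $R_J(a(t),t)=0$) and to $C_J(\lambda_0,t)\,P_J(\lambda_0,t)\,\sqrt{F_J^{(1)}(t)}$ at the upper limit, using the branch choice \eqref{eq:branch-of-Qo-and-FJ1}. The whole argument therefore reduces to the residue identity $C_J(\lambda_0,t)\,P_J(\lambda_0,t)=\tfrac{1}{2}$, equivalently $C_J(\lambda_0,t)\,\Res_{x=\lambda_0}A_{J,0}(x,t)=\tfrac12$, which one verifies uniformly for every $J$ from Tables \ref{table:CJ-and-DJ} and \ref{table:AJ}.

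The main obstacle is not the differentiation mechanism, which is clean and uniform, but the two pointwise algebraic facts it relies on: in (i), the identification of $(\p R_J/\p x)(\lambda_0(r),r)\ne0$ with simplicity of the $P$-turning point, and in (ii), the residue identity $C_J(\lambda_0,t)\,\Res_{x=\lambda_0}A_{J,0}=\tfrac12$. Both come down to bookkeeping through Tables \ref{table:CJ-and-DJ}--\ref{table:AJ} together with $F_J(\lambda_0,t)=0$, and the genuinely delicate point is keeping the branches of $\sqrt{Q_{J,0}}$ and $\sqrt{F_J^{(1)}}$ consistent so that the endpoint value at $x=\lambda_0$ carries the correct sign $+\tfrac12\sqrt{F_J^{(1)}(t)}$ dictated by \eqref{eq:branch-of-Qo-and-FJ1}; I would fix these conventions at the outset so that the matched derivatives agree as formal identities rather than merely up to sign.
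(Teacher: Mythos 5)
First, a point of order: the paper never proves this proposition --- it is imported verbatim from \cite[Proposition 2.1]{KT96} --- so there is no in-paper argument to compare yours against; the assessment below is of your proposal on its own terms, effectively against the original verification in \cite{KT96}. With that understood, your proof is correct in outline, and the two algebraic facts you reduce it to are both true. In part (ii), the differentiation scheme is sound: the boundary terms vanish because $\sqrt{Q_{J,0}}$ vanishes at both endpoints, the $\eta^{1}$-component of Proposition \ref{prop:t-derivative-of-Sodd} is exactly $\partial_t\sqrt{Q_{J,0}}=\partial_x\bigl(\sqrt{Q_{J,0}}\,A_{J,0}\bigr)$ (only the top terms of $S_{J,{\rm odd}}$ and of $A_J$ can contribute at that order), and your residue identity $C_J(\lambda_0,t)\,P_J(\lambda_0,t)=\tfrac12$ does hold uniformly: from Tables \ref{table:CJ-and-DJ} and \ref{table:AJ} the products are $1\cdot\tfrac12$ for ${\rm I},{\rm II}$, $\tfrac{t}{2\lambda_0^2}\cdot\tfrac{\lambda_0^2}{t}$ for the ${\rm III'}$ cases, $\tfrac{1}{4\lambda_0}\cdot 2\lambda_0$ for ${\rm IV}$, $\tfrac{t}{2\lambda_0(\lambda_0-1)^2}\cdot\tfrac{\lambda_0(\lambda_0-1)^2}{t}$ for ${\rm V}$, and $\tfrac{t(t-1)}{2\lambda_0(\lambda_0-1)(\lambda_0-t)}\cdot\tfrac{\lambda_0(\lambda_0-1)(\lambda_0-t)}{t(t-1)}$ for ${\rm VI}$, all equal to $\tfrac12$. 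Matching derivatives, together with the common vanishing at $t=r$, then gives \eqref{eq:integral-relation} with the branches fixed by \eqref{eq:branch-of-Qo-and-FJ1}.

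The one place where you defer genuine content is part (i), the identification of $(\partial R_J/\partial x)(\lambda_0(r),r)\ne0$ with simplicity of the $P$-turning point. Two remarks on that deferral. For $J={\rm I},{\rm III'}(D_7),{\rm III'}(D_8)$ the issue is vacuous: $R_J$ has degree one in $x$, so its $x$-derivative is its nonvanishing leading coefficient. For the degree-two cases, $(\partial R_J/\partial x)(\lambda_0(r),r)\ne0$ says that the \emph{second} root of $R_J(\cdot,r)$ stays away from $\lambda_0(r)$, and your clean identity $\partial_x Q_{J,0}=2F_J$ settles only ${\rm I}$ and ${\rm II}$; it genuinely fails when $C_J\not\equiv1$ (for instance, for ${\rm III'}(D_7)$ the explicit part of $Q_J$ in Table \ref{table:QJ} and the potential inside $K_J$ in Table \ref{table:KJ} already disagree in the sign of the $c^2/4x^2$ term, so no naive universal identity between $Q_{J,0}$ and $F_J$ can hold). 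Hence for ${\rm III'}(D_6)$, ${\rm IV}$, ${\rm V}$, ${\rm VI}$ the equivalence really must be extracted case by case from the explicit factorization \eqref{eq:Q0} --- and that finite bookkeeping, which does close, is precisely the bulk of the original proof in \cite{KT96} rather than a routine afterthought. So: no logical gap, but the weight of the argument sits exactly in the step you set aside.
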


%
%

Proposition \ref{prop:PStokes-and-Stokes} implies that, 
when $t$ lies on a $P$-Stokes curve emanating from a simple 
$P$-turning point $r$, a Stokes segment appears between 
the double turning point $\lambda_0(t)$ and the simple turning 
point $a(t)$. This relationship between $P$-Stokes curves and 
Stokes curves are essential in the 
construction of WKB theoretic transformation to 
$(P_{\rm I})$ near a simple $P$-turning point 
(see \cite{KT96} and \cite{KT98}).

Similar geometric properties are observed also
when $t$ lies on a $P$-Stokes curve emanating from a 
$P$-turning point of simple-pole type.

\begin{prop} [{\cite[Proposition 3.2 (ii)]{Takei10}}]
Suppose that $t$ lies on a $P$-Stokes curve emanating 
from a $P$-turning point of simple-pole type of $\PJ$.
Then, there exists a Stokes curve of $\SLJ$ which starts 
from $\lambda_0(t)$ and returns to $\lambda_0(t)$ after 
encircling several singular points and/or turning points
of $\SLJ$. 
\end{prop}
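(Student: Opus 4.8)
The plan is to follow the strategy of Proposition \ref{prop:PStokes-and-Stokes}: first establish a period identity tying the Stokes geometry of $\SLJ$ to the $P$-Stokes curve, and then read off the loop from the resulting reality condition. The essential new phenomenon, compared with the simple $P$-turning point case, is that the lower end of the relevant trajectory now sits at a \emph{singular point} of $\SLJ$ rather than at an ordinary simple turning point.

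First I would locate that singular point. Since $r_{sp}$ is a branch point of $\lambda_0(t)$ with $F_J^{(1)}(t)=O((t-r_{sp})^{-3/2})$ as in \eqref{eq:simple-pole}, a direct inspection of $F_J(\lambda_0(t),t)=0$ together with the explicit form of $Q_{J,0}$ shows that as $t\to r_{sp}$ the double turning point $x=\lambda_0(t)$ \emph{and} a simple turning point $x=a(t)$ arising from the factor $R_J$ in \eqref{eq:Q0} both tend to a common even-order pole $x=p$ of $Q_{J,0}$, i.e. a singular point of $\SLJ$ (for $\PIII$ one finds $p=0$, with $\lambda_0(t)\sim\sqrt{c/2}\,t^{1/2}$ and $a(t)\sim t/(2c)$). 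Thus near $r_{sp}$ a double turning point, a simple turning point and the singular point $p$ coalesce; this confluence is the feature distinguishing the loop-type case.

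Next, the analytic core. Differentiating $\int_{a(t)}^{\lambda_0(t)}\sqrt{Q_{J,0}(x,t)}\,dx$ in $t$ and using the top-order form of Proposition \ref{prop:t-derivative-of-Sodd}, namely $\p_t\sqrt{Q_{J,0}}=\p_x(\sqrt{Q_{J,0}}\,A_{J,0})$ with $A_{J,0}$ the top term of $A_J$, the endpoint variations drop out (both $a(t)$ and $\lambda_0(t)$ are zeros of $Q_{J,0}$) and only the boundary term $\big[\sqrt{Q_{J,0}}\,A_{J,0}\big]_{a}^{\lambda_0}$ survives. At $x=a$ the factor $A_{J,0}$ is regular while $\sqrt{Q_{J,0}}$ vanishes, so this end contributes nothing; at $x=\lambda_0$ the simple pole of $A_{J,0}$ cancels the simple zero of $\sqrt{Q_{J,0}}$, and the factorization \eqref{eq:branch-of-Qo-and-FJ1} together with $R_J(\lambda_0(t),t)=F_J^{(1)}(t)$ from \eqref{eq:RJ-at-lambda0} evaluates the product to $\tfrac12\sqrt{F_J^{(1)}(t)}$. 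Hence the two sides of
\begin{equation}
\int_{a(t)}^{\lambda_0(t)}\sqrt{Q_{J,0}(x,t)}\,dx=\frac{1}{2}\int_{r_{sp}}^{t}\sqrt{F_J^{(1)}(t)}\,dt
\end{equation}
have equal $t$-derivatives, exactly as in \eqref{eq:integral-relation}, the integration constant being fixed by letting $t\to r_{sp}$ along the shrinking configuration of the previous step. When $t$ lies on the $P$-Stokes curve the right-hand side is real, so there is a horizontal trajectory of $Q_{J,0}\,dx^2$ joining $a(t)$ and $\lambda_0(t)$. On the other hand, by Proposition \ref{prop:residue} and \eqref{eq:resSodd=res-1} the residue $\Res_{x=p}\sqrt{Q_{J,0}}\,dx$ is a genuine constant (for $\PIII$ it is $c/2$); in the regime in which $r_{sp}$ carries a real $P$-Stokes curve this residue is purely imaginary, so the period $\oint$ around $p$ has vanishing imaginary part and a ring domain of closed trajectories encircles $p$. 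Since $a(t)$ lies arbitrarily close to $p$, the saddle connection $a(t)$--$\lambda_0(t)$ fits onto the boundary of this ring domain and completes, through the annulus around $p$, to a single critical trajectory that leaves $\lambda_0(t)$, winds once around $p$ and returns to $\lambda_0(t)$, which is the asserted loop-type Stokes segment; making this passage rigorous is a statement about the global trajectory structure of $Q_{J,0}\,dx^2$ in the sense of \cite{Strebel}.

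The step I expect to be the main obstacle is precisely this last one: passing from the reality of the period to the \emph{existence} of the closed trajectory through $\lambda_0(t)$ near the coalescing configuration of $a(t)$, $\lambda_0(t)$ and the even-order pole $p$. One must control the ring-domain structure around a pole of order $\ge 4$ carrying a nearby turning point, and verify that the boundary critical graph closes up at $\lambda_0(t)$ rather than escaping into $p$; a careful treatment at $p$ (separating the $t$-independent singular part of $\int\sqrt{Q_{J,0}}\,dx$ singled out by the constant residue from the $t$-dependent finite part) is needed both to fix the integration constant and to confirm the geometry. Since the factorization \eqref{eq:Q0}, the relation \eqref{eq:RJ-at-lambda0} and the residue list of Proposition \ref{prop:residue} are uniform in $J$, the analytic identity is uniform, and only this geometric verification may have to be inspected case by case for the $J$ admitting a $P$-turning point of simple-pole type.
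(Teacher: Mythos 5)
The paper itself contains no proof of this proposition: it is imported verbatim from \cite[Proposition 3.2 (ii)]{Takei10}, so your sketch can only be measured against the mechanism that reference (and this paper's own cycle calculus in Lemma \ref{lem:key-lamma} and Section \ref{section:main-theorem2}) actually uses. Measured that way, your sketch has a genuine error, not just an unfinished step. Your central identity
$\int_{a(t)}^{\lambda_0(t)}\sqrt{Q_{J,0}(x,t)}\,dx=\tfrac12\int_{r_{sp}}^{t}\sqrt{F_J^{(1)}(t)}\,dt$
is false for \emph{every} choice of integration path. Indeed, your differentiation argument only pins down the $t$-derivative, and by \eqref{eq:integral-relation} the same left-hand side already equals $\tfrac12\int_{r}^{t}\sqrt{F_J^{(1)}}\,dt$ with $r$ the \emph{simple} $P$-turning point; changing the path of $\int_{a}^{\lambda_0}$ shifts it only by integer multiples of the period $2\pi i\Res_{x=p}\sqrt{Q_{J,0}}\,dx$. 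Already for $\PIII$ one computes from \eqref{eq:quad-diff-D7} (substituting $u=cv$ and evaluating) that
$\int_{r}^{r_{sp}}\sqrt{F^{(1)}_{\III}}\,dt=\pm\pi i c=\pm\,2\pi i\Res_{x=0}\sqrt{Q_{\III,0}}\,dx$,
so your identity differs from the true one by the \emph{half}-period $\pm\pi i\Res_{x=p}\sqrt{Q_{J,0}}\,dx$, which is not in the period lattice. For the same reason your way of fixing the constant fails: precisely because $a(t)$ and $\lambda_0(t)$ coalesce onto a pole carrying the nonzero residue $\pm c/2$ (cf.\ \eqref{eq:PD7-SLD7-residue}), the integral $\int_{a(t)}^{\lambda_0(t)}\sqrt{Q_{J,0}}\,dx$ tends to $\pm\pi ic/2\neq 0$ as $t\to r_{sp}$, not to $0$; the $O(1)$ contribution comes from the region $|a(t)|\ll|x|\ll|\lambda_0(t)|$ next to the pole. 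Consequently your intermediate conclusion --- a horizontal trajectory joining $a(t)$ and $\lambda_0(t)$ whenever $t$ lies on the $P$-Stokes curve from $r_{sp}$ --- is wrong: that reality condition characterizes $P$-Stokes curves emanating from $r$, not from $r_{sp}$.

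The quantity that does equal $\int_{r_{sp}}^{t}\sqrt{F_J^{(1)}}\,dt$ is the integral of $\sqrt{Q_{J,0}}\,dx$ along the closed contour based at $\lambda_0(t)$ which encircles the pole $p$ \emph{together with} the simple turning point $a(t)$ (the analogue of the cycle $\tdelta$ in Figure \ref{fig:delta-kxD7}). Since $\sqrt{Q_{J,0}}$ changes sign around the branch point $a(t)$, this contour integral equals $2\int_{a(t)}^{\lambda_0(t)}\sqrt{Q_{J,0}}\,dx\mp 2\pi i\Res_{x=p}\sqrt{Q_{J,0}}\,dx$; its $t$-derivative is $\pm\sqrt{F_J^{(1)}(t)}$ (the boundary terms at the base point now add instead of cancelling, exactly as in \eqref{eq:Sodd-is-sqrtQ0-2} and \eqref{eq:def-of-R}), and it genuinely vanishes as $t\to r_{sp}$ because the whole cycle can be kept at distance $\asymp|t-r_{sp}|^{1/2}$ from $p$. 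Reality of this cycle integral --- i.e.\ the hypothesis that $t$ lies on the $P$-Stokes curve from $r_{sp}$ --- is then the closing-up condition for the Stokes curve leaving $\lambda_0(t)$ and returning to it after encircling $p$ \emph{and} $a(t)$; this is why the proposition says ``several singular points and/or turning points''. Your alternative closing mechanism cannot be repaired: $p$ is a pole of $Q_{J,0}\,dx^2$ of order $\geq 4$ (order $4$ at $x=0$ for $\SLIII$), and ring domains of closed trajectories surround only \emph{double} poles --- near a pole of order $\geq 3$ every trajectory entering a small neighbourhood runs into the pole along a distinguished direction \cite{Strebel}. Moreover the residue being purely imaginary is a condition on the parameter $c$ alone, while the proposition holds for arbitrary $c$ and ties the loop to the position of $t$; your argument assigns the hypothesis and the conclusion to the wrong mechanisms.
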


\subsection{Degeneration of the $P$-Stokes geometry}
\label{section:degeneration-of-PStokes-geometry}

As is explained in Introduction, we are interested 
in the {\em degenerate} situations of the $P$-Stokes geometry; 
that is, situations where 
there exist a $P$-Stokes curve which 
connects $P$-turning points or $P$-turning points of 
simple-pole type of a 2-parameter solution 
$\lambda_J$ of $\PJ$. 
We will call such special $P$-Stokes curves 
{\em degenerate $P$-Stokes segments}, or 
{\em $P$-Stokes segments} for short. 
In this section we discuss a relationship between 
such a degeneration of the $P$-Stokes geometry of $\PJ$ 
and the Stokes geometry of $\SLJ$.

Typically, there are two types of $P$-Stokes segments 
which appear for the $P$-Stokes geometry in a generic 
situation: A $P$-Stokes segment of the first type 
connects two {\em different} simple $P$-turning points, 
while a $P$-Stokes segment of the second type 
(sometimes called a {\em loop-type}) 
emanates from and returns to the {\em same} $P$-turning 
point and hence forms a closed loop. 

  \begin{figure}[h]
   \begin{center}
  \includegraphics[width=50mm]{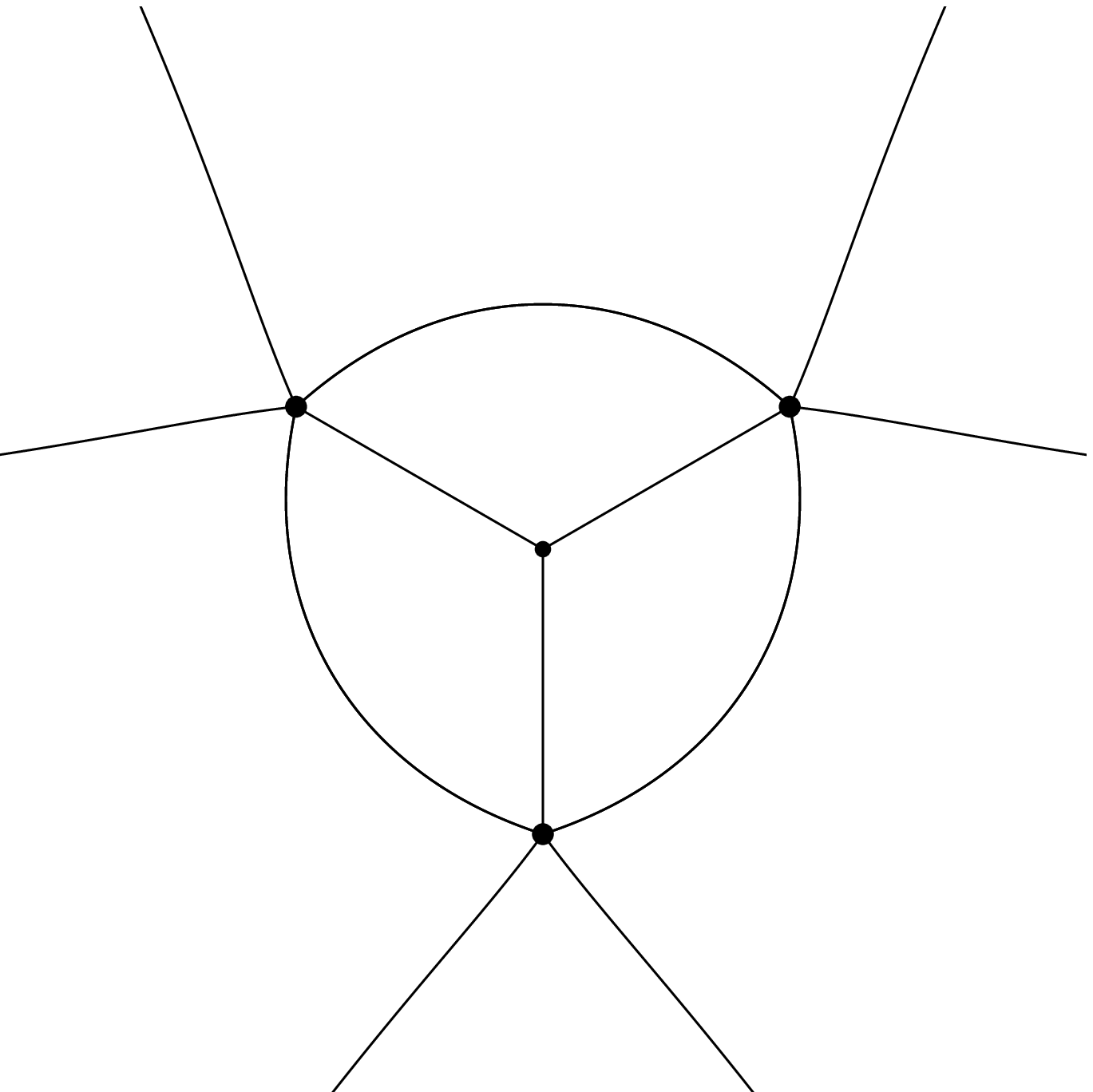} \\[-.2em]
  \end{center} 
  \begin{center}
\begin{pspicture}(0,0)(0,0)
\psset{fillstyle=none}
\rput[c]{0}(-0.5,+4.23){$\times$} 
\rput[c]{0}(-1.15,+3.0){$\times$} 
\rput[c]{0}(-0.5,+4.6){\large $u_{\ast,A}$}
\rput[c]{0}(-1.65,+3.0){\large $u_{\ast,B}$}
\rput[c]{0}(0.6,+4.5){$\Gamma_A$}
\rput[c]{0}(-1.05,+2.10){$\Gamma_B$}
\rput[c]{0}(-1.7,+4.2){\large $r$}
\rput[c]{0}(+1.7,+4.2){\large $r_{A}$}
\rput[c]{0}(0.0,+1.3){\large $r_{B}$}
\rput[c]{0}(0.3,+2.9){\large $0$}
\end{pspicture}
\end{center}
  \vspace{-2.5em}
  \caption{The $P$-Stokes geometry of $\PII$ with 
  $P$-Stokes segments (described on the $u$-plane).}
  \label{fig:P2-Stokes}
  \end{figure}

Figure \ref{fig:P2-Stokes} depicts the $P$-Stokes 
geometry of $\PII$ when $c=i$, 
and we can observe that three 
$P$-Stokes segments appear in the figure. 
Here we have introduced a new variable
\begin{equation} \label{eq:u-P2}
u=\lambda_0(t) 
\end{equation}
of the Riemann surface of $\lambda_0(t)$
and Figure \ref{fig:P2-Stokes} describes the $P$-Stokes 
curves of $\PII$ on the $u$-plane. Using the relation 
$t=-(2u^3+c)/u$, the quadratic differential which 
defines the $P$-Stokes geometry of $\PII$ is written as  
\begin{equation} \label{eq:quad-diff-P2}
F^{(1)}_{\rm II}(t)~dt^2 = {\rm quad}_{\rm II}(u,c)~du^2,~~~
{\rm quad}_{\rm II}(u,c) = \frac{(4u^3-c)^3}{u^5}
\end{equation}
in the $u$-variable. Although Figure \ref{fig:P2-Stokes}
depicts the case $c=i$, the configuration of $P$-Stokes 
geometry of $\PII$ described in the variable $u$ given 
in \eqref{eq:u-P2} for any 
$c \in \hspace{+.1em} i{\mathbb R}_{>0}$ 
(where ${\mathbb R}_{>0}$ denotes the set of positive 
real numbers) is the same as Figure \ref{fig:P2-Stokes}
since the quadratic differential \eqref{eq:quad-diff-P2} 
has the following scale invariance: 
\[
r^{-1} \sqrt{{\rm quad}_{\rm II}(r^{1/3}u,rc)}~d(r^{1/3}u) = 
\sqrt{{\rm quad}_{\rm II}(u,c)}~du~~ (r \ne 0). 
\]
Therefore, when $c \in i \hspace{+.1em} {\mathbb R}_{>0}$, 
$P$-Stokes geometry of $\PII$ has three simple 
$P$-turning points and three $P$-Stokes segments. 
The symbols $r, r_A$ and $r_B$ 
(resp., $\Gamma_A$, $\Gamma_B$) 
in Figure \ref{fig:P2-Stokes} represent the $P$-turning 
points (resp., $P$-Stokes segments) of $\PII$ 
when $c \in i \hspace{+.1em} {\mathbb R}_{>0}$. 
Furthermore, since ${\rm quad}_{\rm II}(u,c)$ is also 
invariant under $(u,c) \mapsto (-u,-c)$, the $P$-Stokes 
geometry when $c \in i \hspace{+.1em} {\mathbb R}_{<0}$ 
(where ${\mathbb R}_{<0}$ denotes the set of negative 
real numbers) is the reflection $u \mapsto -u$ of 
Figure \ref{fig:P2-Stokes}. 

  \begin{figure}[h]
  \begin{center}
\begin{pspicture}(0,0)(0,0)
\psset{fillstyle=none}
\rput[c]{0}(-3.4,-0.9){\large $\lambda_0(t)$}
\rput[c]{0}(-4.8,-2.8){\large $a(t)$}
\rput[c]{0}(-1.3,-3.3){\large $a_{A}(t)$}
\rput[c]{0}(2.0,-2.7){\large $\lambda_0(t)$}
\rput[c]{0}(3.0,-1.0){\large $a(t)$}
\rput[c]{0}(4.8,-3.3){\large $a_{B}(t)$}
\end{pspicture}
\end{center}
  \begin{minipage}{0.45\hsize}
  \begin{center}
  \includegraphics[width=45mm]
  {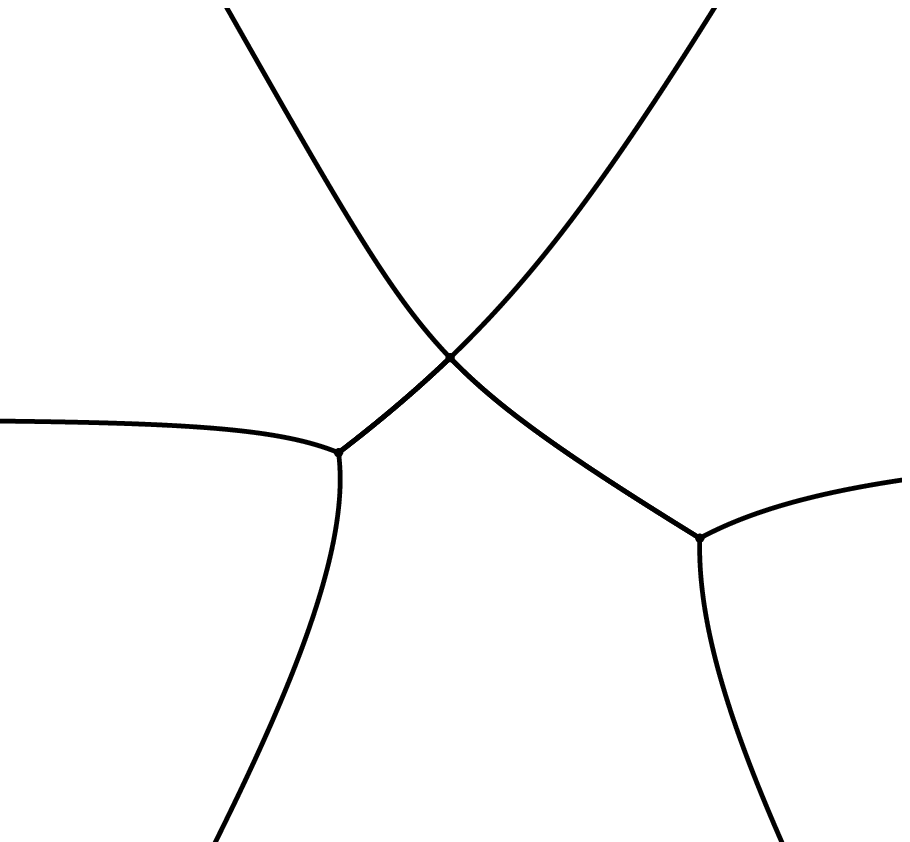} \\ 
  {($SL_{\rm II}$-$A$): The Stokes geometry of $\SLII$ 
  corresponding to $u_{\ast,A}$.}
  \end{center}
  \end{minipage} \hspace{+1.5em}
  \begin{minipage}{0.45\hsize}
  \begin{center}
  \includegraphics[width=45mm]
  {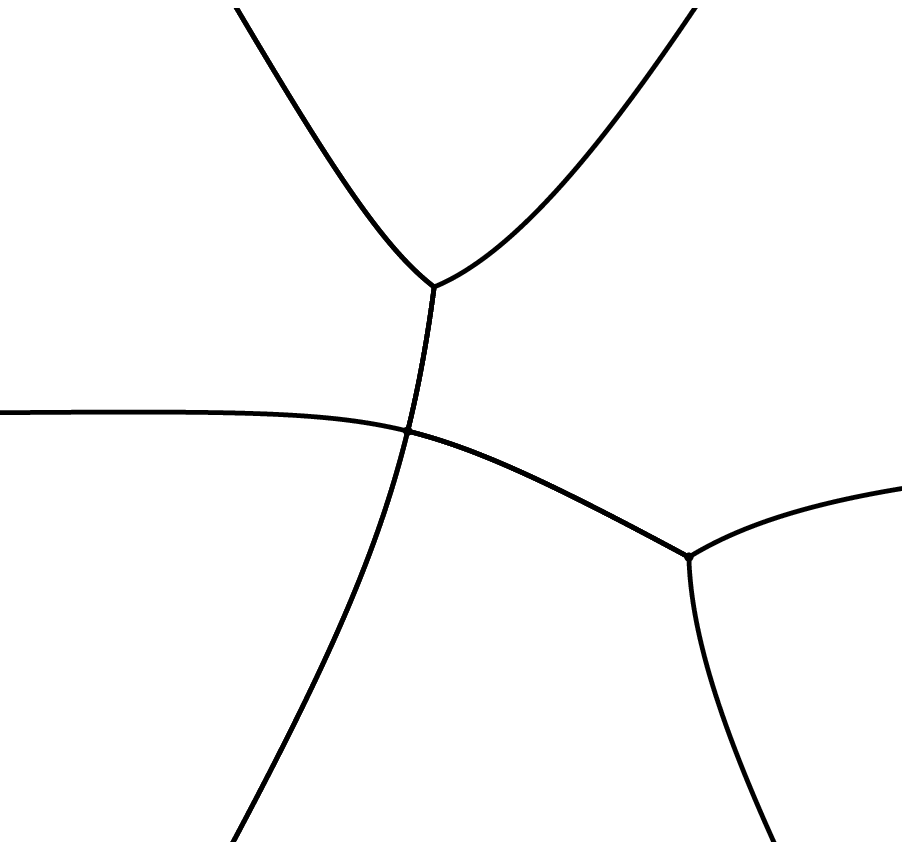} \\ 
  {($SL_{\rm II}$-$B$): The Stokes geometry of $\SLII$ 
  corresponding to $u_{\ast,B}$.}
  \end{center} 
  \end{minipage} 
  \caption{The Stokes geometries of $\SLII$ 
  on $P$-Stokes segments.}
  \label{fig:SL2-Stokes}
  \end{figure}

Figure \ref{fig:SL2-Stokes} ($SL_{\rm II}$-$A$)
(resp., ($SL_{\rm II}$-$B$))
depicts the Stokes geometry of $\SLII$ when we fix $t$ 
at a point $t_{\ast,A}$ (resp., $t_{\ast,B}$) 
corresponding to a point $u_{\ast,A}$ (resp., $u_{\ast,B}$)
which lies on the $P$-Stokes segment $\Gamma_A$ 
(resp., $\Gamma_B$) in Figure \ref{fig:P2-Stokes}.
Note that $u$ assigns a point $t$ on the $t$-plane together 
with a branch of $\lambda_0$ at $t$, and the Stokes 
geometries shown in Figure \ref{fig:SL2-Stokes}
are drawn for the the branch of $\lambda_0$ assigned 
by $u_{\ast,A}$ and $u_{\ast,B}$, 
respectively (see Remark \ref{rem:Stokes-geometry-at-t}).
In both cases of Figure \ref{fig:SL2-Stokes} 
($SL_{\rm II}$-$A$) and ($SL_{\rm II}$-$B$), 
there are two Stokes segments 
in the Stokes geometry of $\SLII$ each of which connects 
the double turning point $x=\lambda_0(t)$ and a simple turning 
point. Here, $a(t)$, $a_A(t)$ and $a_B(t)$ are 
the simple turning points of $\SLII$ which 
merge with $\lambda_0(t)$ at the $P$-turning point 
$r$, $r_A$ and $r_B$, respectively
(cf.~Proposition \ref{prop:PStokes-and-Stokes} (i)).
Here $a_A(t)$ and $a_B(t)$ merge $\lambda_0$ when $t$ 
tends to $r_A$ and $r_B$ along the $P$-Stokes segment
$\Gamma_A$ or $\Gamma_B$, respectively. 

  \begin{figure}[h]
  \begin{center}
\begin{pspicture}(0,0)(0,0)
\psset{fillstyle=none}
\rput[c]{0}(0.32,-1.5){$\times$} 
\rput[c]{0}(-0.32,-1.5){$\times$} 
\rput[c]{0}(1.0,-1.5){\large $u_{\ast, A}$}
\rput[c]{0}(-1.0,-1.5){\large $u_{\ast, B}$}
\rput[c]{0}(0.0,-0.45){\large $\Gamma$}
\rput[c]{0}(-0.3,-2.6){\large $r$}
\rput[c]{0}(-0.4,-4.0){\large $r_{sp}$}
\end{pspicture}
\end{center}
  \begin{center}
  \includegraphics[width=60mm]
  {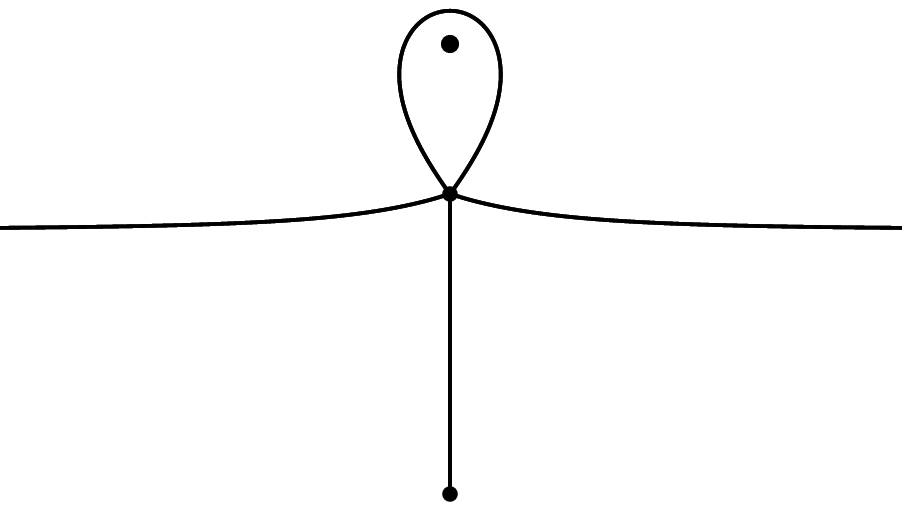} 
  \end{center} \vspace{-1.7em}
  \caption{The P-Stokes geometry of $\PIII$ with a 
  loop-type $P$-Stokes segment (described on the u-plane).}
  \label{fig:P3-Stokes}
  \end{figure}

On the other hand, Figure \ref{fig:P3-Stokes} 
depicts the $P$-Stokes geometry of $\PIII$ when 
$c=i$ by using a new variable 
\begin{equation} \label{eq:u-P3}
u=\frac{2\lambda_0(t)^2}{c\lambda_0(t)-t}
\end{equation}
of the Riemann surface 
of $\lambda_0(t)$. Since $t=-u^2(u-c)/2$, 
the quadratic differential becomes  
\begin{equation} \label{eq:quad-diff-D7}
F^{(1)}_{{\rm III'}(D_7)}(t)~dt^2 = 
{\rm quad}_{{\rm III'}(D_7)}(u,c)~du^2, \quad
{\rm quad}_{{\rm III'}(D_7)}(u,c) =
\frac{(3u-2c)^3}{u(u-c)^2}~du^2.
\end{equation}
Hence there is a one simple $P$-turning point and 
one $P$-turning point of simple-pole type 
in the $P$-Stokes geometry of $\PIII$. 
In Figure \ref{fig:P3-Stokes} we can observe 
that a $P$-Stokes segment of loop-type, 
which is denoted by $\Gamma$, appears around 
the double pole $u=c$ of \eqref{eq:quad-diff-D7}.  
It is known that such a loop appears when the residue 
of $\sqrt{{\rm quad}_{{\rm III'}(D_7)}(u,c)}~du$ 
at $u=c$ takes a pure imaginary value 
(see \cite[Section 7]{Strebel}).
Since the quadratic differential 
\eqref{eq:quad-diff-D7} satisfies 
\[
r^{-1} \sqrt{{\rm quad}_{{\rm III'}(D_7)}
(ru,rc)}~d(ru) = 
\sqrt{{\rm quad}_{{\rm III'}(D_7)}(u,c)}~du 
\]
for any $r \ne 0$, we can conclude that 
the configuration of $P$-Stokes geometry of $\PIII$ 
(described in the variable $u$ given by \eqref{eq:u-P3})
when $c \in i{\mathbb R}_{>0}$ 
is the same as in Figure \ref{fig:P3-Stokes}.
Furthermore, since ${\rm quad}_{\III}(u,c)$ is also 
invariant under $(u,c) \mapsto (-u,-c)$, the $P$-Stokes 
geometry when $c \in i \hspace{+.1em} {\mathbb R}_{<0}$ 
is the reflection $u \mapsto -u$ of Figure \ref{fig:P3-Stokes}.

  \begin{figure}[h]
  \begin{center}
\begin{pspicture}(0,0)(0,0)
\psset{fillstyle=none}
\rput[c]{0}(-4.7,-1.9){\large $a(t)$} 
\rput[c]{0}(-2.4,-2.3){\large $0$}
\rput[c]{0}(-2.9,-3.8){\large $\lambda_0(t)$}
\rput[c]{0}(-3.5,-2.6){\large $\gamma_A$}
\rput[c]{0}(+1.8,-3.6){\large $a(t)$} 
\rput[c]{0}(4.0,-3.1){\large $0$}
\rput[c]{0}(3.2,-1.7){\large $\lambda_0(t)$}
\rput[c]{0}(3.0,-2.85){\large $\gamma_B$}
\end{pspicture}
\end{center}
  \begin{minipage}{0.48\hsize}
  \begin{center}
  \includegraphics[width=50mm]{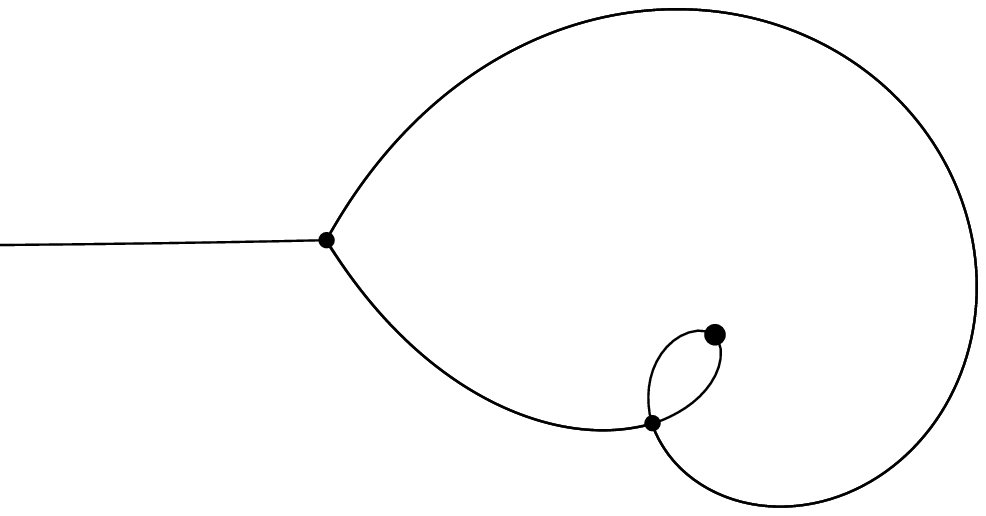} \\[-.5em]
  {($SL_{\III}$-$A$): The Stokes geometry of 
  $\SLIII$ corresponding to $u_{\ast,A}$.}
  \end{center}
  \end{minipage} \hspace{+.6em} 
  \begin{minipage}{0.48\hsize}
  \begin{center}
  \includegraphics[width=50mm]{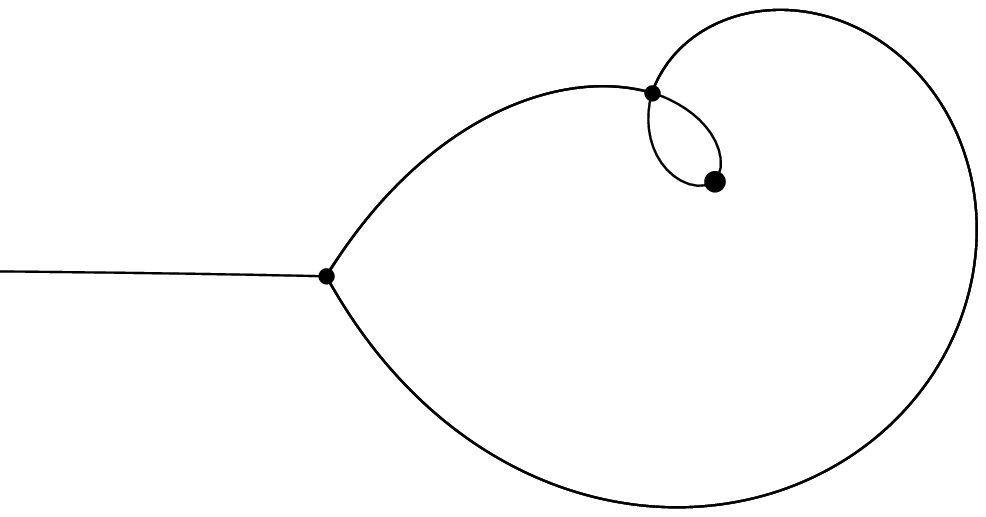} \\[-.5em]
  {($SL_{\III}$-$B$): The Stokes geometry of 
  $\SLIII$ corresponding to $u_{\ast,B}$.}
  \end{center} 
  \end{minipage}
  \caption{The Stokes geometries of $\SLIII$ on 
  the loop-type $P$-Stokes segment.}
  \label{fig:SLD7-Stokes}
  \end{figure}

Figure \ref{fig:SLD7-Stokes} ($SL_{\III}$-$A$)
(resp., ($SL_{\III}$-$B$)) 
depicts the Stokes geometry of $\SLIII$ 
when $t$ is fixed at a point $t_{\ast,A}$ 
(resp., $t_{\ast,B}$) corresponding to 
$u_{\ast,A}$ (resp., $u_{\ast,B}$) 
which lies on the loop-type $P$-Stokes segment 
$\Gamma$ in Figure \ref{fig:P3-Stokes}.
There are two Stokes segments in the Stokes 
geometry of $\SLIII$ both of which connect 
the double turning point $\lambda_0(t)$ and 
the same simple turning point $a(t)$. 
When $t$ tends to the simple $P$-turning point $r$ 
along $\Gamma$ in Figure \ref{fig:P3-Stokes}, 
one of the two Stokes segments shrinks to a point 
(cf.~Proposition \ref{prop:PStokes-and-Stokes} (i)).
In Figure \ref{fig:SLD7-Stokes} ($SL_{\III}$-$A$)
(resp., ($SL_{\III}$-$B$)) the Stokes segment 
$\gamma_A$ (resp., $\gamma_B$) shrinks to a point 
when $t$ tends to $r$ along $\Gamma$ in 
clockwise (resp., counter-clockwise) direction.
 

In Figure \ref{fig:SL2-Stokes} and 
Figure \ref{fig:SLD7-Stokes} 
we can observe common properties 
of the Stokes geometries of $\SLJ$'s 
when $t$ lies on a $P$-Stokes segment. 
Firstly, there appear two Stokes segments 
each of which connects the double turning point 
$\lambda_0(t)$ and a simple turning point. 
Secondly, these two Stokes 
segments are adjacent in the Stokes curves emanating from 
$\lambda_0(t)$. We can show that these properties are 
commonly observed for the Stokes geometry of $\SLJ$ 
when $t$ lies on a $P$-Stokes segment of $\PJ$. 

\begin{prop}\label{prop:P-saddles}
Let $r_1$ and $r_2$ be (possibly same) simple $P$-turning 
points of $\lambda_J$ which are not of simple-pole type, and 
$a_1(t)$ and $a_2(t)$ be the simple turning points of 
$(SL_{J})$ corresponding to $r_1$ and $r_2$ by Proposition 
\ref{prop:PStokes-and-Stokes} (i). 
Suppose that $r_1$ and $r_2$ are connected by 
a $P$-Stokes segment $\Gamma$, and take a point $t_{\ast}$ 
which lies on $\Gamma$ as in Figure \ref{fig:two-saddles}. 
Then, there appear two Stokes segments $\gamma_1$ and 
$\gamma_2$ in the Stokes geometry of $\SLJ$ when $t=t_{\ast}$, 
where $\gamma_1$ (resp., $\gamma_2$) connects 
$\lambda_0(t_{\ast})$ and $a_1(t_{\ast})$ 
(resp., $a_2(t_{\ast})$). 
Moreover, $\gamma_1$ and $\gamma_2$
are adjacent Stokes curves in the four Stokes curves 
emanating from $x=\lambda_0(t_{\ast})$. 
\end{prop}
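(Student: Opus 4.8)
The plan is to reduce everything to the period relation \eqref{eq:integral-relation} of Proposition \ref{prop:PStokes-and-Stokes}(ii) and to the local canonical coordinate \eqref{eq:zJ0} at the double turning point. First I would fix on a neighbourhood of $t_{\ast}$ the branch of $\sqrt{Q_{J,0}(x,t)}$ normalized as in \eqref{eq:branch-of-Qo-and-FJ1}, and apply \eqref{eq:integral-relation} to each turning point, obtaining
\[
\int_{a_i(t_{\ast})}^{\lambda_0(t_{\ast})}\sqrt{Q_{J,0}(x,t_{\ast})}\,dx
= \frac{1}{2}\int_{r_i}^{t_{\ast}}\sqrt{F_J^{(1)}(t)}\,dt
\qquad (i=1,2),
\]
where the right-hand path runs along $\Gamma$. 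Since $t_{\ast}$ lies on the $P$-Stokes segment $\Gamma$, which is a horizontal trajectory of $F_J^{(1)}(t)\,dt^2$, both right-hand integrals are real; hence both left-hand periods are real, so $\lambda_0(t_{\ast})$ and $a_i(t_{\ast})$ are joined by a horizontal trajectory of $Q_{J,0}(x,t_{\ast})\,dx^2$. Combined with the local trajectory structure near $\lambda_0(t_{\ast})$ described by the canonical model below (and matching the configurations in Figures \ref{fig:SL2-Stokes} and \ref{fig:SLD7-Stokes}), this yields the two Stokes segments $\gamma_1,\gamma_2$.

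For the adjacency, I would pass to the coordinate $z=z_0(x,t_{\ast})$ of \eqref{eq:zJ0}, which is biholomorphic near $\lambda_0(t_{\ast})$, sends $\lambda_0(t_{\ast})$ to $z=0$, and carries $Q_{J,0}\,dx^2$ to the canonical leading form $4z^2\,dz^2$ of \eqref{eq:Qcan}. In this coordinate the four Stokes curves emanating from the double turning point are exactly the four coordinate half-axes, and two of them are adjacent precisely when one is a real half-axis and the other an imaginary half-axis. Since
\[
z_0(a_i(t_{\ast}),t_{\ast})^2=\int_{\lambda_0(t_{\ast})}^{a_i(t_{\ast})}\sqrt{Q_{J,0}(x,t_{\ast})}\,dx
=-\frac{1}{2}\int_{r_i}^{t_{\ast}}\sqrt{F_J^{(1)}(t)}\,dt
\]
is real, each $a_i(t_{\ast})$ lies on a real or an imaginary half-axis according to the sign of this real number. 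Thus the entire adjacency claim collapses to the assertion that the two real numbers $\int_{r_1}^{t_{\ast}}\sqrt{F_J^{(1)}}\,dt$ and $\int_{r_2}^{t_{\ast}}\sqrt{F_J^{(1)}}\,dt$ have opposite signs, because a real half-axis and an imaginary half-axis are always adjacent, whatever the individual signs of the $z_0(a_i)$.

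It remains to establish this opposite-sign property, which I expect to be the main obstacle. Orienting $\Gamma$ and writing $\psi(t)=\int_{r_1}^{t}\sqrt{F_J^{(1)}}\,dt$ for the induced real length parameter, which increases monotonically along the horizontal trajectory $\Gamma$, I would argue as follows. When $r_1\ne r_2$ one has $\int_{r_2}^{t_{\ast}}\sqrt{F_J^{(1)}}\,dt=\psi(t_{\ast})-\psi(r_2)$ with $\psi(r_2)=\int_{r_1}^{r_2}\sqrt{F_J^{(1)}}\,dt$ the real nonzero length of $\Gamma$; since $t_{\ast}$ lies strictly between $r_1$ and $r_2$ we get $0<\psi(t_{\ast})<\psi(r_2)$, so the two integrals straddle zero. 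When $r_1=r_2$ (the loop case, where $a_1=a_2$ and $\gamma_1,\gamma_2$ are the two distinct segments), the two integrals correspond to the two ways of joining $r$ to $t_{\ast}$ around $\Gamma$, and their difference equals $2\pi i$ times the residue of $\sqrt{F_J^{(1)}(t)}\,dt$ at the enclosed double pole; using \eqref{eq:resSodd=res-1} and Table \ref{table:residues}, which render this residue a pure imaginary constant in the degenerate regime $c\in i\,{\mathbb R}_{\ne 0}$, the difference is a nonzero real number, and the monotone parameter again forces the two integrals to have opposite signs. The two points demanding care are (i) the simultaneous consistency, for $a_1$ and $a_2$, of the branch \eqref{eq:branch-of-Qo-and-FJ1} of $\sqrt{Q_{J,0}}$ together with the orientation of $\Gamma$, so that \eqref{eq:integral-relation} may be applied with matching signs; and (ii) in the loop case, the identification of the $t$-plane period of $F_J^{(1)}\,dt^2$ with the $x$-plane residue of $\sqrt{Q_{J,0}}\,dx$ at the corresponding singular point of $\SLJ$, which is precisely what Proposition \ref{prop:residue} and \eqref{eq:resSodd=res-1} are designed to provide.
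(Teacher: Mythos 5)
Your proof is correct and follows essentially the same route as the paper's: both arguments reduce the adjacency claim to the statement that the two real numbers $\int_{r_k}^{t_\ast}\sqrt{F_J^{(1)}(t)}\,dt$ ($k=1,2$) have opposite signs (the paper phrases the reduction through $v_{J,k}=\phi_{J,k}/2$ and the sign pattern of the four Stokes rays at the double turning point, you through the canonical coordinate $z_0$ of \eqref{eq:zJ0} --- the criterion is identical), and both deduce the opposite signs from monotonicity of $\mathrm{Re}\int\sqrt{F_J^{(1)}}\,dt$ along the $P$-Stokes segment. The only divergence is your separate treatment of the loop case $r_1=r_2$ via residues, which is unnecessary and imprecisely sourced (Table \ref{table:residues} lists $x$-plane residues of $S_{J,{\rm odd}}\,dx$, not $t$-plane residues of $\sqrt{F_J^{(1)}}\,dt$): since $\Gamma$ is itself a closed horizontal trajectory, the period $\oint_{\Gamma}\sqrt{F_J^{(1)}(t)}\,dt$ is automatically real and nonzero, so the same monotonicity argument covers both cases uniformly, exactly as in the paper's proof, which does not distinguish them.
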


\begin{figure}[h]
\begin{center}
\begin{pspicture}(0.5,7.5)(14,9.0)
%
\psset{fillstyle=solid, fillcolor=black}
\pscircle(5,8){0.06} 
\pscircle(8,8){0.06} 
\psset{fillstyle=none}
\rput[c]{0}(7,7.7){$\Gamma$}
\rput[c]{0}(6.5,8){$\times$}
\rput[c]{0}(4.3,8){\large $r_1$}
\rput[c]{0}(8.6,8){\large $r_2$}
\rput[c]{0}(6.5,8.5){\large $t_{\ast}$}
\psset{linewidth=1pt}
\psline(5,8)(8,8)
\psline(5,8)(4.2,8.6)
\psline(5,8)(4.2,7.4)
\psline(5,8)(5.3,8.8)
\psline(5,8)(5.3,7.2)
\psline(8,8)(8.8,8.6)
\psline(8,8)(8.8,7.4)
\psline(8,8)(7.7,8.8)
\psline(8,8)(7.7,7.2)
\end{pspicture}
\end{center}
\caption{A $P$-Stokes segment $\Gamma$ and 
two simple $P$-turning points $r_1$ and $r_2$.} 
\label{fig:two-saddles}
\end{figure}

\begin{proof}
Since $t_{\ast}$ lies on $P$-Stokes curves emanating 
from $r_1$ and $r_2$ simultaneously, it follows from 
Proposition \ref{prop:PStokes-and-Stokes}
that the double turning point $x=\lambda_0(t)$ lies on 
both Stokes curves emanating from $a_1(t)$ and $a_2(t)$ 
when $t=t_{\ast}$. 
Hence, in the Stokes geometry of $\SLJ$ there are 
two Stokes segments $\gamma_1$ and $\gamma_2$ 
which connects $\lambda_0(t_{\ast})$ and 
$a_{1}(t_{\ast})$ and $a_2(t_{\ast})$, respectively.
Thus the following two cases (i) and (ii) in Figure 
\ref{fig:candidates} may possibly occur: 
In the case (i) (resp., (ii)) 
$\gamma_1$ and $\gamma_2$ are adjacent 
(resp., opposite) Stokes curves which emanate from 
$\lambda_0$. However, the case (ii) does not happen in 
our assumption, due to the following reason. 

For $k=1,2$, set 
\begin{eqnarray}
\label{eq:phi-Jk}
\phi_{J,k}(t) & = & \int_{r_k}^t \sqrt{F^{(1)}_J(t)}~dt, \\ 
{v}_{J,k}(t) & = & \int_{a_k(t)}^{\lambda_0(t)}
\sqrt{Q_{J,0}(x,t)}~dx. \label{eq:v-Jk} 
\end{eqnarray}
Then, Proposition \ref{prop:PStokes-and-Stokes} (ii) 
implies that $v_{J,k}(t) = \phi_{J,k}(t)/2$ ($k=1,2$). 
The real parts of $\phi_{J,1}(t_{\ast})$ and 
$\phi_{J,2}(t_{\ast})$ 
have different sign from each other since the real parts 
are monotonously increasing or decreasing along 
$P$-Stokes curves. 
Thus the real parts of $v_{J,1}(t_{\ast})$ and 
$v_{J,2}(t_{\ast})$ also have different signs.
Therefore, the case (ii) in Figure \ref{fig:candidates} 
never happens and only the case (i) appears. 
\end{proof} 

\begin{figure}[h]
\begin{center}
\begin{pspicture}(0,0.7)(11,3.3)
\psset{fillstyle=solid, fillcolor=black}
\pscircle(2.5,2.2){0.08} 
\pscircle(8,2){0.08} 
\psset{fillstyle=none}
\rput[c]{0}(2.5,3.2){$\lambda_0(t_{\ast})$}
\rput[c]{0}(1.5,1.7){\large $\gamma_1$}
\rput[c]{0}(3.5,1.7){\large $\gamma_2$} 
\rput[c]{0}(2.5,0.5){(i)} %
\psset{linewidth=1pt}
\psline(1,0.7)(3.5,3.2)
\psline(4,0.7)(1.5,3.2)
\rput[c]{0}(7.4,2.5){$\lambda_0(t_{\ast})$}
\rput[c]{0}(6.7,1.6){\large $\gamma_1$}
\rput[c]{0}(9.4,1.6){\large $\gamma_2$} 
\rput[c]{0}(8.0,0.5){(ii)} %
\psset{linewidth=1pt}
\psline(6.5,2)(9.5,2)
\psline(8,3)(8,1)
\end{pspicture}
\end{center}
\caption{Two candidates of Stokes segments.} 
\label{fig:candidates}
\end{figure}

Proposition \ref{prop:P-saddles} implies that 
there are the following two possibilities for the geometric 
type of the Stokes geometry of $\SLJ$ when $t_{\ast}$ lies 
on a $P$-Stokes segments
(cf.~Figure \ref{fig:SLJ-Stokes-with-two-saddles}).
\begin{itemize}
\item[(a)] The double turning point $\lambda_0(t_{\ast})$ 
is connected with {\em different} simple turning points 
by two Stokes segments. This case is observed in Figure 
\ref{fig:SL2-Stokes}. 

\item[(b)] The double turning point 
$\lambda_0(t_{\ast})$ is connected with the {\em same} 
simple turning point by two Stokes segments. 
This case is observed in Figure 
\ref{fig:SLD7-Stokes}.
\end{itemize}

\begin{figure}[h]
\begin{center}
\begin{pspicture}(0.5,2.0)(10,5.5)
%
\psset{fillstyle=solid, fillcolor=black}
\pscircle(3.0,4.0){0.08} 
\pscircle(1.5,2.5){0.07}
\pscircle(4.5,2.5){0.07}
\pscircle(7.5,4.5){0.08} 
\pscircle(7.5,2.50){0.07}
\psset{fillstyle=none}
\rput[c]{0}(3.0,4.8){\large $\lambda_0(t_{\ast})$}
\rput[c]{0}(3.0,1.8){(a)} 
\psset{linewidth=1pt}
\pscurve(1.5,2.5)(3,4)(3.5,4.4)
\pscurve(4.5,2.5)(3,4)(2.5,4.4)
\rput[c]{0}(7.5,5.2){\large $\lambda_0(t_{\ast})$}
\rput[c]{0}(7.5,1.8){(b)} 
\psset{linewidth=1pt}
\pscurve(7.5,2.5)(6.8,3)(6.7,3.5)
\pscurve(7.5,2.5)(8.2,3)(8.3,3.5)
\pscurve(6.7,3.5)(6.8,3.8)(7.5,4.5)
\pscurve(8.3,3.5)(8.2,3.8)(7.5,4.5)
\pscurve(7.5,4.5)(7.9,4.8)(8.,4.85)
\pscurve(7.5,4.5)(7.1,4.8)(7.0,4.85)
%
\end{pspicture}
\end{center}
\caption{Two Stokes segments in the Stokes geometry of $\SLJ$.} 
\label{fig:SLJ-Stokes-with-two-saddles}
\end{figure}

The following fact will be used in the proof 
of our main results.
\begin{lem}\label{lemma:integral-equality}
In the same situation of Proposition \ref{prop:P-saddles}, 
we have 
\begin{equation} \label{eq:integrals-for-saddle-connections}
\int_{a_1(t)}^{a_2(t)}\sqrt{Q_{J,0}(x,t)}~dx = 
\frac{1}{2} \int_{r_1}^{r_2} \sqrt{F^{(1)}_J(t)}~dt.
\end{equation}
Here the path of integral in the left-hand side is 
taken along a composition of two Stokes segments 
$\gamma_1$ and $\gamma_2$ in the Stokes geometry 
of $\SLJ$, while that in the right-hand side 
is taken along the $P$-Stokes segment $\Gamma$. 
\end{lem}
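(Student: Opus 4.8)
The plan is to reduce the identity \eqref{eq:integrals-for-saddle-connections} to the single–turning–point relation of Proposition \ref{prop:PStokes-and-Stokes}(ii) by exploiting additivity of contour integrals together with the fact that the common endpoint $\lambda_0(t)$ is a \emph{double} turning point. First I would recall from \eqref{eq:Q0} that $Q_{J,0}(x,t)=C_J(x,t)^2(x-\lambda_0(t))^2R_J(x,t)$, so that under the branch convention \eqref{eq:branch-of-Qo-and-FJ1} the integrand $\sqrt{Q_{J,0}(x,t)}=C_J(x,t)(x-\lambda_0)\sqrt{R_J(x,t)}$ is \emph{single-valued and holomorphic} in a neighborhood of $x=\lambda_0(t)$ (the double zero becomes a simple zero of the square root, not a branch point). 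The branch points of the integrand are exactly the simple turning points, in particular $a_1(t)$ and $a_2(t)$, which are the endpoints of the path. Hence the integral along the composite path $\gamma_1\cup\gamma_2$ from $a_1(t)$ through $\lambda_0(t)$ to $a_2(t)$ is computed with one and the same analytic branch, namely the branch fixed by $\sqrt{R_J(\lambda_0(t),t)}=\sqrt{F_J^{(1)}(t)}$ that also underlies the definitions \eqref{eq:v-Jk} of $v_{J,1}$ and $v_{J,2}$.

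Granting this, the computation is immediate. Splitting the path at $\lambda_0(t)$ and using additivity,
\begin{equation}
\int_{a_1(t)}^{a_2(t)}\sqrt{Q_{J,0}(x,t)}~dx
= \int_{a_1(t)}^{\lambda_0(t)}\sqrt{Q_{J,0}}~dx
+ \int_{\lambda_0(t)}^{a_2(t)}\sqrt{Q_{J,0}}~dx
= v_{J,1}(t) - v_{J,2}(t),
\end{equation}
where the second term equals $-v_{J,2}(t)$ because reversing the orientation of $\gamma_2$ relative to its definition flips the sign. By the identity $v_{J,k}(t)=\phi_{J,k}(t)/2$ already established in the proof of Proposition \ref{prop:P-saddles} (which is Proposition \ref{prop:PStokes-and-Stokes}(ii) applied at $r_k$), this equals $\tfrac12(\phi_{J,1}(t)-\phi_{J,2}(t))$. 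Finally, an analogous additivity on the Riemann surface of $\lambda_0$ — valid because the interior point $t$ of $\Gamma$ is a regular point of $\sqrt{F_J^{(1)}}$ while only $r_1,r_2$ are its branch points — gives $\phi_{J,1}(t)-\phi_{J,2}(t)=\int_{r_1}^t\sqrt{F_J^{(1)}}\,dt-\int_{r_2}^t\sqrt{F_J^{(1)}}\,dt=\int_{r_1}^{r_2}\sqrt{F_J^{(1)}}\,dt$, taken along $\Gamma$, which yields the claimed equality.

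The one genuinely delicate point, which I would treat carefully, is the consistency of branches: I must check that the branch of $\sqrt{Q_{J,0}}$ continued from $a_1(t)$ to $\lambda_0(t)$ along $\gamma_1$ agrees, at $\lambda_0(t)$, with the branch continued from $a_2(t)$ along $\gamma_2$, so that the two pieces glue into a single integral over $\gamma_1\cup\gamma_2$ with no spurious monodromy. This is exactly where the hypotheses of Proposition \ref{prop:P-saddles} enter: the \emph{adjacency} of $\gamma_1$ and $\gamma_2$ among the four Stokes curves issuing from $\lambda_0(t)$, together with the sign analysis there showing that $\mathrm{Re}\,v_{J,1}(t_{\ast})$ and $\mathrm{Re}\,v_{J,2}(t_{\ast})$ have opposite signs, guarantees both that the composite path is the geometrically correct one and that the combination $v_{J,1}-v_{J,2}$ (rather than $v_{J,1}+v_{J,2}$) is the one that appears. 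I expect no other obstacle; once the branch bookkeeping is settled, the statement is a one-line consequence of Proposition \ref{prop:PStokes-and-Stokes}(ii).
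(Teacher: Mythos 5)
Your proof is correct and follows essentially the same route as the paper's: the paper's argument is precisely the identity $v_{J,1}(t)-v_{J,2}(t)=\bigl(\phi_{J,1}(t)-\phi_{J,2}(t)\bigr)/2$ obtained from Proposition \ref{prop:PStokes-and-Stokes}(ii), combined with splitting the path $\gamma_1\cup\gamma_2$ at the double turning point $\lambda_0(t)$. Your additional discussion of branch consistency at $\lambda_0(t)$ (no branch point there, since it is a double zero of $Q_{J,0}$) is a point the paper leaves implicit, but it does not change the substance of the argument.
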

\begin{proof}
Let $\phi_{J,k}(t)$ and $v_{J,k}(t)$ be functions 
defined in \eqref{eq:phi-Jk} and \eqref{eq:v-Jk}. 
Since $v_{J,k}(t) = \phi_{J,k}(t)/2$ holds for $k=1,2$, 
we have $v_{J,1}(t) - v_{J,2}(t) = 
(\phi_{J,1}(t) - \phi_{J,2}(t))/2$. 
This shows the desired relation.
\end{proof}

Lemma \ref{lemma:integral-equality} entails that 
the integral of $\sqrt{Q_{J,0}(x,t)}~dx$ appearing 
\eqref{eq:integrals-for-saddle-connections} does not 
depend on $t$. Generally, the integral of 
$S_{J,{\rm odd}}(x,t,\eta)$ along a closed cycle 
in the Riemann surface of $\sqrt{Q_{J,0}(x,t)}$ is independent 
of $t$ by \eqref{eq:t-derivative-of-Sodd}.
Especially, from the equality 
\eqref{eq:integrals-for-saddle-connections} 
and Table \ref{table:residues} we can show the following. 
\begin{lem} \label{lem:integral-P2-P3}
\begin{enumerate}[\upshape (i)]
\item For $J = {\rm II}$, we have
\begin{equation} \label{eq:integral-P2}
\int_{r_1}^{r_2}\sqrt{F^{(1)}_{\rm II}(t)}~dt =  
\pm 2\pi i c
\end{equation}
when $c \in i \hspace{+.1em} {\mathbb R}_{\ne 0}$.
Here $r_1$ and $r_2$ are two simple $P$-turning points 
of $\PII$ connected by a $P$-Stokes segment, and the path 
of integral is taken along the $P$-Stokes segment.
The sign $\pm$ depends on the branch of square root. %
\item For $J = {\rm III'}(D_7)$, we have
\begin{equation} \label{eq:integral-D7}
\int_{\Gamma}\sqrt{F^{(1)}_{{\rm III'}(D_7)}(t)}~dt = 
\pm  2 \pi i c
\end{equation}
when $c \in i \hspace{+.1em} {\mathbb R}_{\ne 0}$.
Here the path of integral is taken along the loop-type 
$P$-Stokes segment $\Gamma$ depicted in Figure 
\ref{fig:P3-Stokes}.
The sign $\pm$ depends on the branch of square root. %
\end{enumerate}
\end{lem}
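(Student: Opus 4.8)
The plan is to turn each $t$-integral of $\sqrt{F_J^{(1)}}$ into a closed-cycle integral of $\sqrt{Q_{J,0}}$ on the Riemann surface of $\sqrt{Q_{J,0}(x,t)}$ and then evaluate the latter by the residue theorem, feeding in Table \ref{table:residues} through \eqref{eq:resSodd=res-1}. The common starting point is Lemma \ref{lemma:integral-equality}, which already identifies $\tfrac12\int\sqrt{F_J^{(1)}}\,dt$ (along the $P$-Stokes segment) with $\int\sqrt{Q_{J,0}}\,dx$ taken along the composite Stokes segment $\gamma_1\cup\gamma_2$. Since $\sqrt{Q_{J,0}}$ is the top term of $\eta^{-1}S_{J,{\rm odd}}$ and the integral of $S_{J,{\rm odd}}$ over any closed cycle is $t$-independent by \eqref{eq:t-derivative-of-Sodd}, the value is pinned down purely by the residues in Table \ref{table:residues}.

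For part (i) we have $r_1\ne r_2$ and $a_1\ne a_2$, so the path $a_1\to\lambda_0(t_{\ast})\to a_2$ along $\gamma_1\cup\gamma_2$ is not closed on the $x$-plane. I would close it into a cycle $C$ on the Riemann surface by returning from $a_2$ to $a_1$ on the other sheet; because $a_1,a_2$ are simple turning points, i.e.\ branch points of $\sqrt{Q_{{\rm II},0}}$, the return traversal reproduces the forward value, so $\oint_C\sqrt{Q_{{\rm II},0}}\,dx = 2\int_{a_1}^{a_2}\sqrt{Q_{{\rm II},0}}\,dx = \int_{r_1}^{r_2}\sqrt{F^{(1)}_{\rm II}}\,dt$, the last step by Lemma \ref{lemma:integral-equality}. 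The surface $y^2=Q_{{\rm II},0}(x,t)$ is a genus-$0$ curve whose only poles of $y\,dx$ lie over $x=\infty$, and $C$ encircles exactly one of the two points over $x=\infty$; hence the residue theorem gives $\oint_C\sqrt{Q_{{\rm II},0}}\,dx = \pm 2\pi i\,\Res_{x=\infty}\sqrt{Q_{{\rm II},0}}\,dx$. Finally \eqref{eq:resSodd=res-1} together with ${\rm Res}(SL_{\rm II},\infty)=c\eta$ from Table \ref{table:residues} yields $\Res_{x=\infty}\sqrt{Q_{{\rm II},0}}\,dx=c$, whence $\int_{r_1}^{r_2}\sqrt{F^{(1)}_{\rm II}}\,dt=\pm 2\pi i c$.

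For part (ii) the loop case is analogous, but now $r_1=r_2=r$ and $a_1=a_2=a$, so the path $a\to\lambda_0(t_{\ast})\to a$ along $\gamma_A\cup\gamma_B$ is already closed and, as in Figure \ref{fig:SLD7-Stokes}, encircles the singular point $x=0$ once. By Lemma \ref{lemma:integral-equality} with coincident endpoints, this loop integral equals $\tfrac12\int_{\Gamma}\sqrt{F^{(1)}_{\III}}\,dt$. Since $x=0$ is an even (fourth) order pole of $Q_{\III,0}$ it is not a branch point, so the lift of the loop winds exactly once around one point over $x=0$, and the residue theorem gives the loop integral $=\pm 2\pi i\,\Res_{x=0}\sqrt{Q_{\III,0}}\,dx$. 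Using \eqref{eq:resSodd=res-1} and ${\rm Res}(SL_{\III},0)=\tfrac{c\eta}{2}$ from Table \ref{table:residues}, this residue is $c/2$, so $\tfrac12\int_{\Gamma}\sqrt{F^{(1)}_{\III}}\,dt=\pm\pi i c$, i.e.\ $\int_{\Gamma}\sqrt{F^{(1)}_{\III}}\,dt=\pm 2\pi i c$. In both parts the hypothesis $c\in i{\mathbb R}_{\ne 0}$ is precisely what guarantees the existence of the relevant $P$-Stokes segment (cf.\ Section \ref{section:degeneration-of-PStokes-geometry}).

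The main obstacle I anticipate is bookkeeping rather than substance: tracking the sheet of the Riemann surface along $\gamma_1\cup\gamma_2$, and hence the branch of $\sqrt{Q_{J,0}}$ fixed by \eqref{eq:branch-of-Qo-and-FJ1}, so that the ``doubling'' in part (i) genuinely produces twice the forward integral and the loop in part (ii) winds exactly once around $x=0$. The orientation of the cycle and the choice of square-root branch affect only an overall sign, which is absorbed into the $\pm$ in the statement, so no delicate cancellation is required; the entire content lies in the homology class of the cycle and the residue values already recorded in Table \ref{table:residues}.
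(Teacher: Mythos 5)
Your proposal is correct and follows essentially the same route as the paper: the paper likewise invokes Lemma \ref{lemma:integral-equality}, closes the path $\gamma_1\cup\gamma_2$ into the cycle $\delta$ on the Riemann surface of $\sqrt{Q_{J,0}}$ (Figure \ref{fig:delta}), and evaluates it by the residue theorem using \eqref{eq:resSodd=res-1} and Table \ref{table:residues}, i.e.\ $\Res_{x=\infty}\sqrt{Q_{\rm II,0}}\,dx=\pm c$ for (i) and $\Res_{x=0}\sqrt{Q_{\III,0}}\,dx=\pm c/2$ for (ii). Your write-up merely makes explicit the sheet bookkeeping and the loop-case details that the paper compresses into ``in the same manner.''
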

\begin{figure}
\begin{center}
\begin{pspicture}(0,1)(7,6)
%
\psset{fillstyle=solid, fillcolor=black}
\pscircle(3.5,4.4){0.09} 
\pscircle(1.5,2.5){0.08}
\pscircle(5.5,2.5){0.08}
\psset{fillstyle=none}
\psset{linewidth=1.5pt}
\pscurve(5.5,2.5)(5.45,2.4)(5.3,2.5)(5.2,2.6)(5.1,2.5)
(5.0,2.4)(4.9,2.5)(4.8,2.6)(4.7,2.5)(4.6,2.4)(4.5,2.5)
(4.4,2.6)(4.3,2.5)(4.2,2.4)(4.1,2.5)(4,2.6)(3.9,2.5)
(3.8,2.4)(3.7,2.5)(3.6,2.6)(3.5,2.5)(3.4,2.4)(3.3,2.5)
(3.2,2.6)(3.1,2.5)(3,2.4)(2.9,2.5)(2.8,2.6)(2.7,2.5)
(2.6,2.4)(2.5,2.5)(2.4,2.6)(2.3,2.5)(2.2,2.4)(2.1,2.5)
(2,2.6)(1.9,2.5)(1.8,2.4)(1.7,2.5)(1.6,2.55)(1.5,2.5)
\rput[c]{0}(3.5,5.3){\large $\lambda_0(t_{\ast})$}
\rput[c]{0}(1.0,1.5){\large $a_{1}(t_{\ast})$}
\rput[c]{0}(6.0,1.5){\large $a_{2}(t_{\ast})$} 
\rput[c](2.0,4.1){\Large $\delta$} 
\psset{linewidth=0.2pt}
\pscurve(1.5,2.5)(3,4)(5,5.5)
\pscurve(5.5,2.5)(4,4)(2,5.5)
\pscurve(1.5,2.5)(1,2.7)(0,3) 
\pscurve(5.5,2.5)(6,2.7)(7,3)
\pscurve(1.5,2.5)(1.6,2)(1.8,1)
\pscurve(5.5,2.5)(5.4,2)(5.2,1)
\psset{linewidth=2.0pt}
\pscurve(3.2,4.6)(3.5,4.75)(3.8,4.6)
\pscurve(1,2.5)(2.5,4)(3.2,4.6)
\pscurve(6,2.5)(4.5,4)(3.8,4.6)
\pscurve(1,2.5)(0.9,2.2)(1.1,2)
\pscurve(6,2.5)(6.1,2.2)(5.9,2)
\pscurve(1.05,2.03)(1.28,1.95)(1.62,2.22)
\pscurve(5.95,2.03)(5.72,1.96)(5.38,2.22)
\pscurve(1.6,2.2)(1.7,2.3)(2.0,2.6)
\pscurve(5.4,2.2)(5.3,2.3)(5.1,2.5)
\psline(4.7,3.8)(4.35,3.8)
\psline(4.7,3.8)(4.7,4.15)
\psset{linewidth=2.0pt,linestyle=dashed}
\pscurve(3.0,3.6)(3.5,3.95)(4.0,3.6) 
\pscurve(2.0,2.6)(2.5,3.1)(2.9,3.5) 
\pscurve(5.0,2.6)(4.5,3.1)(4.1,3.5) 
%
\end{pspicture}
\end{center}
\caption{The cycle $\delta$.} 
\label{fig:delta}
\end{figure}
\begin{proof}
We prove \eqref{eq:integral-P2}. 
Let $t_{\ast}$ be a point on the $P$-Stokes segment 
connecting $r_1$ and $r_2$, and $a_1(t)$ and 
$a_2(t)$ be the simple turning points 
of $\SLII$ which correspond to $r_1$ and $r_2$ by 
Proposition \ref{prop:PStokes-and-Stokes} (i).
Then, we have 
\[
\int_{r_1}^{r_2}\sqrt{F^{(1)}_{\rm II}(t)}~dt =  
2 \int_{a_1(t_{\ast})}^{a_2(t_{\ast})} 
\sqrt{Q_{\rm II,0}(x,t_{\ast})}~dx
\]
by \eqref{eq:integrals-for-saddle-connections}. 
The integral of $\sqrt{Q_{\rm II,0}(x,t_{\ast})}~dx$ can 
be written as 
\[
2 \int_{a_1(t_{\ast})}^{a_2(t_{\ast})} 
\sqrt{Q_{\rm II,0}(x,t_{\ast})}~dx = 
\oint_{\delta} \sqrt{Q_{\rm II,0}(x,t_{\ast})}~dx, 
\]
where $\delta$ is a closed cycle in the Riemann surface
of $\sqrt{Q_{\rm II,0}(x,t_{\ast})}$ described 
in Figure \ref{fig:delta}. 
The wiggly line in Figure \ref{fig:delta} represents 
the branch cut to determine the branch of 
$\sqrt{Q_{\rm II,0}(x,t_{\ast})}$, 
and solid and dashed line represents the path on 
the first and the second sheet of the Riemann surface 
of $\sqrt{Q_{\rm II,0}(x,t_{\ast})}$, respectively.
Since the 1-from $\sqrt{Q_{\rm II,0}(x,t_{\ast})}~dx$ 
has no other singular point other than 
$x=a_1(t_{\ast})$, $a_2(t_{\ast})$ 
and $\infty$, we have
\begin{equation} \label{eq:P2-SL2-residue}
\oint_{\delta} \sqrt{Q_{\rm II,0}(x,t_{\ast})}~dx = 
2 \pi i \Res_{x=\infty} 
\sqrt{Q_{\rm II,0}(x,t_{\ast})}~dx = 
\pm  2 \pi i c.
\end{equation}
Here we have used \eqref{eq:resSodd=res-1} 
and Table \ref{table:residues} of residues.
Thus we have proved \eqref{eq:integral-P2}. 
The equality \eqref{eq:integral-D7} 
can be proved in the same manner by using 
the following fact:  
\begin{equation} \label{eq:PD7-SLD7-residue}
\Res_{x=0} \sqrt{Q_{{\rm III}(D_7),0}(x,t)}~dx = 
\pm \frac{c}{2}.
\end{equation}
\end{proof}

\section{WKB theoretic transformation to 
$\PII$ on $P$-Stokes segments} 
\label{section:main-results}

Here we show our main claims concerning with WKB 
theoretic transformations between 
Painlev\'e transcendents on $P$-Stokes segments. 
Since we simultaneously deal with two different 
Painlev\'e equations $\PJ$ and $\PII$, 
in this section we put symbol $\sim$ over variables 
or functions relevant to $\PJ$ and $\SLJ$ 
in order to avoid confusions. 

\subsection{Assumptions and statements}

Let $(\tilde{\lambda}_J, \tilde{\nu}_J) = $ 
$(\tilde{\lambda}_J(\tt,\eta;\talpha,\tbeta), 
\tilde{\nu}_J(\tt,\eta;\talpha,\tbeta))$ 
be a 2-parameter solution of $(H_J)$ 
defined in a neighborhood of a point 
$\tt_{\ast} \in \Omega_J$, and consider 
$\SLJ$ and $\DJ$ with 
$(\tilde{\lambda}_J, \tilde{\nu}_J)$ 
substituted into their coefficients. 
Here we assume the following conditions.
\begin{ass} \label{ass:transform-to-P2}
\begin{enumerate}[\upshape (1)]
\item %
$J \in \{{\rm II}, {{\rm III'}(D_6)}, 
{\rm IV}, {\rm V}, {\rm VI} \}$. %
\item %
There is a $P$-Stokes segment $\tilde{\Gamma}$ 
in the $P$-Stokes geometry of $\PJ$ which connects 
two {\em different} simple $P$-turning points 
$\tilde{r}_1$ and $\tilde{r}_2$ of $\tlambda_J$ 
(which are not simple-pole type), and 
the point $\tt_{\ast}$ in question lies on $\tilde{\Gamma}$. %
\item %
The function \eqref{eq:phaseJ} appearing 
in the instanton $\tPhi_J(\tt,\eta)$ of the 2-parameter solution 
$(\tilde{\lambda}_J, \tilde{\nu}_J)$ is normalized 
at the simple $P$-turning point $\tr_1$ as 
\begin{equation} \label{eq:tlidephiJ}
\tphi_J(\tt) = \int_{\tr_1}^{\tt}\sqrt{\tF_J^{(1)}(\tt)}~d\tt.
\end{equation}
\item %
The Stokes geometry of $\SLJ$ at $\tt=\tt_{\ast}$ 
contains the same configuration as in Figure 
\ref{fig:SLJ-Stokes-with-two-saddles} (a). 
That is, the double turning point $\tlambda_0(\tt_{\ast})$ 
is connected with two {\em different} simple turning points 
$\tilde{a}_1(\tt_{\ast})$ and $\tilde{a}_2(\tt_{\ast})$
by two Stokes segments $\tilde{\gamma}_{1}$ 
and $\tilde{\gamma}_{2}$, respectively. 
Here the labels of the simple turning points 
and the Stokes segments are assigned by 
the following rule: When $\tt$ tends to $\tr_1$ 
(resp., $\tr_2$) along $\tGamma$, $\ta_1(\tt)$ 
(resp., $\ta_2(\tt)$) merges with $\tlambda_0(\tt)$
(cf.~Proposition \ref{prop:PStokes-and-Stokes}). %
\item %
All singular points of 
$\tQ_{J,0}(\tx,\tt_{\ast})$ (as a function of $\tx$)
are poles of {\em even} order. %
\end{enumerate}
\end{ass} %
Since the $P$-Stokes geometry for $J = {\rm I}$, 
${\rm III'}(D_7)$ and ${\rm III'}(D_8)$ never 
contains a $P$-Stokes segment connecting 
two different simple $P$-turning points,  
we have excluded these cases. 
One of our main results below claims that, 
under Assumption \ref{ass:transform-to-P2}
we can construct a formal transformation 
series defined on a neighborhood of the union 
$\tilde{\gamma}_1 \cup \tilde{\gamma}_2$ 
of two Stokes segments that brings  
$\SLJ$ to $\SLII$ with an appropriate 
2-parameter solution $(\lambda_{\rm II},\nu_{\rm II})$ of 
$(H_{\rm II})$ being substituted into $(\lambda,\nu)$
in ($SL_{\rm II}$), in the following sense. 

First, we fix the constant $c$ contained in $\PII$ 
and $\SLII$ by 
\begin{equation} \label{eq:c-P2}
c = \frac{1}{2\pi i} \int_{\tr_1}^{\tr_2}
\sqrt{\tF_{J}^{(1)}(\tt)}~d\tt, 
\end{equation}
where the path of integral is taken along the 
$P$-Stokes segment $\tGamma$. 
Since the function \eqref{eq:tlidephiJ}
is monotone and takes real values along $\tGamma$, 
the constant $c$ determined by \eqref{eq:c-P2} is 
non-zero and pure-imaginary. 
Here we assume that the imaginary part of $c$ is positive;
$c \in i \hspace{+.1em} {\mathbb R}_{>0}$. 
Then, the geometric configuration of $P$-Stokes geometry 
of $\PII$ (described in the variable $u$ given 
by \eqref{eq:u-P2}) when $c$ is given by 
\eqref{eq:c-P2} is the same as 
Figure \ref{fig:P2-and-SL2-Stokes-geometry} (P).
Thus, the $P$-Stokes geometry of $\PII$ has three simple 
$P$-turning points, and three $P$-Stokes segments 
appear simultaneously. (As is remarked in 
Section \ref{section:degeneration-of-PStokes-geometry}, 
when $c \in i \hspace{+.1em} {\mathbb R}_{<0}$, 
the $P$-Stokes geometry of $\PII$ is the reflection 
$u \mapsto -u$ of 
Figure \ref{fig:P2-and-SL2-Stokes-geometry} (P). 
Our discussion below is also applicable to the case 
of $c \in i \hspace{+.1em} {\mathbb R}_{<0}$.) 
Furthermore, we can verify that the corresponding 
Stokes geometry of $\SLII$ on a $P$-Stokes segment 
is of the same type as in 
Figure \ref{fig:P2-and-SL2-Stokes-geometry} (SL).
That is, when we take any point $t_{\ast}$ 
on a $P$-Stokes segment of $\PII$, 
say $\Gamma$ depicted in 
Figure \ref{fig:P2-and-SL2-Stokes-geometry} (P), 
then the corresponding Stokes geometry 
of $\SLII$ has one double turning point at 
$x=\lambda_0(t_{\ast})$ and two simple turning points 
$x=a(t_{\ast})$ and $a'(t_{\ast})$, 
and there are two Stokes segments 
$\gamma$ and $\gamma'$ by which 
$\lambda_0(t_{\ast})$ is connected with 
these simple turning points. 
Note that $a(t)$ (resp., $a'(t)$) merges with 
$\lambda_0(t)$ as $t$ tends to $r$ (resp., $r'$)
along the $P$-Stokes segment $\Gamma$.


  \begin{figure}[h]
  \begin{center}
\begin{pspicture}(0,0)(0,0)
\psset{fillstyle=none}
\rput[c]{0}(-3.7,-1.65){$\times$} 
\rput[c]{0}(-3.7,-3.0){\large $0$}
\rput[c]{0}(-3.7,-1.2){\large $t_{\ast}$}
\rput[c]{0}(-5.1,-1.8){\large $r$}
\rput[c]{0}(-1.7,-1.7){\large $r'$}
\rput[c]{0}(-2.8,-1.4){$\Gamma$}
\rput[c]{0}(1.0,-3.1){\large $a(t_{\ast})$}
\rput[c]{0}(5.0,-3.5){\large $a'(t_{\ast})$}
\rput[c]{0}(2.7,-0.8){\large $\lambda_0(t_{\ast})$}
\rput[c]{0}(1.9,-2.0){\large $\gamma$}
\rput[c]{0}(3.6,-2.1){\large $\gamma'$}
\end{pspicture}
\end{center}
  \begin{minipage}{0.45\hsize}
  \begin{center}
  \includegraphics[width=50mm]{P2-saddle-u.eps} \\[-.2em]
  {(P): $P$-Stokes geometry of $\PII$ 
  (described on the $u$-plane).}
  \end{center}
  \end{minipage} \hspace{+1.5em}
  \begin{minipage}{0.45\hsize}
  \begin{center}
  \includegraphics[width=47mm]
  {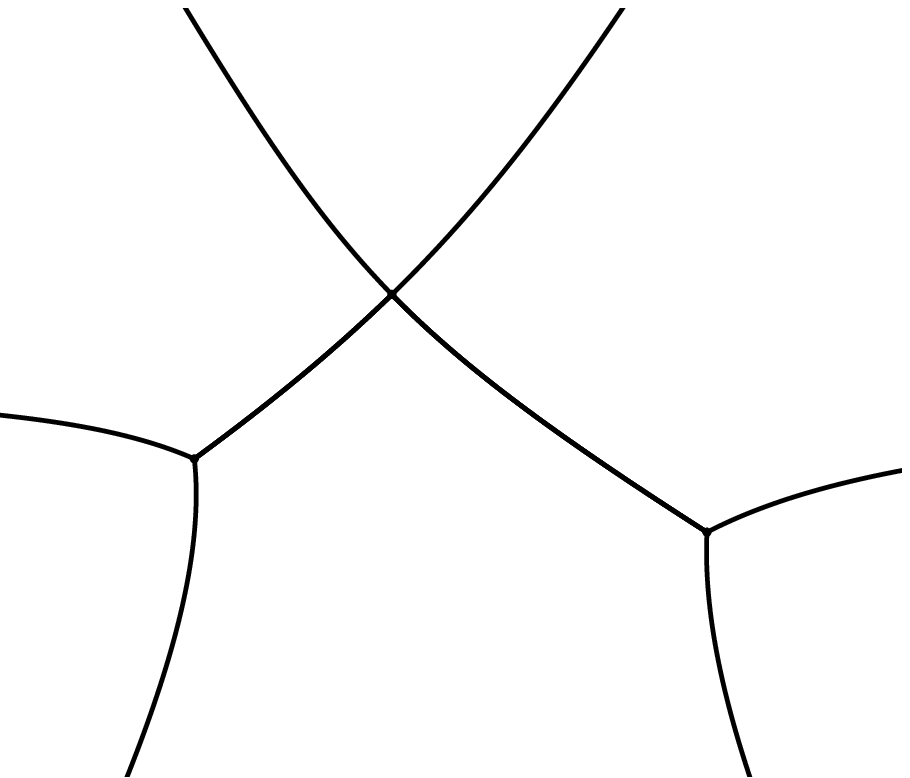} \\[+2.5em]
  {(SL): Stokes geometry of $\SLII$ at $t = t_{\ast}$.}
  \end{center} 
  \end{minipage} 
  \caption{The $P$-Stokes geometry of $\PII$
  and the Stokes geometry of $\SLII$.}
  \label{fig:P2-and-SL2-Stokes-geometry}
  \end{figure}

Having these geometric properties in mind, 
we formulate the precise statement of our 
first main result as follows. 

\begin{thm} \label{thm:main-theorem1}
Under Assumption \ref{ass:transform-to-P2}, 
for any 2-parameter solution 
$(\tilde{\lambda}_J, \tilde{\nu}_J) = $
$(\tilde{\lambda}_J(\tilde{t},\eta; \tilde{\alpha},
\tilde{\beta}), \tilde{\nu}_J(\tilde{t},\eta; 
\tilde{\alpha}, \tilde{\beta}))$ of $(H_{J})$, 
there exist
\begin{itemize} %
\item a domain $\tilde{U}$ which contains the union  
$\tilde{\gamma}_1 \cup \tilde{\gamma}_2$ 
of two Stokes segments, %
\item a neighborhood $\tilde{V}$ of $\tilde{t}_{\ast}$, %
\item formal series 
\[
x(\tx,\tt,\eta)=\sum_{j\ge0}\eta^{-j/2}x_{j/2}(\tx,\tt,\eta), 
\quad 
t(\tt,\eta)=\sum_{j\ge0}\eta^{-j/2}t_{j/2}(\tt,\eta)
\]
whose coefficients 
$\{x_{j/2}(\tilde{x},\tilde{t},\eta)\}_{j=0}^{\infty}$ 
and $\{t_{j/2}(\tilde{t},\eta)\}_{j=0}^{\infty}$ 
are functions defined on $\tilde{U}\times\tilde{V}$ 
and $\tV$, respectively, and may depend on $\eta$, %
\item a 2-parameter solution 
\begin{eqnarray*}
(\lambda_{\rm II},\nu_{\rm II}) & = &
(\lambda_{\rm II}(t,\eta;\alpha,\beta),
\nu_{\rm II}(t,\eta;\alpha,\beta)), \\
(\alpha,\beta) & = & (\sum_{n=0}^{\infty}\eta^{-n}\alpha_n, 
\sum_{n=0}^{\infty}\eta^{-n}\beta_n)
\end{eqnarray*}
of $\HII$ with the constant $c$ being determined by 
\eqref{eq:c-P2}, and the function \eqref{eq:phaseJ} 
appearing in the instanton $\Phi_{\rm II}(t,\eta)$ 
that is normalized at a simple $P$-turning point $r_1$ 
of $\PII$ as 
\begin{equation} \label{eq:phiII}
\phi_{\rm II}(t) = \int_{r_1}^{t}\sqrt{F_{\rm II}^{(1)}(t)}~dt,
\end{equation} %
\end{itemize}
which satisfy the relations below: %
\begin{enumerate}[\upshape (i)]
\item %
The function $t_0(\tt)$ is independent of 
$\eta$ and satisfies
\begin{equation} \label{eq:correspondence-of-instantonP2}
\tphi_{J}(\tt) = \phi_{{\rm II}}(t_0(\tt)).
\end{equation}
\item $dt_0/d\tt$ never vanishes on $\tV$. %
\item The function $x_0(\tx,\tt)$ is also 
independent of $\eta$ and satisfies 
\begin{eqnarray}
x_0(\tlambda_0(\tt),\tt) & = & \lambda_0(t_{0}(\tt)), \\
x_0(\ta_k(\tt),\tt) & = & a_k(t_{0}(\tt))~~(k=1,2).
\end{eqnarray} %
Here $\lambda_0(t)$ and $a_{k}(t)$ ($k=1,2$)
are double and two simple turning points of $\SLII$.
\item $\p x_0/\p\tx$ never vanishes on $\tU\times\tV$. %
\item $x_{1/2}$ and $t_{1/2}$ vanish identically. %
\item The $\eta$-dependence of $x_{j/2}$ and $t_{j/2}$ ($j \ge 2$)
is only through instanton terms 
$\exp(\ell \hspace{+.1em} \tPhi_{J}(\tt,\eta))$
($\ell = j-2-2m$ with $0 \le m \le j-2$) that appears 
in the 2-parameter solution $(\tlambda_J,\tnu_{J})$ of $(H_J)$. %
\item The following relations hold:
\begin{eqnarray} \label{eq:equivalece-of-PJ}
x(\tlambda_J(\tt,\eta;\talpha,\tbeta),\tt,\eta) & = &
\lambda_{\rm II}(t(\tt,\eta),\eta;\alpha,\beta), \\%
\label{eq:equivalece-of-SLJ}
\tQ_{J}(\tx,\tt,\eta) & = & 
\left(\frac{\p x(\tx,\tt,\eta)}{\p\tx}\right)^2
Q_{\rm II}(x(\tx,\tt,\eta),t(\tt,\eta),\eta) \\
& & 
-\frac{1}{2}\eta^{-2}\{ x(\tx,\tt,\eta);\tx\}, \nonumber
\end{eqnarray}%
where the 2-parameter solutions of $(H_J)$ and $(H_{\rm II})$ 
are substituted into $(\lambda,\nu)$ in the coefficients 
of $\tQ_J$ and $Q_{\rm II}$, respectively, and
$\{x(\tx,\tt,\eta);\tx\}$ denotes the 
Schwarzian derivative \eqref{eq:Schwarzian-derivative}.
\end{enumerate} %
\end{thm}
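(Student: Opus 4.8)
\emph{Proof plan.}
The plan is to lift the problem to the associated isomonodromy systems and to build the transformation by gluing the local reductions available near each turning point. By Proposition~\ref{prop:t-derivative-of-Sodd}, any transformation intertwining $\SLJ$ and $\SLII$ as Schr\"odinger equations automatically intertwines the deformation equations $\DJ$ and $\DII$; hence it suffices to produce $x(\tx,\tt,\eta)$ and $t(\tt,\eta)$ realizing \eqref{eq:equivalece-of-SLJ}, and the relation \eqref{eq:equivalece-of-PJ} between the $\PJ$- and $\PII$-transcendents will follow from the correspondence \eqref{eq:correspondemce2} between the canonical coordinate $\sigma$ and $z_J$ evaluated along the $2$-parameter solution, exactly as in \cite{KT98}. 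In particular the consistency in the $\tt$-direction is not an extra requirement but is forced by the isomonodromy relation \eqref{eq:t-derivative-of-Sodd}.

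First I would fix the data of the target equation. The constant $c$ is prescribed by \eqref{eq:c-P2}; by Lemma~\ref{lem:integral-P2-P3}(i) together with Lemma~\ref{lemma:integral-equality} this choice makes the $P$-Stokes geometry of $\PII$ of the type in Figure~\ref{fig:P2-and-SL2-Stokes-geometry}, so that $\SLII$ exhibits the same configuration of one double and two simple turning points joined by two Stokes segments as $\SLJ$ does at $\tt_{\ast}$ under Assumption~\ref{ass:transform-to-P2}(4). Next I would match the first integral by selecting the parameters $(\alpha,\beta)$ of the $\PII$-solution so that $E_{\rm II}(\eta)=\tE_J(\eta)$; Lemma~\ref{lemma:E} shows this is solvable order by order, the leading identity $E_0=-8\alpha_0\beta_0$ fixing the product $\alpha_0\beta_0$.

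The top term is then determined geometrically. I would define $t_0(\tt)$ by the phase matching \eqref{eq:correspondence-of-instantonP2} with the normalizations \eqref{eq:tlidephiJ} and \eqref{eq:phiII}, whence $dt_0/d\tt$ is non-vanishing since both sides are conformal coordinates for the same quadratic-differential structure. I would then define $x_0(\tx,\tt)$ implicitly by equating action integrals,
\[
\int_{\tlambda_0(\tt)}^{\tx}\sqrt{\tQ_{J,0}(\tx,\tt)}\,d\tx
= \int_{\lambda_0(t_0(\tt))}^{x_0}\sqrt{Q_{\rm II,0}(x,t_0(\tt))}\,dx .
\]
The key point is that this yields a single-valued biholomorphism from a neighborhood $\tU$ of $\tgamma_1\cup\tgamma_2$ onto a neighborhood of $\gamma\cup\gamma'$ sending $\tlambda_0,\ta_1,\ta_2$ to $\lambda_0,a,a'$: by Proposition~\ref{prop:PStokes-and-Stokes}(ii) the action to each simple turning point equals half the corresponding value of $\tphi_J$, so the phase matching already forces the value at $\ta_1$ to agree with that at $a$, while the equality of the two values of $c$ forces the value at $\ta_2$ to agree with that at $a'$. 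This is precisely where the degenerate geometry is used, and it is what makes a transformation defined over the whole union of the two Stokes segments possible.

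Finally I would promote $x_0,t_0$ to full formal series and verify \eqref{eq:equivalece-of-SLJ}. Near the double turning point I would apply Theorem~\ref{thm:transformation-at-double-turning-point} to both $\SLJ$ and $\SLII$ and compose the two reductions to $(Can)$ (the reduction there is purely local at the double turning point, hence unaffected by the degeneracy of the global Stokes geometry), the common parameters $(A,B)$ being pinned down through \eqref{eq:correspondemce1} by the already-matched $E(\eta)$; near each simple turning point I would instead compose the classical reductions of $\SLJ$ and $\SLII$ to the Airy equation~\eqref{eq:Airy} (cf.~\cite{AKT91}). \textbf{The main obstacle is the gluing:} the reduction to $(Can)$ and the two Airy reductions are a priori valid only on disjoint discs, and one must show that the three locally constructed transformations agree on their overlaps and extend holomorphically along the Stokes segments to all of $\tU\times\tV$, while preserving the alternating-parity and instanton structure recorded in (v)--(vi). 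The freedom I would exploit to synchronize the local pieces is the additive ambiguity \eqref{eq:ambiguity} in $s_J$ carried by the free parameter $G(\eta)$; that this one functional degree of freedom suffices rests on the fact that, after the matching of $c$ and of $E(\eta)$, both systems carry identical canonical data, so the discrepancy between the local transformations along each segment is a single holomorphic instanton-valued function that can be absorbed into $G(\eta)$.
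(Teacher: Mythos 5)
Your outline reproduces the coarse skeleton of the paper's proof (fix $c$ by \eqref{eq:c-P2}, match the first integrals $E_{\rm II}=\tE_J$, build $x_0,t_0$ geometrically, reduce to $(Can)$ at the double turning point via Theorem \ref{thm:transformation-at-double-turning-point}, then glue with local transformations at the simple turning points), but it breaks down exactly at the step that is the genuine content of the theorem: the gluing across the \emph{second} simple turning point. The single additive ambiguity $G(\eta)$ of \eqref{eq:ambiguity} (equivalently $C(\eta)$) is a formal power series with \emph{constant} coefficients, and in the paper it is entirely consumed in matching the double-turning-point reduction $x^{\rm pre}$ with the transformation $y^{(1)}$ at $\ta_1$; even there, the matching works only because (a) the discrepancy ${\cal F}-{\cal G}$ is shown to be a genuine constant, independent of both $\tx$ \emph{and} $\tt$ --- the $\tt$-independence requires the deformation intertwinings \eqref{eq:xpre-relation2} and \eqref{eq:yk-relation2} --- and (b) the odd half-integer discrepancies, for which no parameter exists, vanish by the alternating parity. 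Your ``holomorphic instanton-valued function that can be absorbed into $G(\eta)$'' is neither constant nor absorbable. After this first matching no freedom whatsoever remains, so the coincidence $y^{(1)}=y^{(2)}$ at $\ta_2$ cannot be arranged by adjusting anything: it is \emph{forced} by the key identity $\oint_{\tdelta'}\tS_{J,{\rm odd}}\,d\tx=\oint_{\delta'}S_{{\rm II,odd}}\,dx$ over the cycle encircling the pair of simple turning points (Lemma \ref{lem:key-lamma}), whose proof is precisely where \eqref{eq:c-P2}, \eqref{eq:invariance-of-residues}, the fact \eqref{eq:resSodd=res-1} that residues of $S_{\rm odd}$ come only from its leading term, and Assumption \ref{ass:transform-to-P2}(5) (even-order poles, hence no extra branch points inside the cycle) all enter. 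Your proposal never invokes Assumption (5) at all, which is a reliable sign that this mechanism is missing; the equality of the two values of $c$ handles the top term $x_0$ at $\ta_2$, as you say, but does nothing by itself for the higher-order coefficients.

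A second, related error is your opening claim that Proposition \ref{prop:t-derivative-of-Sodd} makes the deformation intertwining automatic. From the Schr\"odinger relation \eqref{eq:equivalece-of-SLJ} alone, differentiating in $\tt$ and using \eqref{eq:t-derivative-of-Sodd} for both equations only yields that $S_{{\rm II,odd}}\cdot\bigl(\p_{\tt}x-\tA_J\,\p_{\tx}x+A_{\rm II}\,\p_{\tt}t\bigr)$ is independent of $\tx$, not that it vanishes; the paper instead obtains both relations \emph{by construction} (Proposition \ref{prop:transformation-at-double-TP}, Lemma \ref{lemma:transformation-at-simple-TP}(vi)), and then uses the deformation relation essentially, both for the constancy of ${\cal F}-{\cal G}$ and for extracting \eqref{eq:equivalece-of-PJ} at the end. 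This is also why composing the two Airy reductions of \cite{AKT91} at each simple turning point is not an adequate substitute: the Airy model carries no deformation parameter, so the composed transformation comes with no control on its $\tt$-dependence, whereas the paper's $y^{(k)}$ are direct $\SLJ\to\SLII$ transformations satisfying \eqref{eq:yk-relation} \emph{and} \eqref{eq:yk-relation2} with the same $t^{\rm pre}(\tt,\eta)$. In short: the parameter-matching and geometric parts of your plan are sound, but the proof of single-valued extension over $\tgamma_1\cup\tgamma_2$ --- the period-integral argument of Lemma \ref{lem:key-lamma} and the induction it feeds --- is absent, and the tools you propose in its place cannot produce it.
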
%

The rest of this section is devoted to 
the proof of Theorem \ref{thm:main-theorem1}.

\subsection{Construction of the top term of the transformation}
\label{section:top-terms}

Here we construct the top terms 
$x_{0}(\tx,\tt)$ and $t_{0}(\tt)$ of the formal series. 

First, we explain the construction of $t_0(\tt)$. 
Since $\tt$ lies on a $P$-Stokes curve emanating 
from $\tr_k$ ($k=1,2$), it is shown in 
\cite[Theorem 2.2]{KT96} that there exists a function 
$t^{(k)}_0(\tt)$ such that 
\begin{equation} \label{eq:top-t0k}
\tphi_{J,k}(\tt) = \phi_{{\rm II},k}(t^{(k)}_0(\tt))
\end{equation}
holds for each $k=1$ and $2$, where 
\begin{equation} \label{eq:tphi-and-phi2}
\tphi_{J,k}(\tt) = \int_{\tr_{k}}^{\tt}
\sqrt{\tF_{J}^{(1)}(\tt)}~d\tt,\quad
\phi_{{\rm II},k}(t) = \int_{r_{k}}^{t}
\sqrt{{F^{(1)}_{\rm II}}(t)}~dt.
\end{equation} %
Here $r_{1}$ and $r_{2}$ are two simple 
$P$-turning points of $\PII$ chosen by 
the following rule. 
Note that we have the following 
two possibilities for the configuration of 
the Stokes geometry of $\SLJ$ at $\tt_{\ast}$ 
(see Figure \ref{fig:P2-transformable-cases}):
\begin{itemize}
\item[(A)] The Stokes segment $\tgamma_2$ 
comes next to the Stokes segment $\tgamma_{1}$ 
in the {\em counter-clockwise} order 
near $\tlambda_0(\tt_{\ast})$. %
\item[(B)] The Stokes segment $\tgamma_2$ 
comes next to the Stokes segment $\tgamma_1$ 
in the {\em clockwise} order 
near $\tlambda_0(\tt_{\ast})$. %
\end{itemize}
Then, we set 
\begin{eqnarray} 
\label{eq:choice-of-r1-and-r2}
(r_1,r_2) = 
\begin{cases}
(r, r') & \text{when the case (A) in Figure 
\ref{fig:P2-transformable-cases} happens}, \\
(r', r) & \text{when the case (B) in Figure 
\ref{fig:P2-transformable-cases} happens},
\end{cases}
\end{eqnarray}
where $r$ and $r'$ are the $P$-turning points 
of $\PII$ depicted in Figure 
\ref{fig:P2-and-SL2-Stokes-geometry} (P). 
Moreover, the branch of $\sqrt{F_{\rm II}^{(1)}(t)}$ 
is taken so that the sign appearing in the right-hand side 
of \eqref{eq:integral-P2} is $+$:
\begin{equation} \label{eq:integral-P2-plus}
\int_{r_1}^{r_2} \sqrt{F_{\rm II}^{(1)}(t)}~dt = 
+ 2 \pi i  c.
\end{equation}
This choice \eqref{eq:choice-of-r1-and-r2}
of $r_1$ and $r_2$ is essential in 
the construction of $x_0(\tx,\tt)$ later. 

\begin{figure}[h]
\begin{center}
\begin{pspicture}(1.0,0.5)(12.5,5.8)
%
\psset{fillstyle=solid, fillcolor=black}
\pscircle(3.0,4.0){0.08} 
\pscircle(1.5,2.5){0.07}
\pscircle(4.5,2.5){0.07}
\pscircle(10.0,4.0){0.08} 
\pscircle(8.5,2.5){0.07}
\pscircle(11.5,2.5){0.07}
\psset{fillstyle=none}
\rput[c]{0}(3.0,4.8){\large $\tlambda_0(\tt_{\ast})$}
\rput[c]{0}(0.8,2.0){\large $\ta_{1}(\tt_{\ast})$}
\rput[c]{0}(5.2,2.0){\large $\ta_{2}(\tt_{\ast})$} 
\rput[c]{0}(1.8,3.5){\large $\tgamma_1$}
\rput[c]{0}(4.2,3.5){\large $\tgamma_2$} 
\rput[c]{0}(3.0,0.5){(A)} 
\psset{linewidth=1pt}
\pscurve(1.5,2.5)(3,4)(5,5.5)
\pscurve(4.5,2.5)(3,4)(1,5.5)
\pscurve(1.5,2.5)(1,2.8)(0.5,3) 
\pscurve(4.5,2.5)(5,2.8)(5.5,3)
\pscurve(1.5,2.5)(1.6,2)(1.8,1)
\pscurve(4.5,2.5)(4.4,2)(4.2,1)
\rput[c]{0}(10.0,4.8){\large $\tlambda_0(\tt_{\ast})$}
\rput[c]{0}(7.8,2.0){\large $\ta_{2}(\tt_{\ast})$}
\rput[c]{0}(12.2,2.0){\large $\ta_{1}(\tt_{\ast})$} 
\rput[c]{0}(8.8,3.5){\large $\tgamma_2$}
\rput[c]{0}(11.2,3.5){\large $\tgamma_1$} 
\rput[c]{0}(10.0,0.5){(B)}

\psset{linewidth=1pt}
\pscurve(8.5,2.5)(10,4)(12,5.5)
\pscurve(11.5,2.5)(10,4)(8,5.5)
\pscurve(8.5,2.5)(8,2.8)(7.5,3) 
\pscurve(11.5,2.5)(12,2.8)(12.5,3)
\pscurve(8.5,2.5)(8.6,2)(8.8,1)
\pscurve(11.5,2.5)(11.4,2)(11.2,1)
\end{pspicture}
\end{center}
\caption{Two possibilities for adjacent 
Stokes segments of $\SLJ$.} 
\label{fig:P2-transformable-cases}
\end{figure}

For each $k=1,2$, 
the function $t^{(k)}_0(\tt)$ satisfying 
\eqref{eq:top-t0k} is unique if we require that 
$t_0^{(k)}(\tt_{\ast})$ lies on the $P$-Stokes segment 
$\Gamma$ depicted in 
Figure \ref{fig:P2-and-SL2-Stokes-geometry} (P)
(cf.~\cite[Section 2, (2.21)]{KT96}).
In what follows we assume that 
$t_0^{(k)}(\tt_{\ast})$ 
lies on $\Gamma$. 
Then, our choice \eqref{eq:c-P2} 
of the constant $c$ in $\PII$ and 
\eqref{eq:integral-P2-plus} imply that 
\begin{equation}\label{eq:difference-coincide}
\phi_{{\rm II},1}(t^{(k)}_0(\tt)) - 
\phi_{{\rm II},2}(t^{(k)}_0(\tt)) = 
\int_{\tr_1}^{\tr_2}\sqrt{\tF_J^{(1)}(\tt)}~d\tt
= \tphi_{J, 1}(\tt) - \tphi_{J, 2}(\tt)
\end{equation}
holds for both $k=1$ and $2$. 
Especially, we have the equality
$\phi_{{\rm II},1}(t^{(2)}_0(\tt)) = \tphi_{J, 1}(\tt)$
as the case of $k=2$ of \eqref{eq:difference-coincide}.
Since $t^{(2)}_0(\tt_{\ast})$ lies on $\Gamma$, 
we have $t^{(1)}_0(\tt) = t^{(2)}_0(\tt)$ 
due to the uniqueness explained above.
We set $t_0(\tt) = t^{(1)}_0(\tt) = t^{(2)}_0(\tt)$ 
and \eqref{eq:correspondence-of-instantonP2} 
follows from \eqref{eq:top-t0k} for $k=1$. 
Taking a small neighborhood $\tV$ of $t_{\ast}$, 
we may assume that the derivative $dt_0/d\tt$ also 
never vanishes on $\tV$. Thus we obtain 
$t_0(\tt)$ satisfying (i) and (ii) of our main claim. 
Especially, we have 
\begin{equation} \label{eq:dt0-dt}
\sqrt{\tF^{(1)}_{J}(\tt)} = 
\frac{dt_0(\tt)}{d\tt} \sqrt{F_{\rm II}^{(1)}(t_0(\tt))} .
\end{equation}

Next, we construct $x_0(\tx,\tt)$. Set 
\begin{eqnarray} 
\label{eq:tw0-Stokes-segments-gamma1-and-gamma2}
(\gamma_1,\gamma_2) & = &
\begin{cases}
(\gamma, \gamma') & \text{when the case (A) in Figure 
\ref{fig:P2-transformable-cases} happens}, \\
(\gamma', \gamma) & \text{when the case (B) in Figure 
\ref{fig:P2-transformable-cases} happens},
\end{cases} 
\end{eqnarray}
where $\gamma$ and $\gamma'$ are the Stokes segments 
of $\SLII$ (at $t = t_0(\tt_{\ast})$) 
depicted in 
Figure \ref{fig:P2-and-SL2-Stokes-geometry} (SL), 
and denote by $a_1(t)$ (resp., $a_2(t)$) the simple 
turning point of $\SLII$ which is the end-point of 
the Stokes segment $\gamma_1$ (resp., $\gamma_2$)
at $t = t_0(\tt_{\ast})$.
Since $\tt_{\ast}$ lies on a $P$-Stokes curve 
emanating from $\tr_1$, and $t_0(\tt)$ satisfies 
$\phi_{J,1}(\tt) = \phi_{\rm II,1}(t_0(\tt))$, 
the same discussion as in 
\cite[Section 2]{KT96} enables us to construct 
$x_0(\tx,\tt)$ satisfying the following conditions. 
\begin{itemize}
\item %
$x_0(\tx,\tt)$ is holomorphic on a domain 
$\tU_1\times \tV$, where $\tU_1$ is an open neighborhood 
of the Stokes segment $\tgamma_1$ of $\SLJ$, 
and $\p x_0/\p \tx$ never vanishes on $\tU_1\times\tV$. %
\item %
For any $\tt \in \tV$, $x_0(\tx,\tt)$ maps $\tU_1$ 
biholomorphically to an open neighborhood $U_1$ of 
the Stokes segment $\gamma_1$ of $\SLII$. 
\item %
Set 
\begin{equation} \label{eq:ZJ-and-ZII}
\tilde{Z}_{J}(\tx,\tt) = \int_{\tlambda_0(\tt)}^{\tx}
\sqrt{\tQ_{J, 0}(\tx,\tt)}~d\tx,~~~
Z_{{\rm II}}(x,\tt) = 
\int_{\lambda_0(t_0(\tt))}^{x}
\sqrt{Q_{{\rm II}, 0}(x,t_0(\tt))}~dx, 
\end{equation}
where 
the branch of $\sqrt{\tQ_{J, 0}(\tx,\tt)}$ and 
$\sqrt{Q_{{\rm II}, 0}(x,t)}$ are chosen so that  
\begin{equation} \label{eq:branches-P2SL2} 
\int_{\tgamma_k} \sqrt{\tQ_{J}(\tx,\tt)}~d\tx =  
\frac{1}{2} \int_{\tr_{k}}^{\tt} 
\sqrt{\tF^{(1)}_J(\tt)}~d\tt, \quad %
\int_{\gamma_k} \sqrt{Q_{\rm II}(x,t)}~dx = %
\frac{1}{2} \int_{r_k}^{t} 
\sqrt{F^{(1)}_{\rm II}(t)}~dt
\end{equation}
hold for $k=1,2$ (cf.~\eqref{eq:integral-relation}). 
In \eqref{eq:branches-P2SL2} Stokes segments 
are directed from the simple turning point 
to the double turning point. 
Then, the following equalities hold:
\begin{eqnarray} \label{eq:implicit-relation-for-x0}
\tilde{Z}_{J}(\tx,\tt)
& = &
Z_{{\rm II}}(x_0(\tx,\tt),\tt), \\
\label{eq:boundary-condition}
x_0(\tlambda_0(\tt),\tt) & = & 
\lambda_0(t_0(\tt)), \quad 
x_0(\ta_1(\tt),\tt) = a_1(t_0(\tt)).
\end{eqnarray} %
\end{itemize}
It is also shown in \cite[Section 2]{KT96} 
that $x_0(\tx,\tt)$ is the unique holomorphic solution 
(satisfying $x_0(\tlambda_0(\tt),\tt),
({\p x_0}/{\p \tx})(\tlambda_0(\tt),\tt) \ne 0$)
of the following implicit functional equation: 
\begin{eqnarray*} 
Z_{J}(\tx,\tt)^{1/2} = 
Z_{\rm II}(x_0(\tx,\tt),\tt)^{1/2}.
\end{eqnarray*}
Here the branch of $Z_{J}(\tx,\tt)^{1/2}$ and 
$Z_{\rm II}(x,\tt)^{1/2}$ are chosen so that, 
they are positive on $\tgamma_1$ and $\gamma_1$, 
respectively, when $\tt = \tt_{\ast}$.
Note that, since we have assumed that the imaginary part 
of $c$ in \eqref{eq:c-P2} is positive, the real parts of  
$\tphi_{J,1}(\tt)$ and $\phi_{{\rm II},1}(t)$ 
are monotonously 
decreasing along the $P$-Stokes segments $\tGamma$ and 
$\Gamma$, respectively. Then, the equality 
\eqref{eq:branches-P2SL2} shows that the real parts of 
$\tilde{Z}_J(\tx,\tt_{\ast})$ and $Z_{\rm II}(x,\tt_{\ast})$ 
are positive along $\tgamma_1$ and $\gamma_1$,
respectively. 

In view of \eqref{eq:implicit-relation-for-x0}, four Stokes 
curves of $\SLJ$ emanating from $\tlambda_0(\tt)$ are mapped  
to those of $\SLII$ emanating from $\lambda_0(t_0(\tt))$ 
by $x_0(\tx,\tt)$ locally. 
Especially, the Stokes segment $\tgamma_1$ of $\SLJ$ 
is mapped to the Stokes segment $\gamma_1$ of $\SLII$
when $\tt = \tt_{\ast}$. Furthermore, since 
$\p x_0 / \p\tx \ne 0$ at $\tx=\tlambda_0(\tt)$, 
the other Stokes segment $\tgamma_2$ 
is mapped to the Stokes curve emanating from 
$\lambda_0(t_0(\tt_{\ast}))$ 
which comes next to $\gamma_1$ 
in the counter-clockwise (resp., clockwise) order
in the case (A) (resp., (B)), when $\tt = \tt_{\ast}$. 
Thus, our choice \eqref{eq:choice-of-r1-and-r2} 
of the $P$-turning points $r_1$ and $r_2$ of $\PII$ 
entails that $x_0(\tx,\tt_{\ast})$ 
maps $\tgamma_2$ to the Stokes segment $\gamma_2$ of 
$\SLII$ given by 
\eqref{eq:tw0-Stokes-segments-gamma1-and-gamma2} 
near $\tx=\tlambda_0(\tt_{\ast})$.

Since our choice \eqref{eq:c-P2} of the constant
$c$ in $\PII$ also ensures the equality 
$\phi_{J,2}(\tt) = \phi_{\rm II,2}(t_0(\tt))$, 
the same discussion as in \cite[Section 2]{KT96} 
again enables us to show that $x_0(\tx,\tt)$ 
is also holomorphic at the simple turning point 
$\ta_2(\tt)$ and satisfies
\begin{equation}
x_0(\ta_2(\tt),\tt) = a_2(t_0(\tt)).
\end{equation}
Thus we have constructed $x_0(\tx,\tt)$ satisfying 
the desired properties (iii) and (iv) of our main theorem. 

\subsection{Transformation near the double turning point}
\label{section:transformation-at-double}


In this section we follow the discussion 
given in \cite[Section 4]{KT98}. Namely, 
with the aid of Theorem 
\ref{thm:transformation-at-double-turning-point}, 
we construct a pair of formal series 
$x^{\rm pre}(\tx,\tt,\eta)$ and $t^{\rm pre}(\tt,\eta)$
which transforms $\SLJ$ and the deformation equation 
$\DJ$ to $\SLII$ and $\DII$. 

Let us first fix the correspondence of the parameters: 
For a given pair of parameters
$(\talpha,\tbeta) = (\sum_{n=0}^{\infty}\eta^{-n}\talpha_n, 
\sum_{n=0}^{\infty}\eta^{-n}\tbeta_n)$ of 
$(\tlambda_J,\tnu_J)$ satisfying \eqref{eq:genericity}, 
we choose $(A(\eta),B(\eta)) = (\sum_{n=0}^{\infty}\eta^{-n}A_n, 
\sum_{n=0}^{\infty}\eta^{-n}B_n)$ in 
\eqref{eq:sol-of-Hcan} and 
$(\alpha,\beta) = (\sum_{n=0}^{\infty}\eta^{-n}\alpha_n, 
\sum_{n=0}^{\infty}\eta^{-n}\beta_n)$ in 
$(\lambda_{\rm II},\nu_{\rm II})$ so that 
\begin{equation} \label{eq:invariance-of-residues}
E_{\rm II}(\alpha,\beta) = -16 A(\eta) B(\eta) = 
\tE_J(\talpha,\tbeta)
\end{equation}
holds. Here $\tE_{J}(\talpha,\tbeta)$ and 
$E_{\rm II}(\alpha,\beta)$ be the formal power series 
defined in \eqref{eq:EJ}.
Lemma \ref{lemma:E} guarantees that 
such a choice of parameters is possible. 
Then the discussion in Section \ref{section:zJ-and-sJ}
enables us to construct formal series 
$\tz_J(\tx,\tt,\eta)$ and $\ts_J(\tt,\eta)$ 
(resp., $z_{\rm II}(\tx,\tt,\eta)$ and 
$s_{\rm II}(\tt,\eta)$) satisfying the properties 
in Theorem \ref{thm:transformation-at-double-turning-point}
for such a given $(A(\eta),B(\eta))$; that is, 
\begin{eqnarray}
\sigma(\ts_J(\tt,\eta);A(\eta),B(\eta)) & = & \eta^{1/2}
\tz_J(\tlambda_J(\tt,\eta;\talpha,\tbeta),\tt,\eta), \\
\sigma(s_{\rm II}(t,\eta);A(\eta),B(\eta)) & = & \eta^{1/2}
z_{\rm II}(\lambda_{\rm II}(t,\eta;\alpha,\beta),t,\eta).
\end{eqnarray}
Similarly to \cite[Section 4]{KT98}, define 
\begin{eqnarray} \label{eq:x---pre}
x^{\rm pre}(\tx,\tt,\eta) & = & z_{\rm II}^{-1}
(\tz_{J}(\tx,\tt,\eta), s_{J}(\tt,\eta),\eta),  \\
t^{\rm pre}(\tt,\eta) & = & s_{\rm II}^{-1}
(\ts_{J}(\tt,\eta),\eta). \label{eq:t---pre}
\end{eqnarray}
Then, each coefficient of the formal power series 
$x^{\rm pre}(\tx,\tt,\eta)$ is holomorphic in 
$\tx$ near $\tx = \tlambda_0(\tt)$ and also in $\tt$ on $\tV$, 
and each coefficient of $t^{\rm pre}(\tt,\eta)$  
is holomorphic in $\tt$ on $\tV$.
Furthermore, $x^{\rm pre}(\tx,\tt,\eta)$ 
and $t^{\rm pre}(\tt,\eta)$ have the property 
of alternating parity; that is, 
if we denote by 
$\{x^{\rm pre}_{j/2}(\tx,\tt,\eta)\}_{j=0}^{\infty}$
(resp., $\{t^{\rm pre}_{j/2}(\tt,\eta)\}_{j=0}^{\infty}$)
the coefficient of $\eta^{-j/2}$ in the 
formal series \eqref{eq:x---pre} 
(resp., \eqref{eq:t---pre}), then 
the following conditions hold.
\begin{itemize}
\item %
$x^{\rm pre}_0(\tx,\tt)$ and $t^{\rm pre}_0(\tt)$ 
are independent of $\eta$, 
\item %
$x^{\rm pre}_{1/2}$ and 
$t^{\rm pre}_{1/2}$ vanish identically,  
\item %
For $j \ge 2$, the $\eta$-dependence of 
$x^{\rm pre}_{j/2}(\tx,\tt,\eta)$ 
and $t^{\rm pre}_{j/2}(\tt,\eta)$
are only through instanton terms 
$\exp(\ell \tPhi_{J}(\tt,\eta))$
($\ell = j-2-2m$ with $0 \le m \le j-2$).
\end{itemize}
\begin{lem} \label{lemma:x0-and-t0}
The top terms $x^{\rm pre}_0(\tx,\tt)$ and 
$t^{\rm pre}_0(\tt)$ coincide with $x_0(\tx,\tt)$ and 
$t_0(\tt)$ constructed in Section \ref{section:top-terms}, 
respectively:
\begin{equation}
x^{\rm pre}_0(\tx,\tt) = x_0(\tx,\tt), \quad
t^{\rm pre}_0(\tt) = t_0(\tt).
\end{equation} 
\end{lem}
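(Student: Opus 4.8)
The plan is to read off the top-order ($\eta^{0}$) parts of the formal series \eqref{eq:x---pre} and \eqref{eq:t---pre} and to verify that they satisfy exactly the defining relations of $t_{0}(\tt)$ and $x_{0}(\tx,\tt)$ from Section \ref{section:top-terms}; the coincidence then follows from the uniqueness assertions recorded there. The only inputs needed are the explicit top terms supplied by Theorem \ref{thm:transformation-at-double-turning-point}: by \eqref{eq:sJ0} and \eqref{eq:zJ0} we have $\ts_{J,0}(\tt)=\tfrac{1}{2}\tphi_{J}(\tt)$ and $\tz_{J,0}(\tx,\tt)=\tilde{Z}_{J}(\tx,\tt)^{1/2}$, together with their $\PII$-counterparts $s_{{\rm II},0}(t)=\tfrac{1}{2}\phi_{\rm II}(t)$ and $z_{{\rm II},0}(x,t_{0}(\tt))=Z_{\rm II}(x,\tt)^{1/2}$, where $\tilde{Z}_{J}$ and $Z_{\rm II}$ are the functions in \eqref{eq:ZJ-and-ZII}.

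For the independent variable, taking the $\eta^{0}$-part of $t^{\rm pre}(\tt,\eta)=s_{\rm II}^{-1}(\ts_{J}(\tt,\eta),\eta)$ gives $s_{{\rm II},0}(t^{\rm pre}_{0}(\tt))=\ts_{J,0}(\tt)$, i.e.\ $\phi_{\rm II}(t^{\rm pre}_{0}(\tt))=\tphi_{J}(\tt)$, which is precisely \eqref{eq:correspondence-of-instantonP2}. It remains to see that $t^{\rm pre}_{0}$ picks out the same branch as $t_{0}$. The base point $t_{\ast}:=t_{0}(\tt_{\ast})$ lies on $\Gamma$ and is not a $P$-turning point, so $ds_{{\rm II},0}/dt=\tfrac{1}{2}\sqrt{F_{\rm II}^{(1)}}$ is non-vanishing near $t_{\ast}$ and $s_{{\rm II},0}$ is a local biholomorphism there; since the reduction of Theorem \ref{thm:transformation-at-double-turning-point} for $\SLII$ is carried out on a neighborhood of the double turning point $\lambda_{0}(t_{\ast})$, the inverse $s_{\rm II}^{-1}$ takes values near $t_{\ast}$. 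Evaluating at $\tt_{\ast}$ and using $s_{{\rm II},0}(t_{\ast})=\tfrac{1}{2}\tphi_{J}(\tt_{\ast})=\ts_{J,0}(\tt_{\ast})$ forces $t^{\rm pre}_{0}(\tt_{\ast})=t_{\ast}$, and analytic continuation on $\tV$ yields $t^{\rm pre}_{0}=t_{0}$.

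For the dependent variable, the $\eta^{0}$-part of $x^{\rm pre}(\tx,\tt,\eta)=z_{\rm II}^{-1}(\tz_{J}(\tx,\tt,\eta),\ts_{J}(\tt,\eta),\eta)$ reads $z_{{\rm II},0}(x^{\rm pre}_{0}(\tx,\tt),t_{0}(\tt))=\tz_{J,0}(\tx,\tt)$, that is $Z_{\rm II}(x^{\rm pre}_{0}(\tx,\tt),\tt)^{1/2}=\tilde{Z}_{J}(\tx,\tt)^{1/2}$, which is exactly the implicit functional equation \eqref{eq:implicit-relation-for-x0} characterizing $x_{0}(\tx,\tt)$. Moreover condition (iii) of Theorem \ref{thm:transformation-at-double-turning-point} gives $x^{\rm pre}_{0}(\tlambda_{0}(\tt),\tt)=\lambda_{0}(t_{0}(\tt))$ and $\p x^{\rm pre}_{0}/\p\tx\neq0$, so $x^{\rm pre}_{0}$ obeys the same normalization as $x_{0}$. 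The uniqueness of the holomorphic solution of \eqref{eq:implicit-relation-for-x0} established in \cite[Section 2]{KT96} then gives $x^{\rm pre}_{0}=x_{0}$.

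The step I expect to be delicate is the compatibility of the square-root branches underlying the two constructions. The ``suitable choice'' of square root in \eqref{eq:zJ0} (inherited from \cite[Theorem 3.1]{AKT96}) must be matched with the branches of $\sqrt{\tQ_{J,0}}$ and $\sqrt{Q_{{\rm II},0}}$, and of the outer roots $\tilde{Z}_{J}^{1/2}$ and $Z_{\rm II}^{1/2}$, fixed through \eqref{eq:branches-P2SL2} and declared positive on $\tgamma_{1}$ and $\gamma_{1}$. I would resolve this by comparing the two normalizations along $\tgamma_{1}$: both are pinned down by positivity of $\tilde{Z}_{J}^{1/2}$ there and by the common boundary value $x_{0}(\tlambda_{0}(\tt),\tt)=\lambda_{0}(t_{0}(\tt))$, so the sign choices agree and the top-order identities above are literal equalities rather than equalities up to a branch, completing the argument.
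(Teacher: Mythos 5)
Your proof is correct and follows essentially the same route as the paper's own: read off the $\eta^{0}$-parts of \eqref{eq:x---pre} and \eqref{eq:t---pre} using the explicit top terms \eqref{eq:sJ0} and \eqref{eq:zJ0}, recognize them as the defining relations \eqref{eq:correspondence-of-instantonP2} and \eqref{eq:implicit-relation-for-x0} with the same normalization at $\tx=\tlambda_0(\tt)$, and conclude by the uniqueness statements of Section \ref{section:top-terms}. Your additional care about pinning down $t^{\rm pre}_0(\tt_{\ast})=t_0(\tt_{\ast})$ and matching the square-root branches along $\tgamma_1$ simply makes explicit what the paper compresses into ``by choosing a branch of the square root in \eqref{eq:zJ0} appropriately.''
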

\begin{proof}
It follows from \eqref{eq:sJ0} and the normalizations 
\eqref{eq:tlidephiJ} and \eqref{eq:phiII} that, 
$t^{\rm pre}_0(\tt)$ here satisfies 
\[
\tphi_{J,1}(\tt) = \phi_{\rm II,1}(t^{\rm pre}_0(\tt)).
\] 
Hence it coincides with $t_0(\tt)$ constructed 
in Section \ref{section:top-terms}. Furthermore, 
by choosing a branch of the square root in \eqref{eq:zJ0} 
appropriately, we can show that $x^{\rm pre}_0(\tx,\tt)$ 
satisfies the following conditions in a neighborhood of 
the Stokes segment $\tgamma_1$ of $\SLJ$:
\[
x^{\rm pre}_0(\tlambda_0(\tt),\tt) = \lambda_0(t_0(\tt)),~~ 
(\p x^{\rm pre}_0/\p \tx)(\tlambda_0(\tt),\tt)) \ne 0,
\] 
\[
Z_{J}(\tx,\tt)^{1/2} = 
Z_{\rm II}(x^{\rm pre}_0(\tx,\tt),\tt)^{1/2}.
\]
Here $Z_J$ and $Z_{\rm II}$ are given in 
\eqref{eq:ZJ-and-ZII}, and the branch of 
$Z_{J}(\tx,\tt)^{1/2}$ and $Z_{\rm II}(x,\tt)^{1/2}$ 
are chosen so that they are positive on 
$\tgamma_1$ and $\gamma_1$. Thus, the top term 
$x^{\rm pre}_0(\tx,\tt)$ of $x^{\rm pre}(\tx,\tt,\eta)$ 
(defined by choosing an appropriate branch of \eqref{eq:zJ0}) 
also coincides with $x_0(\tx,\tt)$ constructed in 
Section \ref{section:top-terms}. 
\end{proof}

Therefore, the top terms 
of $x^{\rm pre}(\tx,\tt,\eta)$ and $t^{\rm pre}(\tt,\eta)$
enjoy the desired properties. Moreover, 
they give a local equivalence between 
$\SLJ$ and $\SLII$ together 
with their deformation equations 
$\DJ$ and $\DII$ near $\tx = \tlambda_0(\tt)$ 
in the following sense. 
\begin{prop} [{\cite[Section 4]{KT98}}]
\label{prop:transformation-at-double-TP}
The following equalities hold near 
$\tx = \tlambda_0(\tt)$ and $\tt \in \tV$: 
\begin{eqnarray} 
\tS_{J,{\rm odd}}(\tx,\tt,\eta) & = &
\left( \frac{\p x^{\rm pre}}{\p\tx}(\tx,\tt,\eta) \right)
S_{\rm II, odd}\left( x^{\rm pre}(\tx,\tt,\eta), 
t^{\rm pre}(\tt,\eta), \eta \right), \hspace{-1.9em}
\label{eq:xpre-relation1} \\[+.5em]
\frac{\p x^{\rm pre}}{\p \tt}(\tx,\tt,\eta) & = &
\tA_{J}(\tx,\tt,\eta)\frac{\p x^{\rm pre}}{\p \tx} - 
A_{\rm II}(x^{\rm pre}(\tx,\tt,\eta), 
t^{\rm pre}(\tt,\eta),\eta) \frac{\p t^{\rm pre}}{\p \tt}.
\hspace{-1.7em}
\label{eq:xpre-relation2}
\end{eqnarray} %
\end{prop}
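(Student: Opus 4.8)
The plan is to treat Proposition \ref{prop:transformation-at-double-TP} as a purely \emph{local} statement near the double turning point $\tx=\tlambda_0(\tt)$, where the degeneration of the $P$-Stokes geometry plays no role, so that the argument is word-for-word that of \cite[Section 4]{KT98}. The main input is Theorem \ref{thm:transformation-at-double-turning-point}, applied once to $\SLJ$, $\DJ$ and once to $\SLII$, $\DII$. The decisive point is that the parameter matching \eqref{eq:invariance-of-residues}, namely $\tE_J(\talpha,\tbeta)=-16A(\eta)B(\eta)=E_{\rm II}(\alpha,\beta)$, forces both reductions onto the \emph{same} canonical system $(Can)$, $(D_{\rm can})$ with the \emph{same} constants $(A(\eta),B(\eta))$; hence $Q_{\rm can}(z,s,\eta;A,B)$, $A_{\rm can}$ and the canonical odd part $S_{\rm can,odd}$ are literally shared by the two sides, and $x^{\rm pre}$, $t^{\rm pre}$ are precisely the composition of one reduction with the inverse of the other via \eqref{eq:x---pre}--\eqref{eq:t---pre}.

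First I would record the two covariance relations furnished by each reduction. From the $Q$-relation in Theorem \ref{thm:transformation-at-double-turning-point}(vii), together with the standard fact that the odd part of a Riccati solution transforms as a formal $1$-form under a change of variable obeying the Schwarzian relation (the amplitude and Schwarzian corrections being absorbed into the even part), one gets
\[
\tS_{J,{\rm odd}}(\tx,\tt,\eta) = \frac{\p\tz_J}{\p\tx}\,
S_{\rm can,odd}\bigl(\tz_J(\tx,\tt,\eta),\ts_J(\tt,\eta),\eta\bigr),
\]
and likewise $S_{\rm II,odd}(x,t,\eta)=(\p z_{\rm II}/\p x)\,S_{\rm can,odd}(z_{\rm II},s_{\rm II},\eta)$. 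Since \eqref{eq:x---pre}--\eqref{eq:t---pre} encode $\tz_J(\tx,\tt,\eta)=z_{\rm II}(x^{\rm pre},t^{\rm pre},\eta)$ and $\ts_J(\tt,\eta)=s_{\rm II}(t^{\rm pre},\eta)$, I would eliminate $S_{\rm can,odd}$ between the two relations; the chain-rule identity $\p\tz_J/\p\tx=(\p z_{\rm II}/\p x)(\p x^{\rm pre}/\p\tx)$ then collapses the Jacobian factors and yields exactly \eqref{eq:xpre-relation1}.

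For \eqref{eq:xpre-relation2} I would use the other half of the reduction, the covariance of the deformation equation. Requiring $\DJ$ to be carried to $(D_{\rm can})$ by $(\tz_J,\ts_J)$ is the condition that the deformation vector field $\p_{\tt}-\tA_J\p_{\tx}$ be pushed to a multiple of $\p_s-A_{\rm can}\p_z$, i.e.
\[
\frac{\p\tz_J}{\p\tt} = \tA_J(\tx,\tt,\eta)\,\frac{\p\tz_J}{\p\tx}
- A_{\rm can}(\tz_J,\ts_J,\eta)\,\frac{d\ts_J}{d\tt},
\]
together with the analogous identity for $(z_{\rm II},s_{\rm II})$ and $A_{\rm II}$. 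I would then differentiate $z_{\rm II}(x^{\rm pre},t^{\rm pre},\eta)=\tz_J$ in $\tx$ and in $\tt$, and $s_{\rm II}(t^{\rm pre},\eta)=\ts_J$ in $\tt$, to get three chain-rule relations among the Jacobians. Eliminating $A_{\rm can}$ between the two deformation identities and feeding in these relations, the factor $\p z_{\rm II}/\p x$ (nonvanishing by Theorem \ref{thm:transformation-at-double-turning-point}(iii)) cancels throughout and leaves precisely $\p x^{\rm pre}/\p\tt=\tA_J\,\p x^{\rm pre}/\p\tx-A_{\rm II}\,\p t^{\rm pre}/\p\tt$.

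The steps I expect to require the most care are, first, the precise sign and form of the deformation-covariance identity---it is the condition that reduces $\DJ$ (not merely $\SLJ$) to canonical form, and its sign is exactly what makes the right-hand side of \eqref{eq:xpre-relation2} come out as $\tA_J\,\p x^{\rm pre}/\p\tx-A_{\rm II}\,\p t^{\rm pre}/\p\tt$ rather than its negative---and, second, the verification that the compositions \eqref{eq:x---pre}--\eqref{eq:t---pre} are legitimate operations on formal series of alternating-parity type. The latter is guaranteed by parts (iii)--(vi) of Theorem \ref{thm:transformation-at-double-turning-point}: since the top-order Jacobians $\p z_0/\p x$ and $ds_0/dt$ never vanish, the inverses $z_{\rm II}^{-1}$ and $s_{\rm II}^{-1}$ exist term by term and preserve the instanton structure, so $x^{\rm pre}$, $t^{\rm pre}$ are well-defined, with top terms already identified with $x_0$, $t_0$ in Lemma \ref{lemma:x0-and-t0}. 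No genuinely new analytic difficulty arises here, since everything is confined to a neighborhood of the single double turning point $\tx=\tlambda_0(\tt)$.
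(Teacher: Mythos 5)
Your proposal is correct and takes essentially the same approach as the paper: the proposition is precisely what the paper imports from \cite[Section 4]{KT98}, where $(x^{\rm pre},t^{\rm pre})$ is the composition of the reduction of $\SLJ$, $\DJ$ to the canonical system with the inverse of the reduction of $\SLII$, $\DII$ (the parameter matching \eqref{eq:invariance-of-residues} guaranteeing both reductions share the same $(A(\eta),B(\eta))$, hence the same $S_{\rm can, odd}$ and $A_{\rm can}$), and both identities \eqref{eq:xpre-relation1} and \eqref{eq:xpre-relation2} follow by chain-rule elimination of the canonical data, exactly as you carry out. Your sign verification in the deformation-covariance identity and your remark on term-by-term invertibility of the formal series (via the nonvanishing top-order Jacobians and Lemma \ref{lemma:x0-and-t0}) are exactly the points that the cited argument of \cite{KT98} encapsulates.
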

It follows from \eqref{eq:xpre-relation1} and 
\eqref{eq:xpre-relation2} 
that, if a WKB solution $\psi_{\rm II}(x,t,\eta)$ of $\SLII$ 
also solves the deformation equation $\DII$, then 
\[
\tilde{\psi}_J(\tx,\tt,\eta) = 
\left( \frac{\p x^{\rm pre}}{\p \tx}(\tx,\tt,\eta) \right)^{-1/2}
\psi_{\rm II}\bigl( x^{\rm pre}(\tx,\tt,\eta), 
t^{\rm pre}(\tt,\eta),\eta \bigr)
\]
is a WKB solution of $\SLJ$ which also satisfies $\DJ$
simultaneously near $\tx=\tlambda_0(\tt)$ 
(cf.~\cite[Proposition 3.1]{KT98}). 

Therefore, the formal series defined by \eqref{eq:x---pre}
and \eqref{eq:t---pre} are ``almost the required" one. 
However, these formal series $x^{\rm pre}(\tx,\tt,\eta)$ 
and $t^{\rm pre}(\tt,\eta)$ may not be a desired one;
that is, each coefficient of $x^{\rm pre}(\tx,\tt,\eta)$
may not be holomorphic near a pair of simple turning points 
$\ta_1$ and $\ta_2$, due to the following reason.  

The equality \eqref{eq:xpre-relation1} tells us that 
the coefficient $x^{\rm pre}_{j/2}(\tx,\tt,\eta)$ ($j \ge 1$) 
of $x^{\rm pre}(\tx,\tt,\eta)$ satisfies 
the following 
linear inhomogeneous differential equation:
\begin{equation} \label{eq:xj/2}
S_{-1}(x_0,t_0) \frac{\p x^{\rm pre}_{j/2}}{\p \tx} + 
\frac{\p x_0}{\p \tx} \frac{\p S_{-1}}{\p x}(x_0,t_0) 
\hspace{+.1em} x^{\rm pre}_{j/2} 
+ \frac{\p x_0}{\p \tx} \frac{\p S_{-1}}{\p t}(x_0,t_0) 
\hspace{+.1em} t^{\rm pre}_{j/2} 
= R_{j/2}(\tx,\tt).
\end{equation}
Here $S_{-1}(x,t) = \sqrt{Q_{\rm II,0}(x,t)}$ be the 
top term of $S_{\rm II, odd}(x,t,\eta)$ and 
$R_{j/2}$ consists of the terms given by 
$x^{\rm pre}_0, \dots, x^{\rm pre}_{(j-1)/2}$. 
Since the coefficients of 
$\tS_{J, \rm odd}(\tx,\tt,\eta)$ are singular at simple 
turning points, the coefficient $R_{j/2}$ may be singular 
at $\tx=\ta_1$ and $\tx=\ta_2$, that is, 
$x^{\rm pre}_{j/2}$ is not holomorphic there in general. 

Recall that the transformation series 
$\ts_J(\tt,\eta)$ and $s_{\rm II}(t,\eta)$ 
contain infinitely many free parameters  
as explained in Section \ref{section:zJ-and-sJ}. 
Thus the formal series $t^{\rm pre}(\tt,\eta)$ also 
has free parameters, which will be denoted by $C_n$, 
and we write 
\begin{equation} \label{eq:free-parameter-C}
C(\eta) = \sum_{n=1}^{\infty} \eta^{-n} C_n. 
\end{equation} 
Since the free parameters are contained in $\ts_J(\tt,\eta)$ 
and $s_{\rm II}(t,\eta)$ additively 
(cf.~Section \ref{section:zJ-and-sJ}), 
the formal series $t^{\rm pre}(\tt,\eta)$ 
contains the free parameters in the following manner:
\begin{equation} \label{eq:C-and-tpre}
\ts_J(\tt,\eta) = 
s_{\rm II}(t^{\rm pre}(\tt,\eta),\eta) + C(\eta).
\end{equation}
In the subsequent subsections, we will show that, 
by appropriately choosing the free parameters 
$C_n$'s (i.e., correct choices of $t^{\rm pre}_{j/2}$'s 
appearing in \eqref{eq:xj/2}), 
$x^{\rm pre}_{j/2}$'s become holomorphic in 
neighborhoods of both simple turning points 
$\tx=\ta_1$ and $\ta_2$.
The condition for $C_n$'s together with the 
constraint \eqref{eq:invariance-of-residues} 
between the parameters $(\talpha,\tbeta)$ 
and ($\alpha,\beta$) gives a correspondence 
between 2-parameter solutions $(\tlambda_J,\tnu_J)$ 
of $\PJ$ and $(\lambda_{\rm II},\nu_{\rm II})$ of $\PII$.

\subsection{Matching of two transformations}
\label{section:matching-P2}

With the aid of the idea of \cite{KT98}, 
we show that, by appropriately choosing the 
free parameters $C_n$, the coefficients 
$x^{\rm pre}_{j/2}$ of the formal series 
$x^{\rm pre}(\tx,\tt,\eta)$
become holomorphic in a neighborhood of one of 
the two simple turning points $\tx=\ta_1$ 
and $\tx = \ta_2$. 

The following lemma can be shown by using the same 
discussion as in \cite{AKT91} and \cite{KT98}. 
\begin{lem}
[{cf. \cite[Lemma 2.2]{AKT91}, \cite[Sublemma 4.1]{KT98}}]
\label{lemma:transformation-at-simple-TP}
For each $k=1,2$, there exist an open neighborhood $\tU'_k$
of $\tx=\ta_k(\tt)$ and a formal series 
\begin{equation}
y^{(k)}(\tx,\tt,\eta) = \sum_{j = 0}^{\infty}
\eta^{-j/2} y^{(k)}_{j/2}(\tx,\tt,\eta) 
\end{equation} 
satisfying the following conditions.
\begin{enumerate}[\upshape (i)]
\item %
Each coefficient $y^{(k)}_{j/2}(\tx,\tt,\eta)$ is 
holomorphic in $\tU'_k \times \tV$. %
\item %
The top term $y^{(k)}_0(\tx,\tt)$ 
is free from $\eta$ and  
$\p y^{(k)}_0/\p\tx$ never vanishes 
on $\tU'_k \times \tV$.
\item %
$y^{(k)}_0(\tx,\tt)$ satisfies 
$y^{(k)}_0(\ta_k(\tt),\tt) = a_k(t_0(\tt))$
and maps the Stokes segment 
$\tgamma_k$ of $\SLJ$ to the Stokes segment 
$\gamma_k$ of $\SLII$ locally near $\tx=\ta_k(\tt)$. %
\item %
$y^{(k)}_{1/2}$ vanishes identically.  
\item %
For $j \ge 2$, the $\eta$-dependence of 
$y^{(k)}_{j/2}(\tx,\tt,\eta)$ 
is only through instanton terms 
$\exp(\ell \tPhi_{J}(\tt,\eta))$
($\ell = j-2-2m$ with $0 \le m \le j-2$). 
\item %
The equalities 
\begin{eqnarray} \label{eq:yk-relation} 
& & \\ \nonumber
\tS_{J, {\rm odd}}(\tx,\tt,\eta) & = & 
\left( \frac{\p y^{(k)}}{\p\tx}(\tx,\tt,\eta) \right)
S_{\rm II, odd}\left( y^{(k)}(\tx,\tt,\eta), 
t^{\rm pre}(\tt,\eta), C(\eta), \eta \right), \\
\frac{\p y^{(k)}}{\p\tt} & = & \tA_J(\tx,\tt,\eta)
\frac{\p y^{(k)}}{\p \tx} - 
A_{\rm II}(y^{(k)}(\tx,\tt,\eta),t^{\rm pre}(\tt,\eta),\eta)
\frac{\p t^{\rm pre}}{\p\tt}
\label{eq:yk-relation2}
\end{eqnarray}
hold on $\tU'_k \times \tV$. Here 
$t^{\rm pre}(\tt,\eta)$ is given in \eqref{eq:t---pre}.
\end{enumerate}
\end{lem}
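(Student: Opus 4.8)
The plan is to reproduce, near each of the simple turning points $\tx=\ta_k(\tt)$, the WKB theoretic transformation theory of \cite{AKT91} and \cite[Sublemma 4.1]{KT98}, now transforming $\SLJ$ directly to $\SLII$ rather than to the Airy equation. Since $t^{\rm pre}(\tt,\eta)$ has already been fixed by \eqref{eq:t---pre} (it carries the free parameters $C_n$), the series $y^{(k)}$ is to be constructed so as to make \eqref{eq:yk-relation} and \eqref{eq:yk-relation2} hold for this given $t^{\rm pre}$; the construction is carried out for each fixed $\tt$ and then seen to depend holomorphically on $\tt\in\tV$.

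First I would build the top term $y^{(k)}_0(\tx,\tt)$. Because $\ta_k(\tt)$ is a simple zero of $\tQ_{J,0}$ and $a_k(t_0(\tt))$ is a simple zero of $Q_{\rm II,0}$, the two integrals
\[
\int_{\ta_k(\tt)}^{\tx}\sqrt{\tQ_{J,0}(\tx,\tt)}\,d\tx, \qquad
\int_{a_k(t_0(\tt))}^{x}\sqrt{Q_{\rm II,0}(x,t_0(\tt))}\,dx
\]
each behave like a nonzero constant times the $3/2$-power of the distance to the respective turning point, so their $2/3$-powers are holomorphic and vanish simply there. Equating these $2/3$-powers defines $y^{(k)}_0$ as a holomorphic function of $\tx$ with $y^{(k)}_0(\ta_k(\tt),\tt)=a_k(t_0(\tt))$ and $\p y^{(k)}_0/\p\tx\neq0$, and a compatible choice of branches sends $\tgamma_k$ to $\gamma_k$; this gives (ii) and (iii). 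At the leading ($\eta^1$) order, \eqref{eq:yk-relation} reduces to $\sqrt{\tQ_{J,0}}=(\p y^{(k)}_0/\p\tx)\sqrt{Q_{\rm II,0}(y^{(k)}_0,t_0)}$, which holds by construction.

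Next, expanding \eqref{eq:yk-relation} in powers of $\eta^{-1/2}$ produces, for each $j\ge1$, a linear inhomogeneous first-order equation for $y^{(k)}_{j/2}$ of the same shape as \eqref{eq:xj/2}, with coefficient $\sqrt{Q_{\rm II,0}}$ in front of $\p y^{(k)}_{j/2}/\p\tx$ and an inhomogeneous term determined by $y^{(k)}_0,\dots,y^{(k)}_{(j-1)/2}$ and $t^{\rm pre}$, which I would solve recursively. The vanishing of $y^{(k)}_{1/2}$ and the instanton form of the $\eta$-dependence for $j\ge2$ (conditions (iv) and (v)) then follow from the alternating parity of $\tS_{J,{\rm odd}}$ and $S_{\rm II,{\rm odd}}$, exactly as in the double turning point case of Theorem \ref{thm:transformation-at-double-turning-point}.

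The main obstacle is condition (i), holomorphicity of $y^{(k)}_{j/2}$ across the simple turning point $\ta_k$. Since each coefficient of $\tS_{J,{\rm odd}}$ has a square-root type singularity at $\ta_k$ while the leading coefficient $\sqrt{Q_{\rm II,0}}$ of the recursion vanishes there, both the inhomogeneous term and the leading coefficient are singular at $\ta_k$, and the crux is to check that these singularities cancel so that the solution normalized at $\ta_k$ is in fact holomorphic. This is precisely the local singularity-matching argument of \cite[Lemma 2.2]{AKT91} (see also \cite[Sublemma 4.1]{KT98}), which applies verbatim because it is a purely local statement at a simple turning point. Finally, the deformation-equation relation \eqref{eq:yk-relation2} is deduced from \eqref{eq:yk-relation} together with the $t$-derivative formula \eqref{eq:t-derivative-of-Sodd} of Proposition \ref{prop:t-derivative-of-Sodd} applied to both $\tS_{J,{\rm odd}}$ and $S_{\rm II,{\rm odd}}$, by the same isomonodromy argument used for the double turning point transformation in \cite[Section 4]{KT98}.
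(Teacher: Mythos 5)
Your proposal is correct and follows essentially the same route as the paper: the paper proves this lemma precisely by appealing to the local theory of \cite[Lemma 2.2]{AKT91} and \cite[Sublemma 4.1]{KT98}, fixing the top term $y^{(k)}_0$ as the unique holomorphic solution of the leading-order relation satisfying condition (iii) --- which is exactly your $2/3$-power construction --- and your treatment of the higher-order terms (recursion, singularity cancellation at the simple turning point, and the isomonodromy argument via \eqref{eq:t-derivative-of-Sodd} for the deformation relation \eqref{eq:yk-relation2}) is the intended content of that citation. The only point worth noting is that \eqref{eq:yk-relation2} does not follow from \eqref{eq:yk-relation} alone but from it together with the normalization of $y^{(k)}$ at the turning point (equality of the well-normalized contour integrals), which your construction does provide.
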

The top term $y^{(k)}_0(\tx,\tt)$ is fixed as the 
unique holomorphic function near $\tx=\ta_k(\tt)$
satisfying 
\begin{equation}
\sqrt{\tQ_{J,0}(\tx,\tt)} = 
\left( \frac{\p y^{(k)}_0}{\p \tx} (\tx,\tt) \right)
\sqrt{Q_{\rm II,0}(y^{(k)}_0(\tx,\tt),\tt)}
\end{equation}
at $\tx=\ta_k(\tt)$ and the condition (iii) 
in Lemma \ref{lemma:transformation-at-simple-TP}.
Since $x_0(\tx,\tt)$ constructed in Section 
\ref{section:top-terms} also satisfies the conditions 
for both $k=1,2$, we can conclude that 
\begin{equation} \label{eq:coincidence-of-top-terms}
y^{(1)}_0(\tx,\tt) = y^{(2)}_0(\tx,\tt) = x_0(\tx,\tt).
\end{equation}

Now we try to adjust the free parameters $C_n$ 
that remain in $t^{\rm pre}(\tt,\eta)$ as described 
in \eqref{eq:C-and-tpre} so that the higher order terms 
of transformations $x^{\rm pre}(\tx,\tt,\eta)$ and 
$y^{(1)}(\tx,\tt,\eta)$ constructed above coincide. 
This is a kind of ``matching problem" 
which has been used in constructions of WKB theoretic 
transformations as in 
\cite{AKT91}, \cite{KT96}, \cite{KT98}, etc.

In this subsection we denote by $y^{\rm pre}(\tx,\tt,\eta)$
the formal series $y^{(1)}(\tx,\tt,\eta)$, and write
\begin{equation} \label{eq:ypre}
y^{\rm pre}(\tx,\tt,\eta) = \sum_{j=0}^{\infty}
\eta^{-j/2} y^{\rm pre}_{j/2}(\tx,\tt,\eta) ~~
\left( = y^{(1)}(\tx,\tt,\eta) \right).
\end{equation}
We note that the coefficients of formal series 
$y^{(k)}(\tx,\tt,\eta)$ are holomorphic along 
each Stokes curve emanating from $\ta_k(\tt)$ 
(cf.~\cite[Appendix A.2]{AKT91}). 
Thus, there exists a domain in the 
$\tx$-plane on which both of the coefficients of 
formal series $x^{\rm pre}(\tx,\tt,\eta)$ and 
$y^{\rm pre}(\tx,\tt,\eta)$ are holomorphic 
since the Stokes segment $\tgamma_1$ connects 
the simple turning point $\ta_1(\tt)$ and 
the double turning point $\tlambda_0(\tt)$ 
of $\SLJ$ when $\tt=\tt_{\ast}$. 
In what follows we suppose that $\tx$ 
lies on this domain. To attain the matching, 
we introduce the following functions:
\begin{eqnarray}
{\cal R}(x,t,\eta) & = & \int_{a_1(t)}^x \eta^{-1} 
S_{\rm II, odd}(x,t,\eta)~dx,  
\label{eq:Rxteta} \\
{\cal F}(\tx,\tt,\eta) & = & 
{\cal R}(x^{\rm pre}(\tx,\tt,\eta),t^{\rm pre}(\tt,\eta),\eta), 
\label{eq:calF} \\
{\cal G}(\tx,\tt,\eta) & = & 
{\cal R}(y^{\rm pre}(\tx,\tt,\eta),t^{\rm pre}(\tt,\eta),\eta).
\label{eq:calG}
\end{eqnarray}
Due to the factor $\eta^{-1}$ in \eqref{eq:Rxteta}, 
${\cal F}$ and ${\cal G}$ become formal series 
starting from $\eta^{0}$. It is clear from the definition 
\eqref{eq:xpre-relation1} and \eqref{eq:yk-relation} 
that we find 
\begin{equation} \label{eq:x-indep}
\frac{\p ({\cal F} - {\cal G})}{\p \tx} = 
\eta^{-1} \tS_{J,{\rm odd}}(\tx,\tt,\eta) - 
\eta^{-1} \tS_{J,{\rm odd}}(\tx,\tt,\eta) = 0.
\end{equation}
Furthermore, using \eqref{eq:t-derivative-of-Sodd}, 
\eqref{eq:xpre-relation1} and \eqref{eq:xpre-relation2}, 
we have
\begin{eqnarray*}
\frac{\p {\cal F}}{\p \tt} 
& = & \eta^{-1}S_{\rm II, odd}(x^{\rm pre}(\tx,\tt,\eta),
t^{\rm pre}(\tt,\eta),\eta) \\
& \times & 
\left( \frac{\p x^{\rm pre}}{\p\tt}(\tx,\tt,\eta) + 
A_{\rm II}(x^{\rm pre}(\tx,\tt,\eta),
t^{\rm pre}(\tt,\eta),\eta) 
\frac{\p t^{\rm pre}}{\p\tt}(\tt,\eta) \right) \\
& = & \eta^{-1} \tA_J(\tx,\tt,\eta) 
\frac{\p x^{\rm pre}}{\p \tx}(\tx,\tt,\eta)
S_{\rm II, odd}(x^{\rm pre}(\tx,\tt,\eta),
t^{\rm pre}(\tt,\eta),\eta) \\
& = & \eta^{-1} \tA_J(\tx,\tt,\eta) \hspace{+.1em}
\tS_{J, {\rm odd}}(\tx,\tt,\eta)
\end{eqnarray*}
by a straightforward computation. In the same way we have
\begin{eqnarray*}
\frac{\p {\cal G}}{\p \tt} = \eta^{-1} 
\tA_J(\tx,\tt,\eta) \hspace{+.1em}
\tS_{J, {\rm odd}}(\tx,\tt,\eta).
\end{eqnarray*}
Therefore, 
\begin{equation} \label{eq:y-indep}
\frac{\p ({\cal F}-{\cal G})}{\p \tt} = 0.
\end{equation}
Combining \eqref{eq:x-indep} and \eqref{eq:y-indep}, we conclude
\begin{equation} \label{eq:xy-indep}
{\cal F}-{\cal G} = \sum_{j=0}^{\infty} \eta^{-j/2} 
{\cal I}_{j/2}
\end{equation}
holds with genuine constants ${\cal I}_{j/2}$.

Let us prove the following statement $(\ast)_j$ 
for any $j$ by the induction on $j$:
\begin{eqnarray*}
(\ast)_j~~
\text{A correct choice of $t^{\rm pre}_{j/2}$ 
entails the vanishing} \\
\text{of ${\cal I}_{j/2}$ and coincidence of 
$x^{\rm pre}_{j/2}$ and $y^{\rm pre}_{j/2}$.} \hspace{+1.8em}
\end{eqnarray*}

As we have shown in Section \ref{section:top-terms}
and \eqref{eq:coincidence-of-top-terms}, 
$(\ast)_0$ holds. 
Since $x^{\rm pre}_{1/2} = y^{\rm pre}_{1/2} = 0$ and 
$t^{\rm pre}_{1/2} = 0$, $(\ast)_1$ is also valid. 
Let us suppose $j \ge 2$ and $(\ast)_k$ holds 
for all $k < j$ and show $(\ast)_j$. 
It follows from the definition 
\eqref{eq:calF} and \eqref{eq:calG} and 
the induction hypothesis that 
\begin{equation} \label{eq:Cj/2}
{\cal I}_{j/2} = S_{-1}(x_0,t_0) 
(x^{\rm pre}_{j/2} - y^{\rm pre}_{j/2})
\end{equation}
holds. Here $S_{-1}(x,t)$ is the top term of 
$S_{\rm II, odd}(x,t,\eta)$. 
On the other hand, as we have seen in \eqref{eq:xj/2},
the functions $x^{\rm pre}_{j/2}$ and 
$y^{\rm pre}_{j/2}$ 
satisfy linear inhomogeneous differential equations 
\begin{eqnarray} \label{eq:diff-eq-for-xj/2}
{L}x^{\rm pre}_{j/2} & = & 
R(x^{\rm pre}_0,\dots,x^{\rm pre}_{(j-1)/2},
t^{\rm pre}_0,\dots,t^{\rm pre}_{(j-1)/2}), \\
{L}y^{\rm pre}_{j/2} & = & 
R(y^{\rm pre}_0,\dots,y^{\rm pre}_{(j-1)/2},
t^{\rm pre}_0,\dots,t^{\rm pre}_{(j-1)/2}), 
\label{eq:diff-eq-for-yj/2}
\end{eqnarray}
where $L$ is a differential operator defined by 
\begin{eqnarray}
Lw & = & S_{-1}(x_0,t_0) \frac{\p w}{\p \tx} + 
\frac{\p x_0}{\p\tx} \frac{\p S_{-1}}{\p x}(x_0,t_0) 
\hspace{+.2em} w  \\
& + & \nonumber
\frac{\p x_0}{\p\tx} \frac{\p S_{-1}}{\p t}(x_0,t_0) 
\hspace{+.2em} t^{\rm pre}_{j/2},
\end{eqnarray}
and the right-hand side of \eqref{eq:diff-eq-for-xj/2}
(resp., \eqref{eq:diff-eq-for-yj/2}) is a function 
determined by $x^{\rm pre}_{j'/2}$ 
(resp., $y^{\rm pre}_{j'/2}$) and 
$t^{\rm pre}_{j'/2}$ with $j'\le j-1$. 
The induction hypothesis implies that 
\[
R(x^{\rm pre}_0,\dots,x^{\rm pre}_{(j-1)/2},
t^{\rm pre}_0,\dots,t^{\rm pre}_{(j-1)/2}) = 
R(y^{\rm pre}_0,\dots,y^{\rm pre}_{(j-1)/2},
t^{\rm pre}_0,\dots,t^{\rm pre}_{(j-1)/2}).
\] 
Moreover, since $x^{\rm pre}_{j/2}$ is non-singular 
near $\tx=\tlambda_0(\tt)$, the right-hand sides 
of \eqref{eq:diff-eq-for-xj/2} and 
\eqref{eq:diff-eq-for-yj/2} must be holomorphic 
at $\tx=\tlambda_0(\tt)$. 
The method of variation of constants shows that 
$y^{\rm pre}_{j/2}$ has an at most simple pole near 
$\tx=\tlambda_0(\tt)$,
and has the form 
\begin{equation} \label{eq:singularity-of-yj/2}
y^{\rm pre}_{j/2}(\tx,\tt,\eta) = 
\frac{d_{j/2}(\tt,\eta)-t^{\rm pre}_{j/2}(\tt,\eta)}
{2(x_0(\tx,\tt)-\lambda_0(t_0(\tt)))} + 
\text{(regular function at $\tx=\lambda_0(\tt)$)}.
\end{equation}
Here $d_{j/2}(\tt,\eta)$ is determined by 
$x^{\rm pre}_{j'/2}$ and $t^{\rm pre}_{j'/2}$ with 
$j'\le j-1$ and, in particular, independent of 
$t^{\rm pre}_{j/2}$. 
Substituting \eqref{eq:singularity-of-yj/2} into 
\eqref{eq:Cj/2} and taking the limit 
$\tx \rightarrow \tlambda_0(\tt)$, we obtain 
\begin{equation} \label{eq:free-parameter-and-t}
\frac{1}{2} \sqrt{F^{(1)}_{\rm II}(t_0(\tt))}~ 
\left( t^{\rm pre}_{j/2}(\tt,\eta)-d_{j/2}(\tt,\eta) \right) 
 = {\cal I}_{j/2}.
\end{equation}
Here we have used the equalities \eqref{eq:RJ-at-lambda0},
\eqref{eq:boundary-condition}, 
\[
S_{-1}(x_0,t_0) = (x_0(\tx,\tt) - \lambda_0(t_0(\tt))) 
\sqrt{R_{\rm II}(x_0(\tx,\tt),t_0(\tt))},
\]
and the fact that $x^{\rm pre}_{j/2}(\tx,\tt,\eta)$ 
is holomorphic at $\tx=\tlambda_0(\tt)$. 
Again we emphasize that ${\cal I}_{j/2}$ is 
independent of $\tt$.

Here we suppose that $j$ is even, and write $j=2n$ 
($n \ge 1$). Then, in view of \eqref{eq:C-and-tpre}, 
we can conclude that the free parameter $C_n$ 
remains in $t^{\rm pre}_{j/2}(\tt,\eta) = 
t^{\rm pre}_{n}(\tt,\eta)$ in the form 
\begin{equation} \label{eq:free-parameter-and-tj/2}
t^{\rm pre}_{n}(\tt,\eta) = 
\left(\frac{d s_0}{d t}(t_0(\tt)) \right)^{-1} C_n + 
N(\tt,\eta).
\end{equation}
Here $s_0(t)$ is the top term \eqref{eq:sJ0} of 
the formal series $s_{\rm II}(t,\eta)$ and hence 
\[
\frac{d s_0}{d t}(t_0(\tt)) = \frac{1}{2}
\sqrt{F^{(1)}_{\rm II}(t_0(\tt))} 
\]
is non-zero, at least when $\tt = \tt_{\ast}$. 
The term $N(\tt,\eta)$ in \eqref{eq:free-parameter-and-tj/2}
consists of terms which are independent of $C_n$.
Thus, \eqref{eq:free-parameter-and-t} and 
\eqref{eq:free-parameter-and-tj/2} show that 
a suitable choice of the free parameter 
$C_n$ makes ${\cal I}_{j/2} = {\cal I}_{n}$ vanish. 
Hence \eqref{eq:Cj/2} implies 
\begin{equation} \label{eq:xj/2=yj/2}
x^{\rm pre}_{j/2}(\tx,\tt,\eta) = 
y^{\rm pre}_{j/2}(\tx,\tt,\eta),
\end{equation}
that is, the claim $(\ast)_j$. 

Next we consider the case that $j$ is odd. In this case, 
due to the property of alternating parity, 
${\cal I}_{j/2}$ must contain only odd instanton terms, 
and hence it never contains constant terms. Thus 
${\cal I}_{j/2}$ must vanish, and \eqref{eq:Cj/2} 
implies \eqref{eq:xj/2=yj/2}. 

Thus the induction proceeds and the claim $(\ast)_{j}$ 
is valid for every $j$. Otherwise stated, the formal series 
$x^{\rm pre}(\tx,\tt,\eta)$ and $y^{\rm pre}(\tx,\tt,\eta)$ 
coincide after the correct choice of free parameters:
\begin{equation} \label{eq:xpre-equal-ypre}
x^{\rm pre}(\tx,\tt,\eta) = y^{\rm pre}(\tx,\tt,\eta) ~
\left( = y^{(1)}(\tx,\tt,\eta) \right).
\end{equation}
Since the all free parameters in $t^{\rm pre}(\tt,\eta)$ 
have been fixed, the correspondence of parameters 
between $(\talpha(\eta),\tbeta(\eta))$ and 
$(\alpha(\eta),\beta(\eta))$ is also fixed. 
In what follows we always assume 
that parameters are chosen so that 
\eqref{eq:xpre-equal-ypre} holds, and denote by 
\begin{equation} \label{eq:tpre-is-t}
t(\tt,\eta) = \sum_{j=0}^{\infty}\eta^{-j/2}t_{j/2}(\tt,\eta)
\end{equation}
the formal series 
$t^{\rm pre}$ after the correct choice of free parameters. 
In the next subsection, we will show that 
the formal series \eqref{eq:xpre-equal-ypre}
also coincides with $y^{(2)}(\tx,\tt,\eta)$ and 
consequently the coefficients of \eqref{eq:xpre-equal-ypre}
are also holomorphic near the simple turning point 
$\tx=\ta_2(\tt)$. 

\subsection{Transformation near the pair of two
simple turning points and the 
transformation of 2-parameter solutions}
\label{section:transformation-near-two-stps}

Finally, in this subsection we show that the 
formal series $y^{(1)}(\tx,\tt,\eta)$ and 
$y^{(2)}(\tx,\tt,\eta)$ constructed in Lemma 
\ref{lemma:transformation-at-simple-TP} coincide.
Our choice \eqref{eq:c-P2} of the constant 
$c$ in $\PII$ and $\SLII$ enables 
us to show the following claim.

\begin{prop}\label{prop:y1=y2}
The formal series $y^{(1)}(\tx,\tt,\eta)$ and 
$y^{(2)}(\tx,\tt,\eta)$ constructed in 
Lemma \ref{lemma:transformation-at-simple-TP} coincide:
\begin{equation} \label{eq:Y1-is-Y2}
y^{(1)}(\tx,\tt,\eta) = y^{(2)}(\tx,\tt,\eta). 
\end{equation} 
Consequently, the coefficients of 
$y^{(1)}(\tx,\tt,\eta)$ and $y^{(2)}(\tx,\tt,\eta)$ 
are holomorphic in $\tx$ on a domain containing 
the pair of two simple turning points 
$\tx=\ta_1(\tt)$ and $\ta_2(\tt)$ of $\SLJ$.
\end{prop}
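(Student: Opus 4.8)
The plan is to repeat, almost verbatim, the matching procedure of Section~\ref{section:matching-P2}, but now comparing $x^{\rm pre}(\tx,\tt,\eta)$ --- which, after the choice of free parameters made above, equals $y^{(1)}(\tx,\tt,\eta)$ by \eqref{eq:xpre-equal-ypre} --- with $y^{(2)}(\tx,\tt,\eta)$ along the \emph{second} Stokes segment $\tgamma_2$. Since $\tgamma_2$ joins the simple turning point $\ta_2(\tt)$ to the double turning point $\tlambda_0(\tt)$ of $\SLJ$ when $\tt=\tt_{\ast}$, there is a domain along $\tgamma_2$ on which the coefficients of both $x^{\rm pre}$ and $y^{(2)}$ are holomorphic. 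On this overlap I would introduce ${\cal R}(x,t,\eta)$ as in \eqref{eq:Rxteta} together with ${\cal F}={\cal R}(x^{\rm pre},t^{\rm pre})$ and ${\cal G}'={\cal R}(y^{(2)},t^{\rm pre})$; exactly as in \eqref{eq:x-indep}--\eqref{eq:xy-indep}, Proposition~\ref{prop:t-derivative-of-Sodd} together with the transformation relations \eqref{eq:xpre-relation1} and \eqref{eq:yk-relation} give $\p_{\tx}({\cal F}-{\cal G}')=\p_{\tt}({\cal F}-{\cal G}')=0$, so that ${\cal F}-{\cal G}'=\sum_{j}\eta^{-j/2}{\cal I}'_{j/2}$ with genuine constants ${\cal I}'_{j/2}$. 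The goal is then to prove ${\cal I}'_{j/2}=0$ for all $j$ by induction, which yields $x^{\rm pre}_{j/2}=y^{(2)}_{j/2}$ and hence \eqref{eq:Y1-is-Y2}.

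The base case and the odd-order steps are immediate: the top terms agree by \eqref{eq:coincidence-of-top-terms}, and for odd $j$ the alternating parity forces ${\cal I}'_{j/2}$, a genuine constant, to consist only of odd instanton terms, hence to vanish --- no free parameter being needed, exactly as in Section~\ref{section:matching-P2}. The delicate point is the even-order step $j=2n$. Here the free parameter $C_n$ has \emph{already} been consumed in the matching with $y^{(1)}$, so it is no longer available; instead ${\cal I}'_{j/2}$ must be shown to vanish on the nose. Letting $\tx\to\tlambda_0(\tt)$ and using the simple-pole description \eqref{eq:singularity-of-yj/2} of $y^{(2)}_{j/2}$ together with the computation leading to \eqref{eq:free-parameter-and-t}, I expect to obtain ${\cal I}'_{j/2}=\frac12\sqrt{F^{(1)}_{\rm II}(t_0(\tt))}\,(t^{\rm pre}_{j/2}-d'_{j/2})$, where $d'_{j/2}$ is the pole coefficient of $y^{(2)}_{j/2}$ at $\tlambda_0$. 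Since the completed matching with $y^{(1)}$ already fixed $t^{\rm pre}_{j/2}=d_{j/2}$, the corresponding pole coefficient of $y^{(1)}_{j/2}$, this reduces the whole claim to the identity $d_{j/2}=d'_{j/2}$.

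The main obstacle is precisely this identity, and it is where the choice of $c$ enters decisively. The difference $d_{j/2}-d'_{j/2}$ measures how the simple-pole coefficient at $\tlambda_0$ changes when the normalizing turning point is moved from $\ta_1$ to $\ta_2$; integrating the inhomogeneous equation \eqref{eq:xj/2} along the composed path $\ta_1\to\tlambda_0\to\ta_2$ (equivalently, around the cycle $\delta$ of Figure~\ref{fig:delta}) expresses it as a period of $S_{\rm II, odd}$ over that cycle. By \eqref{eq:resSodd=res-1} and Proposition~\ref{prop:residue} this period is carried \emph{entirely} by the leading term $\eta\,S_{-1}$: the residues of $S_{{\rm odd},j/2}$ vanish for every $j\ge0$, so the period receives no contribution at orders $j\ge2$. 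In other words, the only non-trivial period is the leading one $2\pi i c$, and the constant $c$ was fixed in \eqref{eq:c-P2} (via Lemma~\ref{lem:integral-P2-P3} and \eqref{eq:integral-P2-plus}) exactly so as to absorb it. Consequently $d_{j/2}=d'_{j/2}$ for all even $j\ge2$, forcing ${\cal I}'_{j/2}=0$. Having established ${\cal I}'_{j/2}=0$ for all $j$, I conclude $x^{\rm pre}=y^{(2)}$, whence $y^{(1)}=x^{\rm pre}=y^{(2)}$; as $y^{(1)}$ is holomorphic near $\ta_1$ and $y^{(2)}$ near $\ta_2$, the common series is holomorphic on a domain containing both simple turning points, which is the assertion of Proposition~\ref{prop:y1=y2}.
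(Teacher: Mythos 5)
Your proposal follows essentially the same architecture as the paper's own proof: the paper, too, reduces \eqref{eq:Y1-is-Y2} to a period identity (Lemma \ref{lem:key-lamma}) and then concludes by an order-by-order induction; it merely keeps the books with the contour integrals $\int_{\tdelta^{(k)}_{\tx}}\tS_{J,{\rm odd}}\,d\tx$ and a formal Taylor expansion around $x_0$ (leading to \eqref{eq:pre-key-equality}) instead of your pole coefficients $d_{j/2}$, $d'_{j/2}$; the two bookkeepings are equivalent. Your reduction of the claim to $d_{j/2}=d'_{j/2}$, the parity argument at odd orders, and the observation that no free parameter remains available at even orders are all fine.

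The genuine gap is in the final step. You assert that the period controlling $d_{j/2}-d'_{j/2}$ is ``carried entirely by the leading term $\eta\,S_{-1}$'' on the strength of \eqref{eq:resSodd=res-1} and Proposition \ref{prop:residue}. Those statements concern residues at the \emph{singular points} of the equation (even-order poles of $Q_{{\rm II},0}$, resp.\ $\tQ_{J,0}$); they say nothing about the double turning point, where every coefficient of $S_{\rm II, odd}$ (resp.\ $\tS_{J,{\rm odd}}$) has a pole, with total residue $E_{\rm II}/4$ (resp.\ $\tE_J/4$) by \eqref{eq:EJ} --- a full formal power series whose leading coefficient $-2\alpha_0\beta_0$ is nonzero by Lemma \ref{lemma:E} and \eqref{eq:genericity}, and whose higher coefficients are nonzero in general. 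Your matching constant is a period over a cycle of the type $\tdelta'$ of Figure \ref{fig:integral-cycles}, which encircles the two simple turning points while \emph{excluding} $\tlambda_0(\tt)$; moreover it is not a period of $S_{\rm II, odd}$ alone but the difference between a $\tS_{J,{\rm odd}}$-period and an $S_{\rm II, odd}$-period (the $J$-side enters through the inhomogeneity of \eqref{eq:xj/2}). Decomposing $\tdelta'=\tdelta-\tdelta_0$ as the paper does, the outer pieces over $\tdelta$ and $\delta$ are indeed leading-order and are matched by the choice \eqref{eq:c-P2} of $c$ (this is where Assumption \ref{ass:transform-to-P2} (5) and \eqref{eq:resSodd=res-1} legitimately enter), but the pieces over $\tdelta_0$ and $\delta_0$ contribute $\pi i\,\tE_J/2$ and $\pi i\,E_{\rm II}/2$ at \emph{every} order, and their cancellation is not automatic: it is exactly the condition $\tE_J=E_{\rm II}$, i.e.\ \eqref{eq:invariance-of-residues}, which your argument never invokes at this step. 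So, as written, ``the period receives no contribution at orders $j\ge 2$'' is false for the cycle your constant actually sees; the step is repaired precisely by combining \eqref{eq:invariance-delta0} with \eqref{eq:invariance-delta}, which is the content of the paper's Lemma \ref{lem:key-lamma}.
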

\begin{proof}
We assume that the case (A) in Figure 
\ref{fig:P2-transformable-cases} happens. The discussion 
given here is applicable to the case (B) in Figure 
\ref{fig:P2-transformable-cases}.
Moreover, we will show the equality 
\eqref{eq:Y1-is-Y2} with $\tt$ being fixed at 
$\tt_{\ast}$. This is just for the sake of clarity, 
and our proof is also valid for any $\tt$ 
in a neighborhood $\tV$ of $\tt_{\ast}$. 
(We may take a smaller neighborhood $\tV$, if necessary.) 

Due to the equality \eqref{eq:xpre-equal-ypre} 
the coefficients of $y^{(1)}(\tx,\tt,\eta)$
are holomorphic in $\tx$ near $\tlambda_0(\tt)$. 
Therefore, there exists a domain $\tU'$ containing 
a part of the Stokes segment $\tgamma_2$ on which the 
both coefficients $y^{(1)}(\tx,\tt,\eta)$ and 
$y^{(2)}(\tx,\tt,\eta)$ are holomorphic 
because $\tgamma_2$ connects $\ta_2(\tt_{\ast})$
and $\tlambda_0(\tt_{\ast})$.
In the proof of Proposition \ref{prop:y1=y2} we 
assume that $\tx$ lies on the domain $\tU'$. 
Note that the top terms $y^{(1)}_0(\tx,\tt)$ and 
$y^{(2)}_0(\tx,\tt)$ coincide and are holomorphic 
in the domain $\tU'$ as we have seen in 
\eqref{eq:coincidence-of-top-terms}.

\begin{figure}
\begin{center}
\begin{pspicture}(0,1)(7,6)
%
\psset{fillstyle=solid, fillcolor=black}
\pscircle(3.5,4.4){0.09} 
\pscircle(1.5,2.5){0.08}
\pscircle(5.5,2.5){0.08}
\psset{fillstyle=none}
\psset{linewidth=1.5pt}
\pscurve(5.5,2.5)(5.45,2.4)(5.3,2.5)(5.2,2.6)(5.1,2.5)
(5.0,2.4)(4.9,2.5)(4.8,2.6)(4.7,2.5)(4.6,2.4)(4.5,2.5)
(4.4,2.6)(4.3,2.5)(4.2,2.4)(4.1,2.5)(4,2.6)(3.9,2.5)
(3.8,2.4)(3.7,2.5)(3.6,2.6)(3.5,2.5)(3.4,2.4)(3.3,2.5)
(3.2,2.6)(3.1,2.5)(3,2.4)(2.9,2.5)(2.8,2.6)(2.7,2.5)
(2.6,2.4)(2.5,2.5)(2.4,2.6)(2.3,2.5)(2.2,2.4)(2.1,2.5)
(2,2.6)(1.9,2.5)(1.8,2.4)(1.7,2.5)(1.6,2.55)(1.5,2.5)
\rput[c]{0}(3.5,5.5){\large $\tlambda_0(\tt_{\ast})$}
\rput[c]{0}(1.0,1.5){\large $\ta_{1}(\tt_{\ast})$}
\rput[c]{0}(6.0,1.5){\large $\ta_{2}(\tt_{\ast})$} 
\rput[c]{0}(3.5,3.0){\large $\tx$} 
\rput[c]{0}(3.5,3.5){\large $\times$} 
\rput[c](1.5,3.5){\Large $\tdelta^{(1)}_{\tx}$} 
\rput[c](5.5,3.5){\Large $\tdelta^{(2)}_{\tx}$} 
\psset{linewidth=0.2pt}
\pscurve(1.5,2.5)(3,4)(5,5.5)
\pscurve(5.5,2.5)(4,4)(2,5.5)
\pscurve(1.5,2.5)(1,2.7)(0,3) 
\pscurve(5.5,2.5)(6,2.7)(7,3)
\pscurve(1.5,2.5)(1.6,2)(1.8,1)
\pscurve(5.5,2.5)(5.4,2)(5.2,1)
\psset{linewidth=2.0pt}
\psline(3.5,3.5)(1.5,2.8)\psline(1.7,2.25)(2.1,2.55)
\pscurve(1.5,2.8)(1.3,2.7)(1.2,2.4)(1.5,2.2)(1.7,2.25)
\psline(2.2,3.05)(2.05,2.8)\psline(2.2,3.05)(1.96,3.15)
\psset{linewidth=2.0pt, linestyle=dashed}
\psline(2.15,2.57)(3.5,3.5)
\psset{linewidth=2.0pt, linestyle=solid}
\psline(3.5,3.5)(5.5,2.8)\psline(5.3,2.25)(5,2.5)
\pscurve(5.5,2.8)(5.7,2.7)(5.8,2.4)(5.5,2.2)(5.3,2.25)
\psline(4.8,3.05)(4.96,2.75)\psline(4.8,3.05)(5.1,3.15)
\psset{linewidth=2.0pt, linestyle=dashed}
\psline(4.95,2.57)(3.5,3.5)
\end{pspicture}
\end{center}
\caption{The paths $\tdelta^{(k)}_{\tx}$.} 
\label{fig:delta-kx}
\end{figure}

Using the equality \eqref{eq:yk-relation}, we have
\begin{equation} \label{eq:Sodd-relations-of-yk}
\int_{\tdelta^{(k)}_{\tx}}
\tS_{J,{\rm odd}}(\tx,\tt,\eta)~d\tx =
\int_{\delta^{(k)}_x} S_{\rm II, odd}
(x,t(\tt,\eta),\eta)~dx 
\Bigl|_{x=y^{(k)}(\tx,\tt,\eta)}
\end{equation}
for each $k=1,2$ (cf.~\cite[Section 2]{KT iwanami}). 
Here the integration path $\tdelta^{(k)}_{\tx}$ 
is a contour in the domain $\tU'_k$
depicted in Figure \ref{fig:delta-kx}.
That is, $\tdelta^{(k)}_{\tx}$ starts from the point 
on the second sheet of the Riemann surface of 
$\sqrt{Q_{J,0}(\tx,\tt)}$ corresponding 
to $\tx$, encircles the simple turning point 
$\ta_{k}(\tt_{\ast})$ and ends at the point corresponding 
to $\tx$ on the first sheet. (The wiggly line 
designates the branch cut for $\sqrt{Q_{J,0}(\tx,\tt)}$.) 
The path $\delta^{(k)}_{x}$ is defined 
in the same manner for $J={\rm II}$.
The right-hand side of 
\eqref{eq:Sodd-relations-of-yk}
is written as 
\begin{eqnarray}
\int_{\delta^{(k)}_x} S_{\rm II, odd}
(x,t(\tt,\eta),\eta)~dx 
\Bigl|_{x=y^{(k)}} & = & 
\int_{\delta^{(k)}_x} 
S_{\rm II, odd}(x,t(\tt,\eta),\eta)~dx 
\Bigl|_{x=y^{(k)}_0} \\[+.3em] 
& & \hspace{-6.em}
+ \sum_{n=0}^{\infty} \frac{\p^n S_{\rm II,odd}}{\p x^n}
(y^{(k)}_0,t(\tt,\eta),\eta) 
\frac{(y^{(k)}-y^{(k)}_0)^{n+1}}
{(n+1) \hspace{+.1em} !}
\nonumber
\end{eqnarray}
by the (formal) Taylor expansion. Taking the difference 
of both sides of \eqref{eq:Sodd-relations-of-yk} 
for $k=1$ and $k=2$, we have 
\begin{eqnarray} \label{eq:pre-key-equality}
\int_{\tdelta'} \tS_{J, {\rm odd}}
(\tx,\tt,\eta)~d\tx  & = & 
\int_{\delta'} S_{\rm II, odd}(x,t(\tt,\eta),\eta)~dx \\[+.3em] 
& & \hspace{-10.em}
+ \sum_{n=0}^{\infty} \frac{\p^n S_{\rm II,odd}}{\p x^n}
(x_0,t(\tt,\eta),\eta)~ 
\frac{(y^{(2)}-x_0)^{n+1}-(y^{(1)}-x_0)^{n+1}}
{(n+1) \hspace{+.1em} !}, 
\nonumber
\end{eqnarray}
where $\tdelta'$ is a closed path in the domain 
$\tU'_1 \cup \tU'_2$ 
which encircles the pair of simple turning points 
$\ta_1(\tt_{\ast})$ and $\ta_2(\tt_{\ast})$ 
as indicated in Figure \ref{fig:integral-cycles} 
($\delta'$ is defined in the same manner  
for $J={\rm II}$). Here we have used the equality 
\eqref{eq:coincidence-of-top-terms}. 

\begin{figure}
\begin{center}
\begin{pspicture}(1.,1)(13,5.8)
%
\psset{fillstyle=solid, fillcolor=black}
\pscircle(3.5,4.4){0.09} 
\pscircle(1.5,2.5){0.08}
\pscircle(5.5,2.5){0.08}
\pscircle(10.0,4.4){0.09} 
\pscircle(8.0,2.5){0.08}
\pscircle(12.0,2.5){0.08}
\psset{fillstyle=none}
\psset{linewidth=1.5pt}
\pscurve(5.5,2.5)(5.45,2.4)(5.3,2.5)(5.2,2.6)(5.1,2.5)
(5.0,2.4)(4.9,2.5)(4.8,2.6)(4.7,2.5)(4.6,2.4)(4.5,2.5)
(4.4,2.6)(4.3,2.5)(4.2,2.4)(4.1,2.5)(4,2.6)(3.9,2.5)
(3.8,2.4)(3.7,2.5)(3.6,2.6)(3.5,2.5)(3.4,2.4)(3.3,2.5)
(3.2,2.6)(3.1,2.5)(3,2.4)(2.9,2.5)(2.8,2.6)(2.7,2.5)
(2.6,2.4)(2.5,2.5)(2.4,2.6)(2.3,2.5)(2.2,2.4)(2.1,2.5)
(2,2.6)(1.9,2.5)(1.8,2.4)(1.7,2.5)(1.6,2.55)(1.5,2.5) %
\pscurve(12,2.5)(11.95,2.4)(11.8,2.5)(11.7,2.6)(11.6,2.5)
(11.5,2.4)(11.4,2.5)(11.3,2.6)(11.2,2.5)(11.1,2.4)(11.0,2.5)
(10.9,2.6)(10.8,2.5)(10.7,2.4)(10.6,2.5)(10.5,2.6)(10.4,2.5)
(10.3,2.4)(10.2,2.5)(10.1,2.6)(10,2.5)(9.9,2.4)(9.8,2.5)
(9.7,2.6)(9.6,2.5)(9.5,2.4)(9.4,2.5)(9.3,2.6)(9.2,2.5)
(9.1,2.4)(9,2.5)(8.9,2.6)(8.8,2.5)(8.7,2.4)(8.6,2.5)
(8.5,2.6)(8.4,2.5)(8.3,2.4)(8.2,2.5)(8.1,2.55)(8,2.5) %
\rput[c]{0}(3.5,5.5){\large $\tlambda_0(\tt_{\ast})$}
\rput[c]{0}(1.1,1.5){$\ta_{1}(\tt_{\ast})$}
\rput[c]{0}(5.9,1.5){$\ta_{2}(\tt_{\ast})$} 
\rput[c](3.5,1.4){\Large $\tdelta'$} 
\rput[c](4.7,4.2){\Large $\tdelta_0$} 
\rput[c]{0}(10,5.5){\large $\tlambda_0(\tt_{\ast})$}
\rput[c]{0}(7.6,1.5){$\ta_{1}(\tt_{\ast})$}
\rput[c]{0}(12.4,1.5){$\ta_{2}(\tt_{\ast})$} 
\rput[c](8.2,4.2){\Large $\tdelta$} 
\psset{linewidth=0.2pt}
\pscurve(1.5,2.5)(3,4)(5,5.5)
\pscurve(5.5,2.5)(4,4)(2,5.5)
\pscurve(1.5,2.5)(1,2.7)(0.5,2.9) 
\pscurve(5.5,2.5)(6,2.7)(6.5,2.9)
\pscurve(1.5,2.5)(1.6,2)(1.8,1)
\pscurve(5.5,2.5)(5.4,2)(5.2,1)
\pscurve(8.0,2.5)(9.5,4)(11.5,5.5)
\pscurve(12.0,2.5)(10.5,4)(8.5,5.5)
\pscurve(8.0,2.5)(7.5,2.7)(7.0,2.9) 
\pscurve(12.0,2.5)(12.5,2.7)(13.0,2.9)
\pscurve(8.0,2.5)(8.1,2)(8.3,1)
\pscurve(12.0,2.5)(11.9,2)(11.7,1)
\psset{linewidth=2.0pt}
\pscurve(2,3)(3.5,3)(5,3)
\pscurve(2,2)(3.5,2)(5,2)
\pscurve(2,3)(1.2,2.85)(1,2.5)(1.2,2.15)(2,2)
\pscurve(5,3)(5.8,2.85)(6,2.5)(5.8,2.15)(5,2)
\psline(3.35,3)(3.65,3.20)
\psline(3.35,3)(3.65,2.80)
\psline(3.6,2)(3.35,2.20)
\psline(3.6,2)(3.35,1.80)
\pscircle(3.5,4.4){0.7} 
\psline(4.15,4.5)(4.35,4.2)
\psline(4.15,4.5)(3.95,4.2)
\psline(2.85,4.25)(2.65,4.55)
\psline(2.85,4.25)(3.05,4.55)
\psset{linewidth=2.0pt}
\pscurve(9.7,4.6)(10.0,4.75)(10.3,4.6)
\pscurve(7.5,2.5)(9.0,4)(9.7,4.6)
\pscurve(12.5,2.5)(11.0,4)(10.3,4.6)
\pscurve(7.5,2.5)(7.4,2.2)(7.6,2)
\pscurve(12.5,2.5)(12.6,2.2)(12.4,2)
\pscurve(7.55,2.03)(7.78,1.95)(8.12,2.22)
\pscurve(12.45,2.03)(12.22,1.96)(11.88,2.22)
\pscurve(8.1,2.2)(8.2,2.3)(8.5,2.6)
\pscurve(11.9,2.2)(11.8,2.3)(11.6,2.5)
\psline(11.2,3.8)(11.25,3.50)
\psline(11.2,3.8)(11.47,3.8)
\psset{linewidth=2.0pt,linestyle=dashed}
\pscurve(9.5,3.6)(10.0,3.95)(10.5,3.6) 
\pscurve(8.5,2.6)(9.0,3.1)(9.4,3.5) 
\pscurve(11.5,2.6)(11.0,3.1)(10.6,3.5) 
\end{pspicture}
\end{center}
\caption{The cycles $\tdelta'$, $\tdelta_0$ and $\tdelta$.} 
\label{fig:integral-cycles}
\end{figure}

Now we prove the following key lemma.
\begin{lem} \label{lem:key-lamma}
If the constant $c$ in $\PII$ and $\SLII$ is chosen 
by \eqref{eq:c-P2} and the free parameters satisfy 
\eqref{eq:invariance-of-residues}, 
then the following equality holds: 
\begin{equation} \label{eq:SoddJ-is-SoddII}
\int_{\tdelta'} \tS_{J, {\rm odd}}
(\tx,\tt,\eta)~d\tx  =
\int_{\delta'} S_{\rm II, odd}
(x,t,\eta)~dx.
\end{equation}
The left-hand side (resp., right-hand side) 
of \eqref{eq:SoddJ-is-SoddII} does not depend
on $\tt$ (resp., $t$), and hence \eqref{eq:SoddJ-is-SoddII}
is an equality for constants.
\end{lem}
\begin{proof}[Proof of Lemma \ref{lem:key-lamma}]
Let $\tdelta$ be a closed cycle encircling 
two Stokes segments $\tgamma_1$ and $\tgamma_2$
as indicated in Figure \ref{fig:integral-cycles}, 
and $\delta$ be a similar cycle for $J={\rm II}$. 
Then, the cycles can be decomposed as 
$\tdelta' = \tdelta - \tdelta_0$ and 
$\delta' = \delta - \delta_0$, where 
$\tdelta_0$ is a closed cycle encircling 
the double turning point $\tlambda_0(\tt_{\ast})$ 
as in Figure \ref{fig:integral-cycles}, and 
$\delta_0$ is defined in the same manner for $J={\rm II}$. 
Then, \eqref{eq:invariance-of-residues} implies that 
\begin{equation} \label{eq:invariance-delta0}
\int_{\tdelta_0} \tS_{J,{\rm odd}}(\tx,\tt,\eta)~d\tx =
\int_{\delta_0} S_{{\rm II},{\rm odd}}(x,t,\eta)~dx
\end{equation}
since $\tE_J/4$ and $E_{\rm II}/4$ are residues of 
$\tS_{J, \rm odd}(\tx,\tt,\eta)~d\tx$ and 
$S_{\rm II, odd}(x,t,\eta)~dx$ at the double turning points
$\tlambda_0(\tt)$ and $\lambda_0(t_0(\tt))$, respectively.
In particular, both sides of \eqref{eq:invariance-delta0}
are independent of $\tt$ and $t$.

Furthermore, our choice \eqref{eq:c-P2} 
of the constant $c$ in $\PII$ and $\SLII$ entails that
\begin{equation} \label{eq:invariance-delta}
\int_{\tdelta} \tS_{J,{\rm odd}}(\tx,\tt,\eta)~d\tx =
\int_{\delta} S_{{\rm II},{\rm odd}}(x,t,\eta)~dx
\end{equation}
by the following reason. 

First, since we assume that all singular points 
of $\tQ_{J,0}(\tx,\tt)$ are poles of even order 
in Assumption \ref{ass:transform-to-P2} (5),
$\sqrt{\tQ_{J,0}(\tx,\tt)}$ 
(and hence $\tS_{J,{\rm odd}}(\tx,\tt,\eta)$)
does not have branch points except for 
$\ta_1(\tt)$ and $\ta_2(\tt)$. Therefore, 
the left-hand side of \eqref{eq:invariance-delta} 
is reduced to the sum of residues of 
$\tS_{J,{\rm odd}}(\tx,\tt,\eta)~d\tx$ at 
singular points of $\SLJ$. As is noted in 
\eqref{eq:resSodd=res-1}, the residues of 
$\tS_{J,\rm odd}(\tx,\tt,\eta)~d\tx$ at  
singular points coincide with those of 
$\eta \sqrt{\tQ_{J,0}(\tx,\tt)}~d\tx$. Thus we have
\begin{equation} \label{eq:Sodd-is-sqrtQ0}
\int_{\tdelta} \tS_{J,{\rm odd}}(\tx,\tt,\eta)~d\tx =
\eta \int_{\tdelta} \sqrt{\tQ_{J,0}(\tx,\tt)}~d\tx.
\end{equation}
On the other hand, the equality 
\eqref{eq:branches-P2SL2}
shows that 
\begin{eqnarray} \label{eq:Sodd-is-sqrtQ0-2}
\oint_{\tdelta} \sqrt{\tQ_{J,0}(\tx,\tt)}~d\tx & = & 
2 \left(
\int_{\tgamma_1} \sqrt{\tQ_{J,0}(\tx,\tt)}~d\tx -
\int_{\tgamma_2} \sqrt{\tQ_{J,0}(\tx,\tt)}~d\tx \right) \\ 
& = & \nonumber
\int_{\tr_1}^{\tr_2} \sqrt{\tF_J^{(1)}(\tt)}~d\tt = 
2 \pi i c. 
\end{eqnarray}
Here we have used 
\eqref{eq:c-P2}. 
Since the equalities \eqref{eq:Sodd-is-sqrtQ0} 
and \eqref{eq:Sodd-is-sqrtQ0-2} 
also hold for $J={\rm II}$, we have 
\begin{equation} \label{eq:Sodd-is-sqrtQ0-II}
\int_{\delta} S_{{\rm II},{\rm odd}}(x,t,\eta)~dx =
2 \pi i c \eta.
\end{equation}
Combining \eqref{eq:Sodd-is-sqrtQ0},  
\eqref{eq:Sodd-is-sqrtQ0-2} and 
\eqref{eq:Sodd-is-sqrtQ0-II}, we obtain 
\begin{equation} 
\int_{\tdelta} \tS_{J,{\rm odd}}(\tx,\tt,\eta)~d\tx = 
2 \pi i c \eta =
\int_{\delta} S_{{\rm II},{\rm odd}}(x,t,\eta)~dx, 
\end{equation}
which proves \eqref{eq:invariance-delta}.

As is explained above, we have 
\begin{eqnarray*}
\int_{\tdelta'} \tS_{J, {\rm odd}}
(\tx,\tt,\eta)~d\tx  & = & \int_{\tdelta} 
\tS_{J, {\rm odd}}(\tx,\tt,\eta)~d\tx - 
\int_{\tdelta_0} \tS_{J, {\rm odd}}
(\tx,\tt,\eta)~d\tx,  \\[+.2em]
\int_{\delta'} S_{\rm II, odd}(x,t,\eta)~dx & = & 
\int_{\delta} S_{\rm II, odd}(x,t,\eta)~dx - 
\int_{\delta_0} S_{\rm II, odd}(x,t,\eta)~dx.
\end{eqnarray*}
Therefore, the desired equality 
\eqref{eq:SoddJ-is-SoddII} follows from 
\eqref{eq:invariance-delta0} and 
\eqref{eq:invariance-delta}.
\end{proof}
Due to Lemma \ref{lem:key-lamma}, 
the equality \eqref{eq:pre-key-equality} implies that 
\begin{equation} \label{eq:y2-y1}
\sum_{n=0}^{\infty} \frac{\p^n S_{\rm II,odd}}{\p x^n}
(x_0,t(\tt,\eta),\eta) ~
\frac{(y^{(2)}-x_0)^{n+1}-(y^{(1)}-x_0)^{n+1}}
{(n+1) \hspace{+.1em} !} = 0.
\end{equation}
The coefficient of $\eta^{-(j-2)/2}$ in the left-hand side 
of \eqref{eq:y2-y1} is written as 
\[
S_{-1}(x_0,t_0) (y^{(2)}_{j/2} - y^{(1)}_{j/2}) + 
\bigl( T_{j/2}(y^{(2)}_{0},\dots,y^{(2)}_{(j-1)/2}) - 
T_{j/2}(y^{(1)}_{0},\dots,y^{(1)}_{(j-1)/2}) \bigr),
\]
where $S_{-1}(x,t)$ is the top term of 
$S_{\rm II, odd}(x,t,\eta)$ and the term 
$T_{j/2}(y^{(2)}_{0},\dots,y^{(2)}_{(j-1)/2})$ 
(resp., $T_{j/2}(y^{(1)}_{0},\dots,y^{(1)}_{(j-1)/2})$)
consists of the terms given by 
$y^{(2)}_{0},\dots,y^{(2)}_{(j-1)/2}$
(resp., $y^{(1)}_{0},\dots,y^{(1)}_{(j-1)/2}$).
Hence we can prove $y^{(1)}_{j/2}(\tx,\tt,\eta) = 
y^{(2)}_{j/2}(\tx,\tt,\eta)$ for all $j \ge 0$ 
by using the induction.
\end{proof}

Set
\begin{equation} 
x(\tx,\tt,\eta)~\left( = 
x^{\rm pre}(\tx,\tt,\eta) = y^{(1)}(\tx,\tt,\eta) 
 = y^{(2)}(\tx,\tt,\eta) \right).
\end{equation}
Then we have proved that the coefficients of the 
formal series $x(\tx,\tt,\eta)$ are holomorphic 
in a domain $\tU$ containing the double turning point 
$\tlambda_0(\tt)$ and the pair of the two simple turning 
points $\ta_1(\tt)$ and $\ta_2(\tt)$. 
The formal series $x(\tx,\tt,\eta)$ and 
$t(\tt,\eta)$ have almost all the desired properties 
in Theorem \ref{thm:main-theorem1}. 

Now what remains to be proved is the equality 
\eqref{eq:equivalece-of-PJ} in 
Theorem \ref{thm:main-theorem1}. 
This is a consequence of Proposition 
\ref{prop:transformation-at-double-TP}; in fact, 
\eqref{eq:xpre-relation2} reads as follows:
\begin{equation} \label{eq:x-relation2}
2\tilde{B}_J(\tx,\tt,\eta)
\frac{x(\tx,\tt,\eta) - \lambda_{\rm II}(t(\tt,\eta),\eta)}
{\tx - \tlambda_J(\tt,\eta)} 
\frac{\p x}{\p \tx} = \frac{\p t}{\p \tt} +
2 (x(\tx,\tt,\eta) - \lambda_{\rm II}(t(\tt,\eta),\eta)
\frac{\p x}{\p \tt}.
\end{equation}
Here $\tilde{B}_J(\tx,\tt,\eta)$ is defined by 
$(\tx-\tlambda_J(\tt,\eta))\tA_J(\tx,\tt,\eta)$, which is 
holomorphic at $\tx = \tlambda_J(\tt,\eta)$ 
in view of Table \ref{table:AJ}.
Since the right-hand side of \eqref{eq:x-relation2} is 
non-singular at $\tx = \tlambda_J(\tt,\eta)$, we find 
\begin{equation}
x(\tlambda_J(\tt,\eta),\tt,\eta) = 
\lambda_{\rm II}(t(\tt,\eta), \eta).
\end{equation}
Thus we have proved all claims 
in Theorem \ref{thm:main-theorem1}.

\section{Transformation to $\PIII$ on loop-type 
$P$-Stokes segments}
\label{section:main-theorem2}

In this section we show our second main claim 
concerning with WKB theoretic transformation 
of Painlev\'e transcendents on a loop-type 
$P$-Stokes segment. We put symbol $\sim$ over variables 
or functions relevant to $\PJ$ and $\SLJ$ 
as in the previous section.


\subsection{Assumptions and statements}

Let $(\tilde{\lambda}_J, \tilde{\nu}_J) = $ 
$(\tilde{\lambda}_J(\tt,\eta;\talpha,\tbeta), 
\tilde{\nu}_J(\tt,\eta;\talpha,\tbeta))$ 
be a 2-parameter solution of $(H_J)$ defined in 
a neighborhood of a point $\tt_{\ast} \in \Omega_J$, 
and consider $\SLJ$ and $\DJ$ with 
$(\tilde{\lambda}_J, \tilde{\nu}_J)$ 
substituted into their coefficients. 
In this section we impose the following conditions.
\begin{ass} \label{ass:transform-to-P3}
\begin{enumerate}[\upshape (1)]
\item %
$J \in \{{{\rm III'}(D_7)}, 
{{\rm III'}(D_6)}, {\rm IV}, {\rm V}, {\rm VI} \}$. %
\item %
There is a $P$-Stokes segment of {\em loop-type} 
$\tilde{\Gamma}$ in the $P$-Stokes geometry of $\PJ$ 
which emanates from and returns to a simple $P$-turning 
point $\tr$ of $\tlambda_J$ 
(which is not simple-pole type), and the point $\tt_{\ast}$ 
in question lies on $\tilde{\Gamma}$ as 
indicated in Figure \ref{fig:D7-transformable} (a). %
\item %
The function \eqref{eq:phaseJ} appearing 
in the instanton $\tPhi_J(\tt,\eta)$ of the 2-parameter 
solution $(\tilde{\lambda}_J, \tilde{\nu}_J)$ 
is normalized at the simple $P$-turning point $\tr$: 
\begin{equation} \label{eq:tlidephiJ2}
\tphi_J(\tt) = \int_{\tr}^{\tt}\sqrt{\tF_J^{(1)}(\tt)}~d\tt.
\end{equation}
Here the path of integral is taken along 
one of the paths $\tGamma_{\tt,1}$ or $\tGamma_{\tt,2}$ 
shown Figure \ref{fig:D7-transformable} (a). 
(Since there are singular points inside a 
loop-type $P$-Stokes segment in general, the two 
paths $\tGamma_{\tt,1}$ and $\tGamma_{\tt,2}$ are 
not homotopic in general.)
\item %
The Stokes geometry of $\SLJ$ at $\tt=\tt_{\ast}$ 
contains the same configuration as in 
Figure \ref{fig:D7-transformable} (b). 
That is, the following conditions hold.
\begin{itemize}
\item %
The double turning point $\tlambda_0(\tt_{\ast})$ 
is connected to the {\em same} simple turning point 
$\tilde{a}(\tt_{\ast})$ by two Stokes segments 
$\tilde{\gamma}_{1}$ and 
$\tilde{\gamma}_{2}$. 
Here labels of the Stokes segments are 
given as follows: When $\tt$ tends to 
$\tr$ along the path $\tGamma_{\tt,1}$ 
(resp., $\tGamma_{\tt,2}$) depicted in 
Figure \ref{fig:D7-transformable} (a), 
the Stokes segment $\tgamma_1$ 
(resp., $\tgamma_2$) shrinks to a point
(cf.~Proposition \ref{prop:PStokes-and-Stokes}).
\item %
The union of the Stokes segments $\tilde{\gamma}_{1}$ 
and $\tilde{\gamma}_{2}$ divide the $\tx$-plane 
into two domains. Let $\tilde{W}$ be one of them 
which contains both of the end-points 
$\tilde{p}_1$ and $\tilde{p}_2$
of two Stokes curves of $\SLJ$ emanating from 
$\tlambda_0(\tt)$ other than 
$\tilde{\gamma}_{1}$ or $\tilde{\gamma}_{2}$.
Then, the end-point of the Stokes curve emanating from 
$\ta(\tt_{\ast})$ is {\em not} contained in the domain $\tilde{W}$.
(Unlike Figure \ref{fig:D7-transformable} (b), 
the domain $\tilde{W}$ may contain $\tx = \infty$. 
Also, the points $\tilde{p}_1$ 
and $\tilde{p}_2$ may coincide.)
\end{itemize} 
\item %
The domain $\tilde{W}$ defined above does {\em not} 
contain the other turning point of $\SLJ$ than $\ta(\tt)$ 
and $\tlambda_0(\tt)$. 
All singular points of $\tQ_{J,0}(\tx,\tt)$ 
(as a function of $\tx$) contained in $\tilde{W}$ 
are poles of {\em even} order. %
\end{enumerate}
\end{ass} %

  \begin{figure}[h]
  \begin{center}
\begin{pspicture}(0,0)(0,0)
\psset{fillstyle=none}
\end{pspicture}
\end{center}
  \begin{minipage}{0.5\hsize}
  \begin{center}
  \includegraphics[width=34mm]
  {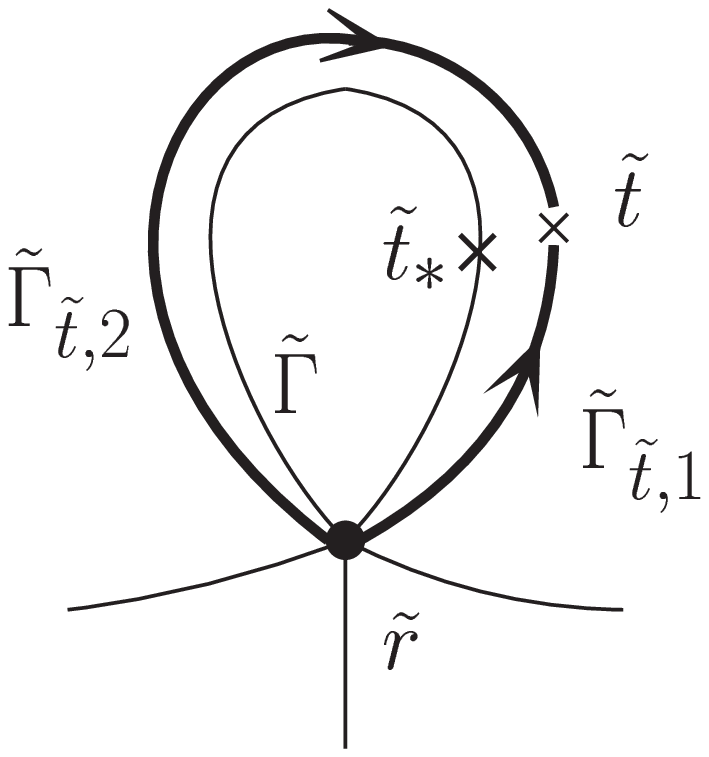} \\
  {(a): The loop-type $P$-Stokes segment $\tGamma$.}
  \end{center}
  \end{minipage} \hspace{+1.0em}
  \begin{minipage}{0.45\hsize}
  \begin{center}
  \includegraphics[width=49mm]
  {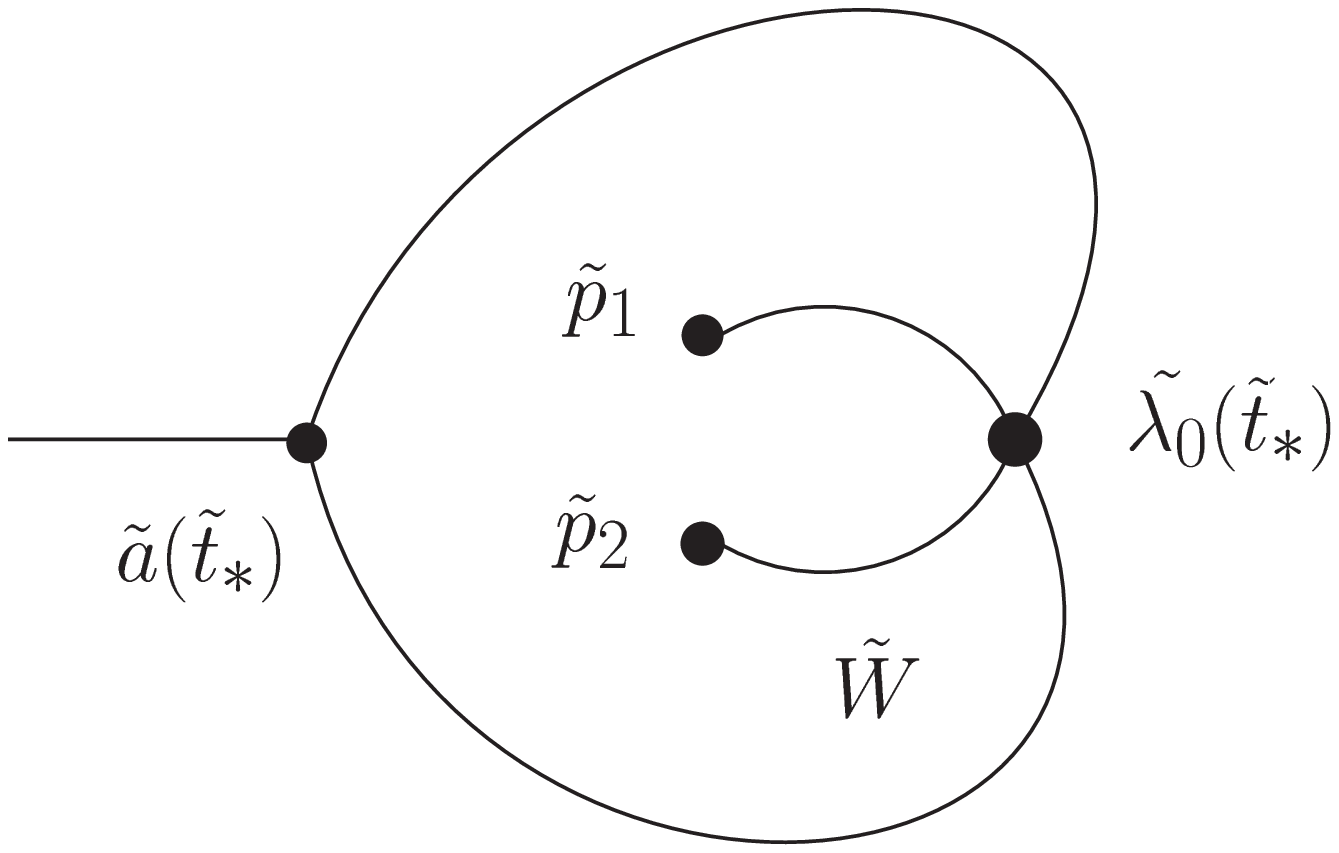} \\[+1.5em]
  {(b): The Stokes geometry of $\SLJ$ at $\tt = \tt_{\ast}$.}
  \end{center} 
  \end{minipage} 
  \caption{The $P$-Stokes geometry of $\PJ$ and an example 
  of the Stokes geometry of $\SLJ$ 
  satisfying Assumption \ref{ass:transform-to-P3}.}
  \label{fig:D7-transformable}
  \end{figure}

Since the $P$-Stokes geometry for $J = {\rm I}$, 
${\rm II}$, and ${\rm III'}(D_8)$ never contains 
a $P$-Stokes segment of loop-type, we have excluded 
these cases. Similarly to Theorem \ref{thm:main-theorem1},
under Assumption \ref{ass:transform-to-P3}
we will construct a formal transformation series 
to the third Painlev\'e equation $\PIII$ of type $D_7$. 

We fix the constant $c$ 
contained in $\PIII$ and $\SLIII$ by 
\begin{equation} \label{eq:c-P3}
c = \frac{1}{2\pi i} \int_{\tGamma}
\sqrt{\tF_{J}^{(1)}(\tt)}~d\tt, 
\end{equation}
where the path of integral is taken along the 
loop-type $P$-Stokes segment $\tGamma$ 
in the same direction as the integral 
\eqref{eq:tlidephiJ2}; that is, when 
\eqref{eq:tlidephiJ2} is defined along the path 
$\tGamma_{\tt,1}$ (resp., $\tGamma_{\tt,2}$) 
in Figure \ref{fig:D7-transformable} (a), 
then the path of integral in \eqref{eq:c-P3} 
is taken counter-clockwise (resp., clockwise)
direction along $\tGamma$. Here we assume that 
the imaginary part of $c$ is positive;
$c \in i \hspace{+.1em} {\mathbb R}_{>0}$. 
Then, the geometric configuration of $P$-Stokes geometry 
of $\PIII$ (described in the variable $u$ given by 
\eqref{eq:u-P3}) is the same as in 
Figure \ref{fig:P3-SL3-D7-Stokes} (P) 
when $c$ is given by \eqref{eq:c-P3}. 
Thus, the $P$-Stokes geometry of $\PIII$ 
has a loop-type $P$-Stokes segment $\Gamma$ 
starting from and returns to the same 
$P$-simple turning point $r$. 
(As is remarked in Section 
\ref{section:degeneration-of-PStokes-geometry}, 
when $c \in i \hspace{+.1em} {\mathbb R}_{<0}$, 
the $P$-Stokes geometry of $\PIII$ is the reflection 
$u \mapsto -u$ of Figure \ref{fig:P3-SL3-D7-Stokes} (P), 
and our discussion below is also applicable 
to the case $c \in i \hspace{+.1em} {\mathbb R}_{<0}$.) 
Furthermore, we can verify that the corresponding 
Stokes geometry of $\SLIII$ on the loop type 
$P$-Stokes segment $\Gamma$ is the same as 
the Stokes geometry depicted in 
Figure \ref{fig:P3-SL3-D7-Stokes} (SL).
That is, when a point $t$ lies on $\Gamma$, 
the corresponding Stokes geometry of $\SLIII$ 
has a double turning point $x=\lambda_0(t)$ 
and a simple turning point $x=a(t)$, 
and two Stokes segments $\gamma$ and 
$\gamma'$ both of which connect 
$\lambda_0(t)$ and $a(t)$. 
These Stokes segments are labeled as follows: 
When $t$ tends to $r$ along the path $\Gamma_{t}$ 
(resp., $\Gamma_{t}'$) depicted in 
Figure \ref{fig:P3-SL3-D7-Stokes} (P), 
the Stokes segment $\gamma$ (resp., $\gamma'$) 
shrinks to a point 
(cf.~Proposition \ref{prop:PStokes-and-Stokes}).

  \begin{figure}[h]
  \begin{center}
\begin{pspicture}(0,0)(0,0)
\psset{fillstyle=none}
\rput[c]{0}(-3.05,-1.7){$\times$} 
\rput[c]{0}(-2.6,-1.6){\large $u_{\ast}$}
\rput[c]{0}(-3.9,-1.5){\large $\Gamma$}
\rput[c]{0}(-3.7,-2.9){\large $r$}
\rput[c]{0}(3.3,-2.3){\large $0$}
\rput[c]{0}(1.0,-1.9){\large $a(t_{\ast})$}
\rput[c]{0}(4.5,-3.2){\large $\lambda_0(t_{\ast})$} 
\rput[c]{0}(2.2,-3.7){\large $\gamma$}
\rput[c]{0}(3.2,-0.8){\large $\gamma'$}
\end{pspicture}
\end{center}
  \begin{minipage}{0.47\hsize}
  \begin{center}
  \includegraphics[width=45mm]
  {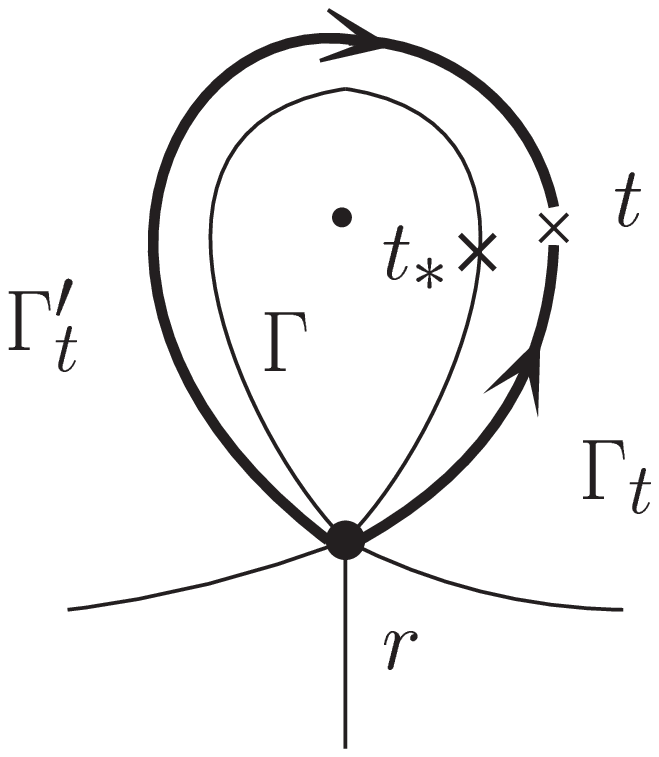} \\[-.5em]
  {(P): The P-Stokes geometry of $\PIII$ 
  (described on the u-plane).}
  \end{center}
  \end{minipage} \hspace{+.3em} 
  \begin{minipage}{0.45\hsize}
  \begin{center}
  \includegraphics[width=45mm]
  {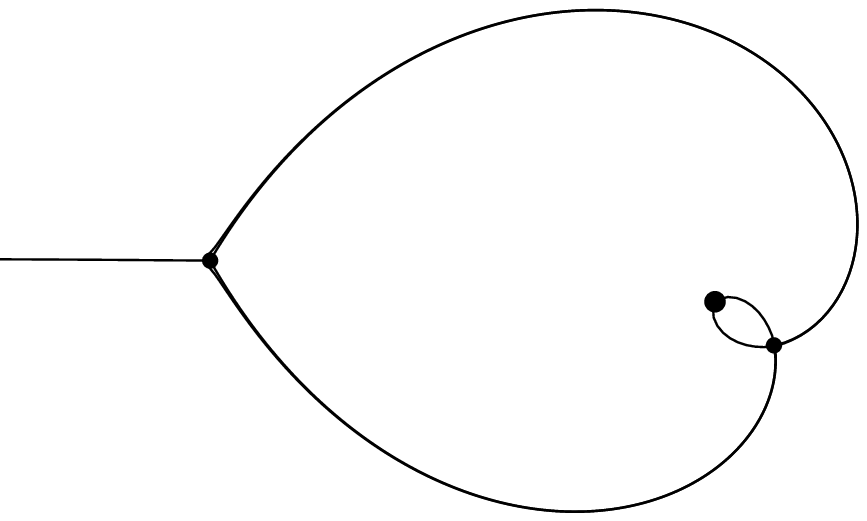} \\[-.5em]
  {(SL): The Stokes geomtry of $\SLIII$ 
  at $t = t_{\ast}$.}
  \end{center} 
  \end{minipage}
  \caption{The $P$-Stokes geometry of $\PIII$ 
  and the Stokes geometry of $\SLIII$.}
  \label{fig:P3-SL3-D7-Stokes}
  \end{figure}

Having the above geometric properties in mind, 
we formulate our second main result as follows. 
\begin{thm} \label{thm:main-theorem2}
Under Assumption \ref{ass:transform-to-P3}, 
for any 2-parameter solution 
$(\tilde{\lambda}_J, \tilde{\nu}_J) = $
$(\tilde{\lambda}_J(\tilde{t},\eta; \tilde{\alpha},
\tilde{\beta}), \tilde{\nu}_J(\tilde{t},\eta; 
\tilde{\alpha},\tilde{\beta}))$ of $(H_{J})$, 
there exist
\begin{itemize} %
\item an annular domain $\tilde{U}$ which contains 
the union $\tilde{\gamma}_1 \cup \tilde{\gamma}_2$
of two Stokes segments, %
\item a neighborhood $\tilde{V}$ of $\tilde{t}_{\ast}$, %
\item formal series 
\[
x(\tx,\tt,\eta)=\sum_{j\ge0}\eta^{-j/2}x_{j/2}(\tx,\tt,\eta), 
\quad 
t(\tt,\eta)=\sum_{j\ge0}\eta^{-j/2}t_{j/2}(\tt,\eta)
\]
whose coefficients 
$\{x_{j/2}(\tilde{x},\tilde{t},\eta)\}_{j=0}^{\infty}$ 
$\{t_{j/2}(\tilde{t},\eta)\}_{j=0}^{\infty}$ 
are functions defined on $\tilde{U}\times\tilde{V}$ 
and $\tV$, respectively, and may depend on $\eta$, %
\item a 2-parameter solution 
\begin{eqnarray*}
(\lambda_{\III},\nu_{\III}) & = &
(\lambda_{\III}(t,\eta;\alpha,\beta),
\nu_{\III}(t,\eta;\alpha,\beta)), \\
(\alpha,\beta) & = & (\sum_{n=0}^{\infty}\eta^{-n}\alpha_n, 
\sum_{n=0}^{\infty}\eta^{-n}\beta_n), 
\end{eqnarray*}
of $\HIII$ with the constant $c$ being determined by 
\eqref{eq:c-P3}, and the function \eqref{eq:phaseJ}
appearing in the instanton $\Phi_{\III}(t,\eta)$ that
is normalized at a simple $P$-turning point $r$ of 
$\PIII$ as 
\begin{equation} \label{eq:phi3D7}
\phi_{\III}(\tt) = \int_{r}^{t}
\sqrt{F_{\III}^{(1)}(t)}~dt,
\end{equation}
\end{itemize}
which satisfy the relations below: %
\begin{enumerate}[\upshape (i)]
\item The function $t_0(\tt)$ is independent of 
$\eta$ and satisfies 
\begin{equation}  
\label{eq:correspondence-of-instanton}
\tphi_{J}(\tt) = 
\tphi_{\III}(t_0(\tt)).
\end{equation}
\item $dt_0/d\tt$ never vanishes on $\tV$. %
\item The function $x_0(\tx,\tt)$ is also 
independent of $\eta$ and satisfies 
\begin{eqnarray} \label{eq:x0D7-1}
x_0(\tlambda_0(\tt),\tt) & = & \lambda_0(t_{0}(\tt),c), \\
x_0(\ta(\tt),\tt) & = & a(t_{0}(\tt),c).
\label{eq:x0D7-2}
\end{eqnarray} 
Here $\lambda_0(t)$ and $a(t)$ are double and simple 
turning points of $\SLIII$.
\item $\p x_0/\p\tx$ never vanishes on $\tU\times\tV$. %
\item $x_{1/2}$ and $t_{1/2}$ vanish identically. %
\item The functions 
$\{ x_{j/2}(\tx,\tt,\eta) \}_{j=0}^{\infty}$ 
are single-valued in the annular domain $\tU$ 
as functions of $\tx$. %
\item The $\eta$-dependence of $x_{j/2}$ and $t_{j/2}$ ($j \ge 2$)
is only through instanton terms $\exp(\ell \tPhi_{J}(\tt,\eta))$
($\ell = j-2-2m$ with $0 \le m \le j-2$) that appears 
in the 2-parameter solution $(\tlambda_J,\tnu_{J})$ of $(H_J)$. %
\item The following relations hold:
\begin{eqnarray} \label{eq:equivalece-of-PJD7}
\hspace{+1.em}
x(\tlambda_J(\tt,\eta;\talpha,\tbeta),\tt,\eta) & = &
\lambda_{{\rm III'}(D_7)}(t(\tt,\eta),c,\eta;\alpha,\beta), \\%
\label{eq:equivalece-of-SLJ}
\tQ_{J}(\tx,\tt,\eta) & = & 
\left(\frac{\p x(\tx,\tt,\eta)}{\p\tx}\right)^2
Q_{{\rm III'}(D_7)}(x(\tx,\tt,\eta),t(\tt,\eta),c,\eta) \\
& & 
-\frac{1}{2}\eta^{-2}\{ x(\tx,\tt,\eta);\tx\}, \nonumber
\end{eqnarray}%
where the 2-parameter solution of $(H_J)$ and $\HIII$ 
are substituted into $(\lambda,\nu)$ in the coefficients 
of $\tQ_J$ and $Q_{{\rm III'}(D_7)}$, respectively, and
$\{x(\tx,\tt,\eta);\tx\}$ denotes the Schwarzian 
derivative \eqref{eq:Schwarzian-derivative}.
\end{enumerate} %
\end{thm}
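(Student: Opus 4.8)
The plan is to follow the four-step architecture of the proof of Theorem \ref{thm:main-theorem1} (Sections \ref{section:top-terms}--\ref{section:transformation-near-two-stps}), modifying each step to account for the loop-type geometry, the target equation $\PIII$, and the crucial fact that the two Stokes segments $\tgamma_1$ and $\tgamma_2$ now terminate at the \emph{same} simple turning point $\ta(\tt)$. First I would construct the top terms $t_0(\tt)$ and $x_0(\tx,\tt)$ exactly as in Section \ref{section:top-terms}: using the normalization \eqref{eq:tlidephiJ2} along one of the paths $\tGamma_{\tt,1}$, $\tGamma_{\tt,2}$ and the analogue of \cite[Theorem 2.2]{KT96} one obtains $t_0(\tt)$ satisfying \eqref{eq:correspondence-of-instanton} with $dt_0/d\tt \neq 0$, while $c$ is fixed by \eqref{eq:c-P3} so that $c \in i\mathbb{R}_{>0}$ and the $\PIII$-geometry is as in Figure \ref{fig:P3-SL3-D7-Stokes}. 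The map $x_0$ is built as the holomorphic solution of $\tilde Z_J(\tx,\tt)^{1/2} = Z_\III(x_0,\tt)^{1/2}$ near $\tgamma_1$, sending $\tlambda_0 \mapsto \lambda_0$ and $\ta \mapsto a$ with $\p x_0/\p\tx \neq 0$; the choice \eqref{eq:c-P3} of $c$ then forces $x_0$ to carry $\tgamma_2$ onto the second Stokes segment $\gamma_2$ of $\SLIII$ as well, yielding \eqref{eq:x0D7-1}--\eqref{eq:x0D7-2}.

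Next, as in Section \ref{section:transformation-at-double}, I would reduce both $\SLJ$ and $\SLIII$ to the common canonical system of Theorem \ref{thm:transformation-at-double-turning-point} near their double turning points, matching parameters through the relation $E_\III = -16 A(\eta) B(\eta) = \tE_J$ (the analogue of \eqref{eq:invariance-of-residues}, legitimate by Lemma \ref{lemma:E}). This produces formal series $x^{\rm pre}$, $t^{\rm pre}$ with the required alternating parity and $\eta$-dependence, holomorphic near $\tx = \tlambda_0(\tt)$, and carrying a free parameter $C(\eta)$ additively in $t^{\rm pre}$. Then, following Section \ref{section:matching-P2} essentially verbatim, the functions $\mathcal{F}$ and $\mathcal{G}$ built from $x^{\rm pre}$ and $y^{(1)}$ differ by $\tt$-independent constants $\mathcal{I}_{j/2}$, and an induction adjusting the $C_n$'s (together with the alternating-parity argument disposing of odd $j$) forces $x^{\rm pre} = y^{(1)}$, so that the coefficients become holomorphic near $\ta(\tt)$ along $\tgamma_1$.

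The heart of the argument, and the step I expect to be the main obstacle, is the loop analogue of Proposition \ref{prop:y1=y2}: showing $y^{(1)} = y^{(2)}$ so that $x := x^{\rm pre}$ extends single-valuedly to the \emph{annular} domain $\tU$. As in the proof of Lemma \ref{lem:key-lamma}, I would take the difference of \eqref{eq:yk-relation} for $k=1,2$ and reduce matters to the equality of periods $\int_{\tdelta} \tS_{J,{\rm odd}}\,d\tx = \int_{\delta} S_{\III,{\rm odd}}\,dx$ over a cycle $\tdelta$ surrounding the loop $\tgamma_1 \cup \tgamma_2$. Here the loop geometry changes the computation decisively: $\tdelta$ now encircles the region $\tilde{W}$, which contains singular points of $\SLJ$ rather than a second simple turning point. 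Assumption \ref{ass:transform-to-P3}(5) --- that all these interior singular points are poles of \emph{even} order --- guarantees that $\sqrt{\tQ_{J,0}}$ has no branch point inside $\tilde{W}$ except at $\ta(\tt)$, so the period reduces by \eqref{eq:resSodd=res-1} to $\eta$ times a sum of residues. Invoking Proposition \ref{prop:residue} and \eqref{eq:PD7-SLD7-residue} together with the integral identity \eqref{eq:integral-D7} of Lemma \ref{lem:integral-P2-P3} and the choice \eqref{eq:c-P3}, both periods evaluate to $2\pi i\, c\, \eta$.

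Equality of periods closes the induction $y^{(1)}_{j/2} = y^{(2)}_{j/2}$ exactly as in Proposition \ref{prop:y1=y2}. Moreover --- and this is precisely where the annular statement (vi) comes from --- because the two periods agree, the term-wise integral of $\tS_{J,{\rm odd}}$ defining the phase-matching equation for $x$ acquires no monodromy upon continuation around the loop, so each $x_{j/2}$ is single-valued on $\tU$. Finally, the Painlevé-level identity \eqref{eq:equivalece-of-PJD7} follows from the deformation relation \eqref{eq:xpre-relation2}, evaluated at $\tx = \tlambda_J$, exactly as in the last paragraph of Section \ref{section:transformation-near-two-stps}. The subtle points to verify carefully will be the correct even order of the relevant interior poles for each admissible $J \in \{\III, {\rm III'}(D_6), {\rm IV}, {\rm V}, {\rm VI}\}$, and the consistency of the branch choices of $\sqrt{\tQ_{J,0}}$ across the non-simply-connected region $\tilde{W}$.
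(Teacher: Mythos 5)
Your proposal reproduces the paper's own architecture step by step (top terms as in Section \ref{section:top-terms-D7}; canonical reduction, parameter matching and the contour-difference argument as in Section \ref{section:construction-higher-D7}), and almost all of the ingredients you cite are exactly the ones the paper uses. There is, however, a concrete error in the step you yourself single out as the heart of the matter: the evaluation of the periods. You claim that, by Assumption \ref{ass:transform-to-P3} (5) and \eqref{eq:resSodd=res-1}, the period of $\tS_{J,{\rm odd}}$ over the difference cycle $\tdelta$ ``reduces to $\eta$ times a sum of residues'' at the singular points inside $\tilde{W}$, hence equals $2\pi i\,c\,\eta$. This is not correct: \eqref{eq:resSodd=res-1} applies only at singular points of $\SLJ$, whereas the cycle $\tdelta$ — which necessarily is based at a point $\tx$ near $\tlambda_0(\tt)$, since that is where the continuations of $y^{(1)}$ along $\tgamma_1$ and of $y^{(2)}$ along $\tgamma_2$ are being compared — also encircles the double turning point, where $\tS_{J,{\rm odd}}$ has a simple pole with residue $\tE_J/4$, nonzero by \eqref{eq:genericity} and Lemma \ref{lemma:E}. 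The correct evaluation, as in \eqref{eq:def-of-R}, is $\int_{\tdelta}\tS_{J,{\rm odd}}\,d\tx = \pi i\,\tE_J - 2\pi i\,c\,\eta$, and likewise $\int_{\delta}S_{\III,{\rm odd}}\,dx = \pi i\,E_{\III} - 2\pi i\,c\,\eta$ on the target side.

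The gap is repairable precisely because of an ingredient you already put in place: the matching of first integrals $\tE_J = E_{\III}$ (your analogue of \eqref{eq:invariance-of-residues}, i.e.\ \eqref{eq:invariance-of-residuesD7}). With it the two periods agree — but note that the equality then genuinely requires \emph{both} the choice \eqref{eq:c-P3} of $c$ \emph{and} the parameter matching, not the choice of $c$ alone as your evaluation suggests; had the periods really been $2\pi i\,c\,\eta$ on both sides, condition \eqref{eq:invariance-of-residuesD7} would have played no role in this step, which should have been a warning sign. Once the period identity is stated correctly, the rest of your argument (vanishing of the Taylor-difference term by induction, single-valuedness of the $x_{j/2}$ on the annulus $\tU$, and the evaluation of the deformation relation at $\tx=\tlambda_J$ to obtain \eqref{eq:equivalece-of-PJD7}) goes through exactly as in the paper. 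One further small remark: in the loop case the coincidence $y^{(1)}=y^{(2)}$ near $\ta(\tt)$ is automatic, since both normalize at the \emph{same} simple turning point and map $\tgamma_1,\tgamma_2$ to $\gamma_1,\gamma_2$ simultaneously; the period argument is needed only to show that their continuations along the two segments agree again near $\tlambda_0(\tt)$, which is how Proposition \ref{prop:single-valuedness} is actually formulated.
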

%

\subsection{Construction of the top 
term of the transformation}
\label{section:top-terms-D7}

First we explain the construction of $t_0(\tt)$ and 
$x_0(\tx,\tt)$. In the proof we consider the case that 
the path of integral \eqref{eq:tlidephiJ2} 
is taken along a path $\tGamma_{\tt,1}$ 
shown in Figure \ref{fig:D7-transformable} (a). 
(This additional assumption is imposed just 
in order to fix the situation, and our discussion 
below is also applicable to the case where
the path of integral \eqref{eq:tlidephiJ2} 
is taken along a path $\tGamma_{\tt,2}$.) 
Then, it follows from the definition \eqref{eq:c-P3}
of the constant $c$ that we have
\begin{equation} \label{eq:c-P3-2}
\int_{\tGamma_{\tt,1}} \sqrt{\tF^{(1)}_J(\tt)}~d\tt - 
\int_{\tGamma_{\tt,2}} \sqrt{\tF^{(1)}_J(\tt)}~d\tt = 
\int_{\tGamma} \sqrt{\tF^{(1)}_J(\tt)}~d\tt = 
2\pi i c.
\end{equation}

Let us construct $t_0(\tt)$. Similarly to Section 
\ref{section:top-terms}, under the assumption 
that $\tt_{\ast}$ lies on the $P$-Stokes segment 
$\tGamma$, we can construct $t^{(k)}_0(\tt)$ so that 
\begin{equation} 
\label{eq:correspondence-of-instanton2} 
\tphi_{J,k}(\tt) = \phi_{\III,k}(t^{(k)}_0(\tt))
\end{equation}
holds for each $k = 1$ and $2$, where
\begin{equation} \label{eq:phiJ-and-phiD7P3}
\tphi_{J,k}(\tt) = \int_{\tGamma_{\tt, k}}
\sqrt{\tF_{J}^{(1)}(\tt)}~d\tt,\quad 
\phi_{\III,k}(t) = \int_{\Gamma_{t, k}}
\sqrt{{F^{(1)}_{\III}}(t)}~dt.
\end{equation} 
Here the path $\Gamma_{t, k}$ 
for $\phi_{\III,k}(t)$ is a path from the $P$-turning point 
$r$ of $\PIII$ to $t$ defined by the following rule. 
Note that, under Assumption \ref{ass:transform-to-P3} (4),
we have the following two possibilities for 
the configuration of the Stokes geometry 
of $\SLJ$ at $\tt_{\ast}$ 
(see Figure \ref{fig:P3-transformable-posibilities}):
\begin{itemize}
\item[(A)] The Stokes segment $\tgamma_2$ 
comes next to the Stokes segment $\tgamma_{1}$ 
in the {\em counter-clockwise} order 
near $\tlambda_0(\tt_{\ast})$. %
\item[(B)] The Stokes segment $\tgamma_2$ 
comes next to the Stokes segment $\tgamma_1$ 
in the {\em clockwise} order 
near $\tlambda_0(\tt_{\ast})$. %
\end{itemize}
Then, we set 
\begin{eqnarray} 
\label{eq:choice-of-r1-and-r2-D7}
(\Gamma_{t, 1},\Gamma_{t, 2}) = 
\begin{cases}
(\Gamma_{t}, \Gamma_{t}') & 
\text{when the case (A) in Figure 
\ref{fig:P3-transformable-posibilities} happens}, \\
(\Gamma_{t}', \Gamma_{t}) & 
\text{when the case (B) in Figure 
\ref{fig:P3-transformable-posibilities} happens},
\end{cases}
\end{eqnarray}
where $\Gamma_{t}$ and $\Gamma_{t}'$ 
are the paths depicted 
in Figure \ref{fig:P3-SL3-D7-Stokes} (P). 
Moreover, the branch of $\sqrt{F_{\III}^{(1)}(t)}$ 
in \eqref{eq:phiJ-and-phiD7P3} is chosen 
so that the sign appearing in the right-hand side 
of \eqref{eq:integral-D7} is $+$ (the orientation of 
$\Gamma$ is given appropriately):
\begin{equation} \label{eq:c-P3-3}
\int_{\Gamma_{t,1}} \sqrt{F^{(1)}_{\III}(t)}~dt - 
\int_{\Gamma_{t,2}} \sqrt{F^{(1)}_{\III}(t)}~dt = 
\int_{\Gamma} \sqrt{F_{\III}^{(1)}(t)}dt = 
+ 2\pi i c.
\end{equation}
This choice \eqref{eq:choice-of-r1-and-r2-D7}
of $\Gamma_{t, 1}$ and $\Gamma_{t, 2}$ is essential 
in the construction of $x_0(\tx,\tt)$. 

\begin{figure}[h]
  \begin{minipage}{0.5\hsize}
  \begin{center}
  \includegraphics[width=50mm]
  {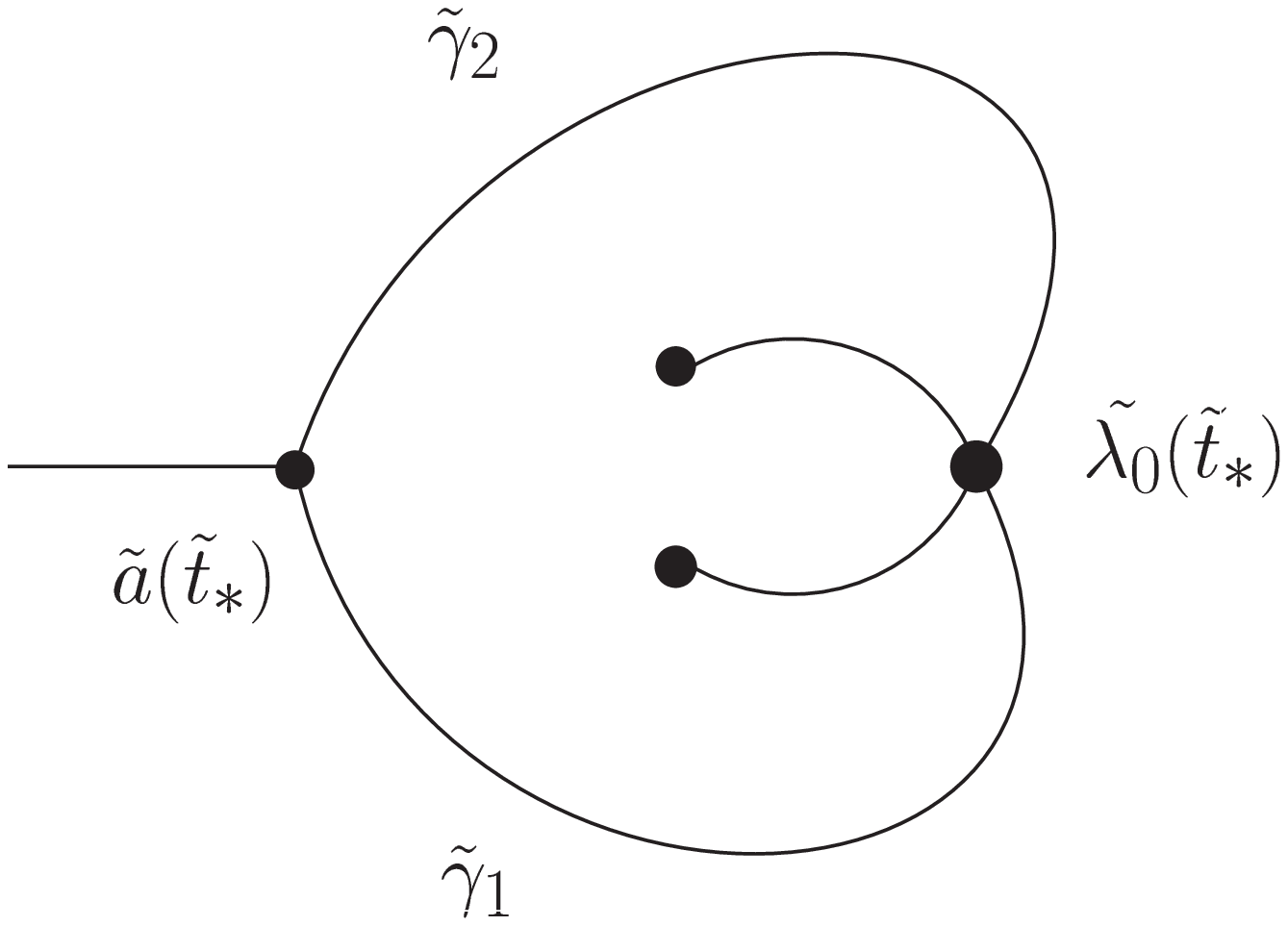} \\
  {(A)}
  \end{center}
  \end{minipage} \hspace{0.0em}
  \begin{minipage}{0.45\hsize}
  \begin{center}
  \includegraphics[width=50mm]
  {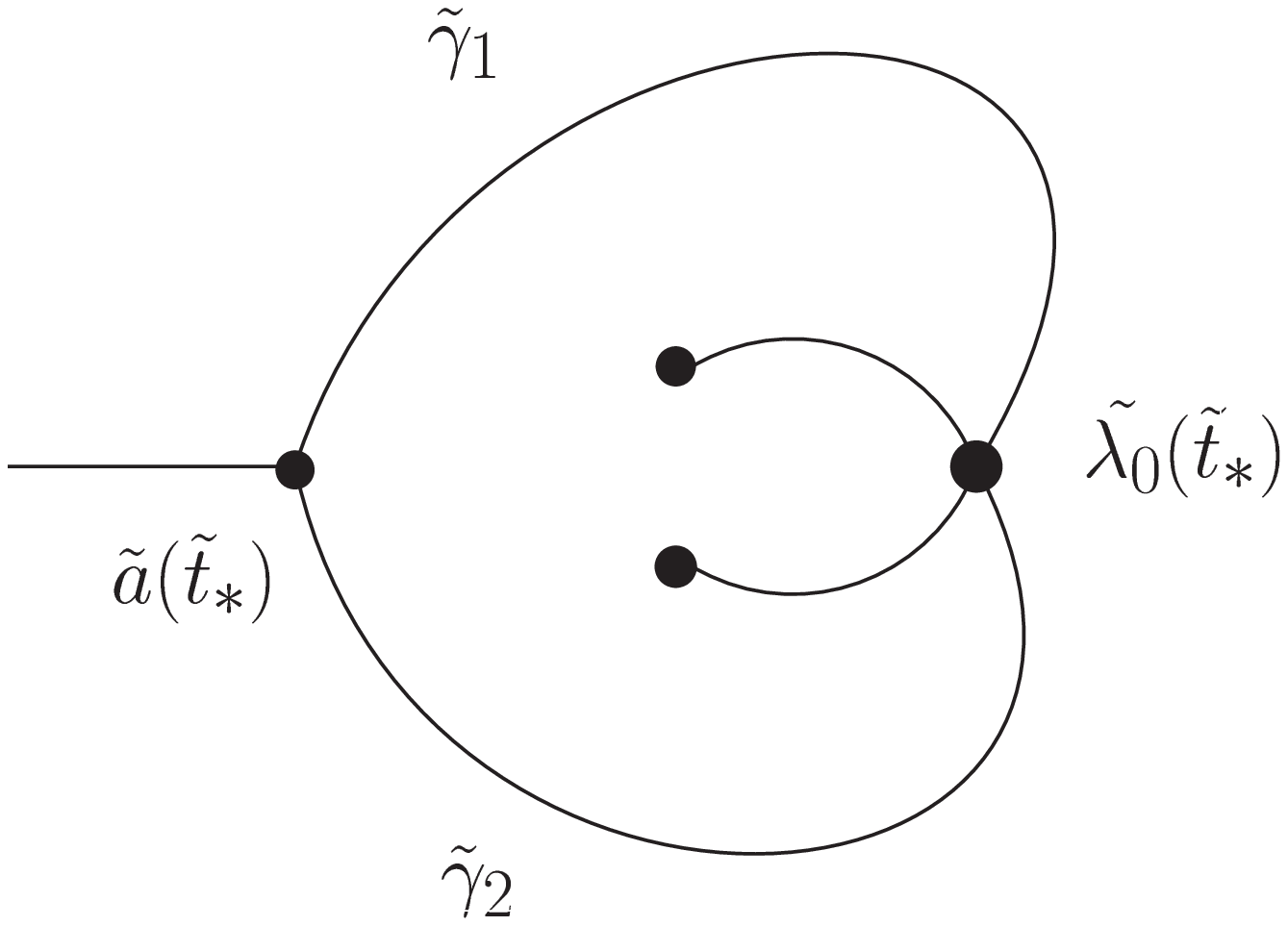} \\
  {(B)}
  \end{center} 
  \end{minipage} 
\caption{Two possibilities for adjacent 
Stokes segments of $\SLJ$.} 
\label{fig:P3-transformable-posibilities}
\end{figure}

Since the right-hand sides of \eqref{eq:c-P3-2} 
and \eqref{eq:c-P3-3} coincide, by the same 
discussion of Section \ref{section:top-terms}
we can show that $t^{(1)}_0(\tt) = t^{(2)}_0(\tt)$. 
We define $t_0(\tt) = t^{(1)}_0(\tt) = t^{(2)}_0(\tt)$.
Then, taking the path in \eqref{eq:phi3D7} 
along $\Gamma_{1,t}$, we have 
\eqref{eq:correspondence-of-instanton}.

Next we construct $x_0(\tx,\tt)$. Set
\begin{eqnarray} 
\label{eq:tw0-Stokes-segments-gamma1-and-gamma2-P3D7}
(\gamma_1,\gamma_2) & = &
\begin{cases}
(\gamma, \gamma') & \text{when the case (A) in Figure 
\ref{fig:P3-transformable-posibilities} happens}, \\
(\gamma', \gamma) & \text{when the case (B) in Figure 
\ref{fig:P3-transformable-posibilities} happens},
\end{cases} 
\end{eqnarray}
where $\gamma$ and $\gamma'$ are the Stokes segments 
of $\SLIII$ depicted in 
Figure \ref{fig:P3-SL3-D7-Stokes} (SL). 
Then, due to the equality 
\eqref{eq:correspondence-of-instanton2} 
and our choice \eqref{eq:choice-of-r1-and-r2-D7}
of paths in \eqref{eq:phiJ-and-phiD7P3},
the discussion of Section \ref{section:top-terms} 
is also valid in this case because the 
relative locations 
$(\ta,\tlambda_0,\tgamma_1,\tgamma_2)$ 
of the simple turning point, the 
double turning point and the two Stokes segments 
of $\SLJ$ completely coincide with those 
$(a,\lambda_0,\gamma_1,\gamma_2)$ of $\SLIII$. 
Thus we can construct $x_0(\tx,\tt)$ satisfying 
\eqref{eq:x0D7-1} and \eqref{eq:x0D7-2} and mapping 
the Stokes segments $\tgamma_1$ and $\tgamma_2$ 
to $\gamma_1$ and $\gamma_2$, respectively.
Furthermore, $x_0(\tx,\tt)$ becomes single-valued 
in an annular domain $\tU$ containing the union 
of two Stokes segments $\tgamma_1 \cup \tgamma_2$ 
due to the following fact:
At each turning point $\ta(\tt)$ and $\tlambda_0(\tt)$, 
a holomorphic function which maps $\tgamma_1$ 
to $\gamma_1$ {\em uniquely} exists and it must 
coincide with $x_0(\tx,\tt)$. 

In what follows we choose the branch of 
$\sqrt{\tQ_{J,0}(\tx,\tt)}$ and 
$\sqrt{Q_{\III,0}(x,t)}$ appearing in the proof so that 
\begin{eqnarray} \label{eq:branches-P3D7}
\int_{\tgamma_k} \sqrt{\tQ_{J}(\tx,\tt)}~d\tx & = & 
\frac{1}{2} \int_{\tGamma_{\tt,k}} \sqrt{\tF^{(1)}_J(\tt)}~d\tt \\
\int_{\gamma_k} \sqrt{Q_{\III}(x,t)}~dx & = & 
\frac{1}{2} \int_{\Gamma_{t,k}} \sqrt{F^{(1)}_{\III}(t)}~dt
\label{eq:branches-P3D7-sono2}
\end{eqnarray}
hold for $k=1,2$. 
In \eqref{eq:branches-P3D7} and 
\eqref{eq:branches-P3D7-sono2} Stokes segments 
are directed from the simple turning point 
to the double turning point.

\subsection{Construction of higher order
terms of the transformation series 
and the transformation of 
the 2-parameter solutions}
\label{section:construction-higher-D7}

Here we explain the construction of higher order 
terms of the transformation series. We note that 
most of the discussion given in Section \ref{section:main-results} 
are applicable also to this case. The transformation series 
$x^{\rm pre}(\tx,\tt,\eta)$ near the double turning point 
is constructed in the same manner as in Section 
\ref{section:transformation-at-double},
and the matching procedure given in Section 
\ref{section:matching-P2} is valid in our case since 
we have only used the fact that 
``there is a Stokes segment of $\SLJ$ connecting 
a simple turning point and the double turning point 
$\tlambda_0(\tt_{\ast})$" in the proof. 
What we have to prove here is the single-valuedness 
of the higher order coefficients of formal series
in the annular domain $\tU$ containing the 
union of two Stokes segments $\tgamma_1 \cup \tgamma_2$.

Define
\begin{eqnarray}
x^{\rm pre}(\tx,\tt,\eta) & = & z_{\III}^{-1}
(\tz_{J}(\tx,\tt,\eta), s_{J}(\tt,\eta),\eta),  \\
t^{\rm pre}(\tt,\eta) & = & s_{\III}^{-1}
(\ts_{J}(\tt,\eta),\eta)
\end{eqnarray}
in the same manner as \eqref{eq:x---pre} and 
\eqref{eq:t---pre} in Section 
\ref{section:transformation-at-double}. 
Here we have fixed the correspondence of free parameters 
$(\talpha,\tbeta)$ of $(\tlambda_J,\tnu_J)$ 
and $(\alpha,\beta)$ of 
$(\lambda_{\III},\nu_{\III})$ 
so that 
\begin{equation} \label{eq:invariance-of-residuesD7}
\tE_{J}(\talpha,\tbeta) = E_{\III}(\alpha,\beta)
\end{equation}
holds similarly to \eqref{eq:invariance-of-residues}. 
Let $y^{(1)}(\tx,\tt,\eta)$ and $y^{(2)}(\tx,\tt,\eta)$ 
be formal series which transform $\SLJ$ to $\SLIII$ 
near the simple turning point $\ta(\tt)$ 
in the sense of Lemma \ref{lemma:transformation-at-simple-TP}. 
In our geometric assumption these two formal series 
coincide near $\tx=\ta{(\tt)}$ due to the following reason.
Since the top term of $y^{(1)}_0(\tx,\tt)$ of 
$y^{(1)}(\tx,\tt,\eta)$ maps the Stokes segment 
$\tgamma_1$ of $\SLJ$ to the Stokes segment 
$\gamma_1$ of $\SLIII$ by definition, it also 
maps the other Stokes segment $\tgamma_2$ to 
$\gamma_2$ simultaneously. 
Thus $y^{(1)}_0(\tx,\tt)$ must coincide with 
$y^{(2)}_0(\tx,\tt)$ near $\tx=\ta(\tt)$, 
and hence the higher order terms also coincide, 
at least near $\tx = \ta(\tt)$. 
Especially, the top terms of them 
also coincide with $x_0(\tx,\tt)$ constructed 
in Section \ref{section:top-terms-D7}.

Furthermore, by the same argument as in 
Section \ref{section:matching-P2} we can prove that  
all coefficients of $y^{(1)}(\tx,\tt,\eta)$ become
holomorphic also at $\tx = \tlambda_0(\tt)$ and we have
\begin{equation}
x^{\rm pre}(\tx,\tt,\eta) = y^{(1)}(\tx,\tt,\eta),
\end{equation}
after we chose the free parameters contained in 
$t^{\rm pre}(\tt,\eta)$ appropriately. 
We denote by $t(\tt,\eta)$ the formal series 
$t^{\rm pre}(\tt,\eta)$ with parameters contained 
in it being chosen appropriately in the above sense. 
Then, there exists a domain $\tU'$ near $\tlambda_0(\tt)$ 
on which all the coefficients of two formal series 
$y^{(1)}(\tx,\tt,\eta)$ and $y^{(2)}(\tx,\tt,\eta)$
are holomorphic. Then, what we have to show here is 
that the analytic continuation of the coefficients 
of $y^{(1)}(\tx,\tt,\eta)$ along the Stokes segment 
$\tgamma_1$ coincides with the analytic continuation 
of the coefficients of $y^{(2)}(\tx,\tt,\eta)$ 
along the Stokes segment $\tgamma_2$ 
on the domain $\tU'$. 
Here we show that the single-valuedness 
is guaranteed by our choice \eqref{eq:c-P3} 
of the constant $c$ in $\PIII$ and $\SLIII$.

\begin{prop} \label{prop:single-valuedness} 
The analytic continuation of the coefficients of 
$y^{(1)}(\tx,\tt,\eta)$ along the Stokes segment 
$\tgamma_1$ coincide with the analytic continuation 
of the coefficients of $y^{(2)}(\tx,\tt,\eta)$ 
along the Stokes segment $\tgamma_2$ 
on the domain $\tU'$: 
\begin{equation} 
\label{eq:single-valuedness-of-transformation-y1-y2}
y^{(1)}(\tx,\tt,\eta) = y^{(2)}(\tx,\tt,\eta).
\end{equation}
Consequently, the coefficients of $y^{(1)}(\tx,\tt,\eta)$
and $y^{(2)}(\tx,\tt,\eta)$ are holomorphic and single-valued 
in $\tx$ on an annular domain $\tU$ containing 
the union of two Stokes segments $\tgamma_1 \cup \tgamma_2$.
\end{prop}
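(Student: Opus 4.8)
The plan is to run the matching argument of Proposition~\ref{prop:y1=y2} essentially unchanged, the one structural novelty being that the cycle controlling the matching now generates the first homology of the annular domain $\tU$ --- equivalently, it is a loop encircling the singular points of $\SLJ$ lying inside $\tilde{W}$ --- rather than a cycle around two distinct simple turning points. As already noted, $y^{(1)}(\tx,\tt,\eta)$ and $y^{(2)}(\tx,\tt,\eta)$ agree near the common simple turning point $\tx=\ta(\tt)$, and the matching of Section~\ref{section:matching-P2} makes $y^{(1)}$ (where it coincides with $x^{\rm pre}$) holomorphic at $\tx=\tlambda_0(\tt)$ as well, once the free parameters in $t^{\rm pre}(\tt,\eta)$ are fixed. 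Continuing $y^{(2)}$ along $\tgamma_2$ produces a second holomorphic candidate on a domain $\tU'$ near $\tlambda_0(\tt)$, and the goal is to prove the two coincide.

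First I would reintroduce $\mathcal{R}(x,t,\eta)=\int_{a(t)}^{x}\eta^{-1}S_{\III,{\rm odd}}~dx$ and set $\mathcal{F}=\mathcal{R}(y^{(1)},t,\eta)$ and $\mathcal{G}=\mathcal{R}(y^{(2)},t,\eta)$, exactly as in \eqref{eq:Rxteta}--\eqref{eq:calG}. The relations \eqref{eq:yk-relation} and \eqref{eq:yk-relation2}, combined with \eqref{eq:t-derivative-of-Sodd}, make the computation \eqref{eq:x-indep}--\eqref{eq:xy-indep} go through verbatim, so $\mathcal{F}-\mathcal{G}$ has genuine constant coefficients, independent of $\tx$ and $\tt$. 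Comparing the continuations along $\tgamma_1$ and $\tgamma_2$ to a common point of $\tU'$ expresses this constant as the difference of the two line integrals of $\tS_{J,{\rm odd}}$; expanding the $\SLIII$-side integral in powers of $(y^{(k)}-x_0)$ as in \eqref{eq:pre-key-equality} separates a pure period term from the $y$-difference terms. Once the two period terms are shown to cancel, the term-by-term induction of \eqref{eq:y2-y1} forces $y^{(1)}=y^{(2)}$.

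The crux, i.e. the analog of Lemma~\ref{lem:key-lamma}, is thus the period identity
\[
\oint_{\tdelta}\tS_{J,{\rm odd}}(\tx,\tt,\eta)~d\tx = \oint_{\delta}S_{\III,{\rm odd}}(x,t,\eta)~dx,
\]
where $\tdelta$ generates $H_1(\tU)$ and $\delta$ is its image under $x_0$. To establish it I would invoke Assumption~\ref{ass:transform-to-P3}~(5): since every singular point of $\tQ_{J,0}$ inside $\tilde{W}$ is a pole of even order and $\tilde{W}$ contains no turning point other than $\ta(\tt)$ and $\tlambda_0(\tt)$, the form $\sqrt{\tQ_{J,0}}~d\tx$, and hence $\tS_{J,{\rm odd}}~d\tx$, has no branch point in the region bounded by $\tdelta$, so the loop integral reduces to residues. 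By \eqref{eq:resSodd=res-1} these equal the residues of $\eta\sqrt{\tQ_{J,0}}~d\tx$, and the branch convention \eqref{eq:branches-P3D7} together with \eqref{eq:c-P3-2} evaluates the left-hand side as $\eta$ times a fixed multiple of $\int_{\tGamma}\sqrt{\tF_J^{(1)}}~d\tt$. The same computation for $\SLIII$, via \eqref{eq:branches-P3D7-sono2} and \eqref{eq:c-P3-3}, gives $\eta$ times the same multiple of $\int_{\Gamma}\sqrt{F_{\III}^{(1)}}~dt$; since $c$ is chosen by \eqref{eq:c-P3} precisely so that these two $P$-periods coincide (and \eqref{eq:invariance-of-residuesD7} fixes the accompanying parameter correspondence), the two sides agree.

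I expect the main obstacle to be exactly this residue evaluation of the loop integral on the $\SLJ$ side: one must confirm that no branch cut of $\sqrt{\tQ_{J,0}}$ threads $\tdelta$ and that $\tilde{W}$ harbors no extra turning point, both guaranteed only by the even-order and no-extra-turning-point hypotheses of Assumption~\ref{ass:transform-to-P3}~(5). With the period identity in hand the remaining steps are formally identical to Proposition~\ref{prop:y1=y2}: the period terms cancel in \eqref{eq:pre-key-equality}, the induction of \eqref{eq:y2-y1} yields $y^{(1)}=y^{(2)}$, and consequently the coefficients $x_{j/2}$ are single-valued on the annular domain $\tU$.
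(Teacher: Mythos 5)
Your overall architecture is the same as the paper's: reduce everything to a period identity for $\tS_{J,{\rm odd}}$ and $S_{\III,{\rm odd}}$ around the cycle produced by the two continuations (the analogue of Lemma \ref{lem:key-lamma}), then run the Taylor-expansion induction of Proposition \ref{prop:y1=y2}. Repackaging the comparison through ${\cal F}$ and ${\cal G}$ instead of the contour identity \eqref{eq:Sodd-relations-of-ykD7} is harmless; the two formulations are equivalent.

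However, your proof of the key period identity has a genuine gap. The cycle $\tdelta=\tdelta_{+}+\tdelta_{-}$ that actually arises from comparing the continuations along $\tgamma_1$ and $\tgamma_2$ encircles, on each sheet, not only the even-order singular points of $\tQ_{J,0}$ inside $\tilde{W}$ but also the \emph{double turning point} $\tlambda_0(\tt)$ (both contours are based at a point $\tx$ near $\tlambda_0(\tt)$ and hug the two segments, whose common endpoint is $\tlambda_0(\tt)$). At that point $\sqrt{\tQ_{J,0}(\tx,\tt)}$ is holomorphic, but $\tS_{J,{\rm odd}}(\tx,\tt,\eta)$ has a simple pole with residue $\tE_J/4$ by \eqref{eq:EJ}, which is nonzero under the genericity condition \eqref{eq:genericity}. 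Consequently your step ``the loop integral reduces to residues, and by \eqref{eq:resSodd=res-1} these equal the residues of $\eta\sqrt{\tQ_{J,0}}\,d\tx$'' is not valid: \eqref{eq:resSodd=res-1} applies only at even-order poles of $\tQ_{J,0}$, not at $\tlambda_0(\tt)$. The correct evaluation, as in \eqref{eq:def-of-R} and the lines following it, is $\int_{\tdelta}\tS_{J,{\rm odd}}\,d\tx=\pi i\,\tE_J-2\pi i c\eta$ and $\int_{\delta}S_{\III,{\rm odd}}\,dx=\pi i\,E_{\III}-2\pi i c\eta$, so the two periods agree only because the parameter correspondence \eqref{eq:invariance-of-residuesD7} holds \emph{in addition to} the choice \eqref{eq:c-P3} of $c$. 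In your write-up the identity is attributed to the choice of $c$ alone and \eqref{eq:invariance-of-residuesD7} is mentioned only as an aside, so each side of your evaluation is missing its $E$-term. Note that the same two-ingredient structure already appears in the $\PII$ case: Lemma \ref{lem:key-lamma} needs both \eqref{eq:invariance-delta0} (the $E$-part, coming from the cycle $\tdelta_0$ around the double turning point) and \eqref{eq:invariance-delta} (the $c$-part); your argument keeps only the analogue of the latter.
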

\begin{proof}
In the proof of Proposition \ref{prop:single-valuedness} 
we assume that the case (A) in Figure 
\ref{fig:P3-transformable-posibilities} happens. 
(The discussion given here is also applicable 
to the case (B) in Figure 
\ref{fig:P3-transformable-posibilities}.)
Moreover, we will prove the equality 
\eqref{eq:single-valuedness-of-transformation-y1-y2} 
when $\tt$ is fixed at $\tt_{\ast}$. 
This is just for the sake of clarity, 
and our proof is also valid in a 
neighborhood $\tV$ of $\tt_{\ast}$. 
(We may take a smaller neighborhood $\tV$
of $\tt_{\ast}$.)

  \begin{figure}[h]
  \begin{minipage}{0.5\hsize}
  \begin{center}
  \includegraphics[width=58mm]
  {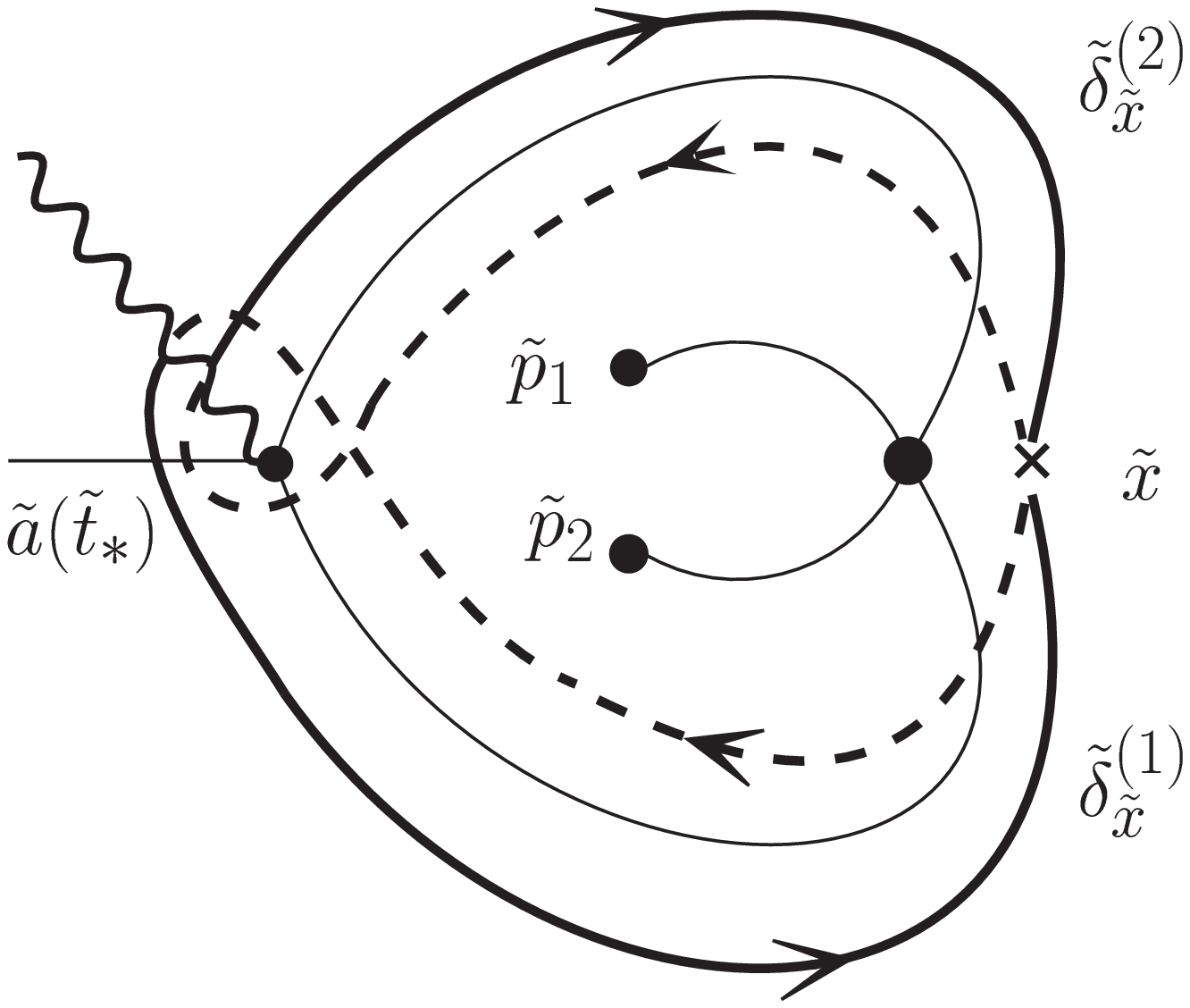} \\
  \end{center}
  \end{minipage} \hspace{-0.2em}
  \begin{minipage}{0.5\hsize}
  \begin{center}
  \includegraphics[width=56mm]
  {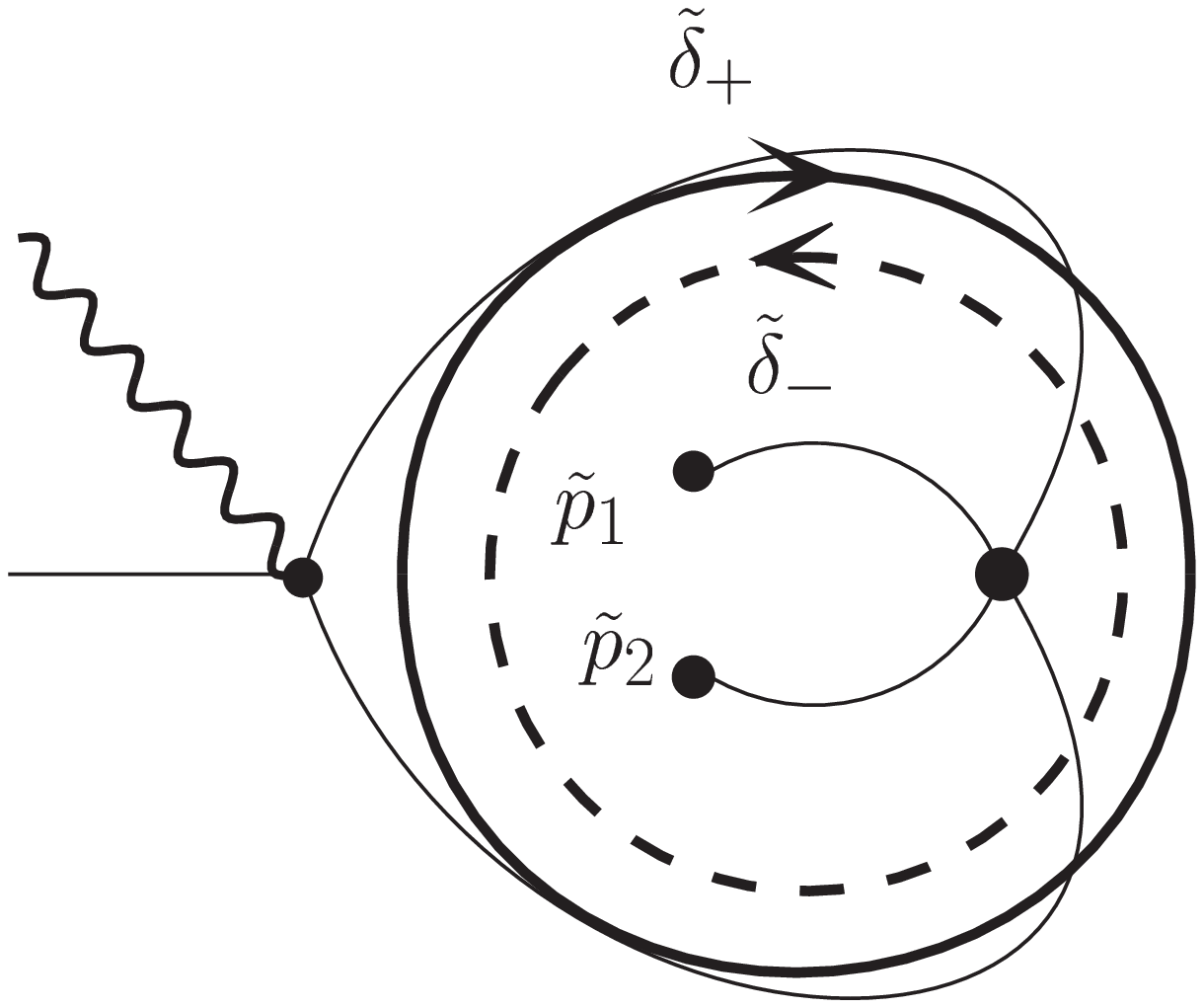} \\[+.5em]
  {The thick solid line (resp., thick dashed line) 
  designates the cycle $\tdelta_{+}$ (resp., $\tdelta_{-}$).}
  \end{center} 
  \end{minipage} 
  \caption{The cycles $\tdelta^{(k)}_{\tx}$ 
  ($k=1,2$) and $\tdelta = \tdelta_{+} + \tdelta_{-}$.}
 \label{fig:delta-kxD7}
  \end{figure}

Let $\tx$ be a point on the domain $\tU'$. 
Similarly to \eqref{eq:Sodd-relations-of-yk}, we have
\begin{equation} \label{eq:Sodd-relations-of-ykD7}
\int_{\tdelta^{(k)}_{\tx}}
\tS_{J,{\rm odd}}(\tx,\tt,\eta)~d\tx =
\int_{\delta^{(k)}_x} S_{\III, {\rm odd}}
(x,t(\tt,\eta),\eta)~dx 
\Bigl|_{x=y^{(k)}(\tx,\tt,\eta)}
\end{equation}
for each $k=1,2$. 
Here the integration path $\tdelta^{(k)}_{\tx}$ 
is a contour depicted in Figure \ref{fig:delta-kxD7}.
That is, $\tdelta^{(k)}_{\tx}$ starts from the point 
on the second sheet of the Riemann surface of 
$\sqrt{Q_{J,0}(\tx,\tt)}$ corresponding to $\tx$, 
goes to the simple turning point $\ta(\tt_{\ast})$ 
along the Stokes segment $\tgamma_k$, 
encircles the simple turning point 
$\ta(\tt_{\ast})$ and returns to the 
point corresponding to $\tx$ on the first sheet
along the Stokes segment $\tgamma_k$. 
(The wiggly line designates the branch cut for 
$\sqrt{Q_{J,0}(\tx,\tt)}$.) 
The path $\delta^{(k)}_{x}$ 
is defined in the same manner for $J={\III}$.
As well as \eqref{eq:pre-key-equality}, 
taking the difference of both sides of
\eqref{eq:Sodd-relations-of-ykD7} for $k=1$ and $k=2$, 
we obtain
\begin{eqnarray} \label{eq:pre-key-equalityD7}
\int_{\tdelta} \tS_{J, {\rm odd}}
(\tx,\tt,\eta)~d\tx  & = & 
\int_{\delta} S_{\III, {\rm odd}}
(x,t(\tt,\eta),\eta)~dx \\[+.3em] 
& & \hspace{-10.em}
+ \sum_{n=0}^{\infty} 
\frac{\p^n S_{\III, {\rm odd}}}{\p x^n}
(x_0,t(\tt,\eta),\eta)~ 
\frac{(y^{(2)}-x_0)^{n+1}-(y^{(1)}-x_0)^{n+1}}
{(n+1) \hspace{+.1em} !}. 
\nonumber
\end{eqnarray}
Here $\tdelta = \tdelta_{+} + \tdelta_{-}$ 
is the sum of two closed cycles 
$\tdelta_{+}$ and $\tdelta_{-}$, where 
$\tdelta_{+}$ (resp., $\tdelta_{-}$)
encircles the double turning point $\tlambda_0(\tt)$ 
and all singular points contained in the domain $\tilde{W}$ 
(cf.~Assumption \ref{ass:transform-to-P3} (4)) 
in clockwise (resp., counter-clockwise) direction 
on the first (resp., the second) sheet of the 
Riemann surface of $\sqrt{\tQ_{J,0}(\tx,\tt)}$ 
as indicated in Figure \ref{fig:delta-kxD7}. 
The path $\delta$ is defined in the same manner 
for $J={\III}$.

Under Assumption \ref{ass:transform-to-P3} (5), 
there is no branch points of 
$\tS_{J, {\rm odd}}(\tx,\tt,\eta)$
inside the closed cycle $\tdelta$. Thus the 
left-hand side of \eqref{eq:pre-key-equalityD7} 
is written as
\begin{eqnarray}\label{eq:def-of-R}
\int_{\tdelta} \tS_{J, {\rm odd}}
(\tx,\tt,\eta)~d\tx & = & 
\int_{\tdelta_{+}} \tS_{J, {\rm odd}}
(\tx,\tt,\eta)~d\tx +
\int_{\tdelta_{-}} \tS_{J, {\rm odd}}
(\tx,\tt,\eta)~d\tx \\
& = & 4 \pi i \hspace{-.5em}
\Res_{\tx=\tlambda_0(\tt)}
\tS_{J, {\rm odd}}(\tx,\tt,\eta)~d\tx 
+ 4 \pi i {R} = \pi i \tE_J 
+  4 \pi i {R}, \hspace{-2.em} \nonumber
\end{eqnarray}
where ${R}$ is the sum of the residues of 
$\tS_{J, {\rm odd}}(\tx,\tt,\eta)~d\tx$ at singular points 
of $\tQ_{J,0}(\tx,\tt)$ contained in the domain $\tilde{W}$.
By the same discussion of the proof of 
Lemma \ref{lem:key-lamma} we have the following 
equality (cf.~\eqref{eq:Sodd-is-sqrtQ0}):
\[
4 \pi i {R} =
 \eta \int_{\tdelta} 
\sqrt{\tQ_{J,0}(\tx,\tt)}~d\tx. 
\]
Here note that $\tQ_{J,0}(\tx,\tt)$ is holomorphic at 
$\tx=\tlambda_0(\tt)$. On the other hand, 
using the equalities \eqref{eq:c-P3-2} and 
\eqref{eq:branches-P3D7}, we have 
\begin{eqnarray} 
\oint_{\tdelta} \sqrt{\tQ_{J,0}(\tx,\tt)}~d\tx & = &
2 \left( \int_{\tgamma_2} \sqrt{\tQ_{J,0}(\tx,\tt)} d\tx
- \int_{\tgamma_1} \sqrt{\tQ_{J,0}(\tx,\tt)} d\tx \right) 
= - 2 \pi i c. \hspace{-2.em} 
\end{eqnarray}
Then, it follows from \eqref{eq:def-of-R} that 
\begin{equation} 
\int_{\tdelta} \tS_{J, {\rm odd}}
(\tx,\tt,\eta)~d\tx = \pi i \tE_J - 2 \pi i c \eta.
\end{equation}
The same computation is also valid for $J = {\III}$
and we obtain 
\begin{equation}
\int_{\delta} S_{\III, {\rm odd}}
(x,t,\eta)~dx = \pi i E_{\III} - 2 \pi i c \eta
\end{equation}
from the equalities \eqref{eq:c-P3-3} and 
\eqref{eq:branches-P3D7-sono2}. 
Since the parameters $(\talpha,\tbeta)$ 
and $(\alpha,\beta)$ are chosen as 
\eqref{eq:invariance-of-residuesD7}, 
the equality \eqref{eq:pre-key-equalityD7} implies
\[
\sum_{n=0}^{\infty} 
\frac{\p^n S_{\III, {\rm odd}}}{\p x^n}
(x_0,t(\tt,\eta),\eta)~ 
\frac{(y^{(2)}-x_0)^{n+1}-(y^{(1)}-x_0)^{n+1}}
{(n+1) \hspace{+.1em} !} = 0.
\]
Therefore, by the induction argument we have 
the desired equality 
\eqref{eq:single-valuedness-of-transformation-y1-y2}
on the domain $\tU'$. Since 
$y^{(1)}(\tx,\tt,\eta)$ and $y^{(2)}(\tx,\tt,\eta)$
coincide at the simple turning point $\ta(\tt)$
as is noted above, we have proved the single-valuedness 
of the transformation series.
\end{proof}

Set 
\begin{equation} 
x(\tx,\tt,\eta)~\left( = 
x^{\rm pre}(\tx,\tt,\eta) = y^{(1)}(\tx,\tt,\eta) 
 = y^{(2)}(\tx,\tt,\eta) \right).
\end{equation}
Then, since the equations 
\eqref{eq:x-relation2} etc. also hold
if we replace ${\rm II}$ by ${\III}$, 
we have the equality 
\begin{equation}
x(\tlambda_J(\tt,\eta),\tt,\eta) = 
\lambda_{\III}(t(\tt,\eta),\eta). 
\end{equation}
Thus we have proved all claims in 
Theorem \ref{thm:main-theorem2}.


\subsection*{Acknowledgements}
The author is very grateful to Yoshitsugu Takei, 
Takahiro Kawai, Takashi Aoki, Tatsuya Koike, 
Shingo Kamimoto, Shinji Sasaki 
and Yasuhiro Wakabayashi for helpful advices. 
Many of ideas of the proof of our main results come 
from materials of the paper \cite{KT98} of Kawai and Takei. 
This research is supported by Research Fellowships 
of Japan Society for the Promotion for Young Scientists.


\begin{thebibliography}{SKK2}



\bibitem[AKT1]{AKT91} 
 T.Aoki, T.Kawai and Y.Takei,
 The Bender-Wu analysis and the Voros theory, 
 in {\em Special Functions} (Okayama 1990), 
 Springer, Tokyo, 1991, 1-29.

\bibitem[AKT2]{AKT96} 
 \bysame,
 WKB analysis of Painlev\'e transcendents with 
 a large parameter. II. 
 --- Multiple-scale analysis of Painlev\'e transcendents,  
 in {\em Structure of Solutions of Differential Equations}, 
 World Sci. Publ., 1996, pp.1-49.

\bibitem[AKT3]{AKT09} 
 \bysame, 
 The Bender-Wu analysis and the Voros theory, II, 
 in {\em Algebraic Analysis and Around}, 
 Adv. Stud. Pure Math. {\bf 54}, 
 Math. Soc. Japan, Tokyo, 2009, 19-94.



\bibitem[DP]{DP99}
E.Delabaere and F.Pham,  
Resurgent methods in semi-classical asymptotics, 
Ann. Inst. Henri Poincar\'e, {\bf 71}(1999), 1-94.


\bibitem[GIL]{GIL}
O.Gamayun, N.Iorgov and O.Lisovyy,
How instanton combinatorics solves Painlev\'e VI, V and III's,
J. Phys. A: Math. Theor. {\bf 46}(2013) 335203.





\bibitem[Iw1]{Iwaki}
 K.Iwaki,
 Parametric Stokes phenomenon for the second 
 Painlev\'e equation with a large parameter, 
 arXiv:1106.0612[math CA](2011), 
 to appear in Funkcialaj Ekvacioj.
 
\bibitem[Iw2]{Iwaki-Bessatsu}
 \bysame,
 Parametric Stokes phenomenon and the Voros coefficients
 of the second Painlev\'e equation, 
 RIMS K\^oky\^uroku Bessatsu, {\bf B40}(2013), 221-242.
 
\bibitem[Iw3]{Iwaki-PIII}
 \bysame, 
 Voros coefficients of the third Painelv\'e equation and 
 parametric Stokes phenomena, 
 arXiv:1303.3603[math CA](2013). 

\bibitem[JMU]{Jimbo-Miwa}
 M.Jimbo, T.Miwa and K.Ueno, 
 Monodromy preserving deformation of 
 linear ordinary differential equations 
 with rational coefficientns. I. 
 General theory and $\tau$-function, 
 Physica, \textbf{2D}(1981), 306-352.


 

\bibitem[KaKo]{Kamimoto-Koike}
 S.Kamimoto and T.Koike, 
 On the Borel summability of 0-parameter solutions of 
 nonlinear ordinary differential equations, 
 preprint of RIMS-1747 (2012).

\bibitem[KapKi]{Kapaev-Kitaev}
 A.Kapaev and A.V.Kitaev, 
 Passage to the limit $P_2 \rightarrow P_1$, 
 J. of Math. Sci., {\bf 73}(1995), 460--467.

 

\bibitem[KT1]{KT96}
 T.Kawai and Y.Takei, 
 WKB analysis of Painlev\'e transcendents 
 with a large parameter. I, 
 Adv.in Math., \textbf{118}(1996),1-33.

\bibitem[KT2]{KT98}
 \bysame, 
 WKB analysis of Painlev\'e transcendents with a large 
 parameter. III. Local reduction of 2-parameter 
 Painlve\'e transcendents, 
 Adv.in Math., \textbf{134}(1998), 178-218.

\bibitem[KT3]{KT iwanami}
 \bysame,
 {\em Algebraic Analysis of Singular Perturbation Theory}, 
 Translations of Mathematical Monographs, 
 volume 227, American Mathematical Society, 2005.

\bibitem[KT4]{KT06}
 \bysame, 
 WKB analysis of higher order Painlev\'e equations 
 with a large parameter
 --- Local reduction of 0-parameter solutions 
 for Painlev\'e hierarchies 
 ($P_J$) ($J$=I, II-1 or II-2), 
 Adv.in Math., \textbf{203}(2006), 636-672.

\bibitem[Ki1]{Kit92}
 A.V.Kitaev, Turning points of linear systems and double asymptotics 
 of the Painlev\'e transcendents, in 
 {\em Painlev\'e Transcendents -- Their Asymptotics and Physical 
 Applications}, NATO ASI Series Volume {\bf 278}, 1992, pp 81--96.

\bibitem[Ki2]{Kit94}
 \bysame, Caustics in 1+1 integrable systems. 
 J. Math. Phys. {\bf 35}(1994), 2934--2954. 
 
\bibitem[Ki3]{Kit06}
 \bysame, An isomonodromy cluster of two regular singularities, 
 J. Phys. A: Math. Gen., {\bf 39}(2006), 12033--12072 
 (Sfb 288 preprint No. 149, 1994).

\bibitem[KiVa]{Kit-Var}
A.V.Kitaev and A.H.Vartanian,
Connection formulae for asymptotics of solutions of the degenerate 
third Painlev\'e equation: I, 
Inverse Problems, {\bf 20}(2004), 1165-1206.

\bibitem[Ko]{Koike00}
 T.Koike, On the exact WKB analysis of second order 
 linear ordinary differential equations with  
 simple poles, Publ. RIMS, Kyoto Univ., 
 {\bf 36}(2000), 297-319.
 
 
\bibitem[OKSO]{OKSO}
 Y.Ohyama, H.Kawamuko, H.Sakai and K.Okamoto, 
 Studies on the Painlev\'e equation. V. 
 Third Painlev\'e Equations of Special Type 
 ${\rm P_{III}}(D_{7})$ and ${\rm P_{III}}(D_{8})$, 
 J. Math. Sci. Univ. Tokyo, \textbf{13}(2006), 145-204.

\bibitem[Ok]{Okamoto}
 K.Okamoto,  
 Isomonodromic deformation and Painlev\'e equations, 
 and the Garnier system, 
 J. Fac. Sci. Univ. Tokyo Sect. 
 IA Math {\bf 33}(1986), 575-618. 
 
 
\bibitem[SS]{SS}
 H.Shen and H.J.Silverstone, 
 Observations on the JWKB treatment of the quadratic barrier, 
 in {\em Algebraic Analysis of Differential Equations}, 
 Springer-Verlag, 2008, pp.237-250.
 
\bibitem[St]{Strebel} 
 K.Strebel, {\it Quadratic Differentials}, 
 Ergebnisse der Mathematik und ihrer Grenzgebiete, 
 Springer-Verlag, Berlin, 1984.
 
\bibitem[Ta]{Takahashi}
 T. Takahashi, 
 On the WKB theoretic structure of a Schr{\"o}dinger 
 operator with a Stokes curve of loop type, 
 talk at the conference ``Exponential analysis of 
 differential equations and related topics", 
 Research Institute for Mathematical Sciences, 2013.
 
\bibitem[T1]{Takei95}
 Y. Takei, 
 On the connection formula for the first Painlev\'e equation 
 from the viewpoint of the exact WKB analysis, 
 RIMS K\^oky\^uroku {\bf 931}(1995), 70-99.
 
\bibitem[T2]{Takei00}
 \bysame, 
 An explicit description of the connection formula 
 for the first Painlev\'e equation, 
 in {\it Toward the Exact WKB Analysis of Differential Equations, 
 Linear or Non-Linear}, Kyoto Univ. Press, 2000, pp.271-296. 

\bibitem[T3]{Takei08}
 \bysame,
 Sato's conjecture for the Weber equation and 
 transformation theory for Schr{\"o}dinger
 equations with a merging pair of turning points, 
 RIMS K\^{o}ky\^{u}roku Bessatsu, 
 \textbf{B10}(2008), 205-224.

\bibitem[T4]{Takei10}
 \bysame, 
 On the turning point problem for 
 instanton-type solutions of Painlev\'e equations, 
 in {\it Asymptotics in Dynamics, Geometry and PDEs; 
 Generalized Borel Summation, vol. II}, 
 CRM Series 12, Pisa, 2011, pp.255-274.

\bibitem[V]{Voros}
 A.Voros, 
 The return of the quartic oscillator. The complex WKB method, 
 Ann. Inst. Henri Poincar\'e, \textbf{39}(1983), 211-338.

\bibitem[WT]{Wakako-Takei} 
 H.Wakako and Y.Takei, 
 Exact WKB analysis for the degenerate third 
 Painlev\'e equation of type ($D_{8}$), 
 Proc. Japan Acad., Ser. A Math. Sci.,
 {\bf 83}(2007), 63-68.
 

\end{thebibliography}
\end{document}